\DeclareMathAlphabet{\cmcal}{OMS}{cmsy}{m}{n}
\DeclareMathOperator*{\smallcoprod}{\text{\raisebox{0.4ex}{\scalebox{0.6}{$\coprod$}}}}
\DeclareMathOperator*{\mcoprod}{\text{\raisebox{0.4ex}{\scalebox{0.9}{$\coprod$}}}}
\newtheorem{theorem}{Theorem}[section]
\newtheorem{corollary}[theorem]{Corollary}
\newtheorem{lemma}[theorem]{Lemma}
\newtheorem{conjecture}[theorem]{Conjecture}
\theoremstyle{definition}
\newtheorem{definition}[theorem]{Definition}
\theoremstyle{remark}
\newtheorem{remark}[theorem]{\bf{Remark}}
\newtheorem{notation}[theorem]{\bf{Notation}}
\numberwithin{equation}{section} \numberwithin{table}{subsection}
\newtheorem*{theorem*}{\bf{Theorem}}
\newtheorem*{claim*}{\bf{Claim}}
\newtheorem*{remark*}{\bf{Remark}}
\newtheorem*{remarks*}{\bf{Remarks}}
\newtheorem*{example*}{\bf{Example}}
\newtheorem*{examples*}{\bf{Examples}}
\newcommand{\C}{{\mathbf{C}}}
\newcommand{\Q}{{\mathbf{Q}}}
\newcommand{\Z}{{\mathbf{Z}}}
\newcommand{\fa}{{\mathfrak{a}}}
\newcommand{\fb}{{\mathfrak{b}}}
\newcommand{\fd}{{\mathfrak{d}}}
\newcommand{\fh}{{\mathfrak{h}}}
\newcommand{\fl}{{\mathfrak{l}}}
\newcommand{\fm}{{\mathfrak{m}}}
\newcommand{\fp}{{\mathfrak{p}}}
\newcommand{\cA}{{\cmcal{A}}}
\newcommand{\cD}{{\cmcal{D}}}
\newcommand{\cS}{{\cmcal{S}}}
\newcommand{\scA}{{\mathscr{A}}}
\newcommand{\scB}{{\mathscr{B}}}
\newcommand{\scC}{{\mathscr{C}}}
\newcommand{\scF}{{\mathscr{F}}}
\newcommand{\bbO}{\mathbb{O}}
\newcommand{\tn}{\textnormal}
\def\a{\alpha}
\def\b{\beta}
\def\d{\delta}
\def\e{\epsilon}
\def\ve{\varepsilon}
\def\g{\gamma}
\def\s{\sigma}
\def\z{\zeta}
\def\p{\varphi}
\def\G{\Gamma}
\def\U{\Upsilon}
\def\<{\left\langle}
\def\>{\right\rangle}
\def\gcd{\tn{gcd}}
\def\max{\tn{max}}
\newcommand{\vv}{\vspace{4mm}}
\newcommand{\svv}{\vspace{2mm}}
\newcommand{\zmod}[1]{{\Z/{#1}\Z}}
\newcommand{\arinj}{\ar@{^(->}}
\newcommand{\arsurj}{\ar@{->>}}
\newcommand{\arsub}{\ar@{}[r]|-*[@]{\subset}}
\newcommand{\arsup}{\ar@{}[r]|-*[@]{\supset}}
\newcommand{\arcap}{\ar@{}[d]|-*[@]{\subset}}
\newcommand{\arcup}{\ar@{}[u]|-*[@]{\subset}}
\newcommand{\arin}{\ar@{}[u]|-*[@]{\in}}
\renewcommand{\pmod}[1]{{\,(\tn{mod}\hspace{1mm} {#1})}}
\newcommand{\Gal}{{\tn{Gal}}}
\newcommand{\Pic}{{\tn{Pic}}}
\newcommand{\SL}{{\tn{SL}}}
\newcommand{\sHom}{{\mathscr{H}\kern-.5pt om}}
\newcommand{\sExt}{{\mathscr{E}\kern-.5pt xt}}
\newcommand{\new}{{\tn{new}}}
\newcommand{\tor}{{\tn{tors}}}
\newcommand{\rd}{\tn{red}}
\renewcommand{~}{\hspace*{0.5mm}}
\newcommand{\ov}{\overline}
\newcommand{\sm}{\smallsetminus}
\mathchardef\hyp="2D
\newcommand{\gauss}[1]{\left\lfloor #1 \right\rfloor}
\newcommand{\mm}[1]{(\zmod {#1})^\times}
\newcommand{\qa}{{\quad \text{and} \quad}}
\newcommand{\qqa}{{~ \text{ and } ~}}
\newcommand{\mat}[4]{
 \left(  \begin{smallmatrix} #1 & #2 \\ #3 & #4 \end{smallmatrix} \right)}
\newcommand{\vect}[2]{
 \left(  \begin{smallmatrix} #1 \\ #2 \end{smallmatrix} \right)}
\newcommand{\br}[1]{\langle #1 \rangle}
\newcommand{\leg}[2]{\genfrac(){}{}{#1}{#2}}
\newcommand{\hide}[1]{}
\newcommand{\imply}{\Longrightarrow}
\newcommand{\dd}{~|~}
\renewcommand{\emptyset}{\varnothing}
\begin{document}

\title{The rational cuspidal subgroup of $J_0(N)$}

\author{Hwajong Yoo}
\address{Hwajong Yoo, College of Liberal Studies and Research Institute of Mathematics, Seoul National University, Seoul 08826, South Korea}
\email{hwajong@snu.ac.kr}                                                                  

\author{Myungjun Yu}
\address{Myungjun Yu, Department of Mathematics, Yonsei University, Seoul 03722, South Korea}
\email{mjyu@yonsei.ac.kr}  
                                                           
\maketitle

\begin{abstract}
For a positive integer $N$, let $J_0(N)$ be the Jacobian of the modular curve $X_0(N)$.
In this paper we completely determine the structure of the rational cuspidal subgroup of $J_0(N)$ when the largest perfect square dividing $N$ is either an odd prime power or a product of two odd prime powers. Indeed, we prove that the rational cuspidal divisor class group of $X_0(N)$ is the whole rational cuspidal subgroup of $J_0(N)$ for such an $N$, and the structure of the former group is already determined by the first author in \cite{Y1}.
\end{abstract}
\setcounter{tocdepth}{1}
\tableofcontents


\section{Introduction}\label{section1}
Let $N$ be a positive integer and let 
\begin{equation*}
\G_0(N)=\left\{ \mat a b c d \in \SL_2(\Z) : c \equiv 0 \pmod N\right\}
\end{equation*}
be the congruence subgroup of $\SL_2(\Z)$. We denote by $X_0(N)$ Shimura's canonical model over $\Q$ of the modular curve corresponding to $\G_0(N)$ and let $J_0(N)=\Pic^0(X_0(N))$ be its Jacobian variety, which is an abelian variety over $\Q$. By the Mordell--Weil theorem, the group of all rational points on $J_0(N)$ is finitely generated and hence we have
\begin{equation*}
J_0(N)(\Q) \simeq \Z^{\oplus r} \oplus J_0(N)(\Q)_\tor,
\end{equation*}
where $r$ is a nonnegative integer and $J_0(N)(\Q)_\tor$ is a finite abelian group called the \textit{rational torsion subgroup}, which is important in the arithmetic theory of modular curves.

Suppose that $N$ is a prime number, in which case $X_0(N)$ has exactly two cusps $0$ and $\infty$. Ogg computed the order of $[0-\infty]$ (the equivalence class of the divisor $0-\infty$) in $J_0(N)$ \cite{Og73} and conjectured that the subgroup generated by this class is the full rational torsion subgroup of $J_0(N)$ \cite[Conj. 2]{Og75}.  Mazur then proved this conjecture (among others) in his seminal paper \cite{M77}.

Now, suppose that $N$ is not necessarily a prime.
Let $\scC_N$ be the \textit{cuspidal subgroup} of $J_0(N)$, which is defined as a subgroup of $J_0(N)(\ov{\Q})$ generated by the equivalence classes of the differences of any two cusps of $X_0(N)$.  By the theorem of Manin \cite[Cor. 3.6]{Ma72} and Drinfeld \cite{Dr73}, $\scC_N$ is a finite abelian group. As a naive generalization of Ogg's conjecture, one may assert that $\scC_N$ is the whole rational torsion subgroup. Unfortunately, for a nonsquarefree integer $N$ some cusps of $X_0(N)$ are not defined over $\Q$ and so $\scC_N$ may have a non-rational point. Hence the following may be regarded as a natural generalization of Ogg's conjecture (cf. Conjecture 2* on \cite[p. 243]{BM25}).

\begin{conjecture}[Generalized Ogg's conjecture] \label{conj: GOC}
For any positive integer $N$ we have
\begin{equation*}
J_0(N)(\Q)_\tor= \scC_N(\Q),
\end{equation*}
where 
\begin{equation*}
\scC_N(\Q)=\scC_N \cap J_0(N)(\Q)
\end{equation*}
is the \textnormal{rational cuspidal subgroup} of $J_0(N)$.
\end{conjecture}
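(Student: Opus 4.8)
A proof of Conjecture~\ref{conj: GOC} in general is out of reach; the plan is to establish it for the $N$ described in the abstract by separating it into two parts. Because every degree-$0$ cuspidal divisor defined over $\Q$ maps to a $\Q$-rational point of $J_0(N)$, and every $\Q$-rational cuspidal class is a $\Q$-rational torsion point, one always has
\[
\cC_N^{\mathrm{rat}} \subseteq \scC_N(\Q) \subseteq J_0(N)(\Q)_\tor ,
\]
where $\cC_N^{\mathrm{rat}}$, the \emph{rational cuspidal divisor class group}, denotes the image in $J_0(N)$ of the group of $\Q$-rational degree-$0$ cuspidal divisors. The outer inclusion being an equality is itself a form of the conjecture, and I would invoke its known cases --- Mazur for prime level, and Eisenstein-ideal arguments of Ohta and others for composite level --- for the relevant $N$. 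The point of the present approach is the inner inclusion: to prove $\scC_N(\Q) = \cC_N^{\mathrm{rat}}$. Since the structure of $\cC_N^{\mathrm{rat}}$ is determined in \cite{Y1}, this computes $\scC_N(\Q)$ completely and, combined with the cited torsion results, yields the conjecture for such $N$.

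For the identity $\scC_N(\Q) = \cC_N^{\mathrm{rat}}$ I would argue cohomologically. Let $G = \Gal(\ov\Q/\Q)$, let $D_N$ be the group of cuspidal divisors of $X_0(N)_{\ov\Q}$, let $D_N^{0} \subseteq D_N$ be its degree-$0$ subgroup, and let $P_N \subseteq D_N^{0}$ be the subgroup of divisors of modular units (equivalently, of principal divisors supported on the cusps); these fit into an exact sequence of $G$-modules
\[
0 \to P_N \to D_N^{0} \to \scC_N \to 0 ,
\]
in which $\cC_N^{\mathrm{rat}}$ is the image of $(D_N^{0})^{G}$ and $\scC_N(\Q) = (\scC_N)^{G}$. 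Now $D_N$ is a permutation $G$-module, and since $X_0(N)$ always has the $\Q$-rational cusp $\infty$ the inclusion of this cusp splits the degree map $G$-equivariantly, so $D_N^{0}$ is a direct summand of a permutation module and $H^1(G, D_N^{0}) = 0$. The long exact cohomology sequence then yields a canonical isomorphism
\[
\scC_N(\Q)\big/\cC_N^{\mathrm{rat}} \;\cong\; H^1(G, P_N),
\]
so the goal is precisely the vanishing of $H^1(G, P_N)$. The $G$-action here factors through $\Gal(\Q(\zeta_n)/\Q)$ for an integer $n$ with $n^2 \mid N$; when the largest square dividing $N$ is an odd prime power this Galois group is cyclic, and when it is a product of two odd prime powers it is a product of two cyclic groups of odd order --- precisely the ranges in which the cohomology of $G$-lattices remains tractable.

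It remains to compute $H^1(G, P_N)$ and show it vanishes under the hypothesis on $N$. I would do this via a sufficiently explicit description of $P_N$ as a $G$-module: using modular units --- Kubert--Lang Siegel units, together with the eta-quotient units of Ligozat where those already suffice --- one writes down a spanning family of $P_N$ whose divisors are given combinatorially in terms of the cusps, organized by denominator $c \mid N$, and then analyzes the resulting lattice factor-by-factor over the prime powers in the square part of $N$, exhibiting it as built from permutation (or at least cohomologically trivial) pieces, or, in the cyclic case, computing the relevant Herbrand quotients directly. A concrete alternative is to bypass the $H^1$ reformulation and instead determine $\scC_N$ itself as a $G$-module, compute $(\scC_N)^G$, and match the answer with the presentation of $\cC_N^{\mathrm{rat}}$ in \cite{Y1}. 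Either way, the main obstacle is this structural control of $P_N$: it has finite index in the permutation module $D_N^{0}$, yet its cokernel is $\scC_N$ --- the very group under study --- so naive arguments are circular; and for non-squarefree $N$ the modular-unit lattice is not spanned by eta-quotients but genuinely needs the finer Siegel-unit generators attached to the cusps of denominator $c$ with $\gcd(c, N/c) > 1$. Carrying out the ensuing cohomology computation cleanly --- with a case analysis according to whether a prime divides $N$ to the first power, to the square, or to a higher power --- and seeing that it collapses exactly when the square part of $N$ is an odd prime power or a product of two such, is where the real work and the essential use of the hypothesis lie; for general $N$ the same identity is expected, but the module-theoretic control this argument needs is currently available only in the stated range.
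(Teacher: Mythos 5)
The statement you are proving is a \emph{conjecture}: the paper does not prove Conjecture~\ref{conj: GOC}, even for the levels in the main theorem, and explicitly leaves it open. What the paper proves is Conjecture~\ref{conj: Yoo}, i.e.\ the identity $\scC_N(\Q)=\scC(N)$ (your ``inner inclusion''), for $N=ML^2$ with $L$ an odd prime power or a product of two odd prime powers. Your plan to upgrade this to the full conjecture by ``invoking the known cases'' of the outer equality $\scC_N(\Q)=J_0(N)(\Q)_\tor$ does not work: Mazur covers prime $N$, Ohta covers squarefree $N$ (away from $2$ and $\gcd(3,N)$), and the result of \cite{Y2} controls only the $p$-primary parts for primes $p$ with $p^2\nmid 12N$. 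For the levels at hand every prime dividing $L$ divides $N$ to a power at least $2$, so the $p$-primary part of the torsion for $p\mid L$ is exactly the part \emph{not} covered by any cited result. This is a genuine gap, not a citation you can fill in.

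On the inner equality itself, your cohomological reformulation is sound and is, in substance, equivalent to what the paper does: from $0\to P_N\to D_N^0\to\scC_N\to 0$ and $H^1(G,D_N^0)=0$ (correct, since the rational cusp $\infty$ splits the degree map) one gets $\scC_N(\Q)/\scC(N)\hookrightarrow H^1(G,P_N)$, and the paper's content is precisely that this obstruction vanishes for the stated $L$. But you do not carry out the computation; you only describe where the difficulty lies. The entire paper is that computation: an explicit basis $F_{m,h}$ of the modular units (Theorem~\ref{thm: basis}), a Ligozat-type criterion for when a formal product of the $F_{m,h}$ is a modular unit (Theorem~\ref{theorem: criterion}), the relations of Theorem~\ref{theorem: relation among Fmh}, the reduction map $\Psi$, and the vanishing theorems of Sections~\ref{section5}--\ref{section7} that establish the divisibility statement of Conjecture~\ref{conj: A}. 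Concretely, given $[D]\in\scC_N(\Q)$ with $nD=\tn{div}(F)$, the paper shows the exponents $e(m,h)$ of $F$ with $h\neq 0$ are divisible by $n$ and then exhibits an explicit unit $g$ with $D-\tn{div}(g)$ rational. Your proposal identifies the right object to control ($P_N$ as a Galois lattice) but supplies neither the structural description of $P_N$ nor the vanishing argument, so it remains a strategy outline rather than a proof even of the part of the statement that is actually within reach.
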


 There are several partial results on Conjecture \ref{conj: GOC}. First of all, when $N$ is squarefree, Ohta proved that for a prime $p$, the $p$-primary parts of two groups are equal as long as $p$ does not divide $2\cdot\gcd(3, N)$ \cite{Oh14}. (Ribet and Wake gave another proof under an assumption that $p$ does not divide $6N$ using ``pure thought'' \cite{RW22}.) Next, when $N$ is not necessarily squarefree, Ren proved that when the $p$-primary part of $\scC_N(\Q)$ is trivial, so is that of $J_0(N)(\Q)_\tor$ under an extra assumption \cite{Ren18}. Lastly, the first author proved that for any positive integer $N$, the $p$-primary parts of two groups are equal as long as $p^2$ does not divide $12N$ (and some more cases) \cite{Y2}. 

Thus, it is worth computing the rational cuspidal subgroup $\scC_N(\Q)$ of $J_0(N)$. Although the cuspidal subgroup $\scC_N$ of $J_0(N)$ is a very explicit object, its size is not known in general. Also, the group $\scC_N(\Q)$ is still mysterious in most cases.
Motivated by a question posed by Ken Ribet, the first author proposed the following.
\begin{conjecture}\label{conj: Yoo}
For any positive integer $N$ we have
\begin{equation*}
\scC_N(\Q)=\scC(N),
\end{equation*}
where $\scC(N)$ is the \textnormal{rational cuspidal divisor class group} of $X_0(N)$.
\end{conjecture}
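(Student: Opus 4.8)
Since the class of a rational cuspidal divisor is a rational point of $\scC_N$, the inclusion $\scC(N)\subseteq\scC_N(\Q)$ is automatic, and both groups are finite; the content is the reverse inclusion, which I would attack one prime at a time. Write $\Div_{\tn{cusp}}(X_0(N))$ for the group of divisors supported on the cusps, $\Div^0_{\tn{cusp}}(X_0(N))$ for its degree-zero subgroup, and $\scF$ for the subgroup of principal ones, so that there is a tautological exact sequence of $\Gal(\ov\Q/\Q)$-modules $0\to\scF\to\Div^0_{\tn{cusp}}(X_0(N))\to\scC_N\to 0$. As $\Div_{\tn{cusp}}(X_0(N))$ is a permutation module, Shapiro's lemma (the relevant terms being $H^1(\Gal(\ov\Q/K),\Z)=0$ for number fields $K$) gives $H^1(\Gal(\ov\Q/\Q),\Div_{\tn{cusp}}(X_0(N)))=0$; since $X_0(N)$ has the rational cusp $\infty$, the degree map is surjective on $\Gal(\ov\Q/\Q)$-invariant divisors, whence $H^1(\Gal(\ov\Q/\Q),\Div^0_{\tn{cusp}}(X_0(N)))=0$ as well. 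Feeding this into the long exact cohomology sequence, and using $\scC_N^{\Gal(\ov\Q/\Q)}=\scC_N(\Q)$ together with the fact that $\scC(N)$ is by definition the image of $\Div^0_{\tn{cusp}}(X_0(N))^{\Gal(\ov\Q/\Q)}$ in $\scC_N$, one obtains a canonical isomorphism $\scC_N(\Q)/\scC(N)\simeq H^1(\Gal(\ov\Q/\Q),\scF)$. Thus Conjecture \ref{conj: Yoo} is \emph{equivalent} to the vanishing of this cohomology group, and that is what I would aim to prove, in the generality of the present paper (i.e.\ for $N$ whose largest square factor is an odd prime power or a product of two).

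Next I would make the Galois action on the cusps explicit. For $d\mid N$ the cusps of ``denominator'' $d$ number $\varphi(g_d)$ with $g_d=\gcd(d,N/d)$, they lie in $\Q(\zeta_{g_d})$, and the whole action therefore factors through $G:=\Gal(\Q(\zeta_g)/\Q)=(\Z/g\Z)^\times$ with $g=\lcm_{d\mid N}g_d$. Under the hypothesis on $N$ the integer $g$ equals $p^k$ or $p^aq^b$ with $p,q$ odd primes, so $G$ is cyclic or a product of two cyclic groups; this, and the resulting shortness of the list of exceptional primes below, is the only place the hypothesis is used. A first, soft consequence is a norm argument: if $c\in\scC_N(\Q)$ is represented by a cuspidal divisor $D$, then $\sum_{\sigma\in G}\sigma(D)$ is a $\Gal(\ov\Q/\Q)$-invariant cuspidal divisor representing $|G|\,c$, so $\scC_N(\Q)/\scC(N)$ is annihilated by $|G|=\varphi(g)$. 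In particular the only primes $\ell$ that can divide the index $[\scC_N(\Q):\scC(N)]$ are those dividing $p\,\varphi(p^k)$ (respectively $p\,q\,\varphi(p^a)\,\varphi(q^b)$), i.e.\ $\ell\in\{p,q\}$ and the prime divisors of $(p-1)(q-1)$.

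For $\ell$ not dividing $|G|$ there is nothing to prove. For $\ell\mid|G|$ with $\ell^2\nmid 12N$ I would invoke the partial results already in the literature: for the $N$ considered here such an $\ell$ is forced to lie outside $\{2,3,p\}$ (resp.\ $\{2,3,p,q\}$), and for those $\ell$ one already knows $\scC(N)[\ell^\infty]=\scC_N(\Q)[\ell^\infty]$ (both being equal to $J_0(N)(\Q)_\tor[\ell^\infty]$ by \cite{Y1,Y2}, using $\scC(N)\subseteq\scC_N(\Q)\subseteq J_0(N)(\Q)_\tor$). This reduces the entire problem to the finitely many exceptional primes $\ell\in\{2,3,p\}$ (resp.\ $\{2,3,p,q\}$) that also divide $|G|$. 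For each such $\ell$ I would compute $H^1(\Gal(\ov\Q/\Q),\scF)[\ell^\infty]=\big(\scC_N(\Q)/\scC(N)\big)[\ell^\infty]$ directly, by pinning down the $\Gal(\ov\Q/\Q)$-module $\scF$ (equivalently $\scC_N$) at $\ell$ precisely enough. Concretely, the subgroup $\scF_\eta\subseteq\scF$ generated by divisors of the $\Q$-rational $\eta$-quotients $\prod_{d\mid N}\eta(d\tau)^{r_d}$ is a trivial $\Gal(\ov\Q/\Q)$-module — it is through $\scF_\eta$ that \cite{Y1} computes $\scC(N)$ — so $H^1(\Gal(\ov\Q/\Q),\scF_\eta)=0$ and $H^1(\Gal(\ov\Q/\Q),\scF)\hookrightarrow H^1(\Gal(\ov\Q/\Q),\scF/\scF_\eta)$; the remaining task is to show the latter vanishes at $\ell$, i.e.\ that the genuinely non-rational principal cuspidal divisors do not enlarge the $G$-fixed quotient. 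Here the cyclic (or two-generator abelian) structure of $G$ keeps the $G$-cohomology computations finite and explicit.

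The crux — and the expected main obstacle — is exactly this last step at $\ell\in\{2,3,p\}$ (and $q$): the order, let alone the $\Gal(\ov\Q/\Q)$-module structure, of the \emph{full} cuspidal group $\scC_N$ is unknown in general, so one cannot simply read off $\scF$, and one must instead argue that no ``exotic'' principal cuspidal divisors beyond $\eta$-quotients contribute to the $G$-invariants at $2$, $3$ and $p$. This is precisely where the restriction on the square part of $N$ earns its keep: as soon as that square part has three or more odd prime factors, or an even prime factor, $G$ gains more characters, the exceptional set of primes grows, the few-variable cohomology no longer decouples, and one must contend with several wild primes simultaneously — which is why Conjecture \ref{conj: Yoo} remains open in general.
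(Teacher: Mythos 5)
Your cohomological framework is correctly set up and is a genuinely different (and cleaner) way to \emph{state} the problem than the paper's: the identification $\scC_N(\Q)/\scC(N)\simeq H^1(\Gal(\ov\Q/\Q),\scF)$ (with your $\scF$ the group of principal cuspidal divisors of degree $0$), the annihilation of this quotient by $\varphi(L)$ via the norm, and the reduction via \cite{Y2} to the primes $\ell$ with $\ell^2\mid 12N$, i.e.\ to $\ell\in\{2,3,p\}$ (resp.\ $\{2,3,p,q\}$), are all valid. But the proposal stops exactly where the actual difficulty begins, and what remains is not a routine computation. Showing that $H^1(\Gal(\ov\Q/\Q),\scF)$ vanishes at those primes requires knowing $\scF$ as a Galois module, i.e.\ knowing \emph{all} modular units on $X_0(N)$ and how $\Gal(\Q(\z_L)/\Q)$ acts on their divisors; your reduction to $H^1$ of $\scF/\scF_\eta$ buys nothing, since that quotient is exactly as opaque as $\scF$ without such a description. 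This is precisely the content of the paper: the explicit basis $F_{m,h}$ of modular units (Theorems \ref{thm: basis} and \ref{thm: basis variant}), the Ligozat-type criterion (Theorem \ref{theorem: criterion}), the equivalence of Conjecture \ref{conj: Yoo} with the divisibility statement Conjecture \ref{conj: A} for odd $L$ (Theorems \ref{theorem: rational modular units} and \ref{theorem: B}), and then the long combinatorial proof of Conjecture \ref{conj: A} in Sections \ref{section: the map Psi}--\ref{section7}. In your language, the hypothesis that $(\s_s(F)/F)^{1/n}$ is again a modular unit is the cocycle condition, and Conjecture \ref{conj: A} is the assertion that the corresponding class in $H^1$ is trivial; the paper spends five sections proving this when $L$ has at most two (odd) prime factors, and your proposal offers no mechanism to replace that work.

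Two smaller cautions. First, the statement you are addressing is a conjecture that the paper itself proves only for $N=ML^2$ with $M$ squarefree and $L$ an odd prime power or a product of two odd prime powers; a proof ``for any positive integer $N$'' is not available, and even restricted to the paper's range your argument is incomplete. Second, your appeal to \cite{Y1,Y2} for $\scC(N)[\ell^\infty]=\scC_N(\Q)[\ell^\infty]$ needs the version of the theorem of \cite{Y2} stated for the rational cuspidal divisor class group $\scC(N)$ (so that the sandwich $\scC(N)\subseteq\scC_N(\Q)\subseteq J_0(N)(\Q)_\tor$ collapses); make sure to quote that form explicitly rather than the form involving only $\scC_N(\Q)$.
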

Here the \textit{rational cuspidal divisor class group} of $X_0(N)$ is defined as a subgroup of $J_0(N)(\ov{\Q})$ generated by the equivalence classes of \textit{rational cuspidal divisors} of degree $0$ on $X_0(N)$. Note that a divisor on $X_0(N)$ is called \textit{cuspidal} (resp. \textit{rational}) if its support lies only on the cusps of $X_0(N)$ (resp. it is fixed under the action of the absolute Galois group of $\Q$). 
The first author already determined the structure of $\scC(N)$ for any positive integer $N$ \cite{Y1}. So the computation of $\scC_N(\Q)$ reduces to proving Conjecture \ref{conj: Yoo}. 
 To the best of our knowledge, Conjecture \ref{conj: Yoo} is known in the following cases.
\begin{enumerate}
\item
If $N$ is small enough, it can be directly verified by computers.
\item
If $N=2^r M$ for $M$ odd squarefree and $0\leq r \leq 3$, then all cusps of $X_0(N)$ are defined over $\Q$ and so it trivially holds true.
\item
If $N=n^2 M$ for $n\dd 24$ and $M$ squarefree, then it is proved by Wang and Yang \cite[Th. 3]{WY20}.
\item
If $N=p^2 M$ for $M$ squarefree and $p$ prime, then it is proved in \cite[Th. 1.4]{GYYY}.
\end{enumerate}

This paper is a sequel to our previous joint work with J. Guo and Y. Yang \cite{GYYY}. The main theorem is the following.

\begin{theorem}\label{main theorem}
Let $N=ML^2$ for some squarefree integer $M$. If $L$ is either an odd prime power or a product of two odd prime powers, then we have
\begin{equation*}
\scC_N(\Q)=\scC(N).
\end{equation*}
\end{theorem}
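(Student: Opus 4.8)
The plan is to prove the inclusion $\scC_N(\Q) \subseteq \scC(N)$, since the reverse inclusion is automatic: every rational cuspidal divisor class lies in $\scC_N$ and is Galois-fixed, hence lies in $\scC_N(\Q)$. So the real content is showing that every Galois-fixed element of the full cuspidal subgroup $\scC_N$ is already a class of a \emph{rational} cuspidal divisor of degree zero. Following the strategy of our previous work \cite{GYYY}, the first step is to set up the combinatorics of the cusps: for $N = ML^2$ with $M$ squarefree, the cusps of $X_0(N)$ are indexed by pairs $(d, \text{class})$ where $d \mid N$, and the Galois action on them is governed by the action of $(\Z/\gcd(d, N/d)\Z)^\times$ via the classical description of Galois conjugacy of cusps (due to Ogg, Deligne--Rapoport). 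The assumption that $L$ is an odd prime power $p^a$ or a product of two odd prime powers $p^a q^b$ keeps the set of ramified primes small, so that $\gcd(d, N/d)$ is always a divisor of $L$ of a very restricted shape (a power of $p$, or of the form $p^i q^j$), and the Galois orbits of cusps can be enumerated explicitly.

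The second step is to compute the degree-zero part of the \emph{rational} cuspidal divisor group at the level of divisors: one identifies inside $\Div^0_{\tn{cusp}}(X_0(N))$ the subgroup $\Div^0_{\tn{cusp}}(X_0(N))^{\text{rational}}$ spanned by Galois-stable cuspidal divisors, and one needs a clean $\Z$-basis for it together with a description of the image in $J_0(N)$. The key input here is the structure theorem for $\scC(N)$ from \cite{Y1}, which gives generators and the orders of the relevant cuspidal classes in terms of generalized Bernoulli-type numbers / the relevant $\varphi$-type functions; I would quote these and not reprove them.

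The third and central step is the comparison. An element of $\scC_N(\Q)$ is represented by some $\sum a_P P \in \Div^0_{\tn{cusp}}$ whose class is Galois-fixed, but the \emph{divisor} need not be Galois-stable. The task is to show one can modify the representative, by adding a principal divisor supported on cusps (i.e.\ a relation in $\scC_N$), to make it Galois-stable. Equivalently, writing $\scC_N^{\text{rat-div}}$ for the subgroup of $\scC_N$ generated by classes of rational cuspidal divisors, one must show the natural map
\begin{equation*}
\scC_N^{\text{rat-div}} \longrightarrow \scC_N^{\Gal}
\end{equation*}
is surjective (it is automatically injective into $J_0(N)(\Q)$). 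I would do this orbit-by-orbit: for each Galois orbit $\mathcal{O}$ of cusps that is \emph{nontrivial} (orbit size $> 1$), one shows that the "defect" of a Galois-fixed class — the failure of an arbitrary representative to be Galois-stable — can be absorbed. Concretely, if $c$ is a Galois-fixed class and $\sigma$ a Galois element, then $\sigma(\sum a_P P) - \sum a_P P$ is a principal divisor supported on cusps; summing over a set of coset representatives and averaging shows that $|\mathcal{O}| \cdot (\text{something})$ differs from a rational divisor by a principal divisor, and one uses that the relevant orbit sizes are coprime to the order of the ambient group (here is where "odd prime power" and control of the $2$-part matter) to divide back. The technical engine for "the defect is principal and can be controlled" is the explicit description, from \cite{Y1} and \cite{GYYY}, of the subgroup of principal divisors supported on cusps — the modular units on $X_0(N)$ — via the Siegel/Ligozat units and their divisors.

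The main obstacle I expect is precisely this last step when $L$ has \emph{two} odd prime factors: with $L = p^a q^b$ the lattice of divisors $d \mid N$ with $\gcd(d,N/d)$ divisible by both $p$ and $q$ becomes genuinely two-dimensional, the Galois group acting on a cusp is a product $(\Z/p^i\Z)^\times \times (\Z/q^j\Z)^\times$, and the orbit-by-orbit absorption argument must be carried out compatibly for the $p$-part and the $q$-part simultaneously; the bookkeeping of which cuspidal classes are hit, and checking that the orders computed in \cite{Y1} are consistent with the averaging/division step, is where the real work lies. The single-prime-power case $L = p^a$ should go through as a streamlined version of the argument in \cite[Th. 1.4]{GYYY} (which is the case $a = 1$), with the extra powers of $p$ handled by an induction on $a$ using the degeneracy maps $X_0(N) \to X_0(N/p)$ and the compatibility of cuspidal groups under pushforward/pullback.
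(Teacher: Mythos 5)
Your reduction to the inclusion $\scC_N(\Q)\subseteq \scC(N)$ and your description of the Galois action on cusps are correct, but the central step of your proposal --- the ``orbit-by-orbit absorption'' via averaging --- has a genuine gap that the coprimality you invoke cannot close. If $[D]$ is Galois-fixed and you sum $\s(D)$ over $\s\in\Gal(\Q(\z_L)/\Q)$, you obtain that $\p(L)\cdot[D]$ (or, orbit-by-orbit, $|\mathcal O|\cdot[D]$ up to rational classes) lies in $\scC(N)$. To ``divide back'' you would need the order of $[D]$ modulo $\scC(N)$ to be coprime to $\p(L)$, hence in particular to $p$ (and $q$). This fails precisely in the interesting cases: for $L=p^a$ the orbit sizes are $p$-powers times divisors of $p-1$, while the orders of cuspidal classes on $X_0(p^{2a}M)$ are typically divisible by $p$ (they involve numerators of the form $N^2\prod(1-p^{-2})/24$ and the $(\ell(m),kh)^2$ expressions of Lemma \ref{lemma: order of Fmh at special cusps}). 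So averaging only controls the prime-to-$\p(L)$ part of the defect, and the entire difficulty of the theorem is concentrated in the $p$-part (and $q$-part), which your argument does not reach. The paper avoids averaging altogether: it translates the Galois-fixedness of $[D]$ into the statement that $(\s_s(F)/F)^{1/n}$ is a modular unit for $\tn{div}(F)=nD$, expands everything in the explicit basis $F_{m,h}$ of Theorem \ref{thm: basis}, and proves the divisibility of the exponents $e(m,h)$ by $n$ (Conjecture \ref{conj: A}) through the rewriting map $\Psi$ built from the relations of Theorem \ref{theorem: relation among Fmh} and a delicate induction on the $p$- and $q$-adic valuations of $h$ (the vanishing theorems of Sections \ref{section5}--\ref{section7}). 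It then needs the new modular-unit criterion of Theorem \ref{theorem: criterion} to certify that the correcting unit $g$ in \eqref{eqn: intro1} is actually a modular unit --- a step your proposal does not address.

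Two smaller points. First, your suggested induction on $a$ for $L=p^a$ via degeneracy maps $X_0(N)\to X_0(N/p)$ is not what happens and is unlikely to work as stated: the cuspidal and rational cuspidal subgroups do not interact with pushforward/pullback in a way that lets you lift the statement from $N/p$ to $N$ (new Galois orbits of cusps appear at level $N$ that are invisible at level $N/p$). Second, \cite[Th.~1.4]{GYYY} is the case $L=p$ prime (i.e.\ $a=1$), where $\p(L)=p-1$ is prime to $p$ and a version of your coprimality argument is available; the whole point of the present paper is that for $a\geq 2$, and for two prime factors, that shortcut disappears.
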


\svv    
In the remaining part of the introduction, we sketch the proof of our main theorem.
To compute the rational cuspidal subgroup of $J_0(N)$, it is necessary to understand \textit{modular units} on $X_0(N)$, which are defined as modular functions on $X_0(N)$
that have zeros and poles only on cusps. A full description of modular units is given in \cite{GYYY} in terms of certain functions $F_{m, h}$, which are again constructed via generalized Dedekind eta functions $E_{g, h}$. For convenience of the readers, we recall their definition.
\begin{definition}
For integers $g$ and $h$ not both congruent to $0$ modulo $N$, let
\begin{equation*}
E_{g, h}(\tau):=q^{B_2(g/N)/2} \prod_{n=1}^\infty \left(1-\z_N^h q^{n-1+g/N}\right)\left(1-\z_N^{-h}q^{n-g/N}\right),
\end{equation*}
where $q=e^{2\pi i \tau}$ and $B_2(x)=x^2-x+1/6$ is the second Bernoulli polynomial.
\end{definition}

To introduce our functions $F_{m, h}$ we need some notation.
\begin{notation}\label{notation}
Let $N$ denote  a positive integer and let $p$ denote a prime.
\begin{itemize}[--]
\item
For a positive divisor $m$ of $N$, let $m':=N/m$ and 
$\ell(m)$ be the largest integer whose square divides $m'$. Also, let
\begin{equation*}
m'':=\frac{m'}{\ell(m)}=\frac{N}{m\ell(m)} \qa N':=\frac{N}{\ell(m)}.
\end{equation*}

\item
Let $L=\ell(1)$, i.e., $L$ is the largest integer whose square divides $N$.

\item
Let $\cD_N$ be the set of all positive divisors of $N$ different from $N$. 
\item
Let $\cD_N^*$ be a subset of $\cD_N$ consisting of divisors $m$ such that $\ell(m)>1$.

\item
For any positive integer $n$, let $\z_n:=e^{2 \pi i/n}$. 
\item
For any integer $s$ prime to $L$, let $\s_s$ be an element of $\Gal(\Q(\z_L)/\Q)$ such that 
\begin{equation*}
\s_s(\z_L)=\z_L^s.
\end{equation*}

\item
For any $m \in \cD_N$, let
\begin{equation*}
\begin{split}
I_m&:=\{ h\in \Z : 1\leq h \leq \ell(m)\} \qa \\
I_m^{\rd}&:=\{h \in I_m : 1\leq h \leq \ell(m)-\p(\ell(m))\},
\end{split}
\end{equation*}
where $\p$ is Euler's totient function. (Here, $\rd$ means ``redundant'', see Theorem \ref{thm: basis variant}.)

\item
Let 
\begin{equation*}
\begin{split}
\cS&:=\{(m, h) \in \cD_N \times \Z : 1 \leq h \leq \ell(m)\},\\
\cS^*&:=\{(m, h) \in \cD_N \times \Z : 0\leq h <\ell(m)\},\\
\cS_\fm^{\rd}&:=\{(\fm, h) \in \cS : h \in I_\fm^{\rd} \}, \\
\cS^{\rd}&:=\cup_{\fm \in \cD_N} \cS_\fm^{\rd} \qa \cS^{\new}:=\cS \sm \cS^{\rd}.
\end{split}
\end{equation*}

\item
Let $\scF$ denote the set of all set-theoretic maps from $\cS$ to $\Q$. We often regard elements of $\scF$ as the maps to $\Q/{\Z}$ after composing the natural surjection $\Q \to \Q/{\Z}$.

\item
For two complex-valued functions $f$ and $g$, we denote by  
\begin{equation*}
f \approx g
\end{equation*}
if they are equal up to constant, i.e., there exists a constant $c \in \C^*$ such that $f=cg$.

\item
For $x \in \Q$, let $v_p(x)$ be the $p$-adic valuation of $x$, i.e., $v_p(p^k a/b)=k$ if $(ab, p)=1$.
\end{itemize} 
\end{notation}

\svv    
We are ready to define our functions.
\begin{definition}\label{definition: Fmh}
For each $m\in \cD_N$, we fix a set $S_{m''} \subset \{1, 2, \dots, m''-1\}$ of representatives of 
$\mm {m''}/{\{\pm 1\}}$. For each $\a \in S_{m''}$, let $\d \in \{1, 2, \dots, m''-1\}$ be an integer such that $\a\d \equiv 1\pmod {m''}$. If $m'' \neq 2$, then we set
\begin{equation*}
F_{m, h}(\tau):=\prod_{\a \in S_{m''}} E_{\a m\ell(m), \d h N'}(N' \tau).
\end{equation*}
Also, if $m''=2$, then we define
\begin{equation*}
F_{m, h}(\tau):=E_{m\ell(m), hN'}(N'\tau)^{1/2}.
\end{equation*}
\end{definition}

As the function $F_{m, h}$ is dependent only on the residue class of $h$ modulo $\ell(m)$ \cite[Rem. 2.2]{GYYY}, any product made by $F_{m, h}$ is simply written as (up to constant)
\begin{equation*}
\prod_{m \in \cD_N} \prod_{h=1}^{\ell(m)} F_{m, h}^{a(m, h)}=\prod_{(m, h) \in \cS} F_{m, h}^{a(m, h)}
\end{equation*}
or 
\begin{equation*}
\prod_{m \in \cD_N} \prod_{h=0}^{\ell(m)-1} F_{m, h}^{b(m, h)}=\prod_{(m, h) \in \cS^*} F_{m, h}^{b(m, h)}.
\end{equation*}
In fact, some functions $F_{m, h}$ are redundant as the dimension of the space of the modular units on $X_0(N)$ is usually smaller than the number of possible pairs $(m, h)$, and we have the following, which is Theorem 1.8 of \cite{GYYY}.

\begin{theorem}\label{thm: basis}
Every modular unit $F$ on $X_0(N)$ can be uniquely expressed as
\begin{equation*}
F \approx \prod_{m \in \cD_N} \prod_{h=0}^{\p(\ell(m))-1} F_{m, h}^{a(m, h)} ~~~ \text{ for some } ~a(m, h) \in \Z.
\end{equation*}
\end{theorem}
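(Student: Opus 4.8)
The plan is to combine two ingredients: (i) a clean description of the full group of modular units on $X_0(N)$ as a free abelian group of known rank, together with a spanning set in terms of the $F_{m,h}$; and (ii) a dimension count showing that the proposed family $\{F_{m,h} : m \in \cD_N,\ 0 \le h \le \varphi(\ell(m))-1\}$ has exactly the right cardinality, so that a spanning set of the correct size must be a basis. The rank of the group of modular units on $X_0(N)$ modulo constants equals (number of cusps of $X_0(N)$) $-1$; this is standard from the theory of Manin and Drinfeld, since the group of modular units modulo constants injects into the divisor group supported on cusps, with image of finite index in the degree-zero part. So the first step is to recall (from \cite{GYYY}) that the $F_{m,h}$ are genuine modular units on $X_0(N)$ and that they span the group of all modular units modulo constants; this is essentially the content of the earlier structural results in that paper and I would simply cite it.

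The second step, and the heart of the matter, is the cardinality identity
\begin{equation*}
\sum_{m \in \cD_N} \varphi(\ell(m)) = \big(\text{number of cusps of } X_0(N)\big) - 1.
\end{equation*}
I would prove this by a direct count on both sides. The cusps of $X_0(N)$ are indexed by pairs $(d, a)$ with $d \mid N$ and $a \in (\Z/\gcd(d, N/d)\Z)^\times/\{\pm 1\}$ (the cusp of ``denominator'' $d$), so the total number of cusps is $\sum_{d \mid N} \varphi(\gcd(d, N/d))$ with the convention that $\varphi$ of the relevant quotient is counted with the $\{\pm1\}$ identification folded in appropriately; writing $N = ML^2$ one checks $\gcd(d, N/d)$ ranges over divisors of $L$, and a reindexing $m \leftrightarrow$ (the divisor such that $\ell(m)$ records the square part) matches each term $\varphi(\ell(m))$ with a block of cusps. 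The ``$-1$'' on the right corresponds to the single missing term $m = N$ (excluded from $\cD_N$), for which $\ell(N) = L$ would contribute $\varphi(L)$ but the total already over-counts by exactly that, or alternatively it is the one relation among the $F_{m,h}$ coming from the fact that a product of all of them of total degree zero is forced. I would make this bookkeeping precise using Notation \ref{notation}.

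Given these two steps, the argument finishes formally: the $F_{m,h}$ with $(m,h)$ in the indicated range span a free abelian group of rank $\le \#\{(m,h)\}= \sum_m \varphi(\ell(m)) = \operatorname{rank}(\text{modular units})$, and they span the whole group, so they are a $\Z$-basis, whence every modular unit is uniquely $\approx \prod F_{m,h}^{a(m,h)}$. The step I expect to be the main obstacle is \emph{not} the dimension count itself but verifying that the restricted range $0 \le h \le \varphi(\ell(m))-1$ (rather than $1 \le h \le \ell(m)$) still spans: one must show the ``redundant'' functions $F_{m,h}$ for $h \in I_m^{\rd}$ are expressible in terms of the retained ones. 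This is presumably where the $q$-expansion identities for the generalized Dedekind eta functions $E_{g,h}$ enter — relations like $\prod_{h \bmod \ell} E_{g, hN'} \approx E_{g\ell, 0}(\cdot)$ or cyclotomic-norm relations reducing $\ell(m)$ exponents to $\varphi(\ell(m))$ of them — and I would extract exactly that from the divisor computations of \cite{GYYY} rather than redo them from scratch.
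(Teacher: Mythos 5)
First, a point of comparison: the paper does not prove this statement at all — it is imported verbatim as Theorem 1.8 of \cite{GYYY} (the sentence introducing it says exactly that), so there is no in-paper argument to measure your sketch against. Judged on its own terms, your proposal contains one genuine error and outsources the essential step. The error is the claim that the $F_{m,h}$ are themselves modular units on $X_0(N)$ and hence generate a free abelian subgroup of the unit group. They are not: by Lemma \ref{lemma: order of Fmh at special cusps} the order of $F_{m,h}$ at $\infty$ is $\frac{m}{24}\prod_{p\mid m''}(1-p)$, which is typically non-integral, and Theorem \ref{theorem: criterion} exists precisely because only certain products of the $F_{m,h}$ are $\Gamma_0(N)$-invariant. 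Your rank count can be repaired: take $V$ to be the free abelian group on the index set $\{(m,h): m\in\cD_N,\ 0\le h\le\varphi(\ell(m))-1\}$, let $W\subseteq V$ be the sublattice of exponent vectors whose associated product \emph{is} a modular unit, and run the argument on the surjection $W\to\mathcal{U}$ (units modulo constants). Then $\operatorname{rank}\mathcal{U}=c-1$ by Manin--Drinfeld, your identity $\sum_{m\in\cD_N}\varphi(\ell(m))=c-1$ is correct (the $-1$ is simply the omitted term $m=N$, for which $\ell(N)=1$; note also that the cusps of denominator $d$ number $\varphi(\gcd(d,N/d))$ with no $\{\pm1\}$ quotient), and a surjection between free abelian groups of equal finite rank is injective. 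That repaired uniqueness argument is a legitimately different, and arguably slicker, route than the one in \cite{GYYY}, which rests on $\{\zeta_{\ell(m)}^{n},\dots,\zeta_{\ell(m)}^{n+\varphi(\ell(m))-1}\}$ being an integral basis of $\Q(\zeta_{\ell(m)})$ — exactly the mechanism the proof of Theorem \ref{thm: basis variant} in this paper alludes to.

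The real content, however, is existence: that every modular unit is, up to constant, an \emph{integral} product of the $F_{m,h}$ with $h$ confined to the truncated range. You correctly identify this as the main obstacle and then defer it wholesale to the divisor computations of \cite{GYYY}; since that is the entire substance of Theorem 1.8 there, the proposal does not so much prove the theorem as re-derive its uniqueness clause from its existence clause. A self-contained treatment would need (i) that every modular unit on $X_0(N)$ is a product of the generalized eta functions $E_{g,h}$, (ii) the regrouping into $F_{m,h}$ over the full range $0\le h<\ell(m)$, and (iii) the elimination of redundant indices via relations of the type in Theorem \ref{theorem: relation among Fmh} together with cyclotomic relations; none of this is supplied by the rank count.
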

By the same argument, we can easily prove the following.
\begin{theorem}\label{thm: basis variant}
Every modular unit $G$ on $X_0(N)$ can be uniquely expressed as
\begin{equation*}
G \approx \prod_{(m, h) \in \cS^\new} F_{m, h}^{b(m, h)} ~~~ \text{ for some } ~b(m, h) \in \Z.
\end{equation*}
\end{theorem}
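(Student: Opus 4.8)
The plan is to deduce the statement from Theorem \ref{thm: basis} by a change-of-basis argument mirroring the proof of Theorem \ref{thm: basis} (this is \cite[Theorem 1.8]{GYYY}). First I would record the bookkeeping. By Theorem \ref{thm: basis}, the group of modular units on $X_0(N)$ modulo constants is free abelian of rank $r:=\sum_{m\in\cD_N}\p(\ell(m))$, a basis being indexed by the pairs $(m,h)$ with $m\in\cD_N$ and $0\le h\le\p(\ell(m))-1$. On the other hand, for each $m\in\cD_N$ the fiber of $\cS^{\new}$ over $m$ is $\{(m,h):\ell(m)-\p(\ell(m))<h\le\ell(m)\}$, which again has exactly $\p(\ell(m))$ elements, so $|\cS^{\new}|=r$. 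Hence it suffices to prove that the classes of the functions $F_{m,h}$ with $(m,h)\in\cS^{\new}$ generate the modular unit group modulo constants: a surjective homomorphism between free abelian groups of the same finite rank is an isomorphism, and this yields at once the existence and the uniqueness of the exponents $b(m,h)$.

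For the generation statement I would use precisely the relations among the $F_{m,h}$ exploited in the proof of Theorem \ref{thm: basis}: the periodicity $F_{m,h}\approx F_{m,h+\ell(m)}$ of \cite[Remark 2.2]{GYYY}, together with the distribution (norm) relations coming from the cyclotomic identity $\prod_{j=0}^{d-1}(1-\z_d^{j}X)=1-X^{d}$, which for each divisor $d>1$ of $\ell(m)$ rewrite a suitable product of the $F_{m,h}$ (taken over a coset of the order-$d$ subgroup of $\Z/\ell(m)\Z$) in terms of functions attached to divisors $m'$ of $N$ with $\ell(m')<\ell(m)$. As in \cite{GYYY}, these relations identify the subgroup generated by $\{F_{m,h}:h\bmod\ell(m)\}$, modulo the contribution of the divisors with strictly smaller $\ell$-parameter, with the cyclotomic ring $\Z[\z_{\ell(m)}]$ under $F_{m,h}\mapsto\z_{\ell(m)}^{h}$. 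The decisive observation is then purely linear-algebraic and can be made one level at a time: in $\Z[\z_{n}]$ any block of $\p(n)$ consecutive powers $\z_{n}^{a},\z_{n}^{a+1},\dots,\z_{n}^{a+\p(n)-1}$ forms a $\Z$-basis, being the image of the power basis $1,\z_{n},\dots,\z_{n}^{\p(n)-1}$ under multiplication by the unit $\z_{n}^{a}$. Taking $n=\ell(m)$ and $a=\ell(m)-\p(\ell(m))+1$ returns precisely the fiber of $\cS^{\new}$ over $m$ (the last index $\ell(m)$ being read as $0$), so feeding this into the induction on $\ell(m)$ used in \cite{GYYY} shows that the functions $F_{m,h}$ with $(m,h)\in\cS^{\new}$ generate the whole group.

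The part I expect to require the most care is not the cyclotomic computation --- which is elementary --- but the verification that the \emph{global} argument of \cite{GYYY} survives the change of representatives: one must check that the relations linking the $F_{m,h}$ for different divisors $m$ remain ``triangular'' with respect to the ordering of $\cD_N$ used there (so that the reduction procedure terminates), and that the degenerate cases (such as $m''=2$, governed by the square-root convention in Definition \ref{definition: Fmh}) cause no trouble. Since all of this is already done in the proof of Theorem \ref{thm: basis}, the write-up should essentially amount to isolating the single place in that argument where $\p(\ell(m))$ residues modulo $\ell(m)$ are selected, and observing that the window $\ell(m)-\p(\ell(m))<h\le\ell(m)$ serves just as well as $0\le h\le\p(\ell(m))-1$.
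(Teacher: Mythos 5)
Your proposal is correct and follows essentially the same route as the paper: the paper's proof simply observes that any $\p(\ell(m))$ consecutive powers $\z_{\ell(m)}^n,\dots,\z_{\ell(m)}^{n+\p(\ell(m))-1}$ form an integral basis (your "multiply the power basis by the unit $\z_{\ell(m)}^a$" observation) and then invokes the argument of Theorem \ref{thm: basis} verbatim. Your additional rank-counting and triangularity remarks are sound but not needed beyond what that argument already provides.
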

\begin{proof}
For any integer $n$, 
\begin{equation*}
\{\z_{\ell(m)}^n, \z_{\ell(m)}^{n+1}, \dots, \z_{\ell(m)}^{n+\p(\ell(m))-1}\}
\end{equation*}
form an integral basis for $\Q(\z_{\ell(m)})$ over $\Q$. Hence the argument of the proof of Theorem \ref{thm: basis} applies verbatim.
\end{proof}

\begin{remark}\label{remark: rational becomes integral}
By Theorem \ref{thm: basis}, if
\begin{equation*}
F\approx \prod_{m \in \cD_N} \prod_{h=0}^{\p(\ell(m))-1} F_{m, h}^{a(m, h)}   \text{ for some } ~a(m, h) \in \Q
\end{equation*}
is a modular unit on $X_0(N)$, then we have 
$a(m, h) \in \Z$ for all $(m, h)$. Analogously, if 
\begin{equation*}
G\approx \prod_{(m, h) \in \cS^\new} F_{m, h}^{b(m, h)}   \text{ for some } ~b(m, h) \in \Q
\end{equation*}
is a modular unit on $X_0(N)$, then we also have $b(m, h) \in \Z$ for all $(m, h) \in \cS^{\new}$.
\end{remark}

Now, we are ready to illustrate our strategy for Conjecture \ref{conj: Yoo}, which already appeared in our previous joint work with Guo and Yang.
Let $D$ be a cuspidal divisor of degree $0$ on $X_0(N)$ such that $[D] \in \scC_N(\Q)$. Since all cusps of $X_0(N)$ are defined over $\Q(\z_L)$, so is $D$. 
Hence by definition, we have $[\s_s(D)]=[D]$ for any integer $s$ prime to $L$. 
Let $n$ be the order of $[D]$, which is finite by the theorem of Manin and Drinfeld. Then by definition, there is a modular unit $F$ such that $\tn{div}(F)=nD$. 
By Theorem \ref{thm: basis}, we can write 
\begin{equation*}
F \approx \prod_{m \in \cD_N} \prod_{h=0}^{\p(\ell(m))-1} F_{m, h}^{e(m, h)} \quad \text{ for some }~e(m, h) \in \Z.
\end{equation*}
To prove Conjecture \ref{conj: Yoo}, it suffices to show that there is another modular unit $g$ such that $D-\tn{div}(g)$ is a rational divisor. 
By Lemma \ref{lemma: Galois action}, for any integer $s$ prime to $L$ we have
\begin{equation*}
\s_s(\tn{div}(F_{m, 0}))=\tn{div}(F_{m, 0}),
\end{equation*}
i.e., $\tn{div}(F_{m, 0})$ is a rational divisor for any $m \in \cD_N$. Thus, if we formally set
\begin{equation}\label{eqn: intro1}
g=\prod_{m \in \cD_N} F_{m, 0}^{a(m)}\prod_{h=1}^{\p(\ell(m))-1} F_{m, h}^{e(m, h)/n} \quad \text{ for some }~ a(m) \in \Z,
\end{equation}
then it is straightforward that 
\[
D-\tn{div}(g)=\sum\nolimits_{m \in \cD_N} (e(m, 0)/n-a(m))\cdot \tn{div}(F_{m, 0})
\]
is rational. For such a formal product to be a modular unit, it is necessary that $e(m, h)$ is divisible by $n$ for all $m \in \cD_N^*$ and $h\neq 0$ (Remark \ref{remark: rational becomes integral}).
From this observation, we believe that our assumption $[\s_s(D)]=[D]$, which is equivalent to asserting that $(\s_s(F)/F)^{1/n}$ is a modular unit on $X_0(N)$, implies this divisibility.
Hence we propose the following.
\begin{conjecture}\label{conj: A}
Let $n\geq 2$ be an integer, and suppose that
\begin{equation*}
F=\prod_{m \in \cD_N} \prod_{h=0}^{\p(\ell(m))-1} F_{m, h}^{e(m, h)}~~~ \text{ with } ~e(m, h) \in \Z
\end{equation*}
is a modular unit on $X_0(N)$. Suppose all the following hold.
\begin{enumerate}
\item
The order of $F$ at any cusp is divisible by $n$.
\item
$(\s_s(F)/F)^{1/n}$ is a modular unit on $X_0(N)$ for any integer $s$ prime to $L$.
\end{enumerate}
Then $e(m, h)$ is divisible by $n$ for any $m \in \cD_N^*$ and $h\neq 0$.
\end{conjecture}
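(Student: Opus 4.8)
The plan is to establish Conjecture \ref{conj: A} when $L$ is an odd prime power or a product of two odd prime powers — which is all that Theorem \ref{main theorem} requires — by reducing it to a statement about Galois invariants in cyclotomic fields. First I would make the action of $\s_s$ on the $F_{m,h}$ explicit: comparing $q$-expansions at the rational cusp $\infty$, on which $\s_s$ acts coefficient-wise, and using $\z_N^{N/\ell(m)}=\z_{\ell(m)}$, one checks $\s_s(F_{m,h})\approx F_{m,sh}$, so that $\s_s$ preserves each block indexed by $m$. By the proofs of Theorems \ref{thm: basis} and \ref{thm: basis variant} — which show that any $\p(\ell(m))$ consecutive functions $F_{m,h}$ form a basis of the corresponding block, mirroring the fact that any $\p(\ell(m))$ consecutive powers of $\z_{\ell(m)}$ form an integral basis of $\Z[\z_{\ell(m)}]$ — the subgroup $U_m$ generated (modulo constants) by $\{F_{m,h}\}_h$ is identified with $\Z[\z_{\ell(m)}]$ via $F_{m,h}\leftrightarrow\z_{\ell(m)}^h$, with $\{F_{m,0},\dots,F_{m,\p(\ell(m))-1}\}$ going to the standard integral basis and $\s_s$ acting as $\z_{\ell(m)}\mapsto\z_{\ell(m)}^s$. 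Setting $\beta_m:=\sum_h e(m,h)\,\z_{\ell(m)}^h$, hypothesis (2) together with the uniqueness in Theorem \ref{thm: basis} forces $n$ to divide every coordinate of $\s_s(\beta_m)-\beta_m$; since the $s$ prime to $L$ surject onto $G_m:=\Gal(\Q(\z_{\ell(m)})/\Q)$, this gives
\[
\bar\beta_m\in\bigl(\Z[\z_{\ell(m)}]/n\Z[\z_{\ell(m)}]\bigr)^{G_m}\qquad\text{for every }m.
\]
The conclusion ``$n\mid e(m,h)$ for $h\neq0$'' is exactly the assertion that $\bar\beta_m$ lies in the image of $\Z/n\Z$, so the remaining task is to remove its non-rational part using hypothesis (1).

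Next I would dispose of the primes not dividing $L$. Writing $n=\prod_q q^{a_q}$, for a prime $q\nmid\ell(m)$ — hence unramified in $\Q(\z_{\ell(m)})$ — the usual descent argument (transitivity of the decomposition groups on the primes above $q$, with $\cO_{\Q(\z_{\ell(m)})}$ cohomologically trivial because the extension is unramified) gives $\bigl(\Z[\z_{\ell(m)}]/q^{a_q}\bigr)^{G_m}=\Z/q^{a_q}\Z$, so the $q$-primary part of $\bar\beta_m$ is automatically rational for every $q\nmid L$. Only the (at most two) odd primes $q\mid L$ remain, and by the shape of $L$ each $\ell(m)$ equals $q^k\ell'$ with $\gcd(\ell',q)=1$ and $\ell'$ a prime power; hence after $\otimes\,\Z_q$ the invariants localize to the wildly ramified layer, $\bigl(\Z[\z_{\ell(m)}]\otimes\Z_q/q^{a_q}\bigr)^{G_m}\cong\bigl(\Z_q[\z_{q^k}]/q^{a_q}\bigr)^{\mm{q^k}}$, reducing everything to a single totally (wildly) ramified cyclotomic layer. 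There the invariant module is genuinely larger than $\Z/q^{a_q}\Z$ once $q^2\mid\ell(m)$ — for instance $\bigl(\Z_3[\z_9]/3\bigr)^{\mm{9}}=\mathbb{F}_3\cdot1\oplus\mathbb{F}_3\cdot(\z_9-1)^4\z_9$ — so for each $m$ I would fix the canonical $\Z/n\Z$-complement $W_m^{\mathrm{bad}}$ of $\Z/n\Z\cdot1$ inside the invariant module (those invariants whose $h=0$ coordinate vanishes), write $\bar\beta_m=\overline{e(m,0)}\cdot1+\bar\gamma_m$ with $\bar\gamma_m\in W_m^{\mathrm{bad}}$, and reduce the conjecture to proving $\bar\gamma_m=0$ for every $m\in\cD_N^*$.

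To force $\bar\gamma_m=0$ I would bring in hypothesis (1). Since $\tn{div}(F_{m,0})$ is rational (Lemma \ref{lemma: Galois action}), hypothesis (1) reads $\sum_m\tn{div}(\bar\gamma_m)\equiv-\sum_m e(m,0)\,\tn{div}(F_{m,0})\pmod n$ in the divisor group of the cusps of $X_0(N)$; using the explicit order-of-vanishing formulas for the $F_{m,h}$ — through the generalized Dedekind eta functions $E_{g,h}$ and the polynomial $B_2$ of Definition \ref{definition: Fmh} — I would show that the induced map
\[
\bigoplus_{m\in\cD_N^*}W_m^{\mathrm{bad}}\;\longrightarrow\;\Div_{\tn{cusp}}(X_0(N))\Big/\Bigl(n\,\Div_{\tn{cusp}}(X_0(N))+\sum_m\Z\,\tn{div}(F_{m,0})\Bigr)
\]
is injective, which kills every $\bar\gamma_m$. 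This I would organize by descending induction on $\ell(m)$ — equivalently, by testing the congruence first at cusps whose denominator isolates the contribution of the deepest block.

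I expect the main obstacle to be this last step: matching the wild-ramification invariants $W_m^{\mathrm{bad}}$ against the cusp-order relations produced by hypothesis (1), uniformly in $m$. It requires both a workable description of $\bigl(\Z_q[\z_{q^k}]/q^a\bigr)^{\mm{q^k}}$ together with a distinguished complement $W_m^{\mathrm{bad}}$, and tight enough control of $\tn{div}(F_{m,h})$ — in particular of the $B_2$-sums over the sets $S_{m''}$ in Definition \ref{definition: Fmh} — to see that these divisors separate the non-rational invariants modulo $n$. The hypothesis that $L$ is an odd prime power or a product of two odd prime powers is precisely what makes this tractable: it caps the number of wild primes at two, makes each relevant $\ell(m)$ a prime power up to a coprime prime-power factor, and so turns the injectivity into a finite, structured verification rather than an open-ended one.
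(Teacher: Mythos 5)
Your reduction collapses at its first substantive step: the claim that hypothesis (2) together with the uniqueness in Theorem \ref{thm: basis} forces $n$ to divide every coordinate of $\s_s(\beta_m)-\beta_m$ \emph{separately for each $m$}. The multiplicative relations among the $F_{m,h}$ are not the within-block relations $\sum_j \z_{\ell(m)}^{h+xj}=0$ of $\Z[\z_{\ell(m)}]$; by Theorem \ref{theorem: relation among Fmh} they are the cross-block relations $\prod_{j=0}^{p-1}F_{m,h+xj}\approx F_{mp,\,p^{\ve}h}$, whose right-hand side lives in a \emph{different} block. Consequently the subgroup generated by $\{F_{m,h}\}_h$ for fixed $m$ is not isomorphic to $\Z[\z_{\ell(m)}]$ (already for $N=p^2$ it has rank $p$, not $p-1$, and contains $F_{p,0}$), and the full unit group is not the direct sum of the blocks. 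To apply the uniqueness of Theorem \ref{thm: basis} to $(\s_{s^*}(F)/F)^{1/n}\approx\prod F_{m,h}^{E_s(m,h)}$ one must first rewrite this product in the basis, and that rewriting mixes blocks: e.g.\ for $N=p^4$ the basis coefficient at $(p,h)$ is $E_s(p,h)+E_s(1,h)-E_s(1,1)-E_s(p,1)$. So hypothesis (2) only yields integrality of such cross-block combinations, and extracting from them the blockwise statement $E_s(m,h)\in\Z$ is precisely the hard part of the paper (the construction of $\Psi$ in Sections 3--4 and the vanishing arguments of Sections 5--7, which require special choices $s=1+p^{a-1}q^{t_2}k$ and a delicate double induction, and which the authors could only carry out for $L$ with at most two prime factors). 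Your proposal assumes this as if it were formal.

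The second half of your plan also indicates the difficulty has been misplaced. Once blockwise invariance is known, the conclusion follows immediately without hypothesis (1) or any cuspidal-divisor injectivity: invariance under all $\s_s$ means $h\mapsto E(m,h)\bmod\Z$ is constant on each multiplicative orbit $\{h:\gcd(h,\ell(m))=d\}$ of $\Z/\ell(m)\Z$, and every such orbit with $d\neq\ell(m)$ contains $\ell(m)-d\geq\p(\ell(m))$, where $E(m,\cdot)$ vanishes by construction; hence $E\equiv 0$. (Consistently, the paper's proof of Theorem \ref{theorem A} uses only condition $(\star)$, i.e.\ hypothesis (2) -- see Remark \ref{remark: Q to Q/Z}.) So the Galois-cohomological analysis of $(\Z_q[\z_{q^k}]/q^a)^{\mm{q^k}}$ and the proposed injection of $\bigoplus_m W_m^{\mathrm{bad}}$ into the cuspidal divisor group address a non-issue, while the genuine obstruction -- disentangling the cross-block integrality constraints -- is left unaddressed.
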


Indeed, we prove the following.
\begin{theorem}[Theorem \ref{theorem: rational modular units}]
Conjecture \ref{conj: Yoo} implies Conjecture \ref{conj: A}.
\end{theorem}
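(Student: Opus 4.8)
The plan is to run in reverse the heuristic given just before Conjecture \ref{conj: A}: extract from the hypotheses a class in $\scC_N(\Q)$, use Conjecture \ref{conj: Yoo} to replace it by an honest rational cuspidal divisor, and then read off the divisibility from the uniqueness in Theorem \ref{thm: basis}. Write $\tn{div}(F) = nD$; by hypothesis (1) this $D$ is a cuspidal divisor of degree $0$, so $[D] \in \scC_N$. For each $s$ prime to $L$, hypothesis (2) gives a modular unit $u_s := (\s_s(F)/F)^{1/n}$, and $n\,\tn{div}(u_s) = \s_s(\tn{div}(F)) - \tn{div}(F) = n(\s_s(D) - D)$, so $[\s_s(D)] = [D]$, i.e. $[D]$ is fixed by $\s_s$. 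Since every cusp of $X_0(N)$ is defined over $\Q(\z_L)$ and $\Gal(\Q(\z_L)/\Q) = \{\s_s : \gcd(s, L) = 1\}$, the divisor $D$ is $\Q(\z_L)$-rational and $[D]$ is $\Gal(\Q(\z_L)/\Q)$-invariant; hence $[D] \in J_0(N)(\Q)$ and thus $[D] \in \scC_N(\Q)$.

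Now invoke Conjecture \ref{conj: Yoo} for this $N$: $[D] \in \scC(N)$, so there are a rational cuspidal divisor $D_0$ of degree $0$ and a modular unit $g$ with $\tn{div}(g) = D - D_0$. By Theorem \ref{thm: basis} we may write $g \approx \prod_{m \in \cD_N}\prod_{h=0}^{\p(\ell(m))-1} F_{m,h}^{c(m,h)}$ with $c(m,h) \in \Z$, whence
\[
H := F g^{-n} \approx \prod_{m \in \cD_N}\prod_{h=0}^{\p(\ell(m))-1} F_{m,h}^{\,e(m,h) - n\,c(m,h)}, \qquad \tn{div}(H) = nD - n(D - D_0) = nD_0,
\]
so $H$ is a modular unit with rational divisor. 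The proof is then completed by the following assertion: \emph{a modular unit whose divisor is rational is, up to a nonzero constant, a product of the functions $F_{m,0}$ alone}; equivalently, in its Theorem \ref{thm: basis} expansion the exponent of $F_{m,h}$ is $0$ whenever $h \neq 0$. Applying this to $H$ and comparing with the displayed expansion via the uniqueness in Theorem \ref{thm: basis} gives $e(m,h) = n\,c(m,h)$ for every $m \in \cD_N$ and every $h \neq 0$; as $\ell(m) = 1$ (so no such $h$ exists) when $m \notin \cD_N^*$, this is precisely Conjecture \ref{conj: A}.

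The remaining assertion carries the real content, and I expect it to be the main obstacle. One direction is Lemma \ref{lemma: Galois action}: $\tn{div}(F_{m,0})$ is rational, so products of the $F_{m,0}$ have rational divisor. For the converse, a direct computation from Definition \ref{definition: Fmh} — using that $F_{m,h}$ involves only the root of unity $\z_{\ell(m)}$, since $\z_N^{N'} = \z_{\ell(m)}$ — shows $\s_s(F_{m,h}) \approx F_{m, sh}$ for $s$ prime to $L$, so $\s_s$ permutes the $F_{m,h}$ within each fixed $m$ by $h \mapsto sh \bmod \ell(m)$, refining the case $h = 0$ of Lemma \ref{lemma: Galois action}. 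A modular unit $H$ with rational divisor satisfies $\s_s(H) \approx H$ for all such $s$; expanding both sides in the basis of Theorem \ref{thm: basis} and invoking uniqueness reduces the claim to a statement about the Galois module generated by the $F_{m,h}$. Using the multiplicative relations among the $F_{m,h}$ coming from $\prod_{j \bmod \ell}(1 - \z_\ell^j X) = 1 - X^\ell$ — which mirror $\Phi_{\ell(m)}(\z_{\ell(m)}) = 0$ — together with the fact that $\{\z_{\ell(m)}^h : 0 \le h < \p(\ell(m))\}$ is an integral basis of $\Z[\z_{\ell(m)}]$ whose only $\Gal$-invariant $\Z$-combinations are the integer multiples of $1$, one argues by descending induction on $\ell(m)$ that the $h \neq 0$ exponents must vanish. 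The delicate points I anticipate are: identifying the ``level-$m$ graded piece'' of the group of modular units with $\Z[\z_{\ell(m)}]$ when $\ell(m)$ is not a prime power, where $\Phi_{\ell(m)}$ has coefficients other than $\pm 1$ and the corresponding relation among the $F_{m,h}$ involves inverses; and controlling how generators $F_{m', h'}$ with $\ell(m')$ a proper multiple of $\ell(m)$ contribute to the same cusp orders, so that the induction genuinely isolates each exponent. Both are exactly where the explicit formulas for $\tn{div}(F_{m,h})$ and for the Galois action on the cusps from \cite{GYYY} will be needed.
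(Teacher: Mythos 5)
Your reduction is exactly the paper's: from hypotheses (1) and (2) you get $[D]\in\scC_N(\Q)$, Conjecture \ref{conj: Yoo} supplies a rational cuspidal divisor in the class, and uniqueness in Theorem \ref{thm: basis} transfers the conclusion once you know that \emph{a modular unit with rational divisor is, up to constant, a product of the $F_{m,0}$ alone}. The problem is that you leave precisely this assertion unproved — you yourself call it ``the main obstacle'' and flag ``delicate points I anticipate'' without resolving them — so the proposal is not a complete proof. Moreover, the route you sketch for it (expand $\s_s(H)\approx H$ in the basis of Theorem \ref{thm: basis} and analyze the resulting Galois module) runs straight into the hard combinatorics of this paper: $\s_s$ sends $F_{m,h}$ to $F_{m,sh}$, and $F_{m,sh}$ is generally \emph{not} a basis element, so re-expanding it requires the relations of Theorem \ref{theorem: relation among Fmh} and essentially the machinery of the map $\Psi$ from Sections 4--7. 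That is far more than is needed, and it is not clear your induction would close up.

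The paper's proof of the assertion is short and avoids Galois arguments entirely: by \cite[Th.~3.6]{Y1}, a modular unit $F'$ with rational divisor is an eta quotient $F'\approx\prod_{d\mid N}\eta_d^{b(d)}$ with $\sum_{d\mid N}b(d)=0$, and the telescoping identity
\begin{equation*}
\prod_{m\in\cD(d)}F_{m,0}\approx\frac{\eta_d}{\eta_N},
\end{equation*}
obtained from Lemma \ref{lemma: Fm0 = eta product} by a M\"obius-inversion computation, converts this directly into $F'\approx\prod_{m\in\cD_N}F_{m,0}^{\sum_{k\mid m}b(k)}$. If you substitute this argument for your sketched one, the rest of your proposal goes through and coincides with the paper's proof.
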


One of the main results of this paper is the following.
\begin{theorem}[Theorem \ref{theorem: B}]\label{theorem B}
If $L$ is odd, Conjecture \ref{conj: A} implies Conjecture \ref{conj: Yoo}.
\end{theorem}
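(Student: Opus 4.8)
The plan is to run the descent strategy sketched above, using the assumed Conjecture \ref{conj: A} as the single nontrivial input. Fix a class $[D] \in \scC_N(\Q)$, represented by a cuspidal divisor $D$ of degree $0$ on $X_0(N)$. Since $\scC(N) \subseteq \scC_N(\Q)$ holds unconditionally, it suffices to prove $[D] \in \scC(N)$. All cusps of $X_0(N)$ are defined over $\Q(\z_L)$, so $D$ is defined over $\Q(\z_L)$, and $[D] \in J_0(N)(\Q)$ forces $[\s_s(D)] = [D]$ for every integer $s$ prime to $L$. Let $n$ be the order of $[D]$; we may assume $n \geq 2$. Choose a modular unit $F$ with $\tn{div}(F) = nD$ and, by Theorem \ref{thm: basis}, write
\begin{equation*}
F \approx \prod_{m \in \cD_N}\ \prod_{h=0}^{\p(\ell(m))-1} F_{m,h}^{e(m,h)}, \qquad e(m,h) \in \Z .
\end{equation*}

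The first step is to verify the two hypotheses of Conjecture \ref{conj: A} for this $F$. Hypothesis (1) is immediate, as the order of $F$ at any cusp is the corresponding multiplicity of $nD$. For hypothesis (2), fix $s$ prime to $L$. The relation $[\s_s(D)] = [D]$ in $\Pic^0(X_0(N))$ means $\s_s(D) - D = \tn{div}(u_s)$ for some modular unit $u_s$ on $X_0(N)$, while compatibility of the Galois action on $q$-expansions with that on the cusps yields $\tn{div}(\s_s(F)) = \s_s(\tn{div}(F)) = n\,\s_s(D)$. Hence
\begin{equation*}
\tn{div}\!\left(\s_s(F)/F\right) = n\bigl(\s_s(D) - D\bigr) = \tn{div}\!\left(u_s^{\,n}\right),
\end{equation*}
so $\s_s(F)/F \approx u_s^{\,n}$, and $(\s_s(F)/F)^{1/n} \approx u_s$ is a modular unit on $X_0(N)$. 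Conjecture \ref{conj: A} then gives $n \mid e(m,h)$ for all $m \in \cD_N^*$ and all $h \neq 0$; in particular $n \mid e(m,h)$ whenever $m \in \cD_N^*$ and $1 \leq h \leq \p(\ell(m)) - 1$.

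The second step is the descent itself. Set
\begin{equation*}
g := \prod_{m \in \cD_N} F_{m,0}^{\,a(m)} \prod_{h=1}^{\p(\ell(m))-1} F_{m,h}^{\,e(m,h)/n}, \qquad a(m) \in \Z .
\end{equation*}
When $m \notin \cD_N^*$ one has $\ell(m) = 1$, so the inner product is empty, and when $m \in \cD_N^*$ the exponents $e(m,h)/n$ are integers by the previous step; thus $g$ is a product of the basis functions of Theorem \ref{thm: basis} with integral exponents, hence a genuine modular unit (Remark \ref{remark: rational becomes integral}). By Lemma \ref{lemma: Galois action} each $\tn{div}(F_{m,0})$ is rational, and a direct computation gives
\begin{equation*}
D - \tn{div}(g) = \sum_{m \in \cD_N} \left(\tfrac{e(m,0)}{n} - a(m)\right) \tn{div}(F_{m,0}),
\end{equation*}
which is a rational cuspidal divisor of degree $0$ whose class equals $[D]$. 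Hence $[D] \in \scC(N)$, and since $[D]$ was arbitrary, $\scC_N(\Q) = \scC(N)$, i.e., Conjecture \ref{conj: Yoo} holds for $N$.

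The two genuinely delicate points — and where the hypothesis that $L$ is odd is used — are the two ``formal'' manipulations above: that $\s_s(F)/F$ is literally the $n$-th power of a modular unit on $X_0(N)$, and that the recombined product $g$ is again a modular unit in the precise sense of Theorem \ref{thm: basis}. Odd $L$ forces $4 \nmid N$, hence $\ell(m)$ is odd for every $m \in \cD_N$, so that $m'' = 2$ occurs only when $\ell(m) = 1$, i.e., only for $m \notin \cD_N^*$; this keeps all of the index bookkeeping of Theorems \ref{thm: basis} and \ref{thm: basis variant} integral and prevents the half-integral (square-root) functions $F_{m,h}$ with $m'' = 2$ from entering the data to which Conjecture \ref{conj: A} and the descent are applied, so that no spurious factors of $2$ intrude. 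I expect the main obstacle to be exactly this careful matching of the Galois identity $\tn{div}(\s_s(F)) = \s_s(\tn{div}(F))$ and the equality $\s_s(D) - D = \tn{div}(u_s)$ against the explicit $F_{m,h}$-basis, so that hypothesis (2) of Conjecture \ref{conj: A} is verified for the precise modular unit $F$ and integer $n$ at hand — a bookkeeping matter rather than a conceptual one.
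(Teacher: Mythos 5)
There is a genuine gap at the heart of your second step: you assert that because $g=\prod_m F_{m,0}^{a(m)}\prod_{h\ge 1}F_{m,h}^{e(m,h)/n}$ has integral exponents in the basis of Theorem \ref{thm: basis}, it is ``hence a genuine modular unit.'' That implication is false, and Remark \ref{remark: rational becomes integral} does not support it. Theorem \ref{thm: basis} says every modular unit has a \emph{unique} expression of that form (an injection into $\Z$-tuples); it does not say that every integral-exponent product of the $F_{m,h}$ is a modular unit on $X_0(N)$, and in general it is not: for instance the order of $F_{m,h}$ at $\infty$ is $\frac{m}{24}\prod_{p\mid m''}(1-p)$, typically not an integer, so a single $F_{m,0}$ already fails to be $\G_0(N)$-invariant. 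This is precisely why the paper proves the Ligozat-type criterion (Theorem \ref{theorem: criterion}) and why the introduction states that ``the remaining part for Theorem \ref{theorem B} is taking suitable integers $a(m)$ \ldots so that $g$ becomes a modular unit.'' Your $a(m)$ are left arbitrary, so nothing forces any of conditions (1)--(5) of the criterion to hold, and the descent does not go through as written.

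The paper's actual argument chooses $g=G\prod_{m\in\cD_N^*}\prod_{h=1}^{\ell(m)-1}(F_{m,h}/F_{m,0})^{E(m,h)}$ with an explicit correction factor $G=F_{N/3,0}^{4a}$ when $3\mid L$, and then checks the five conditions of Theorem \ref{theorem: criterion} one by one. Two of these checks are where the hypothesis that $L$ is odd genuinely enters, and neither matches the reason you give (avoiding the $m''=2$ square-root functions): first, oddness of $L$ gives $(\ell',h)^2\equiv\ell'^2\equiv 1\pmod 8$, which is needed to show the auxiliary exponent $a$ is an integer so that condition (2) at the cusp $0$ can be arranged; second, and most importantly, the mod $L$ condition (4) is deduced from the hypothesis that $(\s_s(F)/F)^{1/n}$ is a modular unit by choosing $s$ with $(s(s-1),L)=1$, a choice that is only possible because $L$ is odd. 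Your first step (verifying the hypotheses of Conjecture \ref{conj: A} for $F$) is fine and agrees with the paper, but without the modular-unit verification for $g$ the proof is incomplete.
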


To prove Theorem \ref{theorem B}, it is necessary to show that $g$ is indeed a modular unit. Thus, we need a criterion for a product in \eqref{eqn: intro1} to be a modular unit on $X_0(N)$, which is given as follows.

\begin{theorem}[Theorem \ref{theorem: criterion}]\label{theorem criterion}
Suppose that $L$ is odd and let
\begin{equation*}
g=\prod_{(m, h) \in \cS^*} F_{m, h}^{a(m, h)} \quad\text{ for some }~~ a(m, h) \in \Z.
\end{equation*}
Then $g$ is a modular unit on $X_0(N)$ if and only if all the following hold.
\begin{enumerate}
\item
The order of $g$ at a cusp $\infty$ is an integer.
\item
The order of $g$ at a cusp $0$ is an integer.
\item
The order of $g$ at a cusp $\vect 1 {N_0}$ is an integer, where $N_0$ is the odd part of $N$.
\item
\tn{(mod $L$ condition)} For any integer $1\leq i \leq N$ prime to $N$, we have
\begin{equation*}
\sum_{m \in \cD_N^*} \psi_i(m) \sum_{h=1}^{\ell(m)-1} h a(m, h) \equiv 0 \pmod L,
\end{equation*}
where 
\begin{equation*}
\psi_{i}(m) := \sum_{1\leq \a \leq m'', ~ (\a, m'')=1} \delta \gauss{\frac{\a i }{m''}}  \frac{L}{\ell(m)}.
\end{equation*}

\item
\tn{(mod $2$ condition)} For every odd prime divisor $p$ of $N$, we have
\begin{equation*}
\sum_{m: m''=p^r} \sum_{h=0}^{\ell(m)-1} a(m, h) \equiv 0 \pmod 2,
\end{equation*}
where the first sum runs over all $m \in \cD_N$ such that $m''$ is a power of $p$.
\end{enumerate}
\end{theorem}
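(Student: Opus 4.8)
The plan is to translate ``$g$ is a modular unit on $X_0(N)$'' into the statement that $g$, viewed as a function on the upper half-plane $\mathbb H$, is holomorphic and non-vanishing on $\mathbb H$ (automatic here) and has trivial multiplier system under $\Gamma_0(N)$; the five conditions are then read off from the transformation law of the generalized Dedekind eta functions.

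\textbf{Step 1.} In the present normalization each $E_{g,h}$ is, up to a nonzero constant, a Siegel function: holomorphic and non-vanishing on $\mathbb H$, of weight $0$, with $E_{g,h}(\gamma\tau)=\varepsilon(g,h,\gamma)E_{(g,h)\gamma}(\tau)$ for $\gamma\in\SL_2(\Z)$, where $\varepsilon$ is an explicit root of unity whose argument combines a ``$B_2$-term'' (recording the orders at cusps) with a generalized Dedekind sum, and the square root occurring when $m''=2$ contributes an extra quadratic (Jacobi-symbol type) factor. Because the $E$'s inside $F_{m,h}$ carry the argument $N'\tau$ and every $\gamma\in\Gamma_0(N)$ acts on $N'\tau$ through a matrix in $\Gamma_0(\ell(m))$ that permutes the subscripts $(\alpha m\ell(m),\delta hN')$ occurring in $F_{m,h}$, one obtains $F_{m,h}(\gamma\tau)=u_{m,h}(\gamma)F_{m,h}(\tau)$ for an explicit root of unity $u_{m,h}(\gamma)$ (this is essentially done in \cite{GYYY}). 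Hence $g=\prod_{(m,h)\in\cS^*}F_{m,h}^{a(m,h)}$ is weight $0$, holomorphic and non-vanishing on $\mathbb H$, meromorphic at every cusp, and
\[
g\ \text{is a modular unit on}\ X_0(N)\ \Longleftrightarrow\ u:=\prod_{(m,h)\in\cS^*}u_{m,h}^{\,a(m,h)}\ \text{is trivial on}\ \Gamma_0(N).
\]

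\textbf{Step 2.} I would evaluate $u$ on a generating set of $\Gamma_0(N)$ made of parabolic elements together with elements representing $H_1(X_0(N),\Z)$. For the parabolic generator $P_s$ of the stabilizer of a cusp $s$ one has $u(P_s)=1$ exactly when $\mathrm{ord}_s(g)\in\Z$; taking $s=\infty,0,\vect1{N_0}$ yields conditions (1), (2), (3). The cusp $\vect1{N_0}$ is the right third choice because, $N_0$ being the odd part of $N$, it detects the half-integral contributions produced by the terms with $m''=2$ (and when $N$ is odd it is $\Gamma_0(N)$-equivalent to $\infty$, so (3) becomes void). On the remaining generators the argument of $u$ is the sum of a term built from orders of $g$ at cusps (hence controlled by (1)--(3) once Step 3 is in hand), a generalized-Dedekind-sum term that — using that $L$ is odd, so it lives over $\Q(\zeta_L)$ — vanishes modulo $L$ precisely under the congruence (4) (vacuous when $N$ is squarefree, since then $\ell(m)=1$), and a quadratic term that is a product of Legendre symbols indexed by the odd primes $p\mid N$, trivial precisely under the family (5). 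This settles necessity of (1)--(5).

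\textbf{Step 3.} For sufficiency, assume (1)--(5); I must show $u$ is trivial on all of $\Gamma_0(N)$. The heart of the argument — and the step I expect to be the main obstacle — is to promote integrality of $\mathrm{ord}_s(g)$ from the three distinguished cusps to every cusp $s=\vect xc$: one inserts the explicit $B_2$-formula for $\mathrm{ord}_s(F_{m,h})$, sums against the $a(m,h)$, and checks that the denominators that appear — which, $L$ being odd, factor as a divisor of $L$ times a power of $2$ — are cleared by the mod $L$ condition (4) and by the mod $2$ conditions (5) together with the integrality already secured at $\infty$, $0$ and $\vect1{N_0}$. Given this, $u$ vanishes on every parabolic generator, hence factors through $H_1(X_0(N),\Z)$, where the residual contributions pinned down by (4) and (5) make it trivial as well, so $g$ descends to a modular unit on $X_0(N)$. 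The recurring technical labour is the bookkeeping of generalized Dedekind sums under the $\Gamma_0(N)$-induced permutation of subscripts; the genuinely new ingredient compared with the prime-power case of \cite{GYYY} is the cusp-by-cusp order computation, which is exactly where the oddness of $L$ enters essentially.
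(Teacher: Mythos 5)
Your necessity direction matches the paper's in substance: the multiplier of $g$ splits into a sign part, a product of Legendre symbols, and an $L$-th root of unity, and (since $L$ is odd) these are independently forced to be trivial by (1),(3),(5) and by (4) respectively. The genuine divergence — and the gap — is in the sufficiency direction. You propose to check triviality of the multiplier on a generating set consisting of all parabolic generators plus homology representatives, which forces you to \emph{first} prove that $\mathrm{ord}_s(g)\in\Z$ at \emph{every} cusp $s=\vect xc$ starting only from integrality at $\infty$, $0$, $\vect 1{N_0}$ together with (4) and (5). You explicitly flag this as ``the main obstacle'' and do not carry it out; as stated it is a conjecture inside the proof, not a proof, and it is far from clear that a direct $B_2$-bookkeeping at an arbitrary cusp closes up (the denominators at a cusp $\vect xc$ involve $(c^2,N)$ and the sums over $\mm{m''}$ in Lemma \ref{lemma: order of Fmh}, not just the quantities appearing in (4) and (5)).

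The paper avoids this entirely via Lemma \ref{lemma: root of unity for G'}: once the orders at $0$ and $\infty$ alone are integers, the root of unity $\e$ in $g(\g\tau)=\e g(\tau)$ depends only on the right coset $G\g$, where $G$ is generated by the two standard parabolics $\mat 1101$ and $\mat 10N1$, and every such coset contains a representative $\mat abcd$ with $24\dd b$, $24N\dd c$, $c>0$. On these representatives the multiplier is computed in closed form (via Lemmas \ref{lemma: Egh}, \ref{lemma: root of unity from transformation}, \ref{lemma: sum with gauss}) and shown to equal $\e'_{1,\g}\e'_{2,\g}e^{-2\pi i\mu_\g}$, which conditions (1)--(5) make trivial; invariance under all of $\G_0(N)$ — and hence integrality at all cusps — then comes for free. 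If you want to salvage your route, you should either prove the all-cusps integrality claim in detail or, better, replace Steps 2--3 by this coset reduction, which is exactly the device that makes the sufficiency direction tractable.
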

Once this theorem is proved, the remaining part for Theorem \ref{theorem B} is taking suitable integers $a(m)$ in \eqref{eqn: intro1} so that $g$ becomes a modular unit. For more detail, see Section \ref{section2}.
 
\svv   
The most difficult part of this paper is proving Conjecture \ref{conj: A} when the number of the prime divisors of $L$ is at most two, which implies Theorem \ref{main theorem}. 
\begin{theorem}\label{theorem A}
If the number of the prime divisors of $L$ is at most two, then Conjecture \ref{conj: A} holds.
\end{theorem}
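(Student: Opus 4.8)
The strategy is to convert both hypotheses into linear congruences modulo $n$ on the exponent vector $(e(m,h))$ and then to show that together they force the claimed divisibility; hypothesis (2) alone never suffices once a prime dividing $L$ also divides $n$, so the bulk of the work is in extracting enough from hypothesis (1).

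\emph{The Galois constraint.} A direct inspection of Definition \ref{definition: Fmh} shows that, for any integer $s$ prime to $L$, $\sigma_s(F_{m,h})=F_{m,sh}$: the only roots of unity occurring in the $q$-expansion of $F_{m,h}$ are the $\ell(m)$-th roots $\zeta_N^{\delta h N'}=\zeta_{\ell(m)}^{\delta h}$, which $\sigma_s$ raises to the $s$-th power. Hence $\sigma_s(F)=\prod_{(m,h)}F_{m,sh}^{e(m,h)}$; rewriting each $F_{m,sh}$ in the basis $F_{m,0},\dots,F_{m,\varphi(\ell(m))-1}$ via the integral basis $\{1,\zeta_{\ell(m)},\dots,\zeta_{\ell(m)}^{\varphi(\ell(m))-1}\}$ of $\Z[\zeta_{\ell(m)}]$ and applying Remark \ref{remark: rational becomes integral}, hypothesis (2) says precisely that for each $m\in\cD_N^*$ the element $\xi_m:=\sum_{h=0}^{\varphi(\ell(m))-1}e(m,h)\,\zeta_{\ell(m)}^{h}\in\Z[\zeta_{\ell(m)}]$ becomes Galois-invariant modulo $n$, i.e.\ $\xi_m \bmod n \in \bigl(\Z[\zeta_{\ell(m)}]/n\bigr)^{(\Z/\ell(m)\Z)^\times}$; what must be shown is that this class lies in the image of $\Z/n\Z$. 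As both hypotheses pass to every divisor of $n$, we may take $n=p^{j}$. If $p\nmid L$ then $\Z[\zeta_{\ell(m)}]/p^{j}$ is \'{e}tale and Galois over $\Z/p^{j}\Z$ with invariant subring exactly $\Z/p^{j}\Z$, and we are done; the essential case is $n=p^{j}$ with $p\mid L$, where wild ramification makes the invariant subring strictly larger than $\Z/p^{j}\Z$ (already $(\Z[\zeta_{p^{2}}]/p)^{(\Z/p^{2}\Z)^\times}$ properly contains the image of $\Z/p\Z$), so hypothesis (1) is indispensable.

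\emph{The order constraint; the case $L=p^{a}$.} Here I would use the explicit formulas of \cite{GYYY} for $\tn{ord}_{c}(F_{m,h})$ at the cusps $c$ of $X_0(N)$ --- piecewise expressions built from the second Bernoulli polynomial and the sawtooth function --- to organise hypothesis (1) as a system of congruences modulo $n$. Running through the divisors $m$ ordered by divisibility of $\ell(m)$, and for fixed $m$ evaluating these congruences at the cusps maximally ramified at level $m$ (indexed by a unit $a$ modulo $\ell(m)$), one isolates
\[
\sum_{h}e(m,h)\,u_a(h)\;\equiv\;0 \pmod{n},\qquad u_a(h):=\tn{ord}_{c_a}(F_{m,h}),\quad a\in(\Z/\ell(m)\Z)^\times,
\]
the contributions of the $F_{m',h'}$ with $\ell(m')$ a proper multiple of $\ell(m)$ being already controlled by the induction and affecting only the rational divisors $\tn{div}(F_{m',0})$, hence not the $h\neq0$ coordinates. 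This reduces everything to the arithmetic assertion: if $\xi\in\Z[\zeta_{d}]/p^{j}$ ($d=p^{i}$, $i\geq1$) is Galois-invariant and $\sum_{h}e_h\,u_a(h)\equiv0\pmod{p^{j}}$ for all $a\in(\Z/d\Z)^\times$, then $e_h\equiv0\pmod{p^{j}}$ for every $h\neq0$. For $p$ odd this follows from the two sets of congruences together with a computation on the values of the second Bernoulli polynomial; for $p=2$ the sawtooth congruences leave a single parity direction undetermined, which is recovered from a supplementary congruence modulo $2$ read off from the remaining cusps (analogous to the mod $2$ condition of Theorem \ref{theorem criterion}).

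\emph{The case $L=p^{a}q^{b}$, and the main obstacle.} For the Galois part nothing changes: $\Z[\zeta_{p^{i}q^{k}}]=\Z[\zeta_{p^{i}}]\otimes_{\Z}\Z[\zeta_{q^{k}}]$ and, since $n=p^{j}$ is prime to $q$, the factor $\Z[\zeta_{q^{k}}]/p^{j}$ is \'{e}tale over $\Z/p^{j}\Z$, so a Galois-invariant class modulo $p^{j}$ descends to $\Z[\zeta_{p^{i}}]/p^{j}$ and we meet the same local problem as before. What genuinely changes is hypothesis (1): the cusps relevant to a divisor $m$ with $\ell(m)=p^{i}q^{k}$ are indexed by a \emph{pair} of units (one modulo $p^{i}$, one modulo $q^{k}$), the order formula for $F_{m,h}$ factors through this pair, and the induction must now handle a two-parameter family of sawtooth congruences at once, checking that their joint span --- again up to the parity correction if $2\in\{p,q\}$ --- annihilates every $h\neq0$ coordinate. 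This marriage of the rigid cyclotomic invariance coming from (2) with sufficiently many independent congruences coming from (1), carried out uniformly over all $m$, all prime powers $\ell(m)$, and --- in the two-prime case --- over the genuinely two-dimensional cusp geometry, while tracking the wildly ramified terms and the $p=2$ anomaly, is the heart of the argument; it is also the reason the hypothesis is restricted to at most two prime divisors of $L$, the cusp combinatorics becoming too unwieldy for this method beyond that.
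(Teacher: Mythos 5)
Your proposal is a plan rather than a proof, and two of its load-bearing reductions do not hold up.

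First, the decoupling of hypothesis (2) into a per-$m$ statement is incorrect. You assert that rewriting $\sigma_s(F)/F$ in the canonical basis takes place, for each fixed $m$, inside $\Z[\zeta_{\ell(m)}]$ via the integral basis $\{1,\zeta_{\ell(m)},\dots,\zeta_{\ell(m)}^{\varphi(\ell(m))-1}\}$, so that hypothesis (2) becomes "$\xi_m \bmod n$ is Galois-invariant" for each $m$ separately. But the multiplicative relations among the $F_{m,h}$ are not the cyclotomic relations $\prod_j F_{m,h+xj}\approx 1$; they are $\prod_{j=0}^{p-1} F_{m,h+xj}\approx F_{mp,\,p^{\varepsilon}h}$ (Theorem \ref{theorem: relation among Fmh}), with a nontrivial term at the \emph{different} level $mp$. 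Consequently the canonical-form exponent of $F_{m,h}$ in $(\sigma_{s^*}(F)/F)^{1/n}$ receives contributions from levels $m/p$, $m/q$, etc., and integrality of the canonical exponents does not split into independent conditions on the classes $\xi_m$. Managing exactly this cross-level propagation is what the paper's map $\Psi$ (Sections \ref{section: toy}--\ref{section: formula}) is built for, and it is where most of the combinatorial difficulty lives; your sketch removes it by fiat.

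Second, the step that is supposed to finish the argument — "if $\xi$ is Galois-invariant and $\sum_h e_h u_a(h)\equiv 0\pmod{p^j}$ for all $a$, then $e_h\equiv 0\pmod{p^j}$ for $h\neq 0$", justified by "a computation on the values of the second Bernoulli polynomial" — is the entire content of the theorem restated, not an argument; and in the two-prime case you offer only the intention to "handle a two-parameter family of sawtooth congruences at once". That case occupies Sections \ref{section6}--\ref{section7} of the paper and requires a genuinely delicate double induction. Note also that your weighting of the hypotheses is inverted relative to what actually works: the paper's proof of Theorem \ref{theorem A} does not use the cusp-order congruences of hypothesis (1) at all. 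It uses hypothesis (2) for many carefully chosen $s$ (e.g.\ $s=1+p^{a-1}q^{t_2}j$), the fact that $\Psi(E_s)\equiv 0$ in $\Q/\Z$ (Corollary \ref{corollary: Psi(gs)=0}), the vanishing theorems of Sections \ref{section5}--\ref{section6}, and the built-in vanishing $E(m,h)=0$ for $\varphi(\ell(m))\le h\le \ell(m)$ coming from the canonical form of $F$. As written, your proposal has a genuine gap at both the reduction step and the concluding step.
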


\svv    
In the rest of this section, we sketch our proof of Theorem \ref{theorem A}. 
From now on, we use the notation introduced in Notation \ref{notation}. To any $f \in \scF$, we associate a function of the form
\begin{equation*}
\prod_{(m, h) \in \cS} F_{m, h}^{f(m, h)}.
\end{equation*}
By this obvious association modular units on $X_0(N)$ can be regarded as elements of $\scF$ satisfying certain extra properties described in Theorem \ref{theorem criterion}. For any integer $h$, let
\begin{equation*}
E(m, h):=\begin{cases}
\frac{1}{n}e(m, h') & \text{ if } ~~1\leq h' < \p(\ell(m)),\\
\quad 0 & \text{ otherwise},
\end{cases}
\end{equation*}
where $h'$ is the remainder of $h$ modulo $\ell(m)$. 
For any integer $s$ prime to $L$, let $s^*$ be an integer such that $ss^*\equiv 1 \pmod L$.
We henceforth assume that $s$ and $s^*$ are such integers unless otherwise stated.
By Lemma \ref{lemma: Galois action}, we easily have
\begin{equation}\label{eqn: intro2}
(\s_{s^*}(F)/F)^{1/n} \approx \prod_{(m, h) \in \cS} F_{m, h}^{E_s(m, h)},
\end{equation}
where 
\begin{equation*}
E_s(m, h):=E(m, sh)-E(m, h).
\end{equation*}

\svv
To prove Conjecture \ref{conj: A}, it suffices to show that $E$ is an integer-valued function.
Ultimately, we will deduce this from certain properties of $E_s$. For instance, if $E_s$ is an integer-valued function for all $s$, then so is $E$.  As we assume that $(\s_{s^*}(F)/F)^{1/n}$ is a modular unit on $X_0(N)$, if $E_s(m, h)=0$ for all $(m, h) \in \cS^{\rd}$ then we can easily prove that $E_s$ is an integer-valued function (Remark \ref{remark: rational becomes integral}). Unfortunately, we cannot directly obtain that $E_s(m, h)=0$ for all $(m, h) \in \cS^{\rd}$. Instead, we first express \eqref{eqn: intro2} as the form in Theorem \ref{thm: basis variant}. 
As already mentioned above, there exist various relations among $F_{m, h}$ and we prove the following.
\begin{theorem}[Theorem \ref{theorem: relation among Fmh}]\label{theorem relation among Fmh}
Let $m\in \cD_N^*$ such that $\ell(m)$ is divisible by a prime $p$. Then we have
\begin{equation*}
\prod_{j=0}^{p-1} F_{m, h+xj} \approx F_{mp, ~p^{\ve}h},
\end{equation*}
where $x=\ell(m)/p$ and $\ve=1+v_p(\ell(mp))-v_p(\ell(m))$.
\end{theorem}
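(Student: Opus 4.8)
The statement is an identity (up to constant) between products of the functions $F_{m,h}$, so the natural approach is to unwind both sides into products of the building blocks $E_{g,h}$ and compare. First I would recall from Definition~\ref{definition: Fmh} that, writing $m''=m'/\ell(m)$ and $N'=N/\ell(m)$, we have (in the generic case $m''\neq 2$)
\[
F_{m, h+xj}(\tau)=\prod_{\a \in S_{m''}} E_{\a m\ell(m),\ \d (h+xj) N'}(N'\tau),
\]
and the analogous expression for $F_{mp,\,p^\ve h}$ using $(mp)''$, $\ell(mp)$, and $N/\ell(mp)$. The key point to sort out at the very beginning is the relationship between the data attached to $m$ and to $mp$: since $p\mid \ell(m)$, the divisor $mp$ of $N$ satisfies $(mp)'=N/(mp)=m'/p$, and $\ell(mp)$ is the largest integer whose square divides $m'/p$; depending on $v_p(m')$ one gets either $\ell(mp)=\ell(m)/p$ (when $v_p(\ell(m))$ drops by one and $\ve=0$) or a more subtle change (captured by $\ve=1+v_p(\ell(mp))-v_p(\ell(m))$, which is exactly the normalizing exponent in the second index). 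Consequently $(mp)''=(m'/p)/\ell(mp)$, $N/\ell(mp)=p^{1-\ve} N'$, and the representative sets $S_{m''}$ and $S_{(mp)''}$ are related in a controlled way. I would record these arithmetic comparisons as a short lemma before touching the eta products.

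**Key steps.** With that dictionary in hand, the heart of the proof is a product identity for the generalized Dedekind eta functions themselves: one needs something of the shape
\[
\prod_{j=0}^{p-1} E_{g,\ h_0+ c j}\big(\tau\big) \approx E_{g',\ h_0'}\big(p^{\pm 1}\tau\big),
\]
i.e.\ a ``raising/lowering the level by $p$'' relation for the $E_{g,h}$. This should follow from the product formula defining $E_{g,h}$ together with the elementary factorization $\prod_{j=0}^{p-1}(1-\z_p^j X)=1-X^p$ applied to the $q$-product, after matching the second index: running $h$ through an arithmetic progression with common difference $x=\ell(m)/p$ inside $\d(h+xj)N'$ makes $\z_N^{\d(h+xj)N'}$ run through a coset of the $p$-th roots of unity (because $\d x N' = \delta \ell(m) N'/p$ has the right $p$-adic valuation), so the $p$ factors telescope into a single factor with $q$ replaced by an appropriate $p$-power and the index $h$ replaced by $p h$ or $h$ according to $\ve$. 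The $q^{B_2(\cdot)/2}$ prefactors must then be reconciled: one checks that $\sum_{j=0}^{p-1} B_2\!\big((g/N + \text{shift}_j)\big)$ collapses (via the classical distribution relation $\sum_{j=0}^{p-1}B_2(x+j/p)=p^{-1}B_2(px)$ for the Bernoulli polynomial) to the prefactor on the right-hand side — and since we only claim equality up to a nonzero constant, any leftover root-of-unity or power-of-$q$-independent factor is harmless. Finally I would take the product over $\a\in S_{m''}$, using the dictionary from the first step to identify $\prod_{\a\in S_{m''}}(\cdots)$ with $\prod_{\b\in S_{(mp)''}}(\cdots)$, and separately handle the degenerate cases $m''=2$ (where $F$ is a square root of a single $E$) and $(mp)''=2$, where the exponents $1/2$ must be tracked but the same factorization applies.

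**Main obstacle.** The routine part is the $q$-product factorization; the genuinely delicate bookkeeping is the interaction between three things at once: the change of the set of $\a$'s (from $S_{m''}$ to $S_{(mp)''}$, including how the inverse $\d$ of $\a$ mod $m''$ transforms), the change of the ``outer'' argument $N'\tau \leadsto (N/\ell(mp))\tau$, and the two cases $\ve\in\{0,1\}$ which correspond to whether passing from $m$ to $mp$ removes a full square factor $p^2$ from $m'$ or only a single $p$. Getting the exponent $p^\ve$ on the second index exactly right — rather than off by a factor of $p$ — is where I expect to spend the most care, and it is precisely the reason the theorem is stated with the somewhat opaque $\ve=1+v_p(\ell(mp))-v_p(\ell(m))$. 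I would therefore organize the proof so that this exponent is \emph{derived} from the distribution relation rather than guessed: compute both sides' behavior under $\tau\mapsto\tau+1$ (equivalently, the order at the cusp $\infty$, which is linear in the Bernoulli values) and read off $\ve$ as the unique exponent making the leading $q$-exponents agree. Once the cusp-$\infty$ orders match and the $q$-expansions agree up to a constant, the two modular units coincide up to a constant, which is all that is asserted.
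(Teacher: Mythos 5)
Your plan is correct in substance, but it takes a genuinely different route from the paper. The paper never touches the $q$-products directly: it invokes Lemma \ref{lemma: order of Fmh} to write down the order of $\prod_{j=0}^{p-1}F_{m,h+xj}$ and of $F_{mp,\,p^{\ve}h}$ at an \emph{arbitrary} cusp $\vect a c$, applies the distribution relation for $P_2$, and checks equality of the two expressions by a case analysis on whether $p$ divides $c'$ and on $\ve\in\{0,1\}$; equality of divisors then gives $\approx$. You instead propose to prove the identity as an exact $q$-expansion identity at $\infty$, via $\prod_{j=0}^{p-1}(1-\z_p^{\d j}X)=1-X^p$ (which applies because $\d x N'j\equiv \d jN/p$ and $(\d,p)=1$) together with the distribution relation for $B_2$ on the prefactors. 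Both routes face the same core bookkeeping in the $\ve=1$ case, namely regrouping the product over $S_{m''}$ into one over $S_{(mp)''}$ via the $p$-to-one reduction map and checking $\d_\a\equiv\d_\b\pmod{\ell(m)/p}$; your route buys independence from the cusp-order formula and yields an exact identity of holomorphic functions (full $q$-expansion agreement at one cusp already forces proportionality, no modularity needed), while the paper's route trades infinite products for finite Bernoulli sums and a cleaner two-by-two case split. One point in your last paragraph should be repaired: you cannot ``read off $\ve$'' from the order at the cusp $\infty$, since by Lemma \ref{lemma: order of Fmh at special cusps} that order is independent of the second index $h$, so every choice of exponent on $h$ gives the same leading $q$-exponent. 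The exponent $\ve$ is forced instead by matching the roots of unity $\z_N^{p\d h N'}$ appearing in the collapsed $q$-product against $\z_N^{\d_\b p^{\ve}hN_1'}$ on the right-hand side (equivalently, by whether $\ell(mp)$ equals $\ell(m)$ or $\ell(m)/p$), so that comparison must be carried out on the full expansion, not on the leading term.
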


It seems that the relations in Theorem \ref{theorem relation among Fmh} generate all relations among $F_{m, h}$. So we may replace $F_{m, h}$ for all $(m, h) \in \cS^{\rd}$ by others and then obtain a function of the form in Theorem \ref{thm: basis variant}. In other words, it seems possible to construct a  map $\Psi : \scF \to \scF$ such that
$\Psi(a)(m, h)=0$ for all $(m, h) \in \cS^{\rd}$ and 
\begin{equation*}
\prod_{(m, h) \in \cS} F_{m, h}^{a(m, h)} \approx \prod_{(m, h) \in \cS^{\new}} F_{m, h}^{\Psi(a)(m, h)}.
\end{equation*}
The construction of such a map $\Psi$ is very complicated in general, so we restrict ourselves to the case where the number of the prime divisors of $L$ is at most two.
By \eqref{eqn: intro2} and Remark \ref{remark: rational becomes integral}, we then obtain the following property of $E_s$ (Corollary \ref{corollary: Psi(gs)=0}). 
\begin{equation}\label{star}
\text{For any $s$ prime to $L$, } ~\Psi(E_s)(m, h) \in \Z \quad \text{ for all } ~(m, h) \in \cS.
\end{equation}

\begin{remark}\label{remark: Q to Q/Z}
In fact, we prove Theorem \ref{theorem A} only using \eqref{star}. Since we are only interested in whether $E(m, h) \in \Z$ or not, it is useful to compose the natural surjective morphism $\Q \to \Q/\Z$ ``everywhere'' and try to prove that $E\equiv 0$, i.e., $E$ is identical to the zero function. Thus, we will prove the following.
\begin{equation}\label{eqn: final goal}
\text{For any $s$ prime to $L$, } ~\Psi(E_s) \equiv 0 \quad  \imply \quad E \equiv 0.
\end{equation}
\end{remark}

\svv  
Finally, we sketch our proof of \eqref{eqn: final goal} when $N=p^r$ is an odd prime power.  By definition, we have $L=p^t$, where $t=\gauss{r/2}$. If $t=0$, there is nothing to prove and so we assume that $t\geq 1$.
If we take $s=1+p^{t-1}k$ for some integer $k$ satisfying $(k, p)=(s, p)=1$, then $E_s(m, h)=0$ unless $\ell(m)=p^t$ and $(h, p)=1$. (Note that $(s-1)h \equiv 0 \pmod {\ell(m)}$ otherwise.) 
This motivates the following definition.
\begin{definition}
For a positive integer $a$, let
\begin{equation*}
\cA(a):=\{(m, h) \in \cS : v_p(h)=v_p(\ell(m))-a\} \qa \cA^+(a):=\cup_{n>a} \cA(n).
\end{equation*}
\end{definition}
The following is a special case of Theorem \ref{theorem: vanishing 1}.
\begin{theorem}
Let $a\in \Z_{\geq 1}$. Suppose that all the following hold.
\begin{enumerate}
\item
$\Psi(f)(m, h)=0$ for all $(m, h) \in \cS$.
\item
$f(m, h)=0$ for all $(m, h) \in \cS \sm  \cA (a)$.
\end{enumerate}
Then $f(m, h)=0$ for all $(m, h) \in \cS$.
\end{theorem}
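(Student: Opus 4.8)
\emph{The plan} is to turn the statement into a question of linear algebra about the relation vectors of Theorem~\ref{theorem relation among Fmh} and then to solve the resulting system by chasing equations up and down the tower of divisors $1\mid p\mid p^{2}\mid\cdots\mid p^{r-1}$ of $N=p^{r}$. Since $\Psi$ is built by repeatedly invoking the relations $\prod_{j=0}^{p-1}F_{m,h+xj}\approx F_{mp,\,p^{\ve}h}$ to clear the indices of $\cS^{\rd}$, the difference $f-\Psi(f)$ always lies in the $\Q$-span of the relation vectors
\[
r_{m,h}=\sum_{j=0}^{p-1}e_{(m,\,h+xj)}-e_{(mp,\,p^{\ve}h)}\qquad(m\in\cD_N^{\ast},\ h\in\Z/(\ell(m)/p)\Z),
\]
with $x=\ell(m)/p$ and $\ve=1+v_p(\ell(mp))-v_p(\ell(m))$. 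Hence the hypothesis $\Psi(f)\equiv 0$ gives $f\equiv\sum_{m,h}c_{m,h}\,r_{m,h}$ for certain $c_{m,h}\in\Q$ (everything modulo $\Z$, as in Remark~\ref{remark: Q to Q/Z}), and it suffices to show this sum is identically zero.

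Next I would record the geometric facts that drive the cancellations. Write $m=p^{i}$, $s_i=v_p(\ell(p^i))=\gauss{(r-i)/2}$, and for $(p^i,h)\in\cS$ set $\lambda(p^i,h)=s_i-v_p(h)$, so that $\cA(a)=\{\lambda=a\}$. A direct valuation count shows: (i) if $h\not\equiv 0$, the $p$ indices $(p^i,h+xj)$ in the support of $r_{p^i,h}$ all lie at one level $\lambda_0$, while its unique index $(p^{i+1},p^{\ve}h)$ in the next row of the tower lies at level $\lambda_0-1$; (ii) the point $(p^i,0)$ always lies at level $0$, hence outside $\cA(a)$ for every $a\ge 1$; (iii) the top divisor $p^{r-1}$ has $\ell(p^{r-1})=1$ and contributes no relation. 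In particular no single $r_{p^i,h}$ is supported inside $\cA(a)$, because its bulk and its far index sit at two distinct levels $\lambda_0$ and $\lambda_0-1$.

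The coefficients $c_{m,h}$ split according to the ``defect'' $d(p^i,h)=a-\lambda(p^i,h)$ of the corresponding relation: $d=0$ marks the relations whose bulk lies inside $\cA(a)$ (possible only when $a\ge 2$), and $h\equiv 0$ the degenerate relations. Evaluating $\sum c_{m,h}r_{m,h}$ at an index outside $\cA(a)$ yields, after using (i)--(iii), an equation tying $c_{p^i,h}$ to the coefficient of a single relation in an adjacent row of the tower whose defect differs by one. Following these equations produces chains, and the argument runs in stages. First, the degenerate coefficients and the relations with $d<0$ are tied by descending chains to coefficients in rows below them; such chains terminate cleanly — at the top divisor $p^{r-1}$ (no relation) or at the bottom divisor $1$ (no relation enters it from below) — so all these coefficients vanish. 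Then a ``bulk'' relation ($d=0$) is tied by an ascending chain, whose defect grows by one at each step, to a degenerate coefficient (or to the top divisor), already known to vanish; hence it vanishes. Finally each remaining relation ($d>0$) is tied by a descending chain to a bulk coefficient or to the bottom, so it too vanishes. Thus every $c_{m,h}\equiv 0$ and $f\equiv 0$.

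\emph{The hard part} is the bookkeeping forced by the twist $h\mapsto p^{\ve}h$: this is exactly what shifts the level by one between a relation and its image in the next row, so it couples every row of the tower to its neighbours and a single monotone induction does not close. One must run the up‑chains and down‑chains in the right order, tracking where a chain terminates and where a reduction modulo $p^{s_{i+1}-1}$ collapses a valuation — the small cases $a=2,3$, the rows with $s_i\le a$, and the degenerate relations all need separate care. Keeping this web of chains consistent is the crux, and is presumably also why the companion Theorem~\ref{theorem A} must be restricted to $L$ with at most two prime factors.
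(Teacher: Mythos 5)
Your reformulation is sound as far as it goes: since each $\Phi^\iota_\fm$ modifies $f$ by subtracting multiples of the vectors coming from Theorem \ref{theorem relation among Fmh}, the difference $f-\Psi(f)$ does lie in the span of the relation vectors $r_{m,h}$, these are linearly independent by Theorem \ref{thm: basis variant}, and the equation obtained by evaluating at an index outside $\cA(a)$ does tie at most two coefficients in adjacent rows (one "bulk" incidence and at most one "far-end" incidence, the latter because $h'\mapsto p^{\ve}h'$ is injective). This is a genuinely different, dual picture from the paper's, which never introduces the relation vectors: the paper works directly with the operators, proving (Lemma \ref{lemma: v0_1}) that under hypothesis (2) the partial composition $\U(\fm)$ does not alter $f(\fm,\cdot)$ at the relevant level, and (Lemma \ref{lemma: v0_4}) that $\Psi(f)=0$ plus vanishing off a single row forces $f=0$, then closing the argument by a descending induction on $v_p(\fm)$ starting from rows where the hypothesis is vacuous.

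The problem is that your proposal stops exactly where the theorem begins. The entire content of the statement is the assertion that the web of two-term equations forces every coefficient to zero, and your three-stage plan (degenerate and $d<0$ by descending chains, then $d=0$ by ascending chains, then $d>0$ by descending chains) is asserted rather than verified. Concretely: (a) your structural claim (i) fails for the degenerate relations, whose bulk is spread over levels $0$ and $1$ rather than sitting at a single level, and these are precisely the relations your stage-2 ascending chains must terminate on, so their treatment cannot be waved off; (b) an ascending chain from a $d=0$ relation passes through far-indices at levels $a-1,a-2,\dots$, and you must check that each such index really is outside $\cA(a)$, that the chain cannot loop, and that when it reaches level $0$ the degenerate coefficient it lands on has already been killed in stage 1 --- none of this is checked; (c) a descending chain continues only when a relation from the row below actually hits the chosen bulk index, which depends on whether $\ve=0$ or $\ve=1$ in that row and on $v_p$ of the index, so "such chains terminate cleanly" requires a case analysis you have not done. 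You flag all of this yourself as "the crux", which is an accurate self-assessment: what you have is a correct translation of the problem plus a plausible strategy, not a proof.
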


As above, we first take $s=1+p^{t-1}k$ for some integer $k$ satisfying $(k, p)=(s, p)=1$. Then $f=E_s$ satisfies two assumptions with $a=t\geq 1$.
Thus, we have $E_s \equiv 0$. For each $1\leq h < p^{t-1}(p-1)$ with $(h, p)=1$, we can always find $k$ such that the remainder of $sh$ modulo $p^t$ is at least $p^{t-1}(p-1)$, and for such a $k$ we have $E(1, sh)=0$ by definition. Since $E_s(1, h)=0$ as well, we have $E(1, h)=0$ as desired. If $r=2t+1$, then by the same argument we have $E(p, h)=0$ as well. In other words, $E(m, h)=0$ for all $(m, h) \in \cA(t)=\cA^+(t-1)$. 

If $t=1$, we are done. Suppose that $t\geq 2$. Since $E(m, h)=0$ for all $(m, h) \in \cA^+(t-1)$, we easily have $E_s(m, h)=0$ for all $(m, h) \in \cA^+(t-1)$ as well. As above, we take $s=1+p^{t-2}k$ for some $k$ satisfying $(k, p)=(s, p)=1$.
Then $f=E_s$ satisfies two assumptions with $a=t-1\geq 1$. By the same argument as above, we can prove that
$E(m, h)=0$ for all $(m, h) \in \cA(t-1)$. Combining above, we have $E(m, h)=0$ for all $(m, h) \in \cA^+(t-2)$.
By doing this successively, we complete the proof. 

When $L$ is a product of two prime powers, then we were not able to prove such a vanishing theorem. Instead, we prove a weaker statement in Section \ref{section6} and deduce \eqref{eqn: final goal} from a more delicate inductive argument in Section \ref{section7}.

\svv   
\subsection*{Acknowledgements}
The authors would like to thank Kenneth Ribet and Yifan Yang for their inspiring discussions. 
H.Y. was supported by National Research Foundation of Korea(NRF) grant funded by the Korea government(MSIT) (No. RS-2023-00239918  and No. 2020R1A5A1016126). M.Y. was supported by Yonsei University Research Fund (2024-22-0146) and by Korea Institute for Advanced Study (KIAS) grant funded by the Korea government.

\svv    
\section{Modular units}\label{section2}
In this section, we elaborate the methods in the paper \cite{GYYY} and prove some new results.

\svv  
Before proceeding, we recall a description of the cusps of $X_0(N)$ (cf. \cite[Sec. 2]{Y1}). A cusp of $X_0(N)$ can be written as $\vect a c$  for some positive divisor $c$ of $N$ and an integer $a$ prime to $c$. In this notation, the cusps  $0$ and $\infty$ are written as $\vect 1 1$ and $\vect 1 N$, respectively.
Two cusps $\vect x d$ and $\vect y \d$ with two positive divisors $d$ and $\d$ of $N$ are equivalent if and only if $d=\d$ and $x \equiv y \pmod z$, where $z=(d, N/d)$. Moreover, a cusp $\vect x d$ is defined over $\Q(\z_z)$. Hence all the cusps of $X_0(N)$ are defined over $\Q(\z_L)$.
For unfamiliar notation, see Notation \ref{notation}.

\svv 
We recall some results of \cite{GYYY}, which are frequently used in the paper.
\begin{lemma}\label{lemma: Egh}
The functions $E_{g, h}$ satisfy
\begin{equation*}
E_{g+N, h}=E_{-g, -h}=-\z_N^{-h} E_{g, h} \qa E_{g, h+N}=E_{g, h}.
\end{equation*}
\end{lemma}
\begin{proof}
This is a part of \cite[Prop. 2.1]{GYYY}, which is \cite[Th. 1]{Ya04}.
\end{proof}

\begin{lemma}\label{lemma: order of Fmh}
Let $m\in \cD_N$. Then for any integer $h$, the order of $F_{m, h}$ at a cusp $\vect a c$ of $X_0(N)$ is
\begin{equation*}
\frac{\ell(m)(N', c)^2}{4(c^2, N)} \sum_{\a \in \mm {m''}} P_2 \left(\frac{\a a'}{m''}+\frac{\d h c'}{\ell(m)}\right),
\end{equation*}
where $P_2(x)=B_2(\{x\})$ is the second Bernoulli function with $\{x\}=x-
\gauss{x}$, $a'=\frac{N'a}{(N', c)}$ and $c'=\frac{c}{(N', c)}$.
\end{lemma}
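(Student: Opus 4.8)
The plan is to reduce everything to the behaviour of a single generalized Dedekind eta function $E_{g,h}$ at the cusp $\infty$, and then transport that value along two maps: the scaling $\tau\mapsto N'\tau$ that appears in Definition \ref{definition: Fmh}, and a matrix $\g\in\SL_2(\Z)$ realizing the cusp $\vect a c$. First I would recall from \cite{GYYY} (ultimately \cite{Ya04}) the transformation law of $E_{g,h}$ under $\SL_2(\Z)$: for $\g\in\SL_2(\Z)$ one has $E_{g,h}(\g\tau)\approx E_{g^*,h^*}(\tau)$, where $(g^*,h^*)$ is the image of the row vector $(g,h)$ under $\g$, the discrepancy being a root of unity which does not affect orders of vanishing. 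Combined with the defining product expansion — which gives $\tn{ord}_{\infty}E_{g,h}=\tfrac12 P_2(g/N)$, the exponent $B_2(g/N)/2$ made well defined modulo $N$ — this yields $\tn{ord}_{a/c}E_{g,h}=\tfrac12\, w\cdot P_2\!\big(\tfrac{ga+hc}{N}\big)$, where $w$ is the width of the cusp $a/c=\vect a c$ on $X_0(N)$; here one uses $w=N/(c^2,N)$, which is legitimate because $(c^2,N)=c\,(c,N/c)$ for $c\mid N$.

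Next I would handle the scaling. Since $F_{m,h}(\tau)$ is, up to a constant, the product over $\a\in S_{m''}$ of $E_{\a m\ell(m),\,\d hN'}(N'\tau)$ (and an analogous single factor with exponent $1/2$ when $m''=2$), it suffices to compute $\tn{ord}_{a/c}$ of each $E_{g,h}(N'\tau)$. As $\tau\to a/c$ we have $N'\tau\to N'a/c=a'/c'$ with $a'=\tfrac{N'a}{(N',c)}$ and $c'=\tfrac{c}{(N',c)}$ coprime (because $(a,c)=1$ forces $(N'a,c)=(N',c)$); these are exactly the $a',c'$ in the statement. The local parameter at $\vect a c$ on $X_0(N)$ and the local parameter at $\vect{a'}{c'}$ on the level where $E_{g,h}(N'\tau)$ is modular differ by a power determined by the ramification of $\tau\mapsto N'\tau$ together with the two cusp widths; carrying this out produces the scalar $\tfrac{\ell(m)(N',c)^2}{4(c^2,N)}$. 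Plugging $g=\a m\ell(m)$ and eta-index $\d hN'$ into $\tfrac12 P_2\!\big(\tfrac{ga'+hc'}{N}\big)$ and using $\tfrac{m\ell(m)}{N}=\tfrac1{m''}$ and $\tfrac{N'}{N}=\tfrac1{\ell(m)}$ turns the Bernoulli argument into $\tfrac{\a a'}{m''}+\tfrac{\d hc'}{\ell(m)}$.

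It then remains to sum over $\a\in S_{m''}$. Since $P_2(-x)=B_2(1-\{x\})=B_2(\{x\})=P_2(x)$, and replacing $\a$ by $-\a$ replaces $\d$ by $-\d$, every term of the sum over the full group $\mm{m''}$ is hit twice among the $\pm$-orbits, so $\sum_{\a\in\mm{m''}}=2\sum_{\a\in S_{m''}}$; this factor $2$ is absorbed into the $4$ in the denominator, giving the stated formula when $m''\neq 2$. For $m''=2$ one has $S_{m''}=\{1\}$, $\d=1$, and the exponent $1/2$ in the definition of $F_{m,h}$ supplies precisely the same normalization, so the one-term sum over $\mm2=\{1\}$ again matches. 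The only genuinely delicate point is the width-and-ramification bookkeeping for $\tau\mapsto N'\tau$ in the second paragraph — pinning down the exponent as exactly $(N',c)^2/(c^2,N)$ rather than being off by a divisor-of-$N'$ factor — which I would double-check by computing $\tn{ord}$ at the two rational cusps $0=\vect11$ and $\infty=\vect1N$ directly from the product expansion of $F_{m,h}$ before trusting the general formula.
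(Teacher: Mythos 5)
Your argument is correct and is exactly the standard computation behind the cited result: the paper itself gives no proof here but simply quotes \cite[Lem.~2.6]{GYYY}, and your route (order of $E_{g,h}$ at $\infty$, Yang's transformation law, the decomposition of $\left(\begin{smallmatrix} N' & 0\\ 0&1\end{smallmatrix}\right)\g$ through the width $N/(c^2,N)$, and the evenness of $P_2$ to pass from $S_{m''}$ to $\mm{m''}$, with the exponent $1/2$ handling $m''=2$) is precisely how that lemma is established. The bookkeeping you flag as delicate does come out to $(N',c)^2/(c^2,N)$ via $(N'a,c)=(N',c)$, so no gap remains.
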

\begin{proof}
This is \cite[Lem. 2.6]{GYYY}.
\end{proof}

\begin{lemma}\label{lemma: order of Fmh at special cusps}
The orders of $F_{m, h}$ at the cusps $\infty$ and $0$ are
\begin{equation*}
\frac{m}{24}\prod_{p \mid m''} (1-p) \qa \frac{m''}{24\ell(m)} \sum_{k \mid m''} \frac{\mu(k)}{k}(\ell(m), kh)^2,
\end{equation*}
respectively, where $\mu(k)$ is the M\"obius function. Also, if $N=2N_0$ for some odd integer $N_0$, then the order of $F_{m, h}$ at the cusp $\vect {1}{N_0}$ is
\begin{equation*}
\frac{m}{24(m, 2)}\prod_{p  \mid m'', ~p\neq 2} (1-p).
\end{equation*}
\end{lemma}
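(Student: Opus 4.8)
The plan is to derive all three formulas from the general cusp formula in Lemma~\ref{lemma: order of Fmh} by substituting the relevant cusp $\vect ac$ and then evaluating the resulting finite sum over $\mm{m''}$. Every such evaluation is an instance of one elementary identity: for divisors $L'\mid M$ of $N$ and any integer $h$,
\begin{equation*}
\sum_{\substack{1\le\a\le M\\ (\a,M)=1}}P_2\!\left(\frac{\a h}{L'}\right)=\frac{M}{6L'^2}\sum_{k\mid M}\frac{\mu(k)}{k}\,\gcd(kh,L')^2,
\end{equation*}
which collapses to $\tfrac1{6M}\prod_{p\mid M}(1-p)$ in the case $L'=M$ and $\gcd(h,M)=1$.

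I would prove the identity by M\"obius inversion: the left-hand side equals $\sum_{k\mid M}\mu(k)\sum_{b=1}^{M/k}P_2(kbh/L')$, and the inner sum consists of $M/k$ consecutive values of the function $b\mapsto P_2(kbh/L')$, which is periodic with period $p_0:=L'/\gcd(kh,L')$. The key point is that $p_0\mid M/k$, equivalently $k\mid\gcd(kh,L')\cdot(M/L')$: since $\mu(k)\neq0$ forces $k$ squarefree, for each prime $p\mid k$ either $p\mid L'$, whence $p\mid\gcd(kh,L')$, or $p\nmid L'$, whence $p\mid M/L'$ because $p\mid M$. Thus the inner sum is $M/(kp_0)$ copies of a full-period sum, and the distribution relation $\sum_{i=0}^{p_0-1}B_2(i/p_0)=1/(6p_0)$ gives $\sum_{b=1}^{M/k}P_2(kbh/L')=\tfrac16\cdot(M/k)\cdot\gcd(kh,L')^2/L'^2$; summing over $k$ proves the identity. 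Note $\ell(m)\mid m''$ (writing $m'=\ell(m)^2u$ gives $m''=\ell(m)u$), so the identity applies with $L'=\ell(m)$ and $M=m''$.

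Now I would specialize Lemma~\ref{lemma: order of Fmh}. At $\infty=\vect1N$ take $c=N$: then $(N',c)=N'$, $(c^2,N)=N$, $a'=1$, $c'=\ell(m)$, so the prefactor is $N'/4$ (as $\ell(m)(N')^2=N\cdot N'$) and the argument of $P_2$ is $\tfrac\a{m''}+\d h\equiv\tfrac\a{m''}\pmod1$; the identity with $L'=M=m''$, $h=1$ gives $\tfrac{N'}{24m''}\prod_{p\mid m''}(1-p)=\tfrac m{24}\prod_{p\mid m''}(1-p)$ since $N'=mm''$. At $0=\vect11$ take $c=1$: then $(N',c)=1$, $a'=N'$, $c'=1$, the prefactor is $\ell(m)/4$, and $\tfrac{\a N'}{m''}=\a m\in\Z$ drops out; since $\d$ is an inverse of $\a$ modulo $m''$, it runs over $\mm{m''}$ with $\a$, so the sum is $\sum_{\d\in\mm{m''}}P_2(\d h/\ell(m))$, and the identity yields exactly $\tfrac{m''}{24\ell(m)}\sum_{k\mid m''}\tfrac{\mu(k)}k\gcd(\ell(m),kh)^2$. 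At $\vect1{N_0}$ with $N=2N_0$, $N_0$ odd, the number $\ell(m)$ is odd and $\ell(m)^2\mid N_0$, so $N'=2(N_0/\ell(m))$, $(N',c)=N_0/\ell(m)$, $(c^2,N)=N_0$, $a'=2$, $c'=\ell(m)$, the prefactor is $N_0/(4\ell(m))$, and the $P_2$-argument reduces to $\tfrac{2\a}{m''}\pmod1$; here $\sum_{\a\in\mm{m''}}P_2(2\a/m'')$ equals $\sum_{\a\in\mm{m''}}P_2(\a/m'')$ when $m''$ is odd (doubling permutes $\mm{m''}$) and equals $\sum_{\b\in\mm{m''/2}}P_2(\b/(m''/2))$ when $2\mid m''$ (then $v_2(m'')=1$ and reduction $\mm{m''}\to\mm{m''/2}$ is a bijection), so in both cases it equals $\tfrac16\prod_{p\mid m'',\,p\neq2}(1-p)$ divided by the odd part of $m''$; multiplying by the prefactor and using $\ell(m)m''=N/m$ gives $\tfrac m{24(m,2)}\prod_{p\mid m'',\,p\neq2}(1-p)$.

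The step I expect to be the main obstacle is the divisibility $p_0\mid M/k$ in the identity above — it is precisely what makes the inner sum collapse into a clean $\gcd(kh,L')^2$, and it rests on the relation $\ell(m)\mid m''$ together with the squarefreeness of $k$. A secondary nuisance is the parity bookkeeping at the cusp $\vect1{N_0}$, where one must keep track of the factor $(m,2)$ and of whether $2\mid m''$; the degenerate case $m''=2$, where $F_{m,h}$ is a square root of an $E_{g,h}$, should be matched against Lemma~\ref{lemma: order of Fmh} directly.
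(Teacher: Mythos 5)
Your derivation is correct; the paper itself offers no argument here, simply citing \cite[Cor.\ 2.7]{GYYY}, and your route --- specializing the general order formula of Lemma \ref{lemma: order of Fmh} at the three cusps and evaluating the resulting sums by M\"obius inversion together with the distribution relation for $B_2$, using $\ell(m)\mid m''$ to make the inner sums close up into full periods --- is exactly the standard computation behind that corollary. The checks at $\vect 1{N_0}$ (the prefactor $N_0/(4\ell(m))$, the bijections $\a\mapsto 2\a$ for odd $m''$ and $\mm{m''}\to\mm{m''/2}$ for $v_2(m'')=1$, and the reconciliation with the factor $(m,2)$) all come out right, so nothing is missing.
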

\begin{proof}
This is \cite[Cor. 2.7]{GYYY}.
\end{proof}

\begin{lemma}\label{lemma: Galois action}
For any integer $s$ prime to $L$, we have
\begin{equation*}
\s_{s}(\tn{div}(F_{m, h}))=\tn{div}(F_{m, ~sh}).
\end{equation*}
\end{lemma}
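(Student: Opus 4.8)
The plan is to reduce the statement to the pointwise identity
\begin{equation*}
\s_s(F_{m, h}) \approx F_{m, sh},
\end{equation*}
that is, to show that the automorphism $\s_s$ carries the modular function $F_{m, h}$ to a nonzero constant multiple of $F_{m, sh}$. Granting this, the lemma is immediate, since taking divisors commutes with the Galois action: $\s_s(\div(F_{m, h})) = \div(\s_s(F_{m, h})) = \div(F_{m, sh})$, the last equality because $\div$ does not see the constant.

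To prove $\s_s(F_{m, h}) \approx F_{m, sh}$ I would compare $q$-expansions at the cusp $\infty$. By Definition \ref{definition: Fmh}, $F_{m, h}$ is a product of functions $E_{\a m\ell(m),\, \d h N'}(N'\tau)$ with $\a\d \equiv 1 \pmod{m''}$ (or a square root of one such function when $m'' = 2$), and in the product formula defining $E_{g, h}$ the parameter $h$ intervenes only through the root of unity $\z_N^{h}$ and its inverse, the leading exponent $B_2(g/N)/2$ being a rational number free of roots of unity. For the functions occurring here one has $\z_N^{\d h N'} = \z_{\ell(m)}^{\d h}$, since $N' = N/\ell(m)$; and because $\ell(m)^2 \mid m' \mid N$ we have $\ell(m) \mid L$, so these coefficients lie in $\Q(\z_{\ell(m)}) \subseteq \Q(\z_L)$, the action of $\s_s$ on them is well defined, and $\s_s(\z_{\ell(m)}^{\d h}) = \z_{\ell(m)}^{s\d h}$. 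This is exactly the effect of replacing $h$ by $sh$ — the representative set $S_{m''}$, hence each $\d$, being untouched — so applying $\s_s$ termwise gives $\s_s(E_{\a m\ell(m),\, \d h N'}(N'\tau)) = E_{\a m\ell(m),\, \d (sh) N'}(N'\tau)$, and therefore $\s_s(F_{m, h}) = F_{m, sh}$ when $m'' \neq 2$ and $\s_s(F_{m, h}) = \pm F_{m, sh}$ when $m'' = 2$; in all cases $\s_s(F_{m, h}) \approx F_{m, sh}$.

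The only genuinely delicate point — and the step I expect to take the most care — is the passage from ``$\s_s$ acts on the cyclotomic field $\Q(\z_L)$'' to ``$\s_s$ acts on the modular function $F_{m, h}$ by acting coefficient-wise on its $q$-expansion at $\infty$''. This is the $q$-expansion principle for $X_0(N)$ over $\Q$, resting on the rationality of the cusp $\infty$ and of the local parameter $q$ there; concretely one realizes the $E_{g, h}$, hence the $F_{m, h}$, as honest modular units on a suitable congruence cover of $X_0(N)$ on which the same rationality holds, and then records orders along the cusps of $X_0(N)$. The subtlety is purely that the ``natural'' coefficient field is $\Q(\z_N)$ while $\s_s$ lives on $\Q(\z_L)$; this is harmless precisely because $\ell(m) \mid L$.

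As an alternative that avoids $q$-expansions altogether, one can argue directly from Lemma \ref{lemma: order of Fmh}. The Galois action permutes the cusps of $X_0(N)$ by $\s_s(\vect a c) = \vect{s^{\pm 1} a}{c}$, modulo the equivalence $a \equiv a + (c, N/c)$; so comparing the coefficient at a cusp $\vect a c$ on the two sides of the asserted identity reduces the claim to the equality of the order of $F_{m, h}$ at $\vect{s^{\mp 1} a}{c}$ with the order of $F_{m, sh}$ at $\vect a c$. Both are given by the explicit formula in Lemma \ref{lemma: order of Fmh}, and they match after the substitution $\a \mapsto s^{\pm 1}\a$ in the sum over $\mm{m''}$ — again the point being that $s$ is prime to $L$ and $\ell(m) \mid L$, so $s$ acts invertibly where needed. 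Either way the algebraic content is a one-line change of variables, and essentially all the work lies in setting up the framework (what ring the coefficients of $F_{m,h}$ live in, and in what sense $\s_s$ acts on $F_{m,h}$) correctly.
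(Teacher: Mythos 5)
The paper itself does not prove this lemma; it is imported verbatim from \cite[Lem.\ 2.9]{GYYY}, so there is no in-paper argument to measure yours against. On its merits, your proposal is correct in substance and is essentially the standard proof. The computational core is right: in the product defining $E_{g,h}$ the second index enters only through $\z_N^{\pm h}$, and in the factors $E_{\a m\ell(m),\,\d h N'}(N'\tau)$ of $F_{m,h}$ one has $\z_N^{\d h N'}=\z_{\ell(m)}^{\d h}$ with $\ell(m)\mid L$, so $\s_s$ sends these coefficients to $\z_{\ell(m)}^{s\d h}$, i.e.\ the $q$-series of $F_{m,h}$ to that of $F_{m,sh}$; the case $m''=2$ forces $\ell(m)\mid 2$, where $sh\equiv h\pmod{\ell(m)}$ and the coefficients are rational, so that case is vacuous (no sign ambiguity even arises).

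The step you flag as delicate is indeed the only one needing real care, and your sketch leaves it at the level of an appeal to general principles: $F_{m,h}$ is not $\G_0(N)$-invariant, so ``$\s_s(F_{m,h})$'' must be interpreted on a covering of $X_0(N)$ on which the $E_{g,h}$ are modular, one must invoke the $q$-expansion principle there, and one must check that forming the divisor and pushing it down to the cusps of $X_0(N)$ commutes with $\s_s$. Your alternative route via Lemma \ref{lemma: order of Fmh} avoids the $q$-expansion principle but trades it for the explicit formula for the action of $\s_s$ on a cusp $\vect a c$ (multiplication of $a$ by $s^{\pm1}$ modulo $(c, N/c)$), which this paper states only implicitly; granting that formula, the substitution $\a\mapsto s^{\pm1}\a$, $\d\mapsto s^{\mp1}\d$ in the sum over $\mm{m''}$ does close the argument exactly as you say, since the order formula depends on $\a$ and $h$ only through $\a a'/m''$ and $\d h c'/\ell(m)$. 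Either version is sound; neither is fully self-contained without one of these two standard but nontrivial inputs, which is presumably why the authors simply cite \cite{GYYY}.
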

\begin{proof}
This is \cite[Lem. 2.9]{GYYY}.
\end{proof}

\begin{lemma}\label{lemma: Fm0 = eta product}
For any $m \in \cD_N$, we have
\begin{equation*}
F_{m, 0}(\tau) \approx \prod_{k \mid m''} \eta(km \tau)^{\mu(k)}=\prod_{k \mid m'} \eta(km \tau)^{\mu(k)},
\end{equation*}
where $\eta$ is the Dedekind eta function.
\end{lemma}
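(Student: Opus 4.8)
The plan is to reduce the identity to an elementary manipulation of infinite $q$-products, isolating the exponent of $q$ as the only step that requires real work. First I would observe that $m''\geq 2$ for every $m\in\cD_N$: since $\ell(m)\leq\sqrt{m'}$, the case $m''=1$ (equivalently $\ell(m)=m'$) would force $m'=1$, i.e.\ $m=N\notin\cD_N$. So assume $m''\geq 2$ and put $Q:=q^{m}$. Using $N=m\ell(m)m''$, so that $N'=N/\ell(m)=mm''$ and $\a m\ell(m)/N=\a/m''$, a direct substitution $\tau\mapsto N'\tau$ in the product defining $E_{g,0}$ gives, for each $\a$ with $1\leq\a\leq m''-1$ and $(\a,m'')=1$,
\begin{equation*}
E_{\a m\ell(m),\,0}(N'\tau)=Q^{m''B_2(\a/m'')/2}\prod_{\substack{j\geq 1\\ j\equiv\pm\a\pmod{m''}}}\bigl(1-Q^{j}\bigr),
\end{equation*}
since the two factors $\prod_{n\geq 1}(1-q^{N'(n-1+\a/m'')})$ and $\prod_{n\geq 1}(1-q^{N'(n-\a/m'')})$ reindex to the progressions of positive integers congruent to $\a$ and to $-\a$ modulo $m''$, which are disjoint when $m''\geq 3$ (for $m''=2$ they coincide, and the square root in the definition of $F_{m,0}$ compensates).

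Next I would multiply over $\a\in S_{m''}$. As $\{\pm\a:\a\in S_{m''}\}$ is a transversal of $\{\pm 1\}$ in $\mm{m''}$, the progressions $\{j\geq 1:j\equiv\pm\a\pmod{m''}\}$ for $\a\in S_{m''}$ partition $\{j\geq 1:(j,m'')=1\}$, and hence
\begin{equation*}
F_{m,0}(\tau)\approx Q^{c_0}\prod_{\substack{j\geq 1\\ (j,m'')=1}}\bigl(1-Q^{j}\bigr),\qquad c_0:=\frac{m''}{4}\sum_{\substack{1\leq\a<m''\\ (\a,m'')=1}}B_2\!\left(\tfrac{\a}{m''}\right),
\end{equation*}
using $B_2(\a/m'')=B_2\bigl((m''-\a)/m''\bigr)$ to pass from the sum over $S_{m''}$ (or, for $m''=2$, from the single value $B_2(1/2)$) to the sum over all primitive residues. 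On the other hand, $\eta(km\tau)=Q^{k/24}\prod_{n\geq 1}(1-Q^{kn})$ together with the fact that $\sum_{k\mid(j,m'')}\mu(k)$ equals $1$ when $(j,m'')=1$ and $0$ otherwise yields
\begin{equation*}
\prod_{k\mid m''}\eta(km\tau)^{\mu(k)}=Q^{\frac{1}{24}\sum_{k\mid m''}k\mu(k)}\prod_{\substack{j\geq 1\\ (j,m'')=1}}\bigl(1-Q^{j}\bigr),
\end{equation*}
so the two functions agree up to a constant precisely when $c_0=\tfrac{1}{24}\sum_{k\mid m''}k\mu(k)$.

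The main obstacle — or rather the one genuinely arithmetic step — is this exponent identity, which I would derive from the Bernoulli distribution relation $\sum_{a=0}^{M-1}B_2(\{a/M\})=\tfrac{1}{6M}$ (the case $k=2$, $x=0$ of $\sum_{a=0}^{M-1}B_k(x+\tfrac aM)=M^{1-k}B_k(Mx)$). Grouping this sum according to $d=(a,M)$ gives $\sum_{e\mid M}g(e)=\tfrac{1}{6M}$ with $g(e):=\sum_{1\leq b<e,\ (b,e)=1}B_2(b/e)$ for $e>1$ and $g(1)=\tfrac16$, and Möbius inversion then yields $g(M)=\tfrac{1}{6M}\sum_{k\mid M}k\mu(k)$; taking $M=m''$ gives $c_0=\tfrac{m''}{4}g(m'')=\tfrac{1}{24}\sum_{k\mid m''}k\mu(k)$, as required. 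Finally the second equality in the statement is immediate: $m'$ and $m''$ have the same radical (a prime divides $m''=m'/\ell(m)$ iff it divides $m'$), so their squarefree divisors coincide while $\mu$ vanishes off the squarefree integers, whence $\prod_{k\mid m''}\eta(km\tau)^{\mu(k)}=\prod_{k\mid m'}\eta(km\tau)^{\mu(k)}$. What remains — the explicit reindexing of the $q$-products, the precise value of the nonzero constant (a sign), and the bookkeeping for $m''=2$ — is routine.
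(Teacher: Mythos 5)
Your argument is correct, and it is genuinely more self-contained than what the paper does: for the first equality the paper simply cites \cite[Rmk.\ 2.12]{GYYY}, whereas you reprove it from the definition of $E_{g,h}$. Your computation is sound at every step: the reindexing $N'(n-1+\a/m'')=m(m''(n-1)+\a)$ and $N'(n-\a/m'')=m(m''n-\a)$ does turn each $E_{\a m\ell(m),0}(N'\tau)$ into a $Q$-product over the progressions $\pm\a \pmod{m''}$ (with the $m''=2$ degeneration handled correctly by the square root in Definition \ref{definition: Fmh}); the union over $S_{m''}$ does give the product over $j$ prime to $m''$; the M\"obius identity $\sum_{k\mid(j,m'')}\mu(k)=[\gcd(j,m'')=1]$ matches this against the eta quotient; and the exponent identity $\tfrac{m''}{4}\sum_{(\a,m'')=1}B_2(\a/m'')=\tfrac{1}{24}\sum_{k\mid m''}k\mu(k)$ follows exactly as you say from the distribution relation $\sum_{a=0}^{M-1}B_2(a/M)=\tfrac{1}{6M}$ plus M\"obius inversion (one can verify it numerically for $m''=2$: both sides equal $-1/24$). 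Your observation that $m''\geq 2$ always holds is a correct and worthwhile sanity check on the definition. For the second equality your argument coincides with the paper's: $v_p(m'')=\lceil v_p(m')/2\rceil$, so $m'$ and $m''$ have the same squarefree divisors and $\mu$ kills the rest. The only thing your write-up buys at a cost is length; what it buys in return is independence from the external reference and an explicit identification of $F_{m,0}$ as the eta quotient $\prod_{k\mid m''}\eta_{km}^{\mu(k)}$ with the fractional $q$-power accounted for, which is exactly the content being imported from \cite{GYYY}.
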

\begin{proof}
The first equality (up to constant) is \cite[Rmk. 2.12]{GYYY}. The second one easily follows by two facts: one is that $\mu(k)=0$ if $k$ is not squarefree; and the other is that
the set of all squarefree divisors of $m''$ coincides with that of $m'=N/m=m'' \ell(m)$.
\end{proof}

\begin{lemma}\label{lemma: root of unity from transformation}
Let $m \in \cD_N$ and $\g=\mat a b c d \in \SL_2(\Z)$ with $24 \dd b$, $24N \dd c$ and $c>0$.
If $N=2N_0$, then we have
\begin{equation*}
F_{N_0, 0}(\g \tau)=(-1)^{\frac{a^2-1}{8}} F_{N_0, 0}(\tau).
\end{equation*}
If $m''\neq 2$, then we have
\begin{equation*}
F_{m, h}(\g \tau)=(-1)^{\frac{(d-1)\p(m'')}{4}} \times \z_{\ell(m)}^{B_{m, h}} \prod_{\a \in S_{m''}} E_{\a a m \ell(m), \d d h N'}(N' \tau),
\end{equation*}
where
\begin{equation*}
B_{m, h}:=\frac{h(1-d)}{2} \sum\nolimits_{\a \in S_{m''}} \d.
\end{equation*}
\end{lemma}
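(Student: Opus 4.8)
\emph{Strategy.} The plan is to reduce the statement to the transformation law of the generalized Dedekind eta function $E_{g,h}$ under the full modular group. Up to an elementary normalizing factor $E_{g,h}$ is a Siegel function, so for every $\gamma=\mat{a}{b}{c}{d}\in\SL_2(\Z)$ there is a formula
\begin{equation*}
E_{g,h}(\gamma\tau)=\varepsilon(\gamma;g,h)\cdot E_{ga+hc,\;gb+hd}(\tau),
\end{equation*}
where $\varepsilon(\gamma;g,h)$ is an explicit root of unity assembled from the classical eta-multiplier of $\gamma$ (hence a Dedekind sum $s(d,c)$ and the term $(a+d)/(12c)$) together with a quadratic exponential in $(g,h)$ coming from the normalization (see \cite[Th. 2]{Ya04}, recalled in \cite{GYYY}). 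Since $24N\mid c$ we have $N'\mid N\mid c$, so $\gamma':=\mat{a}{bN'}{c/N'}{d}\in\SL_2(\Z)$ and $N'(\gamma\tau)=\gamma'(N'\tau)$. Writing $\tau'=N'\tau$ we therefore obtain, for $m''\neq 2$,
\begin{equation*}
F_{m,h}(\gamma\tau)=\prod_{\alpha\in S_{m''}}E_{\alpha m\ell(m),\;\delta h N'}\bigl(\gamma'\tau'\bigr).
\end{equation*}

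\emph{The transformed parameters.} Applying the $\SL_2$-formula to each factor sends $(\alpha m\ell(m),\,\delta hN')$ to $\bigl(\alpha a\,m\ell(m)+\delta h c,\ \alpha b\,m\ell(m)N'+\delta h d N'\bigr)$. Because $\ell(m)N'=N$ the second coordinate is $\equiv \delta d h N'\pmod{N}$, and because $N\mid c$ the first is $\equiv \alpha a\,m\ell(m)\pmod{N}$. Reducing the parameters modulo $N$ by Lemma~\ref{lemma: Egh} — which costs only harmless factors of the shape $-\zeta_N^{\,\ast}$ — recovers exactly the factor $E_{\alpha a m\ell(m),\;\delta d h N'}(N'\tau)$ occurring in the statement, so that $F_{m,h}(\gamma\tau)=\xi(\gamma;m,h)\prod_{\alpha\in S_{m''}}E_{\alpha a m\ell(m),\;\delta d h N'}(N'\tau)$ and everything reduces to computing the root of unity $\xi(\gamma;m,h)$. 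For the remaining case, by Lemma~\ref{lemma: Fm0 = eta product} we have $F_{N_0,0}(\tau)\approx\eta(N_0\tau)/\eta(N\tau)$ with $N=2N_0$; since the implied constant is common to both sides, it suffices to compute the multiplier of the eta-quotient, and writing $N_0(\gamma\tau)=\gamma_1(N_0\tau)$, $N(\gamma\tau)=\gamma_2(N\tau)$ with $\gamma_1=\mat{a}{bN_0}{c/N_0}{d}$, $\gamma_2=\mat{a}{bN}{c/N}{d}\in\SL_2(\Z)$, Dedekind's formula $\eta(\gamma''\tau)=\varepsilon(\gamma'')(c''\tau+d)^{1/2}\eta(\tau)$ gives, after cancellation of the half-integral weight factors,
\begin{equation*}
F_{N_0,0}(\gamma\tau)=\frac{\varepsilon(\gamma_1)}{\varepsilon(\gamma_2)}\,F_{N_0,0}(\tau)=\exp\!\Bigl(\pi i\Bigl(\tfrac{-N_0(a+d)}{12c}+s(d,c/N)-s(d,c/N_0)\Bigr)\Bigr)F_{N_0,0}(\tau).
\end{equation*}

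\emph{Evaluating the roots of unity, and the main obstacle.} This last step is the crux, and it is here that the hypotheses $24\mid b$ and $24N\mid c$ enter decisively: they are precisely the classical divisibility conditions under which the eta-multiplier of $\gamma'$ (resp.\ of $\gamma_1,\gamma_2$) degenerates, so that after invoking Dedekind reciprocity the Dedekind-sum contributions cancel and the quadratic exponentials collapse, their numerators having acquired the needed divisibility. For $m''\neq 2$ one finds that the product over the $|S_{m''}|=\varphi(m'')/2$ factors of the surviving sign works out to $(-1)^{(d-1)\varphi(m'')/4}$ — note $d$ is odd, as $c$ is even — while the quadratic part reassembles, using $24\mid b$ and $\ell(m)N'=N$, into $\zeta_{\ell(m)}^{B_{m,h}}$ with $B_{m,h}=\tfrac{h(1-d)}{2}\sum_{\alpha\in S_{m''}}\delta$, giving the asserted identity. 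For $F_{N_0,0}$, since $48N_0\mid c$ and $24\mid b$ (so $ad\equiv 1\pmod{8}$), reciprocity together with the resulting congruences collapses the exponent above to the Legendre symbol $\leg{2}{d}=(-1)^{(d^2-1)/8}=(-1)^{(a^2-1)/8}$, as claimed. The genuinely delicate part is verifying these cancellations in detail: it requires Dedekind reciprocity plus careful congruence bookkeeping modulo $\ell(m)$, $m''$ and $24$, the $24$-divisibility hypotheses being exactly the conditions that trivialize, or render Jacobi-symbol valued, eta-type multipliers.
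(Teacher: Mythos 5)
The paper does not prove this lemma: it simply cites \cite[Rmk.~2.5]{GYYY} and \cite[Lem.~2.14]{GYYY}, so your proposal is being measured against the proof in that earlier paper, which does indeed proceed exactly along the lines you describe (Yang's transformation law for $E_{g,h}$, conjugation of $\gamma$ by $\left(\begin{smallmatrix} N' & 0\\ 0& 1\end{smallmatrix}\right)$, and the $24$-divisibility hypotheses to kill the eta-multiplier and Dedekind-sum contributions). Your setup is correct: the identity $N'(\gamma\tau)=\gamma'(N'\tau)$ with $\gamma'=\mat{a}{bN'}{c/N'}{d}$, the computation of the transformed parameters, and the observation that they agree with $(\a a m\ell(m),\d d h N')$ modulo $N$ are all right, as is the reduction of the $F_{N_0,0}$ case to an eta-quotient multiplier via Lemma~\ref{lemma: Fm0 = eta product}.

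However, there is a genuine gap: the entire content of the lemma is the exact value of the root of unity, namely that it equals $(-1)^{(d-1)\p(m'')/4}\,\z_{\ell(m)}^{B_{m,h}}$ with $B_{m,h}=\tfrac{h(1-d)}{2}\sum_{\a}\d$, and you never compute it. You write ``one finds that the product \dots works out to'' the stated sign and that the quadratic part ``reassembles'' into $\z_{\ell(m)}^{B_{m,h}}$, and you explicitly defer the verification as ``the genuinely delicate part.'' This is precisely the step that cannot be waved away: the multiplier $\ve(\gamma';g,h)e^{\pi i\delta}$ from \cite[Th.~2]{Ya04} involves a quadratic exponential $\delta=\frac{g^2ab'+2ghb'c'+h^2c'd}{N^2}-\frac{gb'+h(d-1)}{N}$ in the parameters of $\gamma'$, and extracting $B_{m,h}$ from it (and checking that the remaining terms are integers, using $24\dd b$, $24N\dd c$ and $m\ell(m)N'=mN$) is where the work lies. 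Moreover, the factors you call ``harmless'' from reducing the first index modulo $N$ via Lemma~\ref{lemma: Egh} are of the form $(-\z_N^{-\d dhN'})^{\d hc/N}$ and contribute nontrivially to the $\z_{\ell(m)}$-part; they must be tracked, not discarded. As it stands your argument proves only that $F_{m,h}(\gamma\tau)=\xi\prod_{\a}E_{\a am\ell(m),\d dhN'}(N'\tau)$ for \emph{some} root of unity $\xi$, which is strictly weaker than the lemma; the same criticism applies to the $F_{N_0,0}$ case, where the collapse of $s(d,c/N)-s(d,c/N_0)$ and the $(a+d)/(12c)$ term to $(-1)^{(a^2-1)/8}$ is asserted rather than derived.
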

\begin{proof}
The first assertion follows by \cite[Rmk. 2.5]{GYYY}, and the second is \cite[Lem. 2.14]{GYYY}.
\end{proof}

\begin{lemma}\label{lemma: sum with gauss}
Let $m \in \cD_N$ with $m'' \neq 2$ and $L$ odd. For any odd integer $a$ prime to $m''$, we have
\begin{equation*}
\prod_{\a \in S_{m''}} (-1)^{\gauss{\a a/{m''}}} = \begin{cases}
(-1)^{\frac{(a-1)\p(m'')}{4}} & \text{ if $m''$ is even},\\
\leg a p & \text{ if $m''=p^k$ for some odd prime $p$},\\
1 & \text{ otherwise}.
\end{cases}
\end{equation*}
\end{lemma}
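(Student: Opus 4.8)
The plan is to prove this as a self-contained elementary statement about floor functions; the generalized eta products play no role. \textbf{Step 1 (reductions).} First I would check the left‑hand side is independent of the choice of representative set $S_{m''}$: because $a$ is odd and $\a a/m''\notin\Z$ (both $\a$ and $a$ are prime to $m''$), one has $\gauss{(m''-\a)a/m''}=a-1-\gauss{\a a/m''}$, hence $(-1)^{\gauss{(m''-\a)a/m''}}=(-1)^{\gauss{\a a/m''}}$; so I may take $S_{m''}=\{\a:0<\a<m''/2,\ \gcd(\a,m'')=1\}$, and the case $m''=1$ is then trivial. Since $L$ is odd, $4\nmid N$, so $v_2(m'')\le 1$; thus $m''$ is odd $\ge 3$, or $m''=2w$ with $w$ odd $\ge 3$.

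\textbf{Step 2 (Möbius inversion and Eisenstein's lemma).} Writing $\gauss{\a a/m''}=(\a a-(\a a\bmod m''))/m''$ and removing the coprimality condition by Möbius inversion (the inner range collapses because $\lfloor\lfloor(m''-1)/2\rfloor/d\rfloor=\lceil(m''/d)/2\rceil-1$) gives
$$\sum_{\a\in S_{m''}}\gauss{\frac{\a a}{m''}}=\sum_{d\mid m''}\mu(d)\,T_a(m''/d),\qquad T_a(Q):=\sum_{0<\g<Q/2}\gauss{\frac{a\g}{Q}},$$
only squarefree $d$ contributing. The essential input is Eisenstein's lemma in Jacobi‑symbol form: for odd $Q\ge 1$ and odd $a$ prime to $Q$, $(-1)^{T_a(Q)}=\leg{a}{Q}$. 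I would prove it by the reflection argument — writing $a\g=Q\gauss{a\g/Q}+r_\g$, noting $\g\mapsto\min(r_\g,Q-r_\g)$ permutes $\{1,\dots,(Q-1)/2\}$, and using $8\mid Q^2-1$ to kill the quadratic term modulo $2$ — reducing to $T_a(Q)\equiv\#\{0<\g<Q/2:\ a\g\bmod Q>Q/2\}\pmod 2$, the last count being $\leg{a}{Q}$ by Gauss's lemma for the Jacobi symbol.

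\textbf{Step 3 (evaluation).} If $m''$ is odd, every $m''/d$ is odd, so $(-1)^{\sum_\a\gauss{\a a/m''}}=\prod_{d\mid m''}\leg{a}{m''/d}^{\mu(d)}$; writing $m''=\prod_i p_i^{e_i}$, using multiplicativity of the Jacobi symbol, and expanding over subsets $S\subseteq\{1,\dots,r\}$, the exponent of $\leg{a}{p_j}$ equals $-\sum_{S\ni j}(-1)^{|S|}$, which is $0$ for $r\ge 2$ and $1$ for $r=1$; this gives the "otherwise" case ($1$) and the prime‑power case ($\leg{a}{p}$). If $m''=2w$ with $w$ odd, the squarefree divisors $m''/d$ come in pairs $\{2Q,Q\}$ with $Q=w/d_0$ odd, and the identity of Step 2 reorganizes as $\prod_{d_0\mid w}\bigl((-1)^{T_a(2Q)-T_a(Q)}\bigr)^{\mu(d_0)}$; splitting $T_a(2Q)=\sum_{0<\g<Q}\gauss{a\g/(2Q)}$ by the parity of $\g$ gives $T_a(2Q)-T_a(Q)=\sum_{k=1}^{(Q-1)/2}\gauss{a(2k-1)/(2Q)}$, and reducing this sum modulo $2$ and recombining over $d_0$ (via $\sum_{d_0\mid w}\mu(d_0)=0$ and $\sum_{d_0\mid w}\mu(d_0)(w/d_0)=\p(w)=\p(m'')$) leaves exactly the single factor $(-1)^{(a-1)\p(m'')/4}$.

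\textbf{Main obstacle.} I expect the even case to be the hard part: squeezing $T_a(2Q)-T_a(Q)$ modulo $2$ into a closed form uniformly in $Q\mid w$ and assembling the pieces so as to land precisely on $(-1)^{(a-1)\p(m'')/4}$. A cleaner alternative here is to observe that $a\mapsto\prod_{\a\in S_{m''}}(-1)^{\gauss{\a a/m''}}$ is multiplicative in $a$ (from $\gauss{\a ab/m''}=b\gauss{\a a/m''}+\gauss{(\a a\bmod m'')b/m''}$ with $b$ odd, together with Step 1), hence a $\{\pm1\}$‑valued character of the group of odd residues modulo $2m''$ prime to $m''$, identify that group with $\Z/2\times(\Z/w)^\times$, and pin the character down; but showing its restriction to $(\Z/w)^\times$ is trivial still costs an argument of comparable flavour. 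Either way, the Jacobi‑symbol form of Eisenstein's lemma and Gauss's lemma for Jacobi symbols must be set up with care, since it is precisely the failure of their naive analogues for composite moduli that produces the three distinct cases in the statement.
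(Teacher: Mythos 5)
The paper itself does not prove this lemma; it quotes it verbatim from [GYYY, Lem.~3.3], so your self-contained elementary argument is necessarily a different route from the text's citation. Your Steps 1 and 2 are sound: the independence of the representative set (via $\gauss{(m''-\a)a/m''}=a-1-\gauss{\a a/m''}$ with $a$ odd), the reduction to $S_{m''}=\{0<\a<m''/2:\gcd(\a,m'')=1\}$, the Möbius expansion $\sum_{\a\in S_{m''}}\gauss{\a a/m''}=\sum_{d\mid m''}\mu(d)T_a(m''/d)$, and the Jacobi-symbol form of Eisenstein's lemma $(-1)^{T_a(Q)}=\leg{a}{Q}$ for odd $Q$ are all correct, and the inclusion--exclusion in Step 3 does yield $\leg{a}{p}$ when $m''=p^k$ and $1$ when $m''$ is odd with at least two prime factors.

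The even case, however, is a genuine gap as written: you assert that reducing $T_a(2Q)-T_a(Q)$ modulo $2$ and recombining over $d_0\mid w$ "leaves exactly" $(-1)^{(a-1)\varphi(m'')/4}$, but you never produce the closed form that makes this recombination work, and you flag it yourself as the obstacle. The missing identity is $T_a(2Q)-T_a(Q)\equiv\frac{(a-1)(Q-1)}{4}\pmod 2$ for odd $Q$ coprime to $a$; granting it, $\sum_{d_0\mid w}\mu(d_0)\frac{(w/d_0)-1}{2}=\frac{\varphi(w)}{2}$ and $\varphi(2w)=\varphi(w)$ give exactly the stated exponent. This identity is not a routine parity manipulation of your sum $\sum_k\gauss{a(2k-1)/(2Q)}$; the clean way to obtain it is via reciprocity: the lattice-point count gives the exact equality $T_a(2Q)=\sum_{y=1}^{(a-1)/2}\gauss{2Qy/a}$, whose parity by the Gauss-lemma computation is $\leg{2Q}{a}(-1)^{(2Q-1)(a^2-1)/8}$, which simplifies using $\leg{2}{a}=(-1)^{(a^2-1)/8}$ to $(-1)^{T_a(2Q)}=\leg{Q}{a}$; combined with $(-1)^{T_a(Q)}=\leg{a}{Q}$ and quadratic reciprocity for the Jacobi symbol this yields $(-1)^{T_a(2Q)-T_a(Q)}=(-1)^{\frac{a-1}{2}\cdot\frac{Q-1}{2}}$. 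With that ingredient supplied your outline closes up (I have checked it numerically, e.g.\ for $m''=18$), but without it the even case --- the only genuinely delicate part of the lemma --- is asserted rather than proved.
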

\begin{proof}
This is \cite[Lem. 3.3]{GYYY}.
\end{proof}

\begin{lemma}\label{lemma: root of unity for G'}
Let $G$ be the subgroup of $\G_0(N)$ generated by $\mat 1 1 0 1$ and $\mat 1 0 N 1$.
\begin{enumerate}[(i)]
\item
Assume that 
\begin{equation*}
g(\tau)=\prod_{(m, h) \in \cS^*} F_{m, h}^{e(m, h)}~~\text{ for some } ~e(m, h) \in \Z
\end{equation*}
is a product such that its orders at the cusps $0$ and $\infty$ are integers. Then for any element $\g$ of $G_0(N)$, the value of the root of unity $\e$ in $g(\g \tau)=\e g(\tau)$ depends only on the right coset $G \g$ of $\g$ in $\G_0(N)$.
\item
Every right coset of $G$ in $\G_0(N)$ contains an element $\mat a b c d$ such that $24 \dd b$, $24N \dd c$ and $c>0$.
\end{enumerate}
\end{lemma}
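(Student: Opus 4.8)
The plan is to prove the group-theoretic statement~(ii) first, and then deduce~(i) from it together with Lemma~\ref{lemma: root of unity from transformation}.

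\textbf{Part (ii).} Let $\gamma=\mat abcd\in\G_0(N)$, so $N\mid c$ and $\gcd(a,c)=1$. I would reduce $\gamma$ modulo $G$ by repeated left multiplication by $T:=\mat1101$ and $U:=\mat10N1$; recall that $T^k\gamma=\mat{a+kc}{b+kd}{c}{d}$ changes only the top row and $U^k\gamma=\mat ab{c+kNa}{d+kNb}$ only the bottom row. First, the residues $a+kc\bmod 24$ fill the coset $a+\gcd(c,24)\Z$ in $\Z/24\Z$; since $\gcd(a,\gcd(c,24))=1$ and $24=8\cdot 3$, a short Chinese-remainder argument shows this coset meets $(\Z/24\Z)^\times$, so after replacing $\gamma$ by $T^{k_1}\gamma$ we may assume $\gcd(a,24)=1$. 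As $a$ is then a unit modulo $24$, a suitable $U^{k_2}$ makes $24N\mid c$ without changing $a$. Now $ad\equiv1\pmod{24}$ (from $ad-bc=1$ and $24\mid c$), hence $\gcd(d,24)=1$, so a suitable $T^{k_3}$ makes $24\mid b$ while leaving $c$ unchanged. Finally $U^{24k_4}$ alters $c$ only, by $24k_4Na$, and since $\gcd(a,24)=1$ forces $a\neq0$ we may take $c>0$. This produces the desired representative of $G\gamma$.

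\textbf{Part (i), step 1.} The crucial point is that for \emph{every} $\gamma=\mat abcd\in\G_0(N)$ and every $(m,h)$ one has $F_{m,h}(\gamma\tau)=\nu_{m,h}(\gamma)\,F_{m,h}(\tau)$ for some root of unity $\nu_{m,h}(\gamma)$. Indeed, $N'\gamma\tau=\gamma'(N'\tau)$ with $\gamma'=\mat a{bN'}{c/N'}d\in\SL_2(\Z)$; applying the transformation law for the generalized Dedekind eta functions (\cite{Ya04}, cf.\ Lemma~\ref{lemma: root of unity from transformation}) and reducing the resulting indices modulo $N$ by Lemma~\ref{lemma: Egh}, one writes $F_{m,h}(\gamma\tau)$ as a root of unity times $F_{m,adh}(\tau)$ --- the fact that the permuted product $\prod_{\a\in S_{m''}}E_{\a am\ell(m),\d dhN'}(N'\tau)$ reassembles, up to a constant, into $F_{m,adh}$ hinges on the divisibility $\ell(m)\mid m''$, which holds because $\ell(m)^2\mid m'=\ell(m)m''$. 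Since $\ell(m)\mid N\mid c$ and $ad=1+bc$, we get $ad\equiv1\pmod{\ell(m)}$, so $F_{m,adh}=F_{m,h}$ as $F_{m,\cdot}$ depends only on its second argument modulo $\ell(m)$; the case $m''=2$ is handled by passing to $F_{m,h}^2$ and using that $d$ is odd (as $2\mid N$). Consequently $g(\gamma\tau)=\ve(\gamma)g(\tau)$ with $\ve(\gamma):=\prod_{(m,h)}\nu_{m,h}(\gamma)^{e(m,h)}$ a root of unity, and from $g\big((\gamma_1\gamma_2)\tau\big)=g\big(\gamma_1(\gamma_2\tau)\big)=\ve(\gamma_1)\ve(\gamma_2)g(\tau)$ the map $\ve\colon\G_0(N)\to\mu_\infty$ is a homomorphism.

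\textbf{Part (i), step 2.} Being a homomorphism, $\ve$ is constant on right cosets of $G$ if and only if $\ve|_G=1$, i.e.\ if and only if $\ve(T)=\ve(\mat10N1)=1$. For $\ve(T)$: by the definition of $F_{m,h}$, the series $q^{-m\prod_{p\mid m''}(1-p)/24}F_{m,h}$ is a power series in $q$, so $g=\prod F_{m,h}^{e(m,h)}$ has a $q$-expansion with only integral exponents and leading exponent $\tn{ord}_\infty(g)\in\Z$; hence $g(\tau+1)=g(\tau)$ and $\ve(T)=1$. For $\ve(\mat10N1)$: this matrix fixes the cusp $0$, and $S^{-1}\mat10N1S=T^{-N}$ with $S=\mat0{-1}10$; writing $h_0(\sigma):=g(S\sigma)=\sum_{j\geq j_0}d_j e^{2\pi ij\sigma/W}$ with $d_{j_0}\neq0$ and pulling the relation $g(\mat10N1\tau)=\ve(\mat10N1)g(\tau)$ back along $S$ gives $h_0(\sigma-N)=\ve(\mat10N1)h_0(\sigma)$, so comparing leading Fourier coefficients yields $\ve(\mat10N1)=e^{-2\pi ij_0N/W}=e^{-2\pi i\,\tn{ord}_0(g)}$, which is $1$ because $\tn{ord}_0(g)\in\Z$. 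Therefore $\ve|_G=1$, which is~(i). (Combined with Lemma~\ref{lemma: root of unity from transformation} and~(ii), this lets one compute $\ve(\gamma)$ on a representative of $G\gamma$ of the special form there.)

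\textbf{Main obstacle.} Part~(ii) is elementary. The technical heart of~(i) is the identity $F_{m,h}(\gamma\tau)=(\text{root of unity})\cdot F_{m,h}(\tau)$ for arbitrary $\gamma\in\G_0(N)$: keeping track of how the two indices of $E_{g,h}$ transform, how the product over $\a\in S_{m''}$ is permuted, and how the indices reduce modulo $N$ --- all of which closes up thanks to $\ell(m)\mid m''$ --- with the $m''=2$ case requiring extra care because of the square root. If one prefers to invoke only Lemma~\ref{lemma: root of unity from transformation} (valid for $\gamma$ of the special form with $24\mid b$, $24N\mid c$), then one first reduces $\gamma$ to that form using~(ii), but must then establish the invariance $g(\mat10N1\tau)=g(\tau)$ separately, again by the cusp-$0$ expansion argument above.
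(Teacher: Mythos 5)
Your proof is correct. Note that the paper itself gives no argument for this lemma --- it is quoted verbatim from \cite[Lem.~3.2]{GYYY} --- so there is no in-paper proof to compare against; your reconstruction (reduce mod $G$ by an elementary CRT argument for (ii); observe that each $F_{m,h}$ transforms under all of $\G_0(N)$ into a root of unity times itself, so that $\gamma\mapsto\e(\gamma)$ is a character of $\G_0(N)$ which is trivial on the generators $\mat 1101$ and $\mat 10N1$ of $G$ precisely because the orders at $\infty$ and $0$ are integral) is the standard route and surely the one taken in the cited source. The only point you defer rather than carry out --- that $F_{m,h}(\gamma\tau)/F_{m,h}(\tau)$ is constant for arbitrary $\gamma\in\G_0(N)$, which requires reassembling the permuted product over $S_{m''}$ using $\ell(m)\mid m''$ and $ad\equiv 1\pmod{\ell(m)}$ --- is exactly the computation already performed in the proof of Theorem \ref{theorem: criterion}, so this is a legitimate appeal rather than a gap.
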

\begin{proof}
This is \cite[Lem. 3.2]{GYYY}.
\end{proof}

\svv  
The following is a generalization of \cite[Th 1.10]{GYYY}, which is again a generalization of a famous criterion of Ligozat \cite[Prop. 3.2.1]{Li75}.
\begin{theorem}\label{theorem: criterion}
Suppose that $L$ is odd and let
\begin{equation*}
g=\prod_{(m, h) \in \cS^*} F_{m, h}^{a(m, h)} \quad\text{ for some }~~ a(m, h) \in \Z.
\end{equation*}
Then $g$ is a modular unit on $X_0(N)$ if and only if all the following hold.
\begin{enumerate}
\item
The order of $g$ at a cusp $\infty$ is an integer.
\item
The order of $g$ at a cusp $0$ is an integer.
\item
The order of $g$ at a cusp $\vect 1 {N_0}$ is an integer, where $N_0$ is the odd part of $N$.
\item
\tn{(mod $L$ condition)} For any integer $1\leq i \leq N$ prime to $N$, we have
\begin{equation*}
\sum_{m \in \cD_N^*} \psi_i(m) \sum_{h=1}^{\ell(m)-1} h a(m, h) \equiv 0 \pmod L,
\end{equation*}
where 
\begin{equation*}
\psi_{i}(m) := \sum_{1\leq \a \leq m'', ~ (\a, m'')=1} \delta \gauss{\frac{\a i }{m''}}  \frac{L}{\ell(m)}.
\end{equation*}

\item
\tn{(mod $2$ condition)} For every odd prime divisor $p$ of $N$, we have
\begin{equation*}
\sum_{m: m''=p^r} \sum_{h=0}^{\ell(m)-1} a(m, h) \equiv 0 \pmod 2,
\end{equation*}
where the first sum runs over all $m \in \cD_N$ such that $m''$ is a power of $p$.
\end{enumerate}
\end{theorem}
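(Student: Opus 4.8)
The plan is to prove the equivalence by a Ligozat-type computation with transformation formulas, generalizing \cite[Th.~1.10]{GYYY}. The preliminary observation is that each $E_{g,h}$ is holomorphic and non-vanishing on the upper half-plane $\mathbb H$, and that every $F_{m,h}$ is of weight $0$ (no weight constraint on the exponents appears in Theorem~\ref{thm: basis}; concretely $m''\geq 2$ for all $m\in\cD_N$, so $F_{m,0}$ is a weight-$0$ eta quotient by Lemma~\ref{lemma: Fm0 = eta product}, and varying $h$ does not change the weight). Hence $g=\prod_{(m,h)\in\cS^*}F_{m,h}^{a(m,h)}$ is holomorphic, non-vanishing and of weight $0$ on $\mathbb H$, and is meromorphic at every cusp. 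Therefore $g$ is (a constant multiple of) a modular unit on $X_0(N)$ \emph{if and only if} $g$ is $\G_0(N)$-invariant up to a constant: granting such invariance, $g$ descends to a meromorphic, hence rational, function on the compact Riemann surface $X_0(N)$ whose divisor is supported on the cusps, and in particular its order at every cusp is automatically an integer. So it remains to show that conditions (1)--(5) are together equivalent to the statement that $g(\gamma\tau)=\epsilon_\gamma\, g(\tau)$ for all $\gamma\in\G_0(N)$ with some constant $\epsilon_\gamma\in\C^*$ (necessarily a root of unity).

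First I would pass from all of $\G_0(N)$ to finitely many matrices. The integrality of the orders of $g$ at $\infty$ and at $0$, conditions (1) and (2), lets one conclude, as in \cite[Th.~1.10]{GYYY}, that $g$ is invariant up to a constant under the group $G=\langle\mat1101,\mat10N1\rangle$; then Lemma~\ref{lemma: root of unity for G'}(i) shows that the scalar $\epsilon_\gamma$ in $g(\gamma\tau)=\epsilon_\gamma\, g(\tau)$ depends only on the right coset $G\gamma$, and by Lemma~\ref{lemma: root of unity for G'}(ii) it therefore suffices to compute $\epsilon_\gamma$ for representatives $\gamma=\mat abcd$ with $24\mid b$, $24N\mid c$ and $c>0$.

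For such a $\gamma$, Lemma~\ref{lemma: root of unity from transformation} gives $F_{m,h}(\gamma\tau)$ explicitly (note that when $L$ is odd and $N$ is even one has $N=2N_0$ with $N_0$ odd, so the clause for $F_{N_0,0}$ there applies); rewriting the product $\prod_{\a\in S_{m''}}E_{\a a m\ell(m),\,\d d h N'}(N'\tau)$ as a constant times $F_{m,h}(\tau)$ by means of the relations in Lemma~\ref{lemma: Egh}, and then assembling the contributions weighted by the exponents $a(m,h)$, one arrives at an explicit root of unity of the form
\[
\epsilon_\gamma \;=\; \z_L^{\,A_\gamma}\cdot(-1)^{B_\gamma},
\]
where $A_\gamma$ is governed by a sum of the shape $\sum_{m\in\cD_N^*}\psi_i(m)\sum_{h=1}^{\ell(m)-1}h\,a(m,h)$ for a residue $i$ prime to $N$ attached to $\gamma$ (and every such $i$ arises from some $\gamma$), while $B_\gamma$ is a combination of parities, evaluated with the help of Lemma~\ref{lemma: sum with gauss}, involving the sums $\sum_{m}\sum_{h}a(m,h)$ over the $m\in\cD_N$ with $m''$ a power of a given odd prime $p\mid N$, together with the exponent $a(N_0,0)$ of $F_{N_0,0}$ when $N$ is even. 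Now $g$ is a modular unit precisely when $\epsilon_\gamma=1$ for all these $\gamma$, and here the hypothesis that $L$ is odd is decisive: $\z_L^{A_\gamma}$ has odd order while $(-1)^{B_\gamma}\in\{\pm1\}$, so their product is $1$ exactly when each factor is. Letting $\gamma$ (equivalently $i$) range over all residues prime to $N$, the vanishing of the $\z_L$-part is precisely the mod-$L$ condition (4), while the vanishing of the sign is the mod-$2$ condition (5) together with condition (3), the integrality of the order of $g$ at $\vect1{N_0}$ --- the latter being what fixes the parity of $a(N_0,0)$ and so controls the $F_{N_0,0}$-contribution in the even case, exactly as (1) controls the contribution at $\infty$. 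Conversely, if $g$ is a modular unit then its orders at $\infty$, $0$ and $\vect1{N_0}$ are integral (giving (1)--(3)) and the triviality of all $\epsilon_\gamma$ forces (4) and (5) by the same splitting. This yields the equivalence.

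The step I expect to be hardest is the rewriting in the third paragraph. Each application of Lemma~\ref{lemma: Egh} to replace $E_{\a a m\ell(m),\,\d d h N'}$ by $E_{\a m\ell(m),\,\d h N'}$ --- one for every shift of the first index by $N$, and one for every sign flip forced when $\a a\bmod m''$ must be traded for its representative in $S_{m''}$ --- introduces a root-of-unity or $\pm1$ factor, and all of these must be combined with the explicit factor $(-1)^{(d-1)\p(m'')/4}\,\z_{\ell(m)}^{B_{m,h}}$ supplied by Lemma~\ref{lemma: root of unity from transformation} and summed over $(m,h)$ so that the $\z_L$-exponent becomes \emph{exactly} the combinatorial quantity built from $\psi_i(m)$ and its constituent terms $\d\gauss{\a i/m''}$. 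The parity calculation behind condition (5) is equally delicate and will rest on Lemma~\ref{lemma: sum with gauss} together with the special behaviour of $F_{N_0,0}$, equivalently of the cusp $\vect1{N_0}$, when $N$ is even.
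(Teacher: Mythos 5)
Your proposal follows essentially the same route as the paper's proof: reduce to coset representatives of $G$ via Lemma \ref{lemma: root of unity for G'}, compute the multiplier $\e_\g$ explicitly using Lemmas \ref{lemma: Egh}, \ref{lemma: root of unity from transformation} and \ref{lemma: sum with gauss}, and use the oddness of $L$ to split $\e_\g$ into an odd-order part (equivalent to condition (4)) and a sign part (controlled by conditions (1), (3) and (5), with (1) and (3) together fixing the parity of $a(N_0,0)$ and of the $\cD_0$-sum exactly as in the paper). The only blemish is the claim in your opening reduction that being a modular unit is equivalent to $\G_0(N)$-invariance \emph{up to} a constant --- that equivalence is false, since any such product automatically transforms with a multiplier system --- but your third paragraph correctly demands $\e_\g=1$ for all representatives, so the argument as actually executed coincides with the paper's.
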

\begin{proof}
We closely follow the proofs of Theorem 1.10 and Proposition 3.4 of \cite{GYYY}.
We only prove the theorem assuming that $N= 2N_0$. It follows similarly when $N$ is odd, which is in fact much simpler. 
Note first that since $N_0$ is odd, 
\begin{equation*}
m''=2 \iff m=N_0.
\end{equation*}
For simplicity, for $k \in \{0, 1\}$ let
\begin{equation*}
\cD_k:=\{ m \in \cD_N : m''=2^k p^{r} ~~~\text{ for some }  p \equiv 3 \pmod 4 \text{ and an integer }~r\geq 1\}.
\end{equation*}
Since $N$ is not divisible by $4$, $m$ is even if and only if $m''$ is odd. 
Also, for any $m\in \cD_N$ different from $N_0$, we have $\p(m'') \equiv 0 \pmod 2$; and moreover
\begin{equation}\label{eqn: 2.1}
\prod_{p \mid m''}(1-p) \equiv 2 \pmod 4 \iff \p(m'') \equiv 2 \pmod 4 \iff m \in \cD_0 \cup \cD_1.
\end{equation}

First, suppose that $g$ is a modular unit on $X_0(N)$.
Then it is clear that conditions (1), (2) and (3) hold. Let
\begin{equation*}
\G':=\left\{ \mat a b c d \in \G_0(N) : 24 \dd b,  ~24N \dd c \qqa c>0\right\}.
\end{equation*}
Suppose that $\g=\mat a b c d \in \G'$, so in particular, $a$ and $d$ are both odd. 
By Lemma \ref{lemma: order of Fmh at special cusps} and \eqref{eqn: 2.1}, condition (1) implies that 
\begin{equation*}
a(N_0, 0) \equiv 0 \pmod 2 \qa \frac{a(N_0, 0)}{2}+\sum_{m \in \cD_1} \sum_{h=0}^{\ell(m)-1} a(m, h) \equiv 0 \pmod 2,
\end{equation*}
and condition (3) implies that
\begin{equation*}
a(N_0, 0) \equiv 0 \pmod 2  \qa \frac{a(N_0, 0)}{2}+\sum_{m \in \cD_0 \cup \cD_1} \sum_{h=0}^{\ell(m)-1} a(m, h) \equiv 0 \pmod 2.
\end{equation*}
Combining these, we get 
\begin{equation}\label{eqn: 2.2}
a(N_0, 0) \equiv 0 \pmod 2  \qa \sum_{m \in \cD_0} \sum_{h=0}^{\ell(m)-1} a(m, h) \equiv 0 \pmod 2.
\end{equation}
By Lemma \ref{lemma: Egh}, we have
\begin{equation*}
E_{\a a m \ell(m), \d d h N'}(N'\tau)=(-\z_N^{-\d dh N'})^{\gauss{\a a/{m''}}}E_{\a^* m\ell(m), \d^* h N'}(N'\tau),\\
\end{equation*}
where for $\a \in S_{m''}$, we let $\a^*$ be the element in $S_{m''}$ such that $\a a \equiv \a^* \pmod {m''}$ or $\a a \equiv -\a^* \pmod {m''}$, and $\d^*\in \Z$ satisfies $\a^* \d^* \equiv 1 \pmod {m''}$. Thus by Lemma \ref{lemma: root of unity from transformation}, for any $m \in \cD_N$ different from $N_0$, we have
\begin{equation*}
\begin{split}
F_{m, h}(\g \tau)&=(-1)^{\frac{(d-1)\p(m'')}{4}} \times \z_{\ell(m)}^{B_{m, h}} \prod_{\a \in S_{m''}} (-\z_N^{-\d dh N'})^{\gauss{\a a/{m''}}}E_{\a^* m\ell(m), \d^* h N'}(N'\tau)\\
&=(-1)^{\frac{(d-1)\p(m'')}{4}} \times \z_{\ell(m)}^{B_{m, h}}  \left(\prod_{\a \in S_{m''}} (-\z_{\ell(m)}^{-\d dh})^{\gauss{\a a/{m''}}} \right) F_{m, h}(\tau).\\
\end{split}
\end{equation*}

Let 
\begin{equation*}
\e_{1, \g}:=\prod_{m \in \cD_N, ~m\neq N_0} \prod_{h=0}^{\ell(m)-1} \left((-1)^{\frac{(a-1)\p(m'')}{4}}\right)^{a(m, h)},
\end{equation*}
\begin{equation*}
\e_{2, \g}:=\prod_{m \in \cD_N, ~m\neq N_0} \prod_{h=0}^{\ell(m)-1} \left(\prod_{\a \in S_{m''}} (-1)^{\gauss{\a a/{m''}}} \right)^{a(m, h)},
\end{equation*}
and 
\begin{equation*}
\mu_\g:=\sum_{m \in \cD_N^*} \left(\frac{1}{\ell(m)}\sum_{\alpha \in S_{m''}} \xi_m(\a) \sum_{h=1}^{\ell(m)-1} ha(m, h)\right),
\end{equation*}
where
\begin{equation*}
\xi_m(\a):=\d d \gauss{\frac{\a a}{m''}}+\frac{\d(d-1)}{2}.
\end{equation*}
Since $a\equiv d \equiv \pm 1 \pmod 4$ and $a(N_0, 0) \equiv 0 \pmod 2$, by Lemma \ref{lemma: root of unity from transformation} we have
\begin{equation*}
g(\g \tau)=\e_{1, \gamma} \e_{2, \gamma} \cdot e^{-2\pi i \mu_{\gamma}} g(\tau).
\end{equation*}
(If $\ell(m)=1$, then it is not necessary to consider the term involving $\z_{\ell(m)}$.) 
By \eqref{eqn: 2.1} and Lemma \ref{lemma: sum with gauss}, we easily have
\begin{equation*}
\e_{1, \g}=\prod_{m \in \cD_0 \cup \cD_1} \prod_{h=0}^{\ell(m)-1} (-1)^{\frac{(a-1)a(m, h)}{2}}
\end{equation*}
and
\begin{equation*}
\e_{2, \g}=\prod_{m \in \cD_1} \prod_{h=0}^{\ell(m)-1} (-1)^{\frac{(a-1)a(m, h)}{2}} \times \prod_{m:m''=p^{r}, p\neq 2} \prod_{h=0}^{\ell(m)-1} \leg a p^{a(m, h)}.
\end{equation*}
Thus, we have $\e_{1, \g}\e_{2, \g}=\e'_{1, \g} \e'_{2, \g}$, where
\begin{equation*}
\e'_{1, \g}:=\prod_{m \in \cD_0} \prod_{h=0}^{\ell(m)-1}(-1)^{\frac{(a-1)a(m, h)}{2}} \qa \e'_{2, \g}:=\prod_{m:m''=p^{r}, p\neq 2} \prod_{h=0}^{\ell(m)-1} \leg a p^{a(m, h)}.
\end{equation*}

Note that $e^{-2\pi i \mu_\g}$ is an $L$-th root of unity. Since $g$ is a modular unit and $L$ is odd, we must have $\e'_{1, \gamma} \e'_{2, \gamma} =1$ and $\mu_\g$ is an integer.
Note that \eqref{eqn: 2.2} implies that $\e'_{1, \gamma} = 1$, and hence $\e'_{2, \gamma}=1$. 
Furthermore, for each prime divisor $p$ of $N$, there is always an integer $a_p$ such that $(a_p, 6N)=1$, $\leg {a_p} p=-1$ and $\leg {a_p} q=1$ for all odd prime divisors $q$ of $N$ different from $p$. Thus, $\e'_{2, \g}=1$ for all $\g \in \G'$ if and only if condition (5) is satisfied. 

Now, we claim that condition (4) holds from the fact that $\mu_{\gamma}\in \Z$ for all $\gamma \in \Gamma'$. First, we prove that 
\begin{equation*}
\xi_m(\a) \equiv \xi_m(m''-\a) \pmod {m''}.
\end{equation*}
Indeed, since $d$ is odd, we have
\begin{equation*}
\xi_m(\a) - \xi_m(m''-\a) \equiv \d d \left(\gauss{\frac{\a a}{m''}}+\gauss{\frac{(m''-\a)a}{m''}}\right)+\d (d-1) \pmod {m''}.
\end{equation*}
Since $\a a$ is not divisible by $m''$, we have
\begin{equation*}
\gauss{\frac{\a a}{m''}}+\gauss{\frac{(m''-\a)a}{m''}}=a-1
\end{equation*}
and so the assertion follows from the fact that $ad \equiv 1 \pmod N$. Thus, we have
\begin{equation*}
2\sum_{\a \in S_{m''}} \xi_m(\a) \equiv \sum_{1\leq \a \leq m'', ~(\a, m'')=1} \xi_m(\a) \pmod {\ell(m)}
\end{equation*}
as $\ell(m)$ divides $m''$. Also, we have
\begin{equation*}
\sum_{1\leq \a\leq m'', ~ (\a, m'')=1} \d=\sum_{\a \in S_{m''}} (\d+(m''-\d))=\frac{m''\p(m'')}{2} \equiv 0 \pmod {\ell(m)}.
\end{equation*}
Hence we have
\begin{equation*}
\begin{split}
2\mu_\g &\equiv \sum_{m \in \cD_N^*} \left(\frac{1}{\ell(m)}\sum_{1\leq \a\leq m'', ~(\a, m'')=1} \xi_m(\a)\right) \sum_{h=1}^{\ell(m)-1} ha(m, h) \\
&\equiv \frac{d}{L}\sum_{m \in \cD_N^*} \left(\sum_{1\leq \a\leq m'', ~(\a, m'')=1} \delta \gauss{\frac{\a a}{m''}} \frac{L}{\ell(m)}\right) \sum_{h=1}^{\ell(m)-1} ha(m, h)  ~~ \pmod \Z.
\end{split}
\end{equation*}
Since $L$ is odd and $(d, N)=1=(d, L)$, $\mu_\g$ is an integer if and only if 
\begin{equation*}
\sum_{m \in \cD_N^*} \psi_a(m) \sum_{h=1}^{\ell(m)-1} h a(m, h) \equiv 0 \pmod L.
\end{equation*}
Note that we have $\psi_a(m) \equiv \psi_{a+N}(m) \pmod {L}$ as $N=mm'' \ell(m)$. Also, for any integer $1\leq i \leq N$ prime to $N$, we can always find $\g=\mat a b c d \in \G'$ such that $i\equiv a \pmod N$. (We need to take $a$ such that $(a, 6)=1$.) Thus, condition (4) is satisfied.

Conversely, suppose all the conditions hold. As above, for any $\g \in \G'$ we have
\begin{equation*}
g(\g \tau)=\e'_{1, \gamma} \e'_{2, \gamma} \cdot e^{-2\pi i \mu_{\gamma}} g(\tau).
\end{equation*}
Since $N=mm'' \ell(m)$, we have $\psi_a(m) \equiv \psi_{a+N}(m) \pmod {L}$. Thus, condition (4) implies that $\mu_\g \in \Z$. Also, conditions (1) and (3) imply \eqref{eqn: 2.2}, and hence $\e'_{1, \g}=1$. Furthermore, condition (5) implies $\e'_{2, \g}=1$. 
Therefore $g(\g\tau)=g(\tau)$ for any $\g \in \G'$. 
Finally, by Lemma \ref{lemma: root of unity for G'} we have $g(\g \tau)=g(\tau)$ for any $\g \in \G_0(N)$, i.e., $g$ is a modular unit on $X_0(N)$. This completes the proof. 
\end{proof}

\svv    
As a result, we obtain the following.
\begin{theorem}\label{theorem: rational modular units}
Conjecture \ref{conj: Yoo} implies Conjecture \ref{conj: A}.
\end{theorem}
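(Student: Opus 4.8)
The plan is to encode the hypotheses of Conjecture~\ref{conj: A} as the assertion that the divisor class of $D:=\tfrac1n\tn{div}(F)$ lies in $\scC_N(\Q)$, to invoke Conjecture~\ref{conj: Yoo} to write $D$ as a rational cuspidal divisor plus a principal divisor, and then to read off the divisibility from the uniqueness of the basis expansion in Theorem~\ref{thm: basis}. \emph{Step 1 (translating the hypotheses).} By hypothesis~(1) the order of $F$ at every cusp is divisible by $n$, so $D=\tfrac1n\tn{div}(F)$ is an integral cuspidal divisor; it has degree $0$ since $F$ is a function, hence $[D]\in\scC_N$. For $s$ prime to $L$, Lemma~\ref{lemma: Galois action} gives $\s_s(\tn{div}(F))=\sum_{m,h}e(m,h)\tn{div}(F_{m,sh})$, so the modular unit $u_s:=(\s_s(F)/F)^{1/n}$ (which exists by hypothesis~(2)) satisfies $n\cdot\tn{div}(u_s)=\s_s(\tn{div}(F))-\tn{div}(F)=n\bigl(\s_s(D)-D\bigr)$, i.e.\ $\s_s(D)-D=\tn{div}(u_s)$ is principal and $[\s_s(D)]=[D]$. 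As every cusp of $X_0(N)$ is defined over $\Q(\z_L)$ and $\Gal(\Q(\z_L)/\Q)=\{\s_s:(s,L)=1\}$, the class $[D]$ is $\Gal(\ov\Q/\Q)$-invariant, so $[D]\in\scC_N\cap J_0(N)(\Q)=\scC_N(\Q)$.

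\emph{Step 2 (using Conjecture~\ref{conj: Yoo} and reducing).} By Conjecture~\ref{conj: Yoo}, $[D]\in\scC(N)$, so $[D]=[D']$ for some rational cuspidal divisor $D'$ of degree $0$. Then $D-D'$ is a principal divisor supported on cusps, hence $D-D'=\tn{div}(g)$ for a modular unit $g$, which by Theorem~\ref{thm: basis} we write as $g\approx\prod_{m\in\cD_N}\prod_{h=0}^{\p(\ell(m))-1}F_{m,h}^{a(m,h)}$ with $a(m,h)\in\Z$. Put $\wt F:=Fg^{-n}$. Then $\tn{div}(\wt F)=\tn{div}(F)-n\,\tn{div}(g)=nD'$ is $n$ times a rational divisor, and $\wt F\approx\prod_{m\in\cD_N}\prod_{h=0}^{\p(\ell(m))-1}F_{m,h}^{\,e(m,h)-na(m,h)}$ is the expansion of $\wt F$ in the basis of Theorem~\ref{thm: basis}. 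Since $na(m,h)\equiv0\pmod n$, it suffices to prove that $e(m,h)-na(m,h)=0$ for all $m\in\cD_N^*$ and $h\neq0$, for then $e(m,h)=na(m,h)$ is divisible by $n$.

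\emph{Step 3 (rational modular units are spanned by the $F_{m,0}$).} I will prove: any modular unit $G\approx\prod_{m\in\cD_N}\prod_{h=0}^{\p(\ell(m))-1}F_{m,h}^{b(m,h)}$ whose divisor is $\Gal(\Q(\z_L)/\Q)$-invariant has $b(m,h)=0$ for all $m\in\cD_N^*$ and $h\neq0$; applying this to $G=\wt F$ (whose divisor $nD'$ is $\Gal(\Q(\z_L)/\Q)$-invariant because $D'$ is rational) finishes the argument. To this end, let $U$ be the group of modular units on $X_0(N)$ modulo nonzero constants. The divisor map $U\hookrightarrow\Div^0(\{\text{cusps}\})$ is injective and $\Gal(\Q(\z_L)/\Q)$-equivariant, and the cusps of $X_0(N)$ form exactly $d(N)$ Galois orbits, one for each divisor of $N$ (see Section~\ref{section2}), so passing to invariants and tensoring with $\Q$ yields $\dim_\Q\bigl(U\otimes\Q\bigr)^{\Gal(\Q(\z_L)/\Q)}\le d(N)-1=\#\cD_N$. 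On the other hand, by Lemma~\ref{lemma: Galois action} each class $[F_{m,0}]$ is $\Gal(\Q(\z_L)/\Q)$-fixed, and by Theorem~\ref{thm: basis} the classes $\{[F_{m,0}]:m\in\cD_N\}$ form part of a $\Z$-basis of $U$; being $\#\cD_N$ linearly independent invariants, they are a $\Q$-basis of $\bigl(U\otimes\Q\bigr)^{\Gal(\Q(\z_L)/\Q)}$. Hence $[G]$ is a $\Q$-linear combination of the $[F_{m,0}]$; comparing this with the expansion $[G]=\sum b(m,h)[F_{m,h}]$ and using that $\{[F_{m,h}]\}$ is a basis forces $b(m,h)=0$ whenever $h\neq0$.

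\emph{Main obstacle.} The crux is Step~3, identifying the modular units with rational divisor with the span of the $F_{m,0}$. The rank computation reduces this to the (classical) fact that the cusps split into $d(N)$ Galois orbits, recalled at the start of Section~\ref{section2}, together with the basis of Theorem~\ref{thm: basis}; alternatively one could argue concretely by rewriting $\prod_h F_{m,sh}^{b(m,h)}$ in the standard basis via the relations of Theorem~\ref{theorem relation among Fmh} and comparing coefficients as $s$ ranges over residues prime to $L$, but the invariant-space argument is cleaner. The other ingredients — extracting $[\s_s(D)]=[D]$ from hypothesis~(2) via Lemma~\ref{lemma: Galois action}, using hypothesis~(1) to see $D$ is an honest divisor, and the reduction $\wt F=Fg^{-n}$ — are routine.
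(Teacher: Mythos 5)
Your proof is correct. Steps 1 and 2 coincide with the paper's argument (the paper introduces a separate modular unit $G$ with $\tn{div}(G)=nD'$ rather than your $\wt F=Fg^{-n}$, but that is cosmetic). The genuine difference is in Step 3, the assertion that a modular unit with Galois-invariant divisor has vanishing $F_{m,h}$-coefficients for $h\neq 0$. The paper proves this integrally by citing the classification of modular units with rational divisor as eta quotients \cite[Th. 3.6]{Y1}, using $\sum_d b(d)=0$ and the M\"obius-inversion identity $\prod_{m\in\cD(d)}F_{m,0}\approx\eta_d/\eta_N$ to rewrite any such unit as $\prod_m F_{m,0}^{\sum_{k\mid m}b(k)}$. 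You instead run a soft dimension count: the injective, Galois-equivariant divisor map together with the fact that the cusps fall into $d(N)$ Galois orbits bounds $\dim_\Q(U\otimes\Q)^{\Gal}$ by $\#\cD_N$, and the $\#\cD_N$ independent invariant classes $[F_{m,0}]$ then must span, so the invariant class $[\wt F]$ lies in their $\Q$-span and the uniqueness in Theorem \ref{thm: basis} kills the $h\neq 0$ coefficients. Your route avoids the external input from \cite{Y1} and only establishes the rational (not integral) span of the $[F_{m,0}]$, which is all that is needed here; the paper's route yields the stronger integral statement. Two small points of hygiene: $F_{m,0}$ itself need not be a modular unit on $X_0(N)$ (only a suitable power, e.g.\ the $24$th, is, by Ligozat), so "part of a $\Z$-basis of $U$" should be read inside $U\otimes\Q$ — harmless since you tensor with $\Q$ anyway; and the Galois action on $U$ making the divisor map equivariant should be noted (it is implicit throughout the paper, e.g.\ in the meaning of $\s_s(F)$ in Conjecture \ref{conj: A}).
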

\begin{proof}
First, let $\eta_d(\tau):=\eta(d\tau)$, where $\eta$ is the Dedekind eta function. 
For simplicity, let
\begin{equation*}
\cD(d):=\{ m \in \cD_N : d \dd m\}=\{ dn  : 1\leq n \dd d'\} \sm \{N\},
\end{equation*}
where $d'=N/d$.
Then by Lemma \ref{lemma: Fm0 = eta product}, for any $d\in \cD_N$ we have
\begin{equation}\label{eqn: Fmh=eta}
\prod_{m \in \cD(d)} F_{m, 0} \approx \prod_{m\in \cD(d)} \prod_{k\mid m'} \eta_{km}^{\mu(k)}=\frac{\prod_{n \mid d'} \prod_{k \mid \frac{N}{dn}}\eta_{kdn}^{\mu(k)}}{\eta_N}=\frac{\prod_{d\mid s \mid N} \prod_{k \mid \frac{s}{d}} \eta_s^{\mu(k)}}{\eta_N}=\frac{\eta_d}{\eta_N}.
\end{equation}

Next, we claim that any modular unit $F'$ on $X_0(N)$ whose divisor is rational can be written as a product of (integral powers of) $F_{m, 0}$ up to constant. Since $\tn{div}(F')$ is rational, by \cite[Th. 3.6]{Y1} there are integers $b(d)$ such that $F' \approx \prod_{d\mid N} \eta_d^{b(d)}$.
Since $F'$ is a modular unit on $X_0(N)$, we have $\sum_{d\dd N} b(d)=0$ by condition (3) of Proposition 3.5 of \textit{op. cit.}
Thus, by \eqref{eqn: Fmh=eta} we have 
\begin{equation*}
\begin{split}
F' &\approx \prod_{d\mid N} \eta_d^{b(d)} \approx \prod_{d\mid N}  \eta_N^{b(d)} \prod_{d\mid N, d\neq N}(\prod_{m \in \cD(d)} F_{m, 0})^{b(d)}=\eta_N^{\sum_{d \mid N} b(d)}\prod_{d \in \cD_N} (\prod_{m \in \cD(d)} F_{m, 0})^{b(d)}\\
&= \prod_{d \in \cD_N} (\prod_{m \in \cD(d)} F_{m, 0})^{b(d)}=\prod_{m \in \cD_N} (\prod_{k \mid m} F_{m, 0}^{b(k)})=\prod_{m \in \cD_N} F_{m, 0}^{\sum_{k\mid m} b(k)}.
\end{split}
\end{equation*}
This proves the claim.

Finally, let $F$ be a modular unit as in Conjecture \ref{conj: A}.
By assumption (1), there is a cuspidal divisor $D$ on $X_0(N)$ such that $\tn{div}(F)=nD$. Also by assumption (2), $[\s_s(D)]=[D]$ for any integer $s$ prime to $L$. Namely, we have $[D] \in \scC_N(\Q)$. 
Thus, there is a modular unit $g$ such that $D'=D-\tn{div}(g)$ is rational as $[D] \in \scC(N)$ by our assumption that Conjecture \ref{conj: Yoo} holds. 
Since the order of $[D']=[D]$ divides $n$, there is a modular unit $G$ such that
$\tn{div}(G)=nD'$. Moreover, since $D'$ is rational, by the claim above there are integers $a(m)$ such that $G \approx  \prod_{m \in \cD_N} F_{m, 0}^{a(m)}$.
Note that by Theorem \ref{thm: basis}, there are integers $b(m, h)$ such that
$g \approx \prod_{m\in \cD_N} \prod_{h=0}^{\p(\ell(m))-1} F_{m, h}^{b(m, h)}$ as $g$ is also a modular unit on $X_0(N)$. 
Since $n \cdot \tn{div}(g)=nD-nD'=\tn{div}(F/G)$, we have
\begin{equation*}
 \prod_{m \in \cD_N} F_{m, 0}^{e(m, 0)-a(m)-nb(m, 0)}\prod_{h=1}^{\p(\ell(m))-1} F_{m, h}^{e(m, h)-nb(m, h)} \approx 1.
\end{equation*}
Thus, by Theorem \ref{thm: basis} we have $e(m, h)=nb(m, h)$ for all $m \in \cD_N^*$ and $h\neq 0$. This completes the proof. 
\end{proof}

\svv   
We believe that the converse of the above implication also holds, 
and we prove it under a mild assumption using a slight generalization of the proof of Theorem 1.4 in \cite{GYYY}.
\begin{theorem}\label{theorem: B}
If $L$ is odd, Conjecture \ref{conj: A} implies Conjecture \ref{conj: Yoo}.
\end{theorem}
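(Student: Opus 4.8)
The plan is to realize the formal product \eqref{eqn: intro1} as an honest modular unit on $X_0(N)$ by means of Theorem~\ref{theorem: criterion}, after pinning down the exponents $a(m)$. Let $D$ be a cuspidal divisor of degree $0$ with $[D]\in\scC_N(\Q)$, and let $n$ be the order of $[D]$, which is finite by Manin--Drinfeld; we may assume $n\geq 2$, as otherwise $[D]=0\in\scC(N)$. Since every cusp is defined over $\Q(\z_L)$, so is $D$, and as $[D]$ is $\Q$-rational we have $[\s_s(D)]=[D]$ for all $s$ prime to $L$. Pick a modular unit $F$ with $\tn{div}(F)=nD$ and write $F\approx\prod_{m\in\cD_N}\prod_{h=0}^{\p(\ell(m))-1}F_{m,h}^{e(m,h)}$ as in Theorem~\ref{thm: basis}. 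Then $F$ satisfies the two hypotheses of Conjecture~\ref{conj: A}: hypothesis~(1) holds because $\tn{div}(F)=nD$; and since $\s_s(D)-D$ is principal, $\s_s(F)/F$ differs by a constant from the $n$-th power of a modular unit, so $(\s_s(F)/F)^{1/n}$ is again a modular unit, giving hypothesis~(2). Granting Conjecture~\ref{conj: A}, we obtain $n\mid e(m,h)$ for every $m\in\cD_N^{\ast}$ and $h\neq 0$.

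Next set $a(m):=e(m,0)+(n-1)\sum_{h=1}^{\p(\ell(m))-1}e(m,h)/n$, which lies in $\Z$ by the divisibility just proved, and define
\[
g:=\prod_{m\in\cD_N}F_{m,0}^{a(m)}\cdot\prod_{m\in\cD_N^{\ast}}\prod_{h=1}^{\p(\ell(m))-1}F_{m,h}^{e(m,h)/n},
\]
a product of the $F_{m,h}$ with integer exponents. From $g^{n}\approx F\cdot\prod_{m}F_{m,0}^{na(m)-e(m,0)}$ we get, once $g$ is known to be a modular unit, the divisor identity $D-\tn{div}(g)=\sum_{m}(e(m,0)/n-a(m))\,\tn{div}(F_{m,0})$; the right side is an integral cuspidal divisor of degree $0$ which, being a $\Q$-linear combination of the rational divisors $\tn{div}(F_{m,0})$ (rational by Lemma~\ref{lemma: Galois action} with $h=0$), is Galois-fixed, hence rational, so $[D]=[D-\tn{div}(g)]\in\scC(N)$. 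It thus remains only to choose $a(m)$ so that $g$ satisfies conditions (1)--(5) of Theorem~\ref{theorem: criterion}. Writing $E(m):=e(m,0)+\sum_{h\geq 1}e(m,h)$, the choice of $a(m)$ gives $a(m)+\sum_{h\geq 1}e(m,h)/n=E(m)$. Since by Lemma~\ref{lemma: order of Fmh at special cusps} the orders of $F_{m,h}$ at $\infty$ and (when $N$ is even) at $\vect 1{N_0}$ do not depend on $h$, the orders of $g$ at these cusps equal $\sum_m E(m)\,\mathrm{ord}_{\infty}(F_{m,0})=\mathrm{ord}_{\infty}(F)\in\Z$ and $\sum_m E(m)\,\mathrm{ord}_{\vect 1{N_0}}(F_{m,0})=\mathrm{ord}_{\vect 1{N_0}}(F)\in\Z$, giving conditions (1) and (3); and condition (5) for $g$ reads $\sum_{m:\,m''=p^{r}}E(m)\equiv 0\pmod 2$, which is condition (5) for the modular unit $F$.

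Condition (4) does not involve $a(m)$: it asserts $\tfrac1n\sum_{m\in\cD_N^{\ast}}\psi_i(m)\sum_h h\,e(m,h)\equiv 0\pmod L$ for each $i$ prime to $N$. To verify it I would apply condition (4) of Theorem~\ref{theorem: criterion} to the modular unit $(\s_{s^{\ast}}(F)/F)^{1/n}\approx\prod_{(m,h)\in\cS}F_{m,h}^{E_s(m,h)}$ of \eqref{eqn: intro2}, whose exponents $E_s(m,h)=E(m,sh)-E(m,h)$ are integers by the divisibility from Conjecture~\ref{conj: A}. A direct computation gives $\sum_{h=1}^{\ell(m)-1}h\,E_s(m,h)=(s^{\ast}-1)\sum_h h\,e(m,h)/n-\ell(m)\cdot(\text{an integer})$, and since $\psi_i(m)\,\ell(m)\in L\Z$, condition (4) for that unit reduces to $(s^{\ast}-1)\cdot\tfrac1n\sum_{m\in\cD_N^{\ast}}\psi_i(m)\sum_h h\,e(m,h)\equiv 0\pmod L$ for every admissible $s$. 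As $L$ is odd, $s^{\ast}=2$ is admissible, which yields condition (4) for $g$.

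The main obstacle is condition (2): the integrality of the order of $g$ at $0$, the one cusp at which $\mathrm{ord}_0(F_{m,h})$ genuinely depends on $h$. With the above $a(m)$, and using $\mathrm{ord}_0(F_{m,0})=\tfrac1{24}\ell(m)\,\p(m'')$ together with $\mathrm{ord}_0(F)=n\cdot\mathrm{ord}_0(D)\in n\Z$, one computes $\mathrm{ord}_0(g)=\mathrm{ord}_0(D)+\tfrac{n-1}{n}S$ with $S:=\tfrac1{24}\sum_{m\in\cD_N}E(m)\,\ell(m)\,\p(m'')$, so condition (2) amounts to $\tfrac{n-1}{n}S\in\Z$. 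If this fails for the chosen $a(m)$, I would correct it by an integral vector $\delta(m)$ that leaves all exponents at $(m,h)$ with $h\neq 0$ untouched and keeps the orders of $\prod_m F_{m,0}^{\delta(m)}$ at $\infty$ and at $\vect 1{N_0}$ integral and the mod $2$ sums unchanged, while its order at $0$ supplies the missing residue. Proving that such a $\delta$ exists is the hard part: it reduces, via the explicit formula $\mathrm{ord}_0(F_{m,h})=\tfrac{m''}{24\ell(m)}\sum_{k\mid m''}\tfrac{\mu(k)}{k}(\ell(m),kh)^{2}$, the mod $L$ relations already established, and the integrality of the orders of $F$ at \emph{all} cusps, to a statement about which rational cuspidal divisors supported near $0$ are principal, i.e.\ to the structure of $\scC(N)$ determined in \cite{Y1}; this is exactly where the argument generalizes the computation in the proof of \cite[Th.~1.4]{GYYY}. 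Once this is in place, $g$ meets conditions (1)--(5) and is a modular unit on $X_0(N)$ by Theorem~\ref{theorem: criterion}, completing the proof.
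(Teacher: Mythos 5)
Your overall strategy coincides with the paper's: assume Conjecture \ref{conj: A} to get $n\mid e(m,h)$ for $h\neq 0$, build $g$ of the shape \eqref{eqn: intro1} with integral exponents, verify the five conditions of Theorem \ref{theorem: criterion}, and conclude that $D-\tn{div}(g)$ is a rational divisor. Your treatment of conditions (1), (3) and (5) (via the $h$-independence of the orders at $\infty$ and $\vect 1{N_0}$ and of the mod $2$ sums) and of condition (4) (apply the mod $L$ condition to the modular unit $(\s_{s^*}(F)/F)^{1/n}$, absorb the multiples of $\ell(m)$ using $\psi_i(m)\ell(m)\in L\Z$, and use oddness of $L$ to make $s^*-1$ a unit) is correct and matches the paper's argument essentially verbatim.

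However, condition (2) is a genuine gap, and you say so yourself: you compute $\mathrm{ord}_0(g)=\mathrm{ord}_0(D)+\tfrac{n-1}{n}S$ and there is no reason for $\tfrac{n-1}{n}S$ to be an integer; the correction vector $\delta(m)$ whose existence you defer to "the structure of $\scC(N)$" is precisely the content you would need to supply, and invoking $\scC(N)$ here risks circularity since the whole point of the theorem is to relate $\scC_N(\Q)$ to $\scC(N)$. The paper avoids this entirely by a different normalization of the $h=0$ exponents: it takes $g=G\prod_{m,h\geq 1}(F_{m,h}/F_{m,0})^{E(m,h)}$, so that the order at $0$ equals $\sum a(m,h)E(m,h)$ with $a(m,h)=\mathrm{ord}_0(F_{m,h})-\mathrm{ord}_0(F_{m,0})=\frac{m''}{24\ell(m)}\sum_{k\mid m''}\frac{\mu(k)}{k}\bigl((\ell(m),kh)^2-\ell(m)^2\bigr)$. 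The key computation, which uses the hypothesis that $L$ is odd, is that $24\,a(m,h)=\sum_k\mu(k)c\bigl((\ell',h)^2-\ell'^2\bigr)$ with $\ell'$ odd, so $(\ell',h)^2\equiv\ell'^2\equiv 1\pmod 8$ and hence $3a(m,h)\in\Z$ (and $a(m,h)\in\Z$ outright when $3\nmid L$). The residual denominator $3$ is then killed by a single explicit factor $G=F_{N/3,0}^{4a}$ with $a=-3\sum a(m,h)E(m,h)\in\Z$, whose order at $0$ is $a/3$ and whose orders at $\infty$ and $\vect 1{N_0}$ are integral because $9\mid N$. You should replace your choice of $a(m)$ by this one (equivalently, set the exponent of $F_{m,0}$ to $-\sum_{h\geq 1}E(m,h)$ plus the $G$-correction) to close the gap.
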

\begin{proof}
Let $D$ be a cuspidal divisor of degree $0$ on $X_0(N)$ such that $[D] \in \scC_N(\Q)$. Let $n$ be the order of $[D]$. 
If $n=1$, then $[D]=0 \in \scC(N)$ and so there is nothing to prove. Thus, we assume that $n\geq 2$.
By definition, there is a modular unit $F$ such that $\tn{div}(F)=nD$. By Theorem \ref{thm: basis}, we can write
\begin{equation*}
F\approx \prod_{m \in \cD_N} \prod_{h=0}^{\p(\ell(m))-1} F_{m, h}^{e(m, h)} \quad \text{ for some }~ e(m, h) \in \Z.
\end{equation*}

First, let
\begin{equation*}
a=-3\sum_{m \in \cD_N^*} \sum_{h=1}^{\ell(m)-1} a(m, h) \cdot E(m, h),
\end{equation*}
where
\begin{equation*}
a(m, h):=\frac{m''}{24\ell(m)} \sum_{k \mid m''} \frac{\mu(k)}{k}\left((\ell(m), kh)^2-\ell(m)^2\right)
\end{equation*}
and
\begin{equation*}
E(m, h)=\begin{cases}
\frac{1}{n}e(m, h) & \text{ if } ~~ 1\leq h < \p(\ell(m)),\\
\quad 0 & \text{ otherwise}.
\end{cases}
\end{equation*}
We claim that $a$ is an integer. Since we assume Conjecture \ref{conj: A}, which implies that $E(m, h)$ is an integer for any $(m, h) \in \cS^*$, it suffices to show that $3 a(m, h) \in \Z$. For simplicity, let $d=(\ell(m), k)$ (which depends on $k$), $k=dk'$ and $\ell(m)=d\ell'$. Since $(k', \ell')=1$, we have
$(\ell(m), kh)=d(\ell', k' h)=d(\ell', h)$. Also, let $c=m''/{k'\ell'}$ which is an integer as $k'$ and $\ell'$ both divide $m''$ and $(k', \ell')=1$.
Thus, we have
\begin{equation*}
24\cdot a(m, h)=\sum_{k \mid m''} \frac{m'' d^2 \mu(k)}{k\ell(m)} \left((\ell', h)^2-\ell'^2 \right)=\sum_{k \mid m''}  \mu(k) c\left((\ell', h)^2-\ell'^2 \right).
\end{equation*}
Since $\ell'$ is odd (as we assume that $L$ is odd), so is $(\ell', h)$, and hence $(\ell', h)^2\equiv \ell'^2 \equiv 1 \pmod 8$. This completes the proof of the claim.

Now, let
\begin{equation*}
g=G\prod_{m \in \cD_N^*} \prod_{h=1}^{\ell(m)-1}\left(\frac{F_{m, h}}{F_{m, 0}}\right)^{E(m, h)},
\end{equation*}
where $G=F_{N/3,0}^{4a}$ if $L$ is divisible by $3$; and $G=1$ otherwise. 
Since $\tn{div}(F_{m, 0})$ is fixed under the action of $\Gal(\Q(\z_L)/\Q)$, it is obvious that $D-\tn{div}(g)$ is rational. 
Hence it suffices to show that $g$ is a modular unit on $X_0(N)$. Since $a$ and $E(m, h)$ are all integers,
it is enough to show that all the conditions in Theorem \ref{theorem: criterion} hold. 
\begin{enumerate}
\item 
By Lemma \ref{lemma: order of Fmh at special cusps}, the order of $\frac{F_{m, h}}{F_{m, 0}}$ at the cusp $\infty$ is zero. 
Suppose that $L$ is divisible by $3$, i.e., $N$ is divisible by $9$. 
Again by Lemma \ref{lemma: order of Fmh at special cusps}, the order of $F_{N/3, 0}$ at the cusp $\infty$ is $-\frac{N}{36}$ as $m''=3$ in this case. Since $N$ is divisible by $9$, the order of $F_{N/3, 0}^4$ at the cusp $\infty$ is an integer.
Thus, whether $L$ is divisible by $3$ or not, the order of $g$ at the cusp $\infty$ is an integer.

\item
By Lemma \ref{lemma: order of Fmh at special cusps}, the order of $\frac{F_{m, h}}{F_{m, 0}}$ at the cusp $0$ is $a(m, h)$.
Suppose that $L$ is divisible by $3$. Then the order of $F_{N/3, 0}$ at the cusp $0$ is $\frac{1}{12}$. Thus, the order of $g$ at the cusp $0$ is $0$. 
Suppose that $L$ is not divisible by $3$. Then $\ell(m)$ is not divisible by $3$, and so 
by the same argument above, we can easily prove that $a(m, h)$ is an integer. Hence the order of $g$ at the cusp $0$ is also an integer.

\item
Similarly as (1), the order of $g$ at the cusp $\vect 1 {N_0}$ is an integer.
\item
By direct computation, we have
\begin{equation*}
(\s_s(F)/F)^{1/n} \approx \prod_{m \in \cD_N^*} \prod_{h=1}^{\ell(m)-1} \left(\frac{F_{m, sh}}{F_{m, h}}\right)^{E(m, h)}.
\end{equation*}
Since $(\s_s(F)/F)^{1/n}$ is a modular unit on $X_0(N)$, for any integer $1\leq i \leq N$ prime to $N$, we have
\begin{equation*}
\sum_{m \in \cD_N^*} \psi_i(m)\sum_{h=1}^{\ell(m)-1}(sh-h)E(m, h) =(s-1)\sum_{m \in \cD_N^*} \psi_i(m)\sum_{h=1}^{\ell(m)-1}h E(m, h) \equiv 0 \pmod L.
\end{equation*}
Since $L$ is odd, we can choose an integer $s$ such that $(s(s-1), L)=1$. (Here, we crucially use the assumption that $L$ is odd.) Thus, $g$ satisfies the mod $L$ condition. 
\item
It is obvious that $g$ satisfies the mod $2$ condition.
\end{enumerate}
This completes the proof.
\end{proof}

Finally, we provide some relations among $F_{m, h}$.
\begin{theorem}\label{theorem: relation among Fmh}
Let $m\in \cD_N^*$ such that $\ell(m)$ is divisible by a prime $p$. Then we have
\begin{equation*}
\prod_{j=0}^{p-1} F_{m, h+xj} \approx F_{mp, ~p^{\ve}h},
\end{equation*}
where $x=\ell(m)/p$ and $\ve=1+v_p(\ell(mp))-v_p(\ell(m))$.
\end{theorem}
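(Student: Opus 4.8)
The plan is to deduce the identity directly from the infinite-product definition of the $E_{g,h}$, reducing it to two elementary distribution relations for generalized Dedekind eta functions. Write $\ell:=\ell(m)$ and $r:=v_p(m')$; since $p\mid\ell$ and $\ell^2\mid m'$ we have $r\ge 2$, and a short computation from Notation \ref{notation} gives $p\mid m''$, $\ell\mid m''$ (this is just $\ell^2\mid m'$), $mp\in\cD_N$, and that the invariants $\bigl(\ell(mp),\,(mp)'',\,N/\ell(mp)\bigr)$ of $mp$ equal $\bigl(\ell/p,\,m'',\,pN'\bigr)$ when $r$ is even and $\bigl(\ell,\,m''/p,\,N'\bigr)$ when $r$ is odd; accordingly $\ve=0$ (so $p^{\ve}h=h$) in the first case and $\ve=1$ (so $p^{\ve}h=ph$) in the second. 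These two parities are the two cases of the argument.

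The two relations I would establish first — both from the product formula in Definition \ref{definition: Fmh} and $\prod_{j=0}^{p-1}(1-\z_p^j u)=1-u^p$ — are, for a prime $p\mid N$,
\begin{equation*}
\prod_{j=0}^{p-1}E_{g,\,h+jN/p}(\tau)=E_{g,\,ph}(p\tau)\qquad\text{and}\qquad \prod_{t=0}^{p-1}E_{g+tN/p,\,h}(p\tau)=E_{pg,\,h}(\tau).
\end{equation*}
For the first, $\z_N^{h+jN/p}=\z_N^h\z_p^j$, so the inner product over $j$ of each factor $1-\z_N^{h+jN/p}q^{n-1+g/N}$ collapses to $1-\z_N^{ph}(q^p)^{n-1+g/N}$, and likewise for the conjugate factors; the prefactors match since $B_2(g/N)$ does not involve $h$. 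The second is the key point: writing $E_{g+tN/p,h}(p\tau)$ with base $q^p$, the $q$-exponents occurring in its two subproducts are $p(n-1)+pg/N+t$ and $pn-pg/N-t$, and as $(n,t)$ runs over $\Z_{\ge 1}\times\{0,\dots,p-1\}$ the integers $p(n-1)+t$ and $pn-t$ run bijectively over $\Z_{\ge 0}$ and $\Z_{\ge 1}$; hence the double product over $(n,t)$ rearranges into exactly the single products defining $E_{pg,h}(\tau)$, while the prefactors agree by the multiplication theorem $\sum_{t=0}^{p-1}B_2(x+t/p)=p^{-1}B_2(px)$.

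I would then square the asserted identity (harmless, since both sides are nowhere-vanishing holomorphic functions on the upper half-plane) and use the uniform expression $F_{m,h}^{2}=\prod_{\a\in\mm{m''}}E_{\a m\ell,\,\d_\a hN'}(N'\tau)$, with $\d_\a$ the inverse of $\a$ modulo $m''$, which follows from Definition \ref{definition: Fmh} by pairing $\a$ with $m''-\a$ (Lemma \ref{lemma: Egh}). Since $xN'=N/p$ and $\gcd(\d_\a,p)=1$, the set $\{\d_\a jN/p:0\le j<p\}$ equals $\{kN/p:0\le k<p\}$ modulo $N$, so the first relation yields
\begin{equation*}
\prod_{j=0}^{p-1}F_{m,h+xj}^{2}=\prod_{\a\in\mm{m''}}\ \prod_{j=0}^{p-1}E_{\a m\ell,\ \d_\a hN'+\d_\a jN/p}(N'\tau)=\prod_{\a\in\mm{m''}}E_{\a m\ell,\ p\d_\a hN'}(pN'\tau).
\end{equation*}
When $r$ is even this is $F_{mp,h}^{2}$ verbatim, since $(mp)''=m''$, $(mp)\ell(mp)=m\ell$ and $N/\ell(mp)=pN'$. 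When $r$ is odd I would group the $\a$'s by their image $\b$ under the reduction $\mm{m''}\surj(\Z/(m''/p)\Z)^\times$, whose fibres are $\{\b+(m''/p)t:0\le t<p\}$ (here $p^2\mid m''$); then $(\b+(m''/p)t)m\ell\equiv\b m\ell+tN/p\pmod N$, and because $\ell/p\mid m''/p$ — this is where $\ell\mid m''$ enters — the inverses $\d_{\b+(m''/p)t}$ all coincide modulo $\ell/p$, so (using $pN'=N/(\ell/p)$) the second arguments $p\d_{\b+(m''/p)t}hN'$ all coincide modulo $N$ with $p\d'_\b hN'$, where $\d'_\b$ is the inverse of $\b$ modulo $(mp)''=m''/p$. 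Applying the second relation to each fibre converts $\prod_{t}E_{\b m\ell+tN/p,\,p\d'_\b hN'}(pN'\tau)$ into $E_{p\b m\ell,\,p\d'_\b hN'}(N'\tau)=E_{\b(mp)\ell(mp),\,\d'_\b(ph)N'}(N'\tau)$, and the product over $\b$ is exactly $F_{mp,ph}^{2}$. Taking square roots in either case gives $\prod_{j=0}^{p-1}F_{m,h+xj}\approx F_{mp,p^{\ve}h}$.

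The part I expect to demand the most care is the odd case: checking that the second arguments $p\d_\a hN'$ are constant along the fibres of $\mm{m''}\surj(\Z/(m''/p)\Z)^\times$, which rests squarely on $\ell(m)^2\mid m'$, together with the matching of all the $mp$-invariants against the parity of $v_p(m')$; everything else is the two short product identities above plus routine manipulation of the definitions of $m''$, $\ell(m)$ and $N'$.
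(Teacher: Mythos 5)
Your proof is correct, but it takes a genuinely different route from the paper's. The paper never touches the $q$-products: it computes the order of $\prod_{j}F_{m,h+xj}$ and of $F_{mp,\,p^{\ve}h}$ at an arbitrary cusp $\vect a c$ via Lemma \ref{lemma: order of Fmh}, applies the distribution relation for $P_2$, and concludes that the two sides have the same divisor and hence agree up to a constant; its case split on $(c',p)$ and on $(z,p)$, where $m''=\ell(m)z$, matches your parity split on $v_p(m')$, since $(z,p)=1$ exactly when $v_p(m')$ is even, and its counting of $\mm{m''}$ along the fibres of the reduction to the corresponding group for $mp$ is the $P_2$-level analogue of your fibre argument. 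You instead prove the statement as an identity of infinite products, using the two distribution relations for $E_{g,h}$ (in the second and in the first index, respectively) and the uniform squared expression for $F_{m,h}$. Both arguments are sound, and your verification of the invariants of $mp$ in the two parities, of $xN'=N/p$, and of the constancy of $p\,\d_\a hN'$ modulo $N$ along fibres (resting on $\ell(m)\mid m''$) is exactly the bookkeeping the claim requires. What your approach buys is self-containedness: it needs neither the order formula nor the fact that a function with trivial divisor on the compact curve is constant, and it would in principle pin down the constant. What the paper's approach buys is brevity given that Lemma \ref{lemma: order of Fmh} is already in hand. Two small points worth a word in a final write-up: the rearrangement of the double product in your second relation should be justified by absolute convergence, and the concluding square-root step works because the ratio of the two sides is holomorphic and nowhere vanishing on the connected domain $\bbH$ with constant square.
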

\begin{proof}
For simplicity, let $H_{m, h}=\prod_{j=0}^{p-1} F_{m, h+xj}$.
It suffices to prove that the order of $H_{m, h}$ at a cusp $\vect a c$ is equal to that of $F_{mp, p^{\ve}h}$ at the same cusp.
Before proceeding, we recall the distribution property of Bernoulli polynomials $B_2(x)=x^2-x+1/6$ and $P_2(x)=B_2(\{x\})$. Namely, for any integer $n$ we have
\begin{equation*}
\sum_{j=0}^{n-1} B_2\left(x+\frac{j}{n}\right)=\frac{1}{n}B_2(nx),
\end{equation*}
and hence
\begin{equation*}
\sum_{j=0}^{n-1} P_2\left(x+\frac{j}{n}\right)=\frac{1}{n}P_2(nx),
\end{equation*}
which will be frequently used below.

For simplicity, let $\ell=\ell(m)$. Also, the subscript $()_1$ denotes the corresponding element for $mp$ instead of $m$. Namely, let
\begin{equation*}
\ell_1=\ell(mp), ~~m''_1=\frac{N}{mp\ell_1}, ~~N'_1=\frac{N}{\ell_1}, ~~ a_1'=\frac{N'_1 a}{(N'_1, c)} ~~\text{ and }~~c'_1=\frac{c}{(N_1', c)}.
\end{equation*}

Let $m''=\ell z=xpz$ for some integer $z$. Note that $\ve=0$ if $(z, p)=1$ and $\ve=1$ otherwise. Let 
\begin{equation*}
\begin{split}
A&:=\sum_{j=0}^{p-1}  \frac{\ell(N', c)^2}{4 (c^2, N)}  \sum_{\a \in \mm {m''}} P_2\left( \frac{\a a'}{m''} + \frac{\d h c'}{\ell} + \frac{\d jc'}{p}\right)\\
&=\frac{xp(N', c)^2}{4 (c^2, N)} \sum_{\a \in \mm {m''}} \sum_{j=0}^{p-1}  P_2\left( \frac{\a a'}{xpz} + \frac{\d h c'}{xp} + \frac{\d jc'}{p}\right)
\end{split}
\end{equation*}
and
\begin{equation*}
B:=\frac{\ell_1(N_1', c)^2}{4 (c^2, N)} \sum_{\a \in \mm {m_1''}} P_2\left( \frac{\a a_1'}{m''_1} + \frac{\d (p^{\ve}h) c_1'}{\ell_1}\right).
\end{equation*}
be the orders of  $H_{m, h}$ and $F_{mp, ~p^{\ve}h}$ at a cusp $\vect a c$, respectively, which are computed using Lemma \ref{lemma: order of Fmh}. We divide into two cases. 
\begin{enumerate}
\item
Suppose that $(c', p)=1$. Then we have
\begin{equation*}
\sum_{j=0}^{p-1} P_2\left( \frac{\a a'}{xpz} + \frac{\d h c'}{xp} + \frac{\d jc'}{p}\right)=\frac{1}{p}P_2\left( \frac{\a a'}{xz} + \frac{\d h c'}{x}\right).
\end{equation*}
Thus, we have
\begin{equation*}
A=\frac{x(N', c)^2}{4(c^2, N)} \sum_{\a \in \mm {m''}} P_2\left( \frac{\a a'}{xz} + \frac{\d h c'}{x}\right).
\end{equation*}

\begin{itemize}[--]
\item
If $(z, p)=1$ (and so $\ve=0$), then $\ell_1=x$ and $m_1''=m''=xpz$. Also, $N_1'=N/{\ell_1}=N'p$. Since $(c', p)=1$, i.e., $v_p(c) \leq v_p(N')$, we have $(N_1', c)=(N', c)$, $a_1'=pa'$ and $c_1'=c'$.
Thus, we have
\begin{equation*}
B=\frac{x(N', c)^2}{4 (c^2, N)} \sum_{\a \in \mm {m''}}P_2\left( \frac{\a a'}{xz} + \frac{\d h c'}{x}\right).
\end{equation*}

\item
If $(z, p)=p$ (and so $\ve=1$), then $\ell_1=xp$ and $m''_1=xz$. Also, $N_1'=N'$ and so $a_1'=a'$ and $c_1'=c'$. 
Since $p$ divides $z$, any element of $\mm {xpz}$ can be written as $\a+xzk$ for some $\a \in \mm {xz}$ and $k\in \{0, 1, \dots, p-1\}$. Note also that
the multiplicative inverse of $\a+xzk$ (modulo $xpz$) is of the form $\d+xzk'$, where $\d$ is the multiplicative inverse of $\a$ (modulo $xz$). Since
\begin{equation*}
\frac{\a a'}{xz} \equiv \frac{(\a+xzk)a'}{xz} \pmod \Z \qa \frac{\d h c'}{x} \equiv \frac{(\d+xzk')hc'}{x} \pmod \Z,
\end{equation*}
we have
\begin{equation*}
\sum_{\a \in \mm {m''}} P_2\left(\frac{\a a'}{xz} + \frac{\d h c'}{x}\right)=p\sum_{\a \in \mm {m_1''}} P_2\left(\frac{\a a'}{xz} + \frac{\d h c'}{x}\right).
\end{equation*}
Thus, we have
\begin{equation*}
\begin{split}
 B&=\frac{xp(N', c)^2}{4 (c^2, N)} \sum_{\a \in \mm {m_1''}} P_2\left( \frac{\a a'}{xz} + \frac{\d h c'}{x}\right)\\
 &=\frac{x(N', c)^2}{4 (c^2, N)} \sum_{\a \in \mm {m''}} P_2\left( \frac{\a a'}{xz} + \frac{\d h c'}{x}\right).
\end{split}
\end{equation*}
\end{itemize}

\item
Suppose that $(c', p)=p$. Then we have
\begin{equation*}
A=\frac{xp^2(N', c)^2}{4 (c^2, N)} \sum_{\a \in \mm {m''}}   P_2\left( \frac{\a a'}{xpz} + \frac{\d h c'}{xp}\right).
\end{equation*}
\begin{itemize}[--]
\item
If $(z, p)=1$ (and so $\ve=0$), then $\ell_1=x$ and $m_1''=m''=xpz$. Also, $N_1'=N/{\ell_1}=N'p$. Since $(c', p)=p$, i.e., $v_p(c) \geq v_p(N')+1$, we have $(N_1', c)=p(N', c)$, $a_1'=a'$ and $c_1'=c'/p$.
Thus, we have
\begin{equation*}
B=\frac{xp^2(N', c)^2}{4 (c^2, N)} \sum_{\a \in \mm {m''}}P_2\left( \frac{\a a'}{xpz} + \frac{\d h c'}{xp}\right).
\end{equation*}

\item
If $(z, p)=p$ (and so $\ve=1$), then $\ell_1=xp$ and $m''_1=xz$. Also, $N_1'=N'$ and so $a_1'=a'$ and $c_1'=c'$. 
As above, we write $\a$ and $\d$ in $\mm {xpz}$ as $\a+xzk$ and $\d+xzk'$ for some $\a, \d \in \mm {xz}$. Thus, we have
\begin{equation*}
\begin{split}
\sum_{\a \in \mm {m''}} P_2\left(\frac{\a a'+z\d h c'}{xpz}\right)&=\sum_{\a \in \mm {m_1''}} \sum_{k=0}^{p-1} P_2\left(\frac{\a a'}{xpz} + \frac{\d h c'}{xp}+\frac{ka'}{p}\right)\\
&=\frac{1}{p} \sum_{\a \in \mm {m_1''}} P_2\left(\frac{\a a'}{xz} + \frac{\d h c'}{x}\right),
\end{split}
\end{equation*}
where the first (resp. second) equality follows from the fact that $p$ divides $z$ (resp. $p$ does not divide $a'$).
Therefore we have
\begin{equation*}
\begin{split}
 B&=\frac{xp(N', c)^2}{4 (c^2, N)} \sum_{\a \in \mm {m_1''}} P_2\left( \frac{\a a'}{xz} + \frac{\d h c'}{x}\right)\\
 &=\frac{xp^2(N', c)^2}{4 (c^2, N)} \sum_{\a \in \mm {m''}} P_2\left( \frac{\a a'}{xpz} + \frac{\d h c'}{xp}\right).
\end{split}
\end{equation*}
\end{itemize}
\end{enumerate}
This completes the proof. 
\end{proof}

\begin{remark}\label{remark: (m, h) and (mp, h')}
By definition, if $v_p(h)=v_p(\ell(m))-a$ for some $a\in \Z$, then we have
\begin{equation*}
v_p(p^{\ve}h)=v_p(\ell(mp))-(a-1).
\end{equation*}
\end{remark}

\svv   
\section{Motivational examples}\label{section: toy}
Since our definition of $\Psi$ in the next section is quite complicated, 
we try to give a motivation behind the definition.
To begin with, we consider the case where $N$ is a prime power. 

First, let $N=p^2$. Then we easily have
\begin{equation*}
\cS=\{(1, 1), (1, 2), \dots, (1, p), (p, 1)\} \qa \cS^{\rd}=\{(1, 1)\}.
\end{equation*}
Thus, it suffices to replace $F_{1, 1}$ by others using Theorem \ref{theorem relation among Fmh}. Indeed we have
\begin{equation*}
F_{1, 1} \approx F_{p, 1} \times \prod_{j=2}^{p} F_{1, ~j}^{-1}
\end{equation*}
and hence 
\begin{equation*}
\prod_{(m, h) \in \cS} F_{m, h}^{a(m, h)}  \approx \prod_{(m, h) \in \cS^\new} F_{m, h}^{a(m, h)+b(m, h)},
\end{equation*}
where 
\begin{equation*}
b(m, h)=\begin{cases}
-a(1, 1) & \text{ if } ~~ m=1,\\
a(1, 1) & \text{ if } ~~ m=p.
\end{cases}
\end{equation*}
(Here and below $b$ is defined only on $\cS^{\new}$.)

\svv  
Next, let $N=p^3$. Then we have
\begin{equation*}
\cS=\{(1, k), (p, k), (p^2, 1) : 1\leq k \leq p\} \qa \cS^{\rd}=\{(1, 1), (p, 1)\}.
\end{equation*}
 As above, by Theorem \ref{theorem relation among Fmh} we have
\begin{equation*}
F_{1, 1} \approx F_{p, p} \times \prod_{j=2}^{p} F_{1, ~j}^{-1} \qa F_{p, 1} \approx F_{p^2, 1} \times \prod_{j=2}^{p} F_{p, ~j}^{-1}.
\end{equation*}
Thus, we can replace $F_{1, 1}$ and $F_{p, 1}$ by others. In this case, the order of the replacements is not important and we get
\begin{equation*}
\prod_{(m, h) \in \cS} F_{m, h}^{a(m, h)}  \approx \prod_{(m, h) \in \cS^\new} F_{m, h}^{a(m, h)+b(m, h)},
\end{equation*}
where
\begin{equation*}
b(m, h)=\begin{cases}
-a(1, 1) & \text{ if } ~~ m=1,\\
-a(p, 1) & \text{ if } ~~ m=p \qqa h\neq p,\\
a(1, 1)-a(p, 1) & \text{ if }~~ (m, h)=(p, p),\\
a(p, 1) & \text{ if }~~m=p^2.
\end{cases}
\end{equation*}

\svv   
Then, let $N=p^4$. We have
\begin{equation*}
\cS^{\rd}=\{(1, k), (p, 1), (p^2, 1) : 1\leq k \leq p\}.
\end{equation*}
Again by Theorem \ref{theorem relation among Fmh}, for each $1\leq k \leq p$ we have
\begin{equation*}
F_{1, k} \approx F_{p, k} \times \prod_{j=1}^{p-1} F_{1, k+pj}^{-1}.
\end{equation*}
Moreover, we have
\begin{equation*}
F_{p, 1} \approx F_{p^2, p} \times \prod_{j=2}^{p} F_{p, j}^{-1} \qa F_{p^2, 1} \approx F_{p^3, 1} \times \prod_{j=2}^{p} F_{p^2, j}^{-1}.
\end{equation*}
We may replace $F_{p^2, 1}$ and $F_{p, 1}$ by others and then replace $F_{1, k}$ by others. If so, the term $F_{p, 1}$ reappears and hence we should work again.
In contrast with the previous case, the order of the replacements is significant.
We first replace $F_{1, k}$ by others. Next, we replace $F_{p, 1}$ by others. Finally, we replace $F_{p^2, 1}$ by others. 
As a result, we have
\begin{equation*}
\prod_{(m, h) \in \cS} F_{m, h}^{a(m, h)}  \approx \prod_{(m, h) \in \cS^\new} F_{m, h}^{a(m, h)+b(m, h)},
\end{equation*}
where  
\begin{equation*}
b(m, h)=\begin{cases}
-a(1, h') & \text{ if } ~~ m=1,\\
a(1, h)-a(1, 1)-a(p, 1) & \text{ if } ~~ m=p,\\
-a(p^2, 1) & \text{ if }~~ m=p^2 \qqa h \neq p, \\
a(1, 1)+a(p, 1) -a(p^2, 1)& \text{ if }~~(m, h)=(p^2, p),\\
a(p^2, 1) & \text{ if }~~m=p^3.
\end{cases}
\end{equation*}
(Here, $h'$ is the smallest positive integer congruent to $h$ modulo $p$.)

\svv   
Finally, let $N=p^r$ for some integer $r\geq 2$. A general recipe is as follows: For each divisor $\fm$ of $p^r$ with $\ell(\fm)>1$, we construct a map $\Psi_\fm$ based on the discussion above. 
The desired map $\Psi$ will then be the composition of $\Psi_\fm$ from $\fm=1$ to $\fm=p^{r-2}$. 
More specifically, let $\fm=p^k$ with $\ell(\fm)=p^n>1$. Then by definition, we have
\begin{equation*}
I_\fm=\{ h \in \Z : 1 \leq h \leq p^n\} \qa I_\fm^{\rd}=\{ h \in \Z : 1 \leq h \leq p^{n-1}\}.
\end{equation*}
For each $h \in I_\fm$, let $h'$ be the corresponding element in $I_\fm^{\rd}$, i.e., $h'$ is the smallest positive integer congruent to $h$ modulo $p^{n-1}$. Then by Theorem \ref{theorem relation among Fmh}, we have
\begin{equation*}
\prod_{h \in I_\fm} F_{\fm, h}^{a(\fm, h)} \approx \prod_{h \in I_\fm^{\rd}} F_{\fm p, p^{\ve} h}^{a(\fm, h)}\times \prod_{h \in I_\fm \sm I_\fm^{\rd}} F_{\fm, h}^{a(\fm, h)-a(\fm, h')}.
\end{equation*}
This motivates the following definition.

\begin{definition}\label{definition1-toy}
Let $N=p^r$ be a prime power with $r\geq 2$. If $\ell(\fm)$ is divisible by $p$, then we define a map $\Psi_\fm$ as follows.
\begin{equation*}
\Psi_\fm(f)(m, h):=\begin{cases}
f(m, h)-f(\fm, h') & \text{ if } ~~ m=\fm,\\
f(m, h)+f(\fm, p^{-\ve}h) & \text{ if } ~~ m=\fm p \qqa p^{\ve} \dd h,\\
f(m, h) & \text{ otherwise},
\end{cases}
\end{equation*}
where $h'$ is the smallest positive integer congruent to $h$ modulo $\ell(\fm)/p$, and 
\begin{equation*}
\ve:=1+v_p(\ell(\fm p))-v_p(\ell(\fm)).
\end{equation*}
Furthermore, for any $f \in \scF$ we define a map $\Psi$ as follows.
\begin{equation*}
\Psi(f):=\Psi_{p^{r-2}} \circ \Psi_{p^{r-3}} \circ \cdots \circ \Psi_p \circ \Psi_1(f).
\end{equation*}
\end{definition}

\svv   
By definition, $\Psi_\fm(f)(\fm, h)=0$ for all $1\leq h \leq \ell(\fm)/p$, which means that $h\in I_\fm^{\rd}$ if $\ell(\fm)$ is a power of $p$. Hence the above definition works to ``remove'' all terms $F_{\fm, h}$ with $h \in I_\fm^{\rd}$. On the other hand, if $\ell(\fm)$ is divisible by $pq$ then the size of $I_\fm^{\rd}$ is larger than both $\ell(\fm)/p$ and $\ell(\fm)/q$. Thus, we need a variant of Definition \ref{definition1-toy} as follows.

\begin{definition}\label{definition2-toy}
Suppose that $\ell(\fm)$ is divisible by a prime $\fp$. We fix an interval $I_\fm^\fp$ in $I_\fm$ of size $\ell(\fm)/\fp$, and replace $F_{\fm, h}$ for $h \in I_\fm^\fp$ by others. For each $h \in I_\fm$, let $\rho_\fm(h)$ denote the unique element of $I_\fm^\fp$ congruent to $h$ modulo $\ell(\fm)/\fp$. Then we define a map $\Phi_\fm$ as follows.
\begin{equation*}
\Phi_\fm(f)(m, h):=\begin{cases}
f(m, h)-f(\fm, \rho_\fm(h)) & \text{ if } ~~ m=\fm,\\
f(m, h)+f(\fm, \rho_\fm(\fp^{-\ve}h)) & \text{ if } ~~ m=\fm \fp \qqa \fp^{\ve} \dd h,\\
f(m, h) & \text{ otherwise}.
\end{cases}
\end{equation*}
\end{definition}

\svv    
Now, suppose that $\ell(\fm)=pq$ for two primes $p<q$. By definition, we have
\begin{equation*}
I_\fm=\{h\in \Z: 1\leq h \leq pq\} \qa I_\fm^{\rd}=\{h \in \Z : 1\leq h \leq p+q-1\}.
\end{equation*}
Let $\Phi_\fm^1$ be the map $\Phi_\fm$ in Definition \ref{definition2-toy} with $\fp=p$ and $I_\fm^\fp=\{h \in \Z : 1\leq h \leq q\}$. Similarly, we may define $\Phi_\fm^2$ be the map with $\fp=q$ and $I_\fm^\fp=\{ h \in \Z : q < h \leq p+q\}$. Since $p+q \not\in I_\fm^{\rd}$, it is not necessary to replace $F_{\fm, p+q}$ by others. Thus, we slightly modify our definition of $\Phi_\fm^2$ as follows.
\begin{equation*}
\Phi_\fm^2(f)(m, h):=\begin{cases}
f(m, h)-\d(h) \cdot f(\fm, \rho_\fm(h)) & \text{ if } ~~ m=\fm,\\
f(m, h)+\d(\fp^{-\ve}h) \cdot f(\fm, \rho_\fm(\fp^{-\ve}h)) & \text{ if } ~~ m=\fm \fp \qqa \fp^{\ve} \dd h,\\
f(m, h) & \text{ otherwise},
\end{cases}
\end{equation*}
where $\d(h)=0$ if $\rho_\fm(h) \not\in I_\fm^{\rd}$ and $1$ otherwise. 

\svv
Note that $\d(q)=0$ as $\rho_\fm(q)=p+q \not\in I_\fm^{\rd}$.
Note also that for any $f \in \scF$,
\begin{equation*}
\Phi_\fm^1(f)(\fm, h)=0  \quad \text{ for all } h \in I_\fm^p.
\end{equation*}
Since $q \in I_\fm^p$ (and $\d(q)=0$), we have
\begin{equation}\label{eqn: mot1}
\Phi_\fm^2 \circ \Phi_\fm^1(f)(\fm, h)=0 \quad \text{for all } h \in I_\fm^p \text{ congruent to $q$ modulo $p$}.
\end{equation}
Thus, we have
\begin{equation*}
(\Phi_\fm^1 \circ \Phi_\fm^2) \circ \Phi_\fm^1(f)(\fm, q+r)=0,
\end{equation*}
where $r$ is the remainder of $q$ modulo $p$. As in \eqref{eqn: mot1}, we then have
\begin{equation*}
\Phi_\fm^2 \circ \Phi_\fm^1\circ \Phi_\fm^2 \circ \Phi_\fm^1(f)(\fm, h)=0 \quad \text{for all } h \in I_\fm^p \text{ congruent to $2q$ modulo $p$}.
\end{equation*}
Similarly, we have
\begin{equation*}
(\Phi_\fm^1 \circ \Phi_\fm^2)^2 \circ \Phi_\fm^1(f)(\fm, q+r)=(\Phi_\fm^1 \circ \Phi_\fm^2)^2 \circ \Phi_\fm^1(f)(\fm, q+r')=0,
\end{equation*}
where $r'$ is the remainder of $2q$ modulo $p$. Doing this successively, we can easily prove that
\begin{equation*}
(\Phi_\fm^1 \circ \Phi_\fm^2)^{p-1} \circ \Phi_\fm^1(f)(\fm, h)=0 \quad \text{ for all } h \in I_\fm^{\rd}.
\end{equation*}
Hence if $\ell(\fm)=pq$, we can define our map $\Psi_\fm$ as follows.
\begin{equation*}
\Psi_\fm :=(\Phi_\fm^1 \circ \Phi_\fm^2)^{p-1} \circ \Phi_\fm^1.
\end{equation*}

\svv   
More generally, suppose that $\ell(\fm)=p^a q^b$ for some integers $a, b \geq 1$. 
Let $\fd(\fm)=p^{a-1}q^{b-1}$ be an integer. Then we divide the interval $I_\fm$ into $pq$ pieces, where $\nu$-th piece is
\begin{equation*}
I_\fm(\nu):=\{h \in \Z : (\nu-1)\fd(\fm)< h \leq \nu\fd(\fm)\}.
\end{equation*}
Then the situation is analogous to the previous case. For instance, we have
\begin{equation*}
I_\fm^p=\mcoprod_{1\leq \nu\leq q} I_\fm(\nu) \qa I_\fm^q=\mcoprod_{q< \nu\leq p+q} I_\fm(\nu). 
\end{equation*}
Also, we have
\begin{equation*}
(\Phi_\fm^1 \circ \Phi_\fm^2) \circ \Phi_\fm^1(f)(\fm, h)=0 \quad \text{ for all } ~h \in I_\fm(q+r).
\end{equation*}
Hence the above definition of $\Psi_\fm$ works  in general.

\begin{remark}
If we regard an interval $I_\fm(\nu)$ as an integer $\nu$, then all the formula for $\Psi_\fm$ are the same as the case of $\ell(\fm)=pq$. This is the reason why we use ``vector notation'' in Section \ref{section: formula}.
\end{remark}
 
So far we have defined our map $\Psi_\fm$ for all divisors $\fm$ of $N$ with $\ell(\fm)>1$. 
To define our map $\Psi$, we compose the maps $\Psi_\fm$ based on the divisibility relation as in the prime power case. Since $\Psi_\fm(f)(m, h)=f(m, h)$ unless $m \in \{ \fm, \fm p, \fm q\}$, we split our domain into several subsets and treat them separately. For more details, see the next section.

\svv   
\section{The map $\Psi$}\label{section: the map Psi}
From now on, let $N=Mp_1^{r_1}p_2^{r_2}$ for some squarefree integer $M$ and two distinct primes $p_1, p_2$ such that $\gcd(M, p_1p_2)=1$ and $p_1<p_2$. 
We henceforth assume that $r_1\geq 2$ so that $N$ is not squarefree. If $r_2=1$, then we may replace $M$ by $Mp_2$, and so we assume that either $r_2=0$ in which case $L$ is a prime power, or $r_2\geq 2$ in which case $L$ is divisible by $p_1p_2$.
We exclusively use the letter $\iota \in \{1, 2\}$ for the index of the prime divisors of $N/M$. Also, for simplicity, we frequently denote $p_1$ and $p_2$ by $p$ and $q$, respectively. (Hence we always assume that $p<q$.)

\svv   
To begin with, we define various subsets of $\cD_N$ and $\cS$ as follows.
Since some divisors of $N$ are not necessary in our discussion, we define
\begin{equation*}
\cD':=\cD_N \sm \{ dp_1^{r_1}p_2^{r_2} : d \dd M\} \qa \cS':=\{(m, h) \in \cS : m \in \cD'\}.
\end{equation*}
(For instance, we already have $E(m, h)=0$ for any $(m, h) \in \cS\sm \cS'$.)
For a divisor $d$ of $M$ we define the following.
\begin{itemize}[--]
\item
$\cD(d):=\{ m \in \cD' : \gcd(m, M)=d\}=\{dp_1^{k_1}p_2^{k_2} : 0\leq k_i \leq r_i \qqa (k_1, k_2) \neq (r_1, r_2)\}$.

\item
$\cD(d)_{\iota}:=\{ m \in \cD(d): v_{p_\iota}(m)=r_\iota\}=\{dp_\tau^k p_\iota^{r_\iota} : 0\leq k < r_\tau\}$, where $\tau=3-\iota$.

\item
$\cS(d) :=\{(m, h) \in \cS' : m \in \cD(d) \} ~~ \text{ and } ~~  \cS(d)_\iota:=\{(m, h) \in \cS(d) : m \in \cD(d)_\iota \}$.
\item
$\cD(d)_0:=\cD(d) \sm (\cD(d)_1 \cup \cD(d)_2)  ~~ \text{ and } ~~   \cS(d)_0:=\cS(d) \sm (\cS(d)_1 \cup \cS(d)_2)$.
\end{itemize}
By definition, we have
\begin{equation*}
\cD'=\mcoprod_{d \mid M} \left(\cD(d)_0 \smallcoprod \cD(d)_1 \smallcoprod \cD(d)_2 \right) \qa \cS'=\mcoprod_{d \mid M} \left(\cS(d)_0 \smallcoprod \cS(d)_1 \smallcoprod \cS(d)_2 \right).
\end{equation*}

Recall that $\scF$ denotes the set of all set-theoretic maps from $\cS$ to $\Q$. 
Similarly, for each positive divisor $d$ of $M$ and  $i \in \{0, 1, 2\}$, let $\scF(d)_i$ denote the set of all set-theoretic maps from $\cS(d)_i$ to $\Q$. (As mentioned in the introduction, we often regard them as maps to $\Q/{\Z}$ after composing the natural projection $\Q \to \Q/{\Z}$.) We will define the map $\Psi$ on each $\cS(d)_i$ in Section \ref{section: 4.2}, and then for each $f \in \scF$, let
\begin{equation*}
\Psi(f)(m, h):=\begin{cases}
\Psi(f|_{\cS(d)_i})(m, h) & \quad \text{ if } ~~ (m, h) \in \cS(d)_i,\\
f(m, h) & \quad  \text{ if } ~~ (m, h) \not\in \cS'.
\end{cases}
\end{equation*}
As we have seen in the definition, it is natural to deal with our discussion on $\scF(d)_i$ separately. Thus, we henceforth fix a positive divisor $d$ of $M$.  
It will be clear that our discussion does not depend on this fixed divisor.

\svv   
\subsection{The definition of $\Psi_\fm$}\label{subsection: 4.1}
To begin with, we fix some notation.
\begin{notation}\label{notation: notion associated with m}
For any $\fm \in \cD'$, we define various notions associated with $\fm$ as follows.
\begin{itemize}[--]
\item
$\ell(\fm)$ : the largest integer whose square divides $N/\fm$.
\item
$\fl_\iota(\fm):=\ell(\fm)/{p_\iota}$.
\item
$\fd(\fm):=\ell(\fm)/{pq}$ if $\ell(\fm)$ is divisible by $pq$. As in the previous section, for each integer $1\leq \nu \leq pq$ let
\begin{equation*}
I_\fm(\nu):=\{h \in \Z : (\nu-1)\fd(\fm)< h \leq \nu\fd(\fm)\}.
\end{equation*}
\item
$\ve_\iota(\fm):=1+v_{p_\iota}(\ell(\fm p_\iota))-v_{p_\iota}(\ell(\fm)) \in \{0, 1\} ~~~\text{ if } ~~ ~\fm p_\iota \in \cD'$. 
\item
$I_\fm:=\{ h \in \Z : 1\leq h \leq \ell(\fm)\}$.
\item
$I_\fm^{\rd}:=\{ h \in \Z : 1\leq h \leq \ell(\fm)-\p(\ell(\fm))\}$.
\item
As in the discussion below Definition \ref{definition2-toy}, we define subsets $I_\fm^\iota$ of $I_\fm$ as follows: If $\ell(\fm)$ is a power of $p_\iota$, then let 
\begin{equation*}
I_\fm^\iota:=\{ h \in I_\fm : 1\leq h \leq \fl_\iota(\fm)\}.
\end{equation*}
If $\ell(\fm)$ is divisible by $pq$, let
\begin{equation*}
\begin{split}
I_\fm^1&:=\{ h \in I_\fm : 1\leq h \leq \fl_1(\fm)\}=\mcoprod\nolimits_{1\leq \nu\leq q} I_\fm(\nu) \qa \\
I_\fm^2&:=\{ h \in I_\fm : \fl_1(\fm)< h \leq \fl_1(\fm)+\fl_2(\fm)\}=\mcoprod\nolimits_{q< \nu\leq p+q} I_\fm(\nu).
\end{split}
\end{equation*}

\item
For any $h \in I_\fm$ with $p_\iota \dd \ell(\fm)$, let $\rho_\fm^\iota(h)$ be the unique element of $I_\fm^\iota$ such that
\begin{equation*}
\rho_\fm^\iota(h)  \equiv h \pmod {\fl_\iota(\fm)}.
\end{equation*}

\item
Suppose that $\ell(\fm)$ is divisible by $p_\iota$. Then for any $h \in I_\fm$, let
\begin{equation*}
\chi^\iota_\fm(h):=\begin{cases}
1 & \text{ if } ~~ \rho_\fm^\iota(h) \in I_\fm^{\rd},\\
0 & \text{ otherwise}.
\end{cases}
\end{equation*}
\end{itemize}
\end{notation}

As discussed in the previous section, we define the following.
\begin{definition}\label{definition: the map Phi}
For each $i \in \{0, 1, 2\}$, let $\fm \in \cD(d)_i$.
If $\ell(\fm)$ is divisible by $p_\iota$, then we define a map $\Phi_\fm^\iota : \scF(d)_i \to \scF(d)_i$ as follows: For any $f \in \scF(d)_i$, let
\begin{equation*}
\Phi_\fm^\iota(f)(m, h):=\begin{cases}
f(m, h)-\chi_\fm^\iota(h)\cdot f(\fm, \rho_\fm^\iota(h)) & \text{ if } ~~m=\fm,\\
f(m, h)+\chi_\fm^\iota(p_\iota^{-\ve_\iota(\fm)}h)\cdot f(\fm, \rho_\fm^\iota(p_\iota^{-\ve_\iota(\fm)}h)) & \text{ if } ~~m=\fm p_\iota \qqa p_\iota^{\ve_\iota(\fm)} \dd h,\\
f(m, h) & \text{ otherwise}.
\end{cases}
\end{equation*}
If $\ell(\fm)$ is not divisible by $p_\iota$, then let $\Phi_\fm^\iota(f):=f$.  Using $\Phi_\fm^\iota$, let
\begin{equation*}
\Psi_\fm:=(\Phi_{\fm}^1 \circ \Phi_\fm^2)^{p-1} \circ \Phi_\fm^1.
\end{equation*}
\end{definition}

\svv
By Corollary \ref{corollary: vanishing on Im1} below, we have $\Psi_\fm(f)(\fm, h)=0$ for all $h \in I_\fm^{\rd}$ as desired.

\begin{remark}\label{remark: Hm1 Hm2 Im1}
If $\ell(\fm)$ is a power of $p_\iota$, then we always have $\chi_\fm^\iota(h)=1$ as $I_\fm^{\rd}=I_\fm^\iota$. Also, if $\ell(\fm)$ is divisible by $pq$, then we always have $\chi_\fm^1(h)=1$ as $I_\fm^1 \subset I_\fm^{\rd}$. Thus, for any $h \in I_\fm$ with $p_\iota \dd \ell(\fm)$, we have $\chi_\fm^\iota(h)=0$ if and only if $\ell(\fm)$ is divisible by $pq$, $\iota=2$ and $\rho_\fm^2(h) \in I_\fm(p+q)=I_\fm^2 \sm I_\fm^{\rd}$. 
\end{remark}

\begin{remark}\label{remark: Hm1 Hm2 Im2}
Suppose $p_\iota \dd \ell(\fm)$. If $h \in I_\fm^{\iota}$, then we have $\rho_\fm^{\iota}(h)=h$ and hence we have
\begin{equation*}
\Phi_\fm^\iota(\fm, h) =0 \quad \text{ for all } ~ h \in I_\fm^{\iota} \cap I_\fm^{\rd}.
\end{equation*}
\end{remark}

\begin{remark}\label{remark: map Phi 1}
Let $p_\iota \dd \ell(\fm)$. If $f(\fm, h)=0$ for all $h \in I_\fm^\iota \cap I_\fm^{\rd}$, then for any $h\in I_\fm$ we have
\begin{equation*}
\chi_\fm^\iota(h)\cdot f(\fm, \rho_\fm^\iota(h))=0
\end{equation*}
and hence $\Phi_\fm^\iota(f)=f$. 
In particular, we have $\Phi_\fm^\iota \circ \Phi_\fm^\iota(f)=\Phi_\fm^\iota(f)$, i.e., $\Phi_\fm^\iota$ is a projection.
\end{remark}

\begin{remark}\label{remark: map Phi 2}
By definition, if $\ell(\fm)=1$, then $\Phi_\fm^1(f)=\Phi_\fm^2(f)=f$ and so $\Psi_\fm(f)=f$. 
Suppose that $\ell(\fm)$ is a power of $p_\iota$. Then by definition, $\Phi_\fm^\tau(f)=f$, where $\tau=3-\iota$, and so 
\begin{equation*}
\Psi_\fm(f)=(\Phi_\fm^\iota)^{p-1} \circ \Phi_\fm^1(f)=\Phi_\fm^\iota(f),
\end{equation*}
where the last equality follows by the previous remark. 
\end{remark}

\begin{remark}\label{remark: identity if vanish on Im1}
Suppose further that $f(\fm, h)=0$ for all $h \in I_\fm^{\rd}$. Then by the same argument as above, we easily prove that
$\Phi_\fm^1(f)=\Phi_\fm^2(f)=f$ and so
\begin{equation*}
\Psi_\fm(f)=f.
\end{equation*}
Thus, for any $f \in \scF$ we have $\Psi_\fm \circ \Psi_\fm(f)=\Psi_\fm(f)$, i.e., $\Psi_\fm$ is also a projection.
\end{remark}

\svv   
\subsection{The definition of $\Psi$}\label{section: 4.2}
Now we are ready to define our map $\Psi$ on each $\scF(d)_i$ for $i \in \{0, 1, 2\}$. 
For $\iota \in \{1, 2\}$, we define a map $\Psi$ on $\scF(d)_\iota$ as follows.
\begin{equation*}
\Psi:=\Psi_{dp_\iota^{r_\iota}p_\tau^{r_\tau-1}} \circ \Psi_{dp_\iota^{r_\iota}p_\tau^{r_\tau-2}}\circ \cdots \circ \Psi_{dp_\iota^{r_\iota}p_\tau^{1}} \circ \Psi_{dp_\iota^{r_\iota}}=\Phi^\tau_{dp_\iota^{r_\iota}p_\tau^{r_\tau-1}} \circ \cdots \circ \Phi^\tau_{dp_\iota^{r_\iota}},
\end{equation*}
where $\tau=3-\iota$. Finally, we define it on $\scF(d)_0$. To do so, we need an ordering on $\cD(d)_0$. Let $\prec$ be an ordering $\cD(d)_0$ preserving the divisibility relation, i.e., $m_i \prec m_j$ whenever $m_i \dd m_j$ and $m_i\neq m_j$. Write
\begin{equation*}
\cD(d)_0=\{m_1, m_2, \dots, m_{r_1r_2}\}
\end{equation*}
so that $m_i \prec m_j$ if and only if $i<j$. Then by our assumption on divisibility, we always have
\begin{equation*}
m_1=d \qa m_{r_1r_2}=dp_1^{r_1-1}p_2^{r_2-1}.
\end{equation*}
Let
\begin{equation*}
\begin{split}
\U(m_k)&:=\Psi_{m_{k-1}} \circ \Psi_{m_{k-2}} \circ \cdots \circ \Psi_{m_1} \qa \\ \Psi(m_k)&:=\Psi_{m_k} \circ \U(m_k)=\Psi_{m_k} \circ \Psi_{m_{k-1}} \circ \cdots \circ \Psi_{m_1}.
\end{split}
\end{equation*}
Finally, let $\Psi:=\Psi(m_{r_1r_2})$.

\svv   
\begin{remark}\label{remark: Psi(f) independent of ordering}
Let $m$ and $\fm$ be two elements of $\cD(d)_0$ such that $\fm \prec m$. 
If $\fm \in \{m, mp, mq\}$ then $m \preceq \fm$ by our definition of the ordering, which is a contradiction. 
By Lemma \ref{lemma: simple formula for Psi}(1) below, we have 
\begin{equation*}
\Psi_{m}(f)(\fm, h)=f(\fm, h) \quad \text{ for all } ~~ h \in I_{\fm}
\end{equation*}
and hence if $\fm=m_k$ for some $k$, then 
\begin{equation*}
\begin{split}
\Psi(f)(\fm, h)&=\Psi_{m_{r_1r_2}} \circ \cdots \circ \Psi_{m_{k+1}} \circ \Psi(\fm)(f)(\fm, h) \\
&=\Psi(\fm)(f)(\fm, h)=\Psi_\fm(\U(\fm)(f))(\fm, h)  \quad \text{ for all } ~~ h \in I_{\fm}.
\end{split}
\end{equation*}
\end{remark}

\begin{remark}\label{remark: Psi(Psi)=Psi}
By Remarks \ref{remark: identity if vanish on Im1} and \ref{remark: Psi(f) independent of ordering} (and Corollary \ref{corollary: vanishing on Im1}), we have 
\begin{equation*}
\Psi(\Psi(\fm)(f))=\Psi(f) \quad \text{ for any }~ \fm \in \cD(d)_0.
\end{equation*}
\end{remark}

The definition of $\Psi$ is independent of a choice of an ordering on $\cD(d)_0$ as long as it preserves the divisibility relation.
Indeed, if $m, \fm \in \cD(d)_0$ such that $m \nmid \fm$ and $\fm \nmid m$, then by Lemma \ref{lemma: simple formula for Psi}(1) below, we easily prove that
\begin{equation*}
\Psi_\fm \circ \Psi_m(f)=\Psi_m \circ \Psi_\fm(f).
\end{equation*}
Thus, we will choose the (co-)lexicographic order on $\cD(d)_0$ whenever it is convenient.

\svv  
The following is the reason for our definition of the map $\Psi$.
\begin{theorem}\label{theorem: Psi(f)=0}
Suppose that 
\begin{equation*}
\prod_{(m, h) \in \cS} F_{m, h}^{f(m, h)}
\end{equation*}
is a modular unit on $X_0(N)$. Then we have
\begin{equation*}
\Psi(f)(m, h) \in \Z \quad \text{ for all }~~ (m, h) \in \cS.
\end{equation*}
\end{theorem}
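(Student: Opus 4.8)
The plan is to show that the map $\Psi$ merely rewrites the divisor data of the given modular unit in the basis of Theorem~\ref{thm: basis variant} without changing the unit itself up to a constant, and then to invoke the integrality built into that basis.

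\textbf{Step 1: each building block of $\Psi$, and hence $\Psi$ itself, preserves the associated function up to a constant.} I would first verify that for every $g \in \scF$ and every admissible pair $\fm \in \cD'$, $\iota \in \{1, 2\}$,
\begin{equation*}
\prod_{(m, h) \in \cS} F_{m, h}^{g(m, h)} \approx \prod_{(m, h) \in \cS} F_{m, h}^{\Phi_\fm^\iota(g)(m, h)}.
\end{equation*}
This is exactly what Theorem~\ref{theorem: relation among Fmh} provides: the relation $\prod_{j=0}^{p_\iota-1} F_{\fm,\, h + \fl_\iota(\fm) j} \approx F_{\fm p_\iota,\, p_\iota^{\ve_\iota(\fm)} h}$ says that, for each residue class $C$ modulo $\fl_\iota(\fm)$ represented by some $\rho = \rho_\fm^\iota(h) \in I_\fm^\iota \cap I_\fm^{\rd}$ (so $\chi_\fm^\iota \equiv 1$ on $C$), a single $F_{\fm, \rho}$ may be traded for $F_{\fm p_\iota,\, p_\iota^{\ve_\iota(\fm)}\rho}$ at the cost of one unit of $F_{\fm, h'}$ for every $h' \in I_\fm$ with $h' \equiv \rho \pmod{\fl_\iota(\fm)}$. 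Carrying out this trade with multiplicity $g(\fm, \rho_\fm^\iota(h))$ on every such class, and leaving untouched any class on which $\chi_\fm^\iota$ vanishes (which by Remark~\ref{remark: Hm1 Hm2 Im1} occurs only for $\iota = 2$, on the class meeting $I_\fm(p+q) = I_\fm^2 \setminus I_\fm^{\rd}$), reproduces the defining formula of $\Phi_\fm^\iota$ term by term; the degenerate cases $\ell(\fm) = 1$ and $\ell(\fm)$ a prime power, where $\Phi_\fm^\iota$ is the identity or has $\chi_\fm^\iota \equiv 1$, are immediate. Composing these moves in the order prescribed by Definition~\ref{definition: the map Phi} and Section~\ref{section: 4.2} then yields
\begin{equation*}
\prod_{(m, h) \in \cS} F_{m, h}^{f(m, h)} \approx \prod_{(m, h) \in \cS} F_{m, h}^{\Psi(f)(m, h)}.
\end{equation*}

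\textbf{Step 2: $\Psi(f)$ is supported on $\cS^{\new}$.} It suffices to show $\Psi(f)(\fm, h) = 0$ whenever $\ell(\fm) > 1$ and $h \in I_\fm^{\rd}$; since $\cS^{\rd} \subseteq \cS'$ and $\cS' = \coprod_{d \mid M}\bigl(\cS(d)_0 \sqcup \cS(d)_1 \sqcup \cS(d)_2\bigr)$, one argues block by block. On $\cS(d)_\iota$ one is in the prime-power situation, where $\Psi_\fm = \Phi_\fm^\tau$ by Remark~\ref{remark: map Phi 2}; on $\cS(d)_0$ one has $\Psi = \Psi_{m_{r_1 r_2}} \circ \cdots \circ \Psi_{m_1}$ for a divisibility-respecting ordering $m_1 \prec \cdots \prec m_{r_1 r_2}$. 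Fix $\fm = m_k$. By Corollary~\ref{corollary: vanishing on Im1} the value $\Psi_{m_k}(\,\cdot\,)(\fm, h)$ equals $0$ for every $h \in I_\fm^{\rd}$ right after the factor $\Psi_{m_k}$ is applied. Each later factor $\Psi_{m_{k'}}$ with $k' > k$ alters only the values at $m_{k'}$, $m_{k'} p$ and $m_{k'} q$ (Lemma~\ref{lemma: simple formula for Psi}), and none of these equals $\fm$: if, say, $m_{k'} p = \fm$, then $m_{k'} = \fm/p$ would be a proper divisor of $\fm$, forcing $k' < k$, a contradiction. Hence the later factors fix every $(\fm, \cdot)$-value, so $\Psi(f)(\fm, h) = 0$ for all $h \in I_\fm^{\rd}$, and therefore $\Psi(f)$ vanishes identically on $\cS^{\rd}$.

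\textbf{Step 3: conclusion.} Combining Steps 1 and 2,
\begin{equation*}
\prod_{(m, h) \in \cS} F_{m, h}^{f(m, h)} \approx \prod_{(m, h) \in \cS^{\new}} F_{m, h}^{\Psi(f)(m, h)},
\end{equation*}
and the left-hand side is a modular unit on $X_0(N)$ by hypothesis. Hence the exponents $\Psi(f)(m, h) \in \Q$ on the right are forced to be integers by Remark~\ref{remark: rational becomes integral} (the integrality half of Theorem~\ref{thm: basis variant}). Since $\Psi(f)(m, h) = 0 \in \Z$ on $\cS^{\rd}$ as well, this gives $\Psi(f)(m, h) \in \Z$ for all $(m, h) \in \cS$, as claimed.

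The main obstacle is Step 1: one must check that the rather intricate combinatorial formula defining $\Phi_\fm^\iota$ — in particular the correction factors $\chi_\fm^\iota$, which are exactly what is needed to accommodate the overlap $I_\fm^1 \cap I_\fm^2$ of the two half-intervals and the redundant block $I_\fm(p+q)$ — is a faithful transcription of the multiplicative relations of Theorem~\ref{theorem: relation among Fmh}, and that composing the $\Phi_\fm^\iota$ in the prescribed order (over $\iota$, then over the divisor chain, then over $\prec$) never reintroduces a factor $F_{\fm, h}$ with $(\fm, h) \in \cS^{\rd}$ after it has been removed. The ordering bookkeeping in Step 2 — which vanishings are created when, and why none is later destroyed — is the second point requiring care.
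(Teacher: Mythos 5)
Your proposal is correct and follows essentially the same route as the paper's proof: use Theorem \ref{theorem: relation among Fmh} to see that each $\Phi_\fm^\iota$ (hence $\Psi$) preserves the associated product up to constant, observe via Corollary \ref{corollary: vanishing on Im1} and the ordering argument (Remark \ref{remark: Psi(f) independent of ordering}) that $\Psi(f)$ vanishes on $\cS^{\rd}$, and conclude integrality from Theorem \ref{thm: basis variant} and Remark \ref{remark: rational becomes integral}. You spell out the translation between the relations and the definition of $\Phi_\fm^\iota$ in more detail than the paper does, but the argument is the same.
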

\begin{proof}
First, let $\fm \in \cD(d)_\iota \cap \cD_N^*$ and $h \in I_\fm^{\rd}$.   Using Theorem \ref{theorem relation among Fmh}, we can replace $F_{\fm, h}^{f(\fm, h)}$ by others. This process is exactly what the map $\Phi_\fm^\iota$ does.
Since $\ell(\fm)$ is always a prime power, we have $\Phi_\fm^\iota=\Psi_\fm$ and so we obtain
\begin{equation*}
\prod_{(m, h) \in \cS(d)_\iota} F_{m, h}^{f(m, h)} \approx \prod_{(m, h) \in \cS(d)_\iota} F_{m, h}^{\Psi_\fm(f)(m, h)}.
\end{equation*}
Doing this from $dp_\iota^{r_\iota}$ to $dp_\iota^{r_\iota}p_\tau^{r_\tau-1}$ inductively (where $\tau=3-\iota$), we get
\begin{equation*}
\prod_{(m, h) \in \cS(d)_\iota}  F_{m, h}^{f(m, h)} \approx \prod_{(m, h) \in \cS(d)_\iota} F_{m, h}^{\Psi(f)(m, h)}.
\end{equation*}

Next, let $\fm\in \cD(d)_0 \cap \cD_N^*$ and $h \in I_\fm^{\rd}$. As above, we replace $F_{\fm, h}^{f(\fm, h)}$ by others.
\begin{equation*}
\prod_{(m, h) \in \cS(d)_0} F_{m, h}^{f(m, h)} \approx \prod_{(m, h) \in \cS(d)_0} F_{m, h}^{\Psi_\fm(f)(m, h)}.
\end{equation*}
Doing this from $m_1$ to $m_{r_1r_2}$ inductively, we get
\begin{equation*}
\prod_{(m, h) \in \cS(d)_0}  F_{m, h}^{f(m, h)} \approx \prod_{(m, h) \in \cS(d)_0} F_{m, h}^{\Psi(f)(m, h)}.
\end{equation*}

We can do the same thing for other divisors of $M$, and finally we get
\begin{equation*}
\prod_{(m, h) \in \cS} F_{m, h}^{f(m, h)}\approx \prod_{(m, h) \in \cS} F_{m, h}^{\Psi(f)(m, h)}.
\end{equation*}
Since $\Psi(f)(m, h)=\Psi_m(g)(m, h)$ for some $g \in \scF$, by Corollary \ref{corollary: vanishing on Im1} below we have 
\begin{equation*}
\Psi(f)(m, h)=\Psi_m(g)(m, h)=0 \quad \text{ for any }~h \in I_m^{\rd}.
\end{equation*}
(For instance, if $m\in \cD(d)_0$, then we have $g=\U(m)(f)$. See Remark \ref{remark: Psi(f) independent of ordering}.)
Thus, by Theorem \ref{thm: basis variant} (and Remark \ref{remark: rational becomes integral}) we have $\Psi(f)(m, h) \in \Z$ for all $(m, h) \in \cS$. 
\end{proof}

By our assumption on $E_s$, we easily have the following.
\begin{corollary}\label{corollary: Psi(gs)=0}
For any integer $s$ prime to $L$, we have
\begin{equation*}
\Psi(E_s)(m, h) \in \Z \quad \text{ for all } ~(m, h) \in \cS.
\end{equation*}
\end{corollary}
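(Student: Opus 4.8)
The plan is to obtain this as an immediate consequence of Theorem \ref{theorem: Psi(f)=0} applied to $f = E_s \in \scF$; the only point that requires verification is that the hypothesis of that theorem is met, namely that $\prod_{(m,h)\in\cS} F_{m,h}^{E_s(m,h)}$ is a modular unit on $X_0(N)$.

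First I would recall the identity \eqref{eqn: intro2}, which was deduced from Lemma \ref{lemma: Galois action}:
\[
(\s_{s^*}(F)/F)^{1/n} \approx \prod_{(m,h)\in\cS} F_{m,h}^{E_s(m,h)}.
\]
Thus the product on the right-hand side coincides, up to a nonzero constant, with $(\s_{s^*}(F)/F)^{1/n}$, and in particular the two functions have the same divisor. Since $ss^* \equiv 1 \pmod L$, the integer $s^*$ is prime to $L$; hence assumption (2) of Conjecture \ref{conj: A}, which holds for every integer prime to $L$, guarantees that $(\s_{s^*}(F)/F)^{1/n}$ is a modular unit on $X_0(N)$. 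Consequently $\prod_{(m,h)\in\cS} F_{m,h}^{E_s(m,h)}$ is a modular unit as well. Note that $E_s$ is in general only $\Q$-valued, but this causes no difficulty: the exponents need not be integers for the associated product to define a genuine modular function, and we have just checked that this function is a unit.

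With this hypothesis in hand, Theorem \ref{theorem: Psi(f)=0} applied to $f = E_s$ yields at once $\Psi(E_s)(m,h) \in \Z$ for all $(m,h)\in\cS$, which is the assertion. I do not anticipate any genuine obstacle in this argument: all of the substance — successively replacing the factors $F_{\fm,h}$ with $h \in I_\fm^{\rd}$ by means of the relations in Theorem \ref{theorem: relation among Fmh}, and then invoking Theorem \ref{thm: basis variant} together with Remark \ref{remark: rational becomes integral} — is already carried out inside the proof of Theorem \ref{theorem: Psi(f)=0}, so here it only remains to feed that theorem the correct modular unit.
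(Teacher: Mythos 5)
Your argument is exactly the paper's: the authors justify the corollary by the one-line remark ``By our assumption on $E_s$'', which unpacks precisely to your verification that \eqref{eqn: intro2} together with assumption (2) of Conjecture \ref{conj: A} shows $\prod_{(m,h)\in\cS} F_{m,h}^{E_s(m,h)}$ is a modular unit, so that Theorem \ref{theorem: Psi(f)=0} applies. Your additional remark that the $\Q$-valued exponents cause no trouble is also consistent with the paper, since the proof of Theorem \ref{theorem: Psi(f)=0} already invokes Remark \ref{remark: rational becomes integral} to handle rational exponents.
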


\svv   
\section{Formulas for $\Psi_\fm$}\label{section: formula}
If $\ell(\fm)$ is a prime power, then the definition of $\Psi_\fm$ is simple. (For instance, see Section \ref{section: toy}.) So throughout this section, we assume that 
$\fm \in \cD(d)_0$ and $\ell(\fm)$ is divisible by $p_1p_2$ unless otherwise mentioned. Also, let $f \in \scF(d)_0$. Before proceeding, we introduce various notations.

\begin{notation}\label{notation: ell=pq}
For any $j\in \Z$, let $\a_j$ be the smallest positive integer such that $\a_j \equiv jq \pmod p$ and $\b_j:=\a_j+q$. Also, for any $i \in \Z$ let $\a^i_j:=\a_j+pi$. From now on, we will write
\begin{alignat*}{2}
T_0&:=\{1, 2, \dots, p\}   &&=\{ \a_1, \a_2, \dots, \a_p\},\\
T_1 &:=\{ h \in \Z : 1 \leq h \leq q \} &&=\{ \a_1, \a_2, \dots, \a_p, \a^1_j, \dots \},\\
T_2&:=\{ h \in \Z : q < h \leq p+q\} &&=\{ \b_0, \b_1, \dots, \b_{p-2}, \b_{p-1}\}.
\end{alignat*}
(Here, we set $\b_0=\b_p=\a_p+q=p+q$.)
Namely, if we use the notation $\a_j$ (resp.  $\a^i_j$), then it \textbf{always} means that $1\leq j \leq p$ and $\a_j \in T_0$ (resp. $1\leq j \leq p$ and $\a^i_j \in T_1$). Similarly, if we use the notation $\b_k$, then we always assume that $0\leq k \leq p-1$ and $\b_k \in T_2$.
\end{notation}
\begin{notation}\label{notation: nu}
From now on, we exclusively use the letter $\nu$ to denote an integer such that $1\leq \nu \leq pq$. Let $[\nu]_\iota$ be the element of $T_\iota$ such that 
\begin{equation*}
\nu \equiv [\nu]_\iota \pmod {p_{\tau}},
\end{equation*}
where $\tau=3-\iota$. So we will write $[\nu]_1=\a^i_j$ and $[\nu]_2=\b_k$ for suitable $i, j$ and $k$ as above. 
\end{notation}

\begin{notation}
Suppose that $\ell(\fm)$ is divisible by $p_\iota$. For any $1\leq a \leq p_\iota$, let
$U_a^\iota(f, \fm)$ be the vector in $\Q^{\fl_\iota(\fm)}$ whose $z$-th component is 
\begin{equation*}
U_a^\iota(f, \fm)_z := f(\fm, \fl_\iota(\fm)(a-1)+z).
\end{equation*}
Suppose that $\ell(\fm)$ is divisible by $pq$. For any $1\leq \nu \leq pq$ and $1\leq a \leq p_\tau$, let $V_{\nu}(f, \fm)$ and $X^\iota_a(f, \fm)$ be the vectors in $\Q^{\fd(\fm)}$ whose $z$-th components are
\begin{equation*}
\begin{split}
V_{\nu}(f, \fm)_z&:= f(\fm, \fd(\fm)(\nu-1)+z)   \qa  \\
X^\iota_a(f, \fm)_z&:=f(\fm p_\iota, p_\iota^{\ve_\iota(\fm)}(\fd(\fm)(a-1)+z)),
\end{split}
\end{equation*}
respectively. 
\end{notation}

Using this notation, we state the formulas for $\Psi_\fm$.
\begin{lemma}\label{lemma: f01}
Let $[\nu]_1=\a^i_j$ and $[\nu]_2=\b_k$. Then we have
\begin{equation*}
\begin{split}
V_\nu(\Psi_\fm(f), \fm)&=V_\nu(\Phi_\fm^1(f), \fm)+\sum_{n=1}^{j-1} V_{\b_n}(\Phi_\fm^1(f), \fm)-\sum_{n=1}^k V_{\b_n}(\Phi_\fm^1(f), \fm)\\
&=V_\nu(\Phi_\fm^2(f), \fm)-V_{\a^i_j}(\Phi_\fm^2(f), \fm)-\sum_{n=1}^{j-1} V_{\a_n}(\Phi_\fm^2(f), \fm)+\sum_{n=1}^k V_{\a_n}(\Phi_\fm^2(f), \fm).
\end{split}
\end{equation*}
\end{lemma}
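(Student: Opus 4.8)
The plan is to cut the statement down to a finite linear-algebra computation and then induct on the number of $\Phi_\fm^1\circ\Phi_\fm^2$-blocks in $\Psi_\fm$.

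First I would note that the $\fm$-slices of a function evolve autonomously under $\Phi_\fm^1$ and $\Phi_\fm^2$: the defining formula for $\Phi_\fm^\iota$ expresses the $\fm$-slices of $\Phi_\fm^\iota(g)$ through the $\fm$-slices of $g$ alone, and since $\rho_\fm^1$ and $\rho_\fm^2$ preserve the position of an argument inside its interval, this action is diagonal in the within-interval coordinate. Concretely, for $g\in\scF(d)_0$ one has
\begin{equation*}
V_\nu(\Phi_\fm^1(g),\fm)=V_\nu(g,\fm)-V_{[\nu]_1}(g,\fm), \qquad V_\nu(\Phi_\fm^2(g),\fm)=V_\nu(g,\fm)-\chi_\fm^2(\nu)\,V_{[\nu]_2}(g,\fm),
\end{equation*}
with $\chi_\fm^1\equiv 1$ (since $I_\fm^1\subset I_\fm^{\rd}$) and $\chi_\fm^2(\nu)=0$ exactly when $[\nu]_2=\b_0=p+q$ (Remarks \ref{remark: Hm1 Hm2 Im1} and \ref{remark: Hm1 Hm2 Im2}); moreover $V_\mu(\Phi_\fm^1(g),\fm)=0$ for every $\mu\in T_1$. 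Hence it suffices to prove the two identities with the vectors $V_\nu(\cdot,\fm)$ treated as free symbols. I would also record the index relations needed below, all immediate from the defining congruences of $\a$, $\b$, $\rho_\fm^1$, $\rho_\fm^2$: $[\b_n]_1=\a_n$ for $1\leq n\leq p-1$, $[\a^i_j]_2=\b_{(j-1)\bmod p}$, $[\a_n]_2=\b_{(n-1)\bmod p}$, whence $\chi_\fm^2(\a^i_j)=0\iff j=1$ and $\chi_\fm^2(\a_n)=0\iff n=1$ in the ranges that occur.

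For the first identity I would put $g_t:=(\Phi_\fm^1\circ\Phi_\fm^2)^t\circ\Phi_\fm^1(f)$ for $0\leq t\leq p-1$, so that $g_0=\Phi_\fm^1(f)$ and $g_{p-1}=\Psi_\fm(f)$, and prove by induction on $t$ that, writing $w^1_\mu:=V_\mu(\Phi_\fm^1(f),\fm)$ and $[\nu]_1=\a^i_j$, $[\nu]_2=\b_k$,
\begin{equation*}
V_\nu(g_t,\fm)=w^1_\nu+\sum_{n=\max(1,\,j-t)}^{j-1}w^1_{\b_n}-\sum_{n=\max(1,\,k-t+1)}^{k}w^1_{\b_n}.
\end{equation*}
The case $t=0$ is the definition of $g_0$. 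For the inductive step, evaluating this formula at the indices $\mu=\b_k\in T_2$ and $\mu=\a^i_j\in T_1$ and using $w^1_\mu=0$ on $T_1$ together with the index relations, the two sums telescope to give $V_{\b_k}(g_t,\fm)=[k>t]\,w^1_{\b_{k-t}}$ and $V_{\a^i_j}(g_t,\fm)=0$; substituting these into $V_\nu(g_{t+1},\fm)=V_\nu(\Phi_\fm^2(g_t),\fm)-V_{[\nu]_1}(\Phi_\fm^2(g_t),\fm)$ introduces exactly one new term $w^1_{\b_\bullet}$ from each of $\Phi_\fm^2$ and $\Phi_\fm^1$, and rewriting $\max(1,k-t)=\max(1,k-(t+1)+1)$ and extending the positive sum down to $\max(1,j-(t+1))$ yields the level-$(t+1)$ formula. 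Setting $t=p-1$ collapses both lower limits to $1$ (because $j\leq p$ and $k\leq p-1$), which is the first identity. The second identity I would then deduce from the first by substitution: using $w^1_{\b_n}=V_{\b_n}(f,\fm)-V_{\a_n}(f,\fm)$ (from $[\b_n]_1=\a_n$), $V_{\a_n}(\Phi_\fm^2(f),\fm)=V_{\a_n}(f,\fm)-\chi_\fm^2(\a_n)\,V_{\b_{(n-1)\bmod p}}(f,\fm)$, the vanishing $V_\mu(\Phi_\fm^2(f),\fm)=0$ for $\mu\in T_2\smallsetminus\{\b_0\}$, and $\chi_\fm^2(\nu)=0\iff k=0$, one checks that both the first expression in the lemma and the second one equal the single quantity
\begin{equation*}
V_\nu(f,\fm)-V_{\a^i_j}(f,\fm)+\sum_{n=1}^{j-1}\bigl(V_{\b_n}(f,\fm)-V_{\a_n}(f,\fm)\bigr)-\sum_{n=1}^{k}\bigl(V_{\b_n}(f,\fm)-V_{\a_n}(f,\fm)\bigr),
\end{equation*}
and in particular agree with each other.

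I expect the bookkeeping in the inductive step to be the main obstacle: one has to keep the reduction modulo $q$ into $T_1$ (the map $\rho_\fm^1$) and the reduction modulo $p$ into $T_2$ (the map $\rho_\fm^2$) strictly separated, and track how each application of $\Phi_\fm^1$ re-populates the slices of $T_2\cap I_\fm^{\rd}$ that the preceding $\Phi_\fm^2$ had just zeroed, and conversely. The feature that makes the ``$\max(1,\cdot)$'' ranges behave as claimed is that every time the process would produce a term indexed by $\b_0$ it is killed by $\chi_\fm^2=0$, so no mod-$p$ wraparound ever survives; checking this is the real content. Once it is secured, the reduction to scalars and the passage between the two formulas are routine.
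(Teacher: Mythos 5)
Your proposal is correct and follows essentially the same route as the paper: your $g_t$ is the paper's $A^t=(\Phi_\fm^1\circ\Phi_\fm^2)^t\circ\Phi_\fm^1(f)$, your evaluations $V_{\a^i_j}(g_t,\fm)=0$ and $V_{\b_k}(g_t,\fm)=[k>t]\,w^1_{\b_{k-t}}$ are the paper's \eqref{eqn: f3} and \eqref{eqn: f6}, and your derivation of the second identity by rewriting everything in terms of $V_\bullet(f,\fm)$ is the same substitution the paper carries out via \eqref{eqn: f7}--\eqref{eqn: f8}. The only cosmetic difference is that you keep a closed-form expression with $\max(1,\cdot)$ lower limits at every level $t$ rather than telescoping the one-step differences, which checks out.
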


\begin{lemma}\label{lemma: f02}
We have
\begin{equation*}
X^1_{\a^i_j}(\Psi_\fm(f), \fm)=X^1_{\a^i_j}(f, \fm)+V_{\a^i_j}(\Phi_\fm^2(f), \fm)+\sum_{n=1}^{j-1} V_{\a_n}(\Phi_\fm^2(f), \fm).
\end{equation*}
\end{lemma}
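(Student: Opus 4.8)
The plan is to isolate what $\Psi_\fm$ does to the block of coefficients indexed by $m=\fm p_1$ and reduce the answer to a telescoping sum over the block indexed by $m=\fm$. The structural fact I would start from is that among the $2p-1$ factors of $\Psi_\fm=(\Phi_\fm^1\circ\Phi_\fm^2)^{p-1}\circ\Phi_\fm^1$, only the $p$ occurrences of $\Phi_\fm^1$ change the $\fm p_1$-block: by Definition~\ref{definition: the map Phi} the map $\Phi_\fm^2$ changes only entries indexed by $m\in\{\fm,\fm p_2\}$, and $\fm p_1\notin\{\fm,\fm p_2\}$. Put $G_s:=(\Phi_\fm^2\circ\Phi_\fm^1)^s(f)$ for $0\le s\le p-1$, so that $\Psi_\fm(f)=\Phi_\fm^1(G_{p-1})$ and, applying the factors in the order in which they are composed (rightmost first), the successive copies of $\Phi_\fm^1$ act on $G_0,G_1,\dots,G_{p-1}$. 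For $h':=\fd(\fm)(\a^i_j-1)+z$ one has $1\le h'\le q\,\fd(\fm)=\fl_1(\fm)$ because $\a^i_j\le q$, hence $h'\in I_\fm^1$, $\rho_\fm^1(h')=h'$ and $\chi_\fm^1(h')=1$ by Remark~\ref{remark: Hm1 Hm2 Im1}; therefore applying $\Phi_\fm^1$ to a state $g$ adds $V_{\a^i_j}(g,\fm)$ to $X^1_{\a^i_j}(\,\cdot\,,\fm)$. Telescoping over the $p$ copies of $\Phi_\fm^1$ gives
\[
X^1_{\a^i_j}(\Psi_\fm(f),\fm)=X^1_{\a^i_j}(f,\fm)+\sum_{s=0}^{p-1}V_{\a^i_j}(G_s,\fm).
\]

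Next I would evaluate this sum using two cancellations on the $\fm$-block: $\Phi_\fm^1$ kills every component $V_\nu(\,\cdot\,,\fm)$ with $\nu\in T_1$, since there $[\nu]_1=\nu$ and $\chi_\fm^1=1$; and $\Phi_\fm^2$ kills every component $V_{\b_k}(\,\cdot\,,\fm)$ with $\b_k\neq\b_0=p+q$, since there $[\b_k]_2=\b_k$ and $\chi_\fm^2(\b_k)=1$. Using $[\a^i_j]_1=\a^i_j$ and $[\a^i_j]_2=\b_{j-1}$, the first cancellation yields, for $s\ge1$,
\[
V_{\a^i_j}(G_s,\fm)=-\,\chi_\fm^2(\a^i_j)\,V_{\b_{j-1}}\!\bigl(\Phi_\fm^1(G_{s-1}),\fm\bigr),
\]
and $\chi_\fm^2(\a^i_j)=1$ precisely when $j\ge2$. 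Hence when $j=1$ the sum collapses to $V_{\a^i_1}(f,\fm)=V_{\a^i_1}(\Phi_\fm^2 f,\fm)$, which is the claim in that case.

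For $j\ge2$, write $P_u:=\Phi_\fm^1(G_u)$. The heart of the proof is the identity
\[
\sum_{s=0}^{p-2}V_{\b_{j-1}}(P_s,\fm)=\sum_{n=1}^{j-1}\bigl(V_{\b_n}(f,\fm)-V_{\a_n}(f,\fm)\bigr),
\]
which I would obtain by unrolling a recursion in $s$. Because every $P_u$ lies in the image of $\Phi_\fm^1$, the components $V_{\a_k}(P_u,\fm)$ with $\a_k\in T_0\subseteq T_1$ vanish for all $u\ge0$; combining this with the two cancellations and the relations $[\b_k]_1=\a_k$, $[\a_k]_2=\b_{k-1}$ gives, for $s\ge1$, that $V_{\b_k}(P_s,\fm)=0$ when $k=1$ and $V_{\b_k}(P_s,\fm)=V_{\b_{k-1}}(P_{s-1},\fm)$ when $k\ge2$, with base value $V_{\b_k}(P_0,\fm)=V_{\b_k}(f,\fm)-V_{\a_k}(f,\fm)$. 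Unrolling, the $s$-th summand equals $V_{\b_{j-1-s}}(f,\fm)-V_{\a_{j-1-s}}(f,\fm)$ for $s\le j-2$ and equals $0$ for $s\ge j-1$, since it then passes through the annihilated index $\b_1$ while the $P$-subscript is still positive; reindexing by $n=j-1-s$ gives the displayed identity. Substituting it back into the telescoped formula and rewriting via $V_\nu(\Phi_\fm^2 f,\fm)=V_\nu(f,\fm)-\chi_\fm^2(\nu)V_{[\nu]_2}(f,\fm)$ with $[\a^i_j]_2=\b_{j-1}$ and $[\a_n]_2=\b_{n-1}$ turns $X^1_{\a^i_j}(f,\fm)+\sum_{s=0}^{p-1}V_{\a^i_j}(G_s,\fm)$ into $X^1_{\a^i_j}(f,\fm)+V_{\a^i_j}(\Phi_\fm^2 f,\fm)+\sum_{n=1}^{j-1}V_{\a_n}(\Phi_\fm^2 f,\fm)$, as required.

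The main obstacle is the bookkeeping in the third paragraph: tracking which components of the $\fm$-block survive each successive pass of $\Phi_\fm^1$ and $\Phi_\fm^2$ as $s$ grows, and verifying that the recursion terminates correctly — in particular that the ``overshoot'' terms with $s\ge j-1$ are genuinely zero. This is essentially the same combinatorial mechanism that drives Lemma~\ref{lemma: f01}, so in practice I would first isolate an auxiliary formula for $V_{\b_k}(P_s,\fm)$ (equivalently, for $V_\nu$ of the intermediate states) and use it in both lemmas; the rest of the proof of Lemma~\ref{lemma: f02} is then just the telescoping of the first paragraph together with the elementary rewriting above.
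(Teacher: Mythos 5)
Your proposal is correct and follows essentially the same route as the paper: your $G_s$ and $P_s$ are the paper's $B^s$ and $A^s$, your telescoping over the $p$ copies of $\Phi_\fm^1$ is the paper's computation of $A^n(\fm p, p^{\ve_1}h)-A^{n-1}(\fm p, p^{\ve_1}h)=B^n(\fm, K_1^{i,j})$, your recursion $V_{\b_k}(P_s,\fm)=V_{\b_{k-1}}(P_{s-1},\fm)$ with annihilation at $k=1$ is the paper's equation \eqref{eqn: f6}, and your final rewriting in terms of $\Phi_\fm^2(f)$ is the paper's identity \eqref{eqn: f8}. The only (harmless) cosmetic difference is that you write $\chi_\fm^2$ as a function of the block index $\nu$ rather than of $h\in I_\fm$.
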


\begin{lemma}\label{lemma: f03}
We have
\begin{equation*}
X^2_{\a_j}(\Psi_\fm(f), \fm)=X^2_{\a_j}(f, \fm)+\sum_{n=1}^{j-1} V_{\b_n}(\Phi_\fm^1(f), \fm).
\end{equation*}
\end{lemma}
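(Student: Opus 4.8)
The plan is to exploit that the values at $\fm p_2$ of any element of $\scF(d)_0$ are left unchanged by $\Phi_\fm^1$ (Definition \ref{definition: the map Phi}), so that in $\Psi_\fm=(\Phi_\fm^1\circ\Phi_\fm^2)^{p-1}\circ\Phi_\fm^1$ only the $p-1$ factors $\Phi_\fm^2$ can move $X^2_{\a_j}$. Put $g_k:=(\Phi_\fm^1\circ\Phi_\fm^2)^{k}\circ\Phi_\fm^1(f)$ for $0\le k\le p-1$, so that $g_0=\Phi_\fm^1(f)$, $g_{p-1}=\Psi_\fm(f)$, and $g_k=\Phi_\fm^1\circ\Phi_\fm^2(g_{k-1})$. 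The first step is the one-step identity
\begin{equation*}
X^2_{\a_j}(\Phi_\fm^2(g),\fm)=X^2_{\a_j}(g,\fm)+\chi(j)\cdot V_{\b_{j-1}}(g,\fm),
\end{equation*}
where $\chi(j)$ is the common value of $\chi_\fm^2$ on the block $I_\fm(\a_j)$, which is $0$ when $j=1$ and $1$ otherwise. This follows from Definition \ref{definition: the map Phi} once one checks that $\rho_\fm^2$ carries $I_\fm(\a_j)$, offsets preserved, onto $I_\fm(\b_{j-1})$ — the unique block of $I_\fm^2$ whose index is congruent to $\a_j$ modulo $p$, i.e.\ $[\a_j]_2=\b_{j-1}$ in the sense of Notation \ref{notation: nu} — that the $\ve_2(\fm)$-twists in the definitions of $X^2$ and of $\Phi_\fm^2$ cancel, and that $\chi_\fm^2$ is constant on $I_\fm(\a_j)$ and vanishes exactly when $\b_{j-1}=\b_0=I_\fm(p+q)$, the one block of $I_\fm^2$ lying outside $I_\fm^{\rd}$ (Remark \ref{remark: Hm1 Hm2 Im1}), i.e.\ exactly when $j=1$. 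Since $\Phi_\fm^1$ fixes the values at $\fm p_2$, applying this along $g_0,g_1,\dots,g_{p-1}$ and telescoping gives
\begin{equation*}
X^2_{\a_j}(\Psi_\fm(f),\fm)=X^2_{\a_j}(f,\fm)+\chi(j)\sum_{k=0}^{p-2}V_{\b_{j-1}}(g_k,\fm).
\end{equation*}

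The heart of the proof is an explicit formula for the values of the $g_k$ at $\fm$ on the blocks $I_\fm(\b_n)$, $1\le n\le p-1$, which I would prove by induction on $k$: $g_k(\fm,\cdot)$ vanishes on $I_\fm^1$, and
\begin{equation*}
V_{\b_n}(g_k,\fm)=\begin{cases}V_{\b_{n-k}}(\Phi_\fm^1(f),\fm)&\text{if }n\ge k+1,\\0&\text{if }n\le k.\end{cases}
\end{equation*}
Vanishing on $I_\fm^1$ holds because $g_k$ is of the form $\Phi_\fm^1(\,\cdot\,)$ and $I_\fm^1\subset I_\fm^{\rd}$ (Remark \ref{remark: Hm1 Hm2 Im2}), so it remains to prove the recursion $V_{\b_n}(g_k,\fm)=V_{\b_{n-1}}(g_{k-1},\fm)$ for $n\ge2$ and $V_{\b_1}(g_k,\fm)=0$. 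For this I would trace $g_k=\Phi_\fm^1\circ\Phi_\fm^2(g_{k-1})$ block by block: each block $I_\fm(\b_n)$ with $1\le n\le p-1$ lies in $I_\fm^2\cap I_\fm^{\rd}$ and is fixed by $\rho_\fm^2$, so $\Phi_\fm^2$ kills $g_{k-1}$ there; on $I_\fm(\a_n)$ (which lies in $I_\fm^1$) one has $\rho_\fm^2(I_\fm(\a_n))=I_\fm(\b_{n-1})$, hence, as $g_{k-1}$ vanishes on $I_\fm^1$, the restriction of $\Phi_\fm^2(g_{k-1})$ to $I_\fm(\a_n)$ equals $-V_{\b_{n-1}}(g_{k-1},\fm)$ for $n\ge2$ and $0$ for $n=1$; finally $\Phi_\fm^1$ reconstitutes $I_\fm(\b_n)$ out of $I_\fm(\a_n)$ via $\rho_\fm^1$ (note $[\b_n]_1=\a_n$), which yields the recursion. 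Unwinding it — the $n=1$ line forcing the zeros in the range $n\le k$ — gives the displayed formula.

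Plugging this into the telescoped identity completes the proof. For $j=1$ the factor $\chi(1)$ vanishes and both sides reduce to $X^2_{\a_1}(f,\fm)$. For $2\le j\le p$ the factor equals $1$ and $j-1\in\{1,\dots,p-1\}$, so the formula gives $V_{\b_{j-1}}(g_k,\fm)=V_{\b_{j-1-k}}(\Phi_\fm^1(f),\fm)$ for $0\le k\le j-2$ and $V_{\b_{j-1}}(g_k,\fm)=0$ for $j-1\le k\le p-2$, whence
\begin{equation*}
\sum_{k=0}^{p-2}V_{\b_{j-1}}(g_k,\fm)=\sum_{k=0}^{j-2}V_{\b_{j-1-k}}(\Phi_\fm^1(f),\fm)=\sum_{n=1}^{j-1}V_{\b_n}(\Phi_\fm^1(f),\fm),
\end{equation*}
which is exactly the asserted identity. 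I expect the main obstacle to be the block bookkeeping behind the one-step identity and the induction step — keeping straight the actions of $\rho_\fm^1$, $\rho_\fm^2$, $\chi_\fm^1$ and $\chi_\fm^2$ on the blocks $I_\fm(\a_n)$, $I_\fm(\a^i_j)$ and $I_\fm(\b_n)$, and above all the distinguished block $I_\fm(p+q)=\b_0=\b_p$, the unique block of $I_\fm^2$ outside $I_\fm^{\rd}$, which is responsible both for the $\chi$-factors and for the ``$n\le k$'' case in the induction.
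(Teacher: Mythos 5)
Your proposal is correct and follows essentially the same route as the paper: the paper sets $A^n=(\Phi_\fm^1\circ\Phi_\fm^2)^n\circ\Phi_\fm^1(f)$ (your $g_n$), observes that $\Phi_\fm^1$ fixes values at $\fm q$ so only the $\Phi_\fm^2$ factors move $X^2_{\a_j}$, derives the same one-step increment $\d(j-1)\cdot A^{n-1}(\fm,K_2^{j-1})$, and uses the same shifting recursion (its equation (f6), your closed form for $V_{\b_n}(g_k,\fm)$) before telescoping. The only difference is presentational — you package the block values as vectors and state the recursion as an explicit induction, while the paper carries it along from the proof of Lemma \ref{lemma: f01} — so there is nothing substantive to add.
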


\svv
We prove all the lemmas together. Before proceeding, we introduce the following notations which are only used in this section. Let
\begin{equation*}
A^n:=(\Phi_\fm^1 \circ \Phi_\fm^2)^n \circ \Phi_\fm^1(f) \qa B^n:=(\Phi_\fm^2 \circ \Phi_\fm^1)^n(f).
\end{equation*}
Also, let $1\leq z \leq \fd(\fm)$  be an integer and let
\begin{equation*}
K_1^{a, b}=\fd(\fm)(\a^a_b-1)+z \qa K_2^c=\fd(\fm)(\b_c-1)+z,
\end{equation*}
where $a, b$ and $c$ are integers following Notation \ref{notation: ell=pq}.
In this notation, we easily have the following.
\begin{enumerate}
\item
$A^0=\Phi_\fm^1(f)~$ and $~A^{p-1}=\Psi_\fm(f)$.
\item
$A^n=\Phi_\fm^1(B^n)~$ and $~B^n=\Phi_\fm^2(A^{n-1})$.
\item
$K_1^{a, b} \in I_\fm^1$ and $K_2^c \in I_\fm^2$.
\item
If $c\neq 0$, then $K_2^c \in I_\fm^{\rd}$ and $K_2^0 \not\in I_\fm^{\rd}$.
\item
If $c\neq 0$, then $\rho_\fm^1(K_2^c)=K_1^{0, c}$ as $\a_c=\b_c-q$ and $\fl_1(\fm)=q \fd(\fm)$. Similarly, $\rho_\fm^1(K_2^0)=K_1^{0, p}$.
\item
$\rho_\fm^2(K_1^{a, b})=K_2^{b-1}$ as $\a_b \equiv \b_{b-1} \pmod p$ and $\fl_2(\fm)=p \fd(\fm)$.
\end{enumerate}
For simplicity, let $\ve_1=\ve_1(\fm)$ and $\ve_2=\ve_2(\fm)$.

\svv
As in the discussion in Section \ref{subsection: 4.1}, by (2) and (3) we have
\begin{equation}\label{eqn: f3}
A^n(\fm, K_1^{a, b})=0 \qa \d(c)\cdot B^n(\fm, K_2^c)=0,
\end{equation}
where $\d(c)=0$ if $c=0$; and $\d(c)=1$ otherwise. Also, since $\Phi_\fm^\iota(g)(\fm p_\tau, h)=g(\fm p_\tau, h)$ by definition (where $\tau=3-\iota$), by (2) we have
\begin{equation}\label{eqn: f4}
\begin{split}
B^n(\fm p, p^{\ve_1}K_1^{a, b})&=A^{n-1}(\fm p, p^{\ve_1}K_1^{a, b}) \qa \\
A^n(\fm q, q^{\ve_2}K_1^{0, b})&=B^n(\fm q, q^{\ve_2}K_1^{0, b}).
\end{split}
\end{equation}

Finally, let
\begin{equation*}
h=\fd(\fm)(\nu-1)+z,
\end{equation*}
where $1\leq \nu \leq pq$ is an integer such that $[\nu]_1=\a^i_j$ and $[\nu]_2=\b_k$. Then by our notation, we have
\begin{equation}\label{eqn: f1}
\rho_\fm^1(h)=K_1^{i, j} \qa \rho_\fm^2(h)=K_2^k.
\end{equation}
Again as in the discussion in Section \ref{subsection: 4.1}, for any $g \in \scF(d)_0$ we easily have
\begin{equation}\label{eqn: f2}
\begin{split}
\Phi_\fm^1(g)(\fm, h)&=g(\fm, h)-g(\fm, K_1^{i, j}),\\
\Phi_\fm^2(g)(\fm, h)&=g(\fm, h)-\d(k)\cdot g(\fm, K_2^k),
\end{split}
\end{equation}
which are frequently used below. (Note that $h$ can be any element in $I_\fm$. On the other hand, $K_1^{a, b}$ and $K_2^c$ are elements in $T_1$ and $T_2$, respectively.)

\svv    
Now, we are ready to prove the lemmas. By \eqref{eqn: f2}, we have
\begin{equation*}
A^n(\fm, h)=\Phi_\fm^1(B^n)(\fm, h)=B^n(\fm, h)-B^n(\fm, K_1^{i, j}).
\end{equation*}
Also, we have $B^n(\fm, h)=A^{n-1}(\fm, h)-\d(k)\cdot A^{n-1}(\fm, K_2^k)$ and
\begin{equation}\label{eqn: f5}
B^n(\fm, K_1^{i, j})=-\d(j-1)\cdot A^{n-1}(\fm, K_2^{j-1}).
\end{equation}
Thus, we have
\begin{equation*}
A^n(\fm, h)-A^{n-1}(\fm, h)=\d(j-1)\cdot A^{n-1}(\fm, K_2^{j-1})-\d(k)\cdot A^{n-1}(\fm, K_2^k).
\end{equation*}
Similarly, by \eqref{eqn: f3} and \eqref{eqn: f2} we have
\[
\d(k) \cdot A^n(\fm, K_2^k)=\d(k-1)\cdot A^{n-1}(\fm, K_2^{k-1})
\]
if $k\geq 1$; and $0$ otherwise. Doing this successively, we have
\begin{equation}\label{eqn: f6}
\d(k) \cdot A^n(\fm, K_2^k)=\begin{cases}
A^0(\fm, K_2^{k-n}) & \text{ if } ~~ k>n,\\
~~0 & \text{ otherwise}.
\end{cases}
\end{equation}
Therefore we have
\begin{equation*}
\begin{split}
A^{p-1}(\fm, h)-A^0(\fm, h)&=\sum_{n=1}^{p-1} \left(\d(j-1)\cdot A^{n-1}(\fm, K_2^{j-1})-\d(k)\cdot A^{n-1}(\fm, K_2^{k})\right)\\
&=\sum_{n=1}^{j-1} A^0(\fm, K_2^n)-\sum_{n=1}^k A^0(\fm, K_2^n),
\end{split}
\end{equation*}
which proves the first equality of Lemma \ref{lemma: f01}. 

Similarly as above, we easily have 
\begin{equation*}
\begin{split}
\Phi_\fm^2(f)(\fm, h)&=f(\fm, h)-\d(k)\cdot f(\fm, K_2^{k}),\\
\Phi_\fm^2(f)(\fm, K_1^{i, j})&=f(\fm, K_1^{i, j})- \d(j-1)\cdot f(\fm, K_2^{j-1}).
\end{split}
\end{equation*}
Also, for any $1\leq n \leq p-1$ we have
\begin{equation*}
\Phi_\fm^1(f)(\fm, K_2^n)=f(\fm, K_2^n)-f(\fm, K_1^{0, n}).
\end{equation*}
Thus, for any $1\leq n \leq p-1$ we have
\begin{equation*}
\Phi_\fm^2(f)(\fm, K_1^{0, n})=-\Phi_\fm^1(f)(\fm, K_2^n)+ f(\fm, K_2^{n})- \d(n-1)\cdot f(\fm, K_2^{n-1})
\end{equation*}
and so
\begin{equation*}
\sum_{n=1}^k \Phi_\fm^2(f)(\fm, K_1^{0, n})=-\sum_{n=1}^k \Phi_\fm^1(f)(\fm, K_2^n)+\d(k)\cdot f(\fm, K_2^k).
\end{equation*}
Combining the results, we have
\begin{equation}\label{eqn: f7}
f(\fm, h)-\sum_{n=1}^{k} \Phi_\fm^1(f)(\fm, K_2^n)=\Phi_\fm^2(f)(\fm, h)+\sum_{n=1}^{k} \Phi_\fm^2(f)(\fm, K_1^{0, n})
\end{equation}
and
\begin{equation}\label{eqn: f8}
f(\fm, K_1^{i, j})-\sum_{n=1}^{j-1} \Phi_\fm^1(f)(\fm, K_2^n)=\Phi_\fm^2(f)(\fm, K_1^{i, j})+\sum_{n=1}^{j-1}  \Phi_\fm^2(f)(\fm, K_1^{0, n}).
\end{equation}
From this, we easily deduce the second equality of Lemma \ref{lemma: f01}. This completes the proof of Lemma \ref{lemma: f01}. \qed

\svv  
Next, suppose that $\nu \in T_1$. By our notation, $h=K_1^{i, j}$ for some $i$ and $j$, i.e., $\nu=\a^i_j$. Similarly as above, we have 
\begin{equation*}
A^n(\fm p, p^{\ve_1}h)=\Phi_\fm^1(B^n)(\fm p, p^{\ve_1}h)=B^{n}(\fm p, p^{\ve_1}h)+B^n(\fm, K_1^{i, j}).
\end{equation*}
By \eqref{eqn: f4}, we have $B^n(\fm p, p^{\ve_1}h)=\Phi_\fm^2(A^{n-1})(\fm p, p^{\ve_1}h)=A^{n-1}(\fm p, p^{\ve_1}h)$. So by \eqref{eqn: f5} and \eqref{eqn: f6}, we have
\begin{equation*}
A^n(\fm p, p^{\ve_1}h)-A^{n-1}(\fm p, p^{\ve_1}h)=B^n(\fm, K_1^{i, j})=\begin{cases}
-A^0(\fm, K_2^{j-n}) & \text{ if } ~~ j>n,\\
~~0 & \text{ otherwise}.
\end{cases}
\end{equation*}
As above, we have 
\begin{equation*}
\Psi_\fm(f)(\fm p, p^{\ve_1}h)=f(\fm p, p^{\ve_1}h)+f(\fm, K_1^{i, j})-\sum_{n=1}^{j-1} \Phi_\fm^1(f)(\fm, K_2^n).
\end{equation*}
Also, by \eqref{eqn: f8} we easily have
\begin{equation*}
\Psi_\fm(f)(\fm p, p^{\ve_1}h)=f(\fm p, p^{\ve_1}h)+\Phi_\fm^2(f)(\fm, K_1^{i, j})+\sum_{n=1}^{j-1} \Phi_\fm^2(f)(\fm, K_1^{0,n}).
\end{equation*}
This completes the proof of Lemma \ref{lemma: f02}. \qed

\svv    
Lastly, suppose that $\nu \in T_0$. By our notation, $h=K_1^{0, j}$ for some $j$. Similarly as above, we have
\begin{equation*}
B^n(\fm q, q^{\ve_2}h)=\Phi_\fm^2(A^{n-1})(\fm q, q^{\ve_2}h)=A^{n-1}(\fm q, q^{\ve_2}h)+\d(j-1)\cdot A^{n-1}(\fm, K_2^{j-1}).
\end{equation*}
By \eqref{eqn: f4}, we have
$A^n(\fm q, q^{\ve_2}h)=\Phi_\fm^1(B^n)(\fm q, q^{\ve_2}h)=B^n(\fm q, q^{\ve_2}h)$ and so by \eqref{eqn: f6}, we have
\begin{equation*}
A^n(\fm q, q^{\ve_2}h)-A^{n-1}(\fm q, q^{\ve_2}h)=\d(j-1)\cdot A^{n-1}(\fm, K_2^{j-1})=\begin{cases}
A^0(\fm, K_2^{j-n}) & \text{ if }~~ j>n, \\
~~0 & \text{ otherwise}.
\end{cases}
\end{equation*}
As above, this proves Lemma \ref{lemma: f03}. \qed

\vv   
The following is obvious from the discussion above, but we state it for later use.
\begin{lemma}\label{lemma: simple formula for Psi}
For an integer $\fh$, we denote by $\fh_1:=p^{-\ve_1(\fm)}\fh$ and $\fh_2:=q^{-\ve_2(\fm)}\fh$.

\begin{enumerate}
\item
If $m \not\in \{\fm, \fm p, \fm q\}$, then we have 
\begin{equation*}
\Psi_\fm(f)(m, h)=f(m, h).
\end{equation*}
Also, if $\fh_1 \not\in \Z$ (resp. $\fh_2 \not\in \Z$), then we have
\begin{equation*}
\Psi_\fm(f)(\fm p, \fh)=f(\fm p, \fh) \quad (\text{resp.} ~~~\Psi_\fm(f)(\fm q, \fh)=f(\fm q, \fh)).
\end{equation*}

\item
Suppose that $\fh \in I_{\fm p}$ and $\fh_1 \in \Z$. Then there is a set $S_1 \subset I_\fm^1$ such that
\begin{equation*}
\Psi_\fm(f)(\fm p, \fh)-f(\fm p, \fh)=\sum_{h \in S_1} \Phi_\fm^2(f)(\fm, h).
\end{equation*}
Also, for any $h \in S_1$ we have $h\equiv \fh_1\pmod {\fd(\fm)}$. Furthermore, we have
\begin{equation}\label{eqn: f001}
v_p(\ell(\fm p))-v_p(\fh)=v_p(\ell(\fm))-v_p(\fh_1)-1
\end{equation}
and
\begin{equation}\label{eqn: f002}
v_q(\ell(\fm p))=v_q(\ell(\fm)) \qa v_q(\fh)=v_q(\fh_1).
\end{equation}

\item
Suppose that $\fh \in I_{\fm q}$ and $\fh_2 \in \Z$. Then there is a set $S_2 \subset I_\fm^{\rd} \cap I_\fm^2$ such that
\begin{equation*}
\Psi_\fm(f)(\fm q, \fh)-f(\fm q, \fh)=\sum_{h \in S_{2}} \Phi_\fm^1(f)(\fm, h).
\end{equation*}
Also, for any $h \in S_2$ we have $h\equiv \fh_2\pmod {\fd(\fm)}$. Furthermore, we have
\begin{equation}\label{eqn: f003}
v_q(\ell(\fm q))-v_q(\fh)=v_q(\ell(\fm))-v_q(\fh_2)-1
\end{equation}
and
\begin{equation}\label{eqn: f004}
v_p(\ell(\fm q))=v_p(\ell(\fm)) \qa v_p(\fh)=v_p(\fh_2).
\end{equation}
\end{enumerate}
\end{lemma}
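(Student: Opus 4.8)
The plan is to read the three parts off the formulas of Section~\ref{section: formula} together with the definitions of $\Phi_\fm^\iota$ and $\Psi_\fm$; the only genuinely new input is the elementary identity $\ell(\fm p_\iota)=\ell(\fm)\,p_\iota^{\ve_\iota(\fm)-1}$, which records that $N/(\fm p_\iota)$ and $N/\fm$ differ only at $p_\iota$, combined with $\ve_\iota(\fm)=1+v_{p_\iota}(\ell(\fm p_\iota))-v_{p_\iota}(\ell(\fm))$. For part (1): by Definition~\ref{definition: the map Phi} one has $\Phi_\fm^\iota(g)(m,h)=g(m,h)$ whenever $m\notin\{\fm,\fm p_\iota\}$, so since $\Psi_\fm$ is a composite of $\Phi_\fm^1$ and $\Phi_\fm^2$, an induction on the number of factors gives $\Psi_\fm(f)(m,h)=f(m,h)$ for $m\notin\{\fm,\fm p,\fm q\}$. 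For the second claim, $\fh_1\notin\Z$ forces $\ve_1(\fm)=1$ and $p\nmid\fh$, i.e.\ $p^{\ve_1(\fm)}\nmid\fh$, so the middle clause of $\Phi_\fm^1$ is never triggered at the argument $(\fm p,\fh)$; hence $\Phi_\fm^1(g)(\fm p,\fh)=g(\fm p,\fh)$ for all $g$, and since $\Phi_\fm^2$ leaves values at first coordinate $\fm p$ untouched, $\Psi_\fm(f)(\fm p,\fh)=f(\fm p,\fh)$. The case of $\fm q$ is symmetric.

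For part (2) I would first dispose of the case in which $\ell(\fm)$ is a prime power or equals $1$: then $\Psi_\fm=\Phi_\fm^\iota$ or the identity (Remark~\ref{remark: map Phi 2}), and the statement is immediate from Definition~\ref{definition: the map Phi} as in Section~\ref{section: toy}. So assume $pq\mid\ell(\fm)$. Then $\fh\in I_{\fm p}$ and $\fh_1\in\Z$ give, via the displayed identity, $1\leq\fh_1=p^{-\ve_1(\fm)}\fh\leq p^{-\ve_1(\fm)}\ell(\fm p)=\ell(\fm)/p=\fl_1(\fm)$, so $\fh_1\in I_\fm^1$; write $\fh_1=\fd(\fm)(\nu-1)+z$ with $1\leq z\leq\fd(\fm)$ and $1\leq\nu\leq q$, so $\nu\in T_1$ and $\nu=\a^i_j$ for suitable $i,j$ in the sense of Notation~\ref{notation: ell=pq}. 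Lemma~\ref{lemma: f02}, evaluated in the $z$-th coordinate, then reads
\[
\Psi_\fm(f)(\fm p,\fh)-f(\fm p,\fh)=\sum_{h\in S_1}\Phi_\fm^2(f)(\fm,h),\qquad S_1=\{\fh_1\}\cup\{\fd(\fm)(\a_n-1)+z:1\leq n\leq j-1\}.
\]
Each element of $S_1$ is $\equiv z\equiv\fh_1\pmod{\fd(\fm)}$ and lies in some $I_\fm(\a_n)\subset I_\fm^1$ since $T_0\subset T_1$, giving $S_1\subset I_\fm^1$ and the congruence. Finally \eqref{eqn: f001} follows from $\fh=p^{\ve_1(\fm)}\fh_1$ and $v_p(\ell(\fm p))=v_p(\ell(\fm))+\ve_1(\fm)-1$, and \eqref{eqn: f002} from the same together with $p\ne q$.

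Part (3) is handled identically using Lemma~\ref{lemma: f03}: now $\fh\in I_{\fm q}$ and $\fh_2\in\Z$ give $\fh_2\leq\ell(\fm)/q=\fl_2(\fm)=p\,\fd(\fm)$, so $\fh_2=\fd(\fm)(\nu-1)+z$ with $1\leq\nu\leq p$, whence $\nu=\a_j$ for a unique $j$, and Lemma~\ref{lemma: f03} in the $z$-th coordinate yields $S_2=\{\fd(\fm)(\b_n-1)+z:1\leq n\leq j-1\}$. Since $1\leq n\leq j-1\leq p-1$ forces $\a_n\in\{1,\dots,p-1\}$ we have $\b_n=\a_n+q\in\{q+1,\dots,q+p-1\}$, so each element of $S_2$ lies in some $I_\fm(\b_n)$ with $q<\b_n<p+q$; by Remark~\ref{remark: Hm1 Hm2 Im1} this is contained in $I_\fm^{\rd}\cap I_\fm^2$, and all elements are $\equiv z\equiv\fh_2\pmod{\fd(\fm)}$. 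The identities \eqref{eqn: f003} and \eqref{eqn: f004} follow as in part (2) with the roles of $p$ and $q$ interchanged.

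The only point that requires care — and the main (modest) obstacle — is the bookkeeping that matches the variables of the statement ($\fh$, $\fh_\iota$, $S_1$, $S_2$) with the vector notation of Section~\ref{section: formula}: unwinding $X^\iota_a$ and $V_\nu$, invoking the index dictionary $[\nu]_1=\a^i_j$, $[\nu]_2=\b_k$, and verifying via $\ell(\fm p_\iota)=\ell(\fm)p_\iota^{\ve_\iota(\fm)-1}$ that $\fh_\iota$ lands in precisely the sub-interval the formulas require. No substantive difficulty should remain, since all the analytic content is already contained in Lemmas~\ref{lemma: f02} and~\ref{lemma: f03}.
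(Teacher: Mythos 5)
Your proof is correct and follows essentially the same route as the paper: part (1) is read off from Definition \ref{definition: the map Phi}, and parts (2) and (3) are obtained by unwinding Lemmas \ref{lemma: f02} and \ref{lemma: f03} in the $z$-th coordinate, with the congruences and the valuation identities \eqref{eqn: f001}--\eqref{eqn: f004} following from the index bookkeeping and the relation $\ell(\fm p_\iota)=\ell(\fm)p_\iota^{\ve_\iota(\fm)-1}$. The paper's proof is merely terser; you supply the same verification in more detail.
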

\begin{proof}
The first assertion is obvious from the definition. For (2) and (3), we use the same notation as in the proof of Lemma \ref{lemma: f01}. If we write $h=\fd(\fm)(\nu-1)+z$ as above, then for any $a, b$ and $c$ we have
\begin{equation*}
h \equiv z \equiv K_1^{a, b} \equiv K_2^c  \pmod {\fd(\fm)}.
\end{equation*}
Moreover, \eqref{eqn: f001}--\eqref{eqn: f004} follow by definition (cf. Remark \ref{remark: (m, h) and (mp, h')}).
Thus, the second (resp. third) assertion directly follows by Lemma \ref{lemma: f02} (resp. Lemma \ref{lemma: f03}).
\end{proof}

\begin{corollary}\label{corollary: vanishing on Im1}
For any $h \in I_\fm^{\rd}$, we have 
\begin{equation*}
\Psi_\fm(f)(\fm, h)=0.
\end{equation*}
\end{corollary}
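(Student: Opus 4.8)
The plan is to pass to the ``vector notation'' of Lemma~\ref{lemma: f01} and reduce the assertion to a short congruence computation with the indices $i,j,k$ of Notation~\ref{notation: ell=pq}. We keep this section's standing hypothesis that $\ell(\fm)$ is divisible by $pq$, so $\ell(\fm)=p^aq^b$ with $a,b\ge1$ and $\fd(\fm)=\ell(\fm)/pq$; the remaining cases are trivial, since if $\ell(\fm)$ is a power of a single $p_\iota$ then $\Psi_\fm(f)=\Phi_\fm^\iota(f)$ (Remark~\ref{remark: map Phi 2}) and $I_\fm^{\rd}=I_\fm^\iota$ (Remark~\ref{remark: Hm1 Hm2 Im1}), so the claim is Remark~\ref{remark: Hm1 Hm2 Im2}, while if $\ell(\fm)=1$ then $I_\fm^{\rd}=\emptyset$.

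First I would rewrite $I_\fm^{\rd}$ as a union of intervals. Since $\varphi(\ell(\fm))=(p-1)(q-1)\fd(\fm)$ we have $\ell(\fm)-\varphi(\ell(\fm))=(p+q-1)\fd(\fm)$, hence
\begin{equation*}
I_\fm^{\rd}=\mcoprod_{1\le\nu\le p+q-1}I_\fm(\nu),
\end{equation*}
and it suffices to prove $V_\nu(\Psi_\fm(f),\fm)=0$ for every $\nu$ with $1\le\nu\le p+q-1$. Writing $[\nu]_1=\a^i_j$ and $[\nu]_2=\b_k$, the first displayed equality of Lemma~\ref{lemma: f01} gives
\begin{equation*}
V_\nu(\Psi_\fm(f),\fm)=V_\nu(\Phi_\fm^1(f),\fm)+\sum_{n=1}^{j-1}V_{\b_n}(\Phi_\fm^1(f),\fm)-\sum_{n=1}^{k}V_{\b_n}(\Phi_\fm^1(f),\fm),
\end{equation*}
so everything comes down to the relation between $j$ and $k$.

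The key computation uses $\a_j\equiv jq$ and $\b_k=\a_k+q\equiv(k+1)q\pmod p$ together with the invertibility of $q$ modulo $p$. If $1\le\nu\le q$, then $[\nu]_1=\nu$, so $\nu\equiv jq\pmod p$; since also $[\nu]_2\equiv\nu\pmod p$, we get $(k+1)q\equiv jq\pmod p$, hence $j=k+1$ (both lie in $\{1,\dots,p\}$). If $q<\nu\le q+p-1$, then $\nu<2q$ because $p<q$, so $[\nu]_1=\nu-q$ and $\nu\equiv(j+1)q\pmod p$; moreover $\nu\in\{q+1,\dots,q+p-1\}\subset T_2$ forces $[\nu]_2=\nu$, so $\b_k=\nu$ and $k\ne0$ (since $\b_0=p+q\ne\nu$), giving $\nu\equiv(k+1)q\pmod p$ and therefore $j=k$ (both lie in $\{1,\dots,p-1\}$).

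In the first range $j-1=k$, so the two sums over $n$ coincide and $V_\nu(\Psi_\fm(f),\fm)=V_\nu(\Phi_\fm^1(f),\fm)$; this vanishes because $I_\fm(\nu)\subset I_\fm^1$ for $\nu\le q$ and $\Phi_\fm^1(f)(\fm,h)=0$ for all $h\in I_\fm^1$ (Remark~\ref{remark: Hm1 Hm2 Im2}, using $I_\fm^1\subset I_\fm^{\rd}$ from Remark~\ref{remark: Hm1 Hm2 Im1}). In the second range $j=k$ and $\b_k=\nu$, so
\begin{equation*}
\sum_{n=1}^{j-1}V_{\b_n}(\Phi_\fm^1(f),\fm)-\sum_{n=1}^{k}V_{\b_n}(\Phi_\fm^1(f),\fm)=-V_{\b_k}(\Phi_\fm^1(f),\fm)=-V_\nu(\Phi_\fm^1(f),\fm),
\end{equation*}
which cancels the first term, so again $V_\nu(\Psi_\fm(f),\fm)=0$. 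This handles all $\nu\le p+q-1$ and finishes the proof. The only step demanding care is the middle one: reading off $[\nu]_1$, $[\nu]_2$ and their residues modulo $p$ consistently with the conventions of Notation~\ref{notation: ell=pq} (in particular the ranges $1\le j\le p$, $0\le k\le p-1$ and the special value $\b_0=\b_p=p+q$). Once those identifications are made, the rest is a direct substitution into Lemma~\ref{lemma: f01} together with the vanishing in Remark~\ref{remark: Hm1 Hm2 Im2}.
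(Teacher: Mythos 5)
Your proposal is correct and follows essentially the same route as the paper's proof: split $I_\fm^{\rd}$ into $I_\fm^1$ (the range $\nu\le q$) and $I_\fm^2\cap I_\fm^{\rd}$ (the range $q<\nu\le p+q-1$), identify $k=j-1$ in the first case and $j=k$, $\b_k=\nu$ in the second, and conclude from the first equality of Lemma~\ref{lemma: f01} together with Remark~\ref{remark: Hm1 Hm2 Im2}. The only difference is that you spell out the mod-$p$ congruence argument identifying $j$ and $k$, which the paper states without proof.
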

\begin{proof}
If $\ell(\fm)$ is a prime power, then this is obvious. So we may assume that $\ell(\fm)$ is divisible by $pq$. 
Write $h=\fd(\fm)(\nu-1)+z$ as above.  
If $h \in I_\fm^1$, then $\nu=\a^i_j$ and $[\nu]_2=\b_{j-1}$. 
If $h \in I_\fm^2 \cap I_\fm^{\rd}$, then $\nu=\b_k$ for some $1\leq k\leq p-1$ and $[\nu]_1=\a_k$. Thus, in both cases the assertion easily follows by Lemma \ref{lemma: f01} and Remark \ref{remark: Hm1 Hm2 Im2}.
\end{proof}

\svv  
\section{Vanishing result I}\label{section5}
In this section, we prove a vanishing result for $f \in \scF(d)_\iota$ satisfying certain assumptions. As above, we have $\iota \in \{1, 2\}$. We include the case where $r_2=0$, in which case, $\cS(d)=\cS(d)_2$.

\svv  
Based on Remark \ref{remark: (m, h) and (mp, h')}, we introduce the following: For a positive integer $a$, let
\begin{equation*}
\cA_\iota(a):=\{(m, h) \in \cS(d)_\iota : v_{p_\tau}(h)=v_{p_\tau}(\ell(m))-a\} \qa \cA^+_\iota(a):=\cup_{n>a} \cA_\iota(n),
\end{equation*}
where $\tau=3-\iota$. Throughout the section, $f$ always denotes an element of $\scF(d)_\iota$.

\begin{theorem}\label{theorem: vanishing 1}
Let $a\in \Z_{\geq 1}$. Suppose that all the following hold.
\begin{enumerate}
\item
$\Psi(f)(m, h)=0$ for all $(m, h) \in \cS(d)_\iota$.
\item
$f(m, h)=0$ for all $(m, h) \in \cS(d)_\iota \sm  \cA_\iota(a)$.
\end{enumerate}
Then $f(m, h)=0$ for all $(m, h) \in \cS(d)_\iota$.
\end{theorem}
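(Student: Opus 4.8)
The plan is to make the action of $\Psi$ on $\scF(d)_\iota$ completely explicit along the divisibility chain of $\cD(d)_\iota$ and then argue by induction; throughout one works modulo $\Z$ as in Remark~\ref{remark: Q to Q/Z}. Put $\tau:=3-\iota$ and enumerate $\cD(d)_\iota=\{m_k:=d\,p_\iota^{r_\iota}p_\tau^{k}\ :\ 0\le k\le r_\tau-1\}$, a chain $m_0\mid m_1\mid\cdots\mid m_{r_\tau-1}$. Since $M/d$ is squarefree, $\ell(m_k)=p_\tau^{n_k}$ with $n_k:=\gauss{(r_\tau-k)/2}$; in particular each $\ell(m_k)$ is a power of $p_\tau$, so $\Phi^\iota_{m_k}=\mathrm{id}$, $\Psi_{m_k}=\Phi^\tau_{m_k}$, and by the definition in Section~\ref{section: 4.2}, $\Psi=\Phi^\tau_{m_{r_\tau-1}}\circ\cdots\circ\Phi^\tau_{m_0}$ on $\scF(d)_\iota$. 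I track this composition through the functions $\phi_k$ on $I_{m_k}=\{1,\dots,p_\tau^{n_k}\}$ defined as the $m_k$-component of $\Phi^\tau_{m_{k-1}}\circ\cdots\circ\Phi^\tau_{m_0}(f)$, so that $\phi_0=f(m_0,\cdot)$ and $\phi_k$ is the $m_k$-component just before the only remaining map, $\Phi^\tau_{m_k}$, that can still alter it.

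Since $\Phi^\tau_{m_k}$ alters only the components at $m_k$ and $m_kp_\tau=m_{k+1}$, and $\chi^\tau_{m_k}\equiv 1$ as $\ell(m_k)$ is a prime power (Remark~\ref{remark: Hm1 Hm2 Im1}), Definition~\ref{definition: the map Phi} yields, with $\ve_k:=\ve_\tau(m_k)=1+n_{k+1}-n_k$ and $\rho_k:=\rho^\tau_{m_k}$ (the reduction mod $p_\tau^{n_k-1}$ onto $I^\tau_{m_k}=\{1,\dots,p_\tau^{n_k-1}\}=I^{\rd}_{m_k}$),
\[
\psi_k(h):=\Psi(f)(m_k,h)=\phi_k(h)-\phi_k(\rho_k(h)),\qquad \phi_{k+1}(h)=f(m_{k+1},h)+[\,p_\tau^{\ve_k}\mid h\,]\,\phi_k\!\big(\rho_k(p_\tau^{-\ve_k}h)\big),
\]
with $\phi_0=f(m_0,\cdot)$. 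Hypothesis~(1) says $\psi_k\equiv 0$ for all $k$, i.e.\ each $\phi_k$ is constant on the fibres of $\rho_k$ (when $n_k=0$ one has $\Phi^\tau_{m_k}=\mathrm{id}$, $\phi_k=\psi_k\equiv 0$); and by the second identity the desired conclusion $f\equiv 0$ on $\cS(d)_\iota$ is equivalent to $\phi_k\equiv 0$ for all $k$. This I prove by induction on $k$.

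Assume $f(m_j,\cdot)\equiv 0$ for $j<k$. Then the cross-term in $\phi_k$ drops out, so $\phi_k=f(m_k,\cdot)$, which by hypothesis~(2) is supported on $\{h:v_{p_\tau}(h)=n_k-a\}$; if $n_k<a$ this is empty, so assume $n_k\ge a$ (equivalently $r_\tau-k\ge 2a$, whence $k+a-1\le r_\tau-1$, so the chain is long enough). Using $v_{p_\tau}(p_\tau^{\ve_i}x)=\ve_i+v_{p_\tau}(x)$ and $\ve_i=1+n_{i+1}-n_i$ (Remark~\ref{remark: (m, h) and (mp, h')}), one applies the cross-term repeatedly: a contribution at valuation $n_i-b$ at level $m_i$ is carried to valuation $n_{i+1}-(b-1)$ at level $m_{i+1}$, while the $f(m_{i+1},\cdot)$-term there lives at the strictly smaller valuation $n_{i+1}-a$ whenever $b<a$. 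Running this through levels $m_k,\dots,m_{k+a-1}$ and invoking $\psi_i\equiv 0$ at each stage, every successive $\psi_i\equiv 0$ forces an extra periodicity on $\phi_k$ — the folds of $\rho_i$ identify residue classes that the cross-term would otherwise keep independent — collapsing the $\varphi(p_\tau^{a-1})$ degrees of freedom of $\phi_k|_{I^\tau_{m_k}}$ one factor of $p_\tau$ at a time; at the last level $m_{k+a-1}$ the propagated piece sits at valuation $n_{k+a-1}-1$, and the $\rho_{k+a-1}$-fibre of $p_\tau^{\,n_{k+a-1}-1}$ also contains the ``escape point'' $p_\tau^{\,n_{k+a-1}}$, at which $\phi_{k+a-1}$ vanishes since valuation $n_{k+a-1}$ lies outside the support of $\phi_{k+a-1}$ (it equals neither $n_{k+a-1}-a$, as $a\ge 1$, nor any valuation reached by an intermediate term). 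Constancy of $\phi_{k+a-1}$ on that fibre then forces the propagated piece to vanish, and — the cross-terms being, on the relevant ranges, the injections $x\mapsto p_\tau^{\ve_i}x$ up to the folds already accounted for — unwinding gives $\phi_k\equiv 0$, hence $f(m_k,\cdot)\equiv 0$, completing the induction.

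For $a=1$ this is instantaneous: each $\rho_k$-fibre meeting the support of $f(m_k,\cdot)$ already contains $p_\tau^{n_k}$, so $\psi_k\equiv 0$ alone gives $f(m_k,\cdot)\equiv 0$; this is essentially the prime-power computation of Section~\ref{section: toy}, and the case $r_2=0$ (where $\cS(d)=\cS(d)_2$ and $L$ is a prime power) is literally that. The hard part is $a\ge 2$: a single level is then useless (its $\rho_k$-fibres lie wholly inside the support), so one must genuinely propagate up the chain while keeping the $p_\tau$-adic bookkeeping straight — checking that the $f(m_{k+i},\cdot)$-terms never collide with the propagated piece before it reaches valuation $n_{\bullet}-1$, that $n_{k+i}-(a-i)\ge 0$ throughout, and that the reductions $\rho_i$ collapse exactly the intended degrees of freedom — together with the boundary cases where $n_{k+i}$ becomes small. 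Carrying out this bookkeeping cleanly is where essentially all the difficulty of the theorem lies.
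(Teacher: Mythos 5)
Your proof is correct, and the chain recursion you make explicit (the functions $\phi_k$ and $\psi_k$ attached to $\Psi=\Phi^\tau_{m_{r_\tau-1}}\circ\cdots\circ\Phi^\tau_{m_0}$) is exactly the mechanism underlying the paper's argument; but you run the induction in the opposite direction, and that makes the organization genuinely different. The paper descends the chain: it first notes that hypothesis (2) forces $f$ to vanish identically on every level with $v_{p_\tau}(m)>r_\tau-2a$ (since $\cA_\iota(a)$ does not meet such levels), takes this as an auxiliary hypothesis $(\star)$, and under $(\star)$ reduces to the case of a function supported at a single level of the chain (Lemma \ref{lemma: v0_4}, itself a reverse induction in which the vanishing comes from $0=\Psi_\fm(f)(\fm p,p^{\ve}h)=f(\fm,h)$); it then recovers $f(\fm,\cdot)$ on its support from $\U(\fm)(f)\equiv 0$ via Lemma \ref{lemma: v0_1}, and the conclusion at level $x$ extends $(\star)$ to $x-1$. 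You instead ascend, so at level $m_k$ you must push the unknown $f(m_k,\cdot)$ through $a-1$ higher levels whose own values are not yet known to vanish; your valuation stratification --- the contribution of $f(m_j,\cdot)$ to $\phi_i$ sits on the stratum $v_{p_\tau}=n_i-(a-(i-j))$, so contributions of distinct $j$ never mix --- is precisely what substitutes for $(\star)$. The trade-off: your route is a single self-contained induction with no bootstrapping and no analogue of Lemma \ref{lemma: v0_4}, at the price of the $p_\tau$-adic bookkeeping you flag at the end, all of which does check out ($n_i\ge a-(i-k)$ keeps the chain long enough and the strata nonnegative and disjoint; the fold $\rho_{k+a-1}$ identifies the valuation-$(n_{k+a-1}-1)$ stratum with the escape point $p_\tau^{\,n_{k+a-1}}$; and the unwinding works because $h\mapsto p_\tau^{-\ve_i}h$ is a bijection from the multiples of $p_\tau^{\ve_i}$ in $I_{m_{i+1}}$ onto $I^\tau_{m_i}$, while $\psi_i\equiv 0$ lets you suppress the intervening $\rho_i$).
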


We only prove the case $\iota=2$ as the other case is proved exactly by the same method. 
For simplicity, let $\fm=dp^xq^{r_2}$ for some $0\leq x \leq r_1-1$, and let $\ve:=1+v_p(\ell(\fm p))-v_p(\ell(\fm))$. Furthermore, let 
\begin{equation*}
\begin{split}
\U(\fm)&:=\Psi_{dp^{x-1}q^{r_2}} \circ \Psi_{dp^{x-2}q^{r_2}} \circ \cdots \circ \Psi_{dq^{r_2}} \qa \\
\Psi(\fm)&:=\Psi_{\fm} \circ \U(\fm)=\Phi_{\fm}^1 \circ \U(\fm).
\end{split}
\end{equation*}

To begin with, we prove some lemmas.
\begin{lemma}\label{lemma: v0_1}
Let $a\in \Z_{\geq 1}$. Suppose that $f(m, h)=0$ for all $(m, h) \in \cA_2^+(a)$.
Then we have
\begin{equation*}
\U(\fm)(f)(\fm, \fh)=f(\fm, \fh) \quad \text{ for all }~ \fh \in I_{\fm} ~~\text{ with }~~ v_p(\fh) \leq v_p(\ell(\fm))-a.
\end{equation*}
\end{lemma}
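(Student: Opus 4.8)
The plan is to introduce the auxiliary divisors $\fm_k := dp^k q^{r_2}$ for $0 \le k \le x$, so that $\fm_x = \fm$ and $\U(\fm) = \Psi_{\fm_{x-1}} \circ \cdots \circ \Psi_{\fm_0}$, and to unwind this composition one factor at a time. If $v_p(\ell(\fm)) < a$ the conclusion is vacuous and if $x = 0$ it is trivial, so I would assume $x \ge 1$ and $v_p(\ell(\fm)) \ge a \ge 1$. The first observation is that since $N/\fm_k = (M/d)p^{r_1 - k}$ with $M/d$ squarefree and coprime to $p$, each $\ell(\fm_k) = p^{\lfloor (r_1-k)/2 \rfloor}$ is a power of $p$; hence $\Psi_{\fm_k} = \Phi_{\fm_k}^1$ by Remark~\ref{remark: map Phi 2}, $\chi_{\fm_k}^1 \equiv 1$ by Remark~\ref{remark: Hm1 Hm2 Im1}, and $\Phi_{\fm_k}^1$ alters the value of a function only at arguments with $m \in \{\fm_k, \fm_k p = \fm_{k+1}\}$. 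Moreover, since the earlier maps $\Psi_{\fm_0}, \dots, \Psi_{\fm_{k-1}}$ do not touch $m = \fm_{k+1}$, one gets $\U(\fm_k)(f)(\fm_{k+1}, h) = f(\fm_{k+1}, h)$ for all $h$.

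Combining these facts with the definition of $\Phi_{\fm_k}^1$ at the arguments $(\fm_k p, h)$, the second step is to prove the recursion
\[
\U(\fm_{k+1})(f)(\fm_{k+1}, h) = f(\fm_{k+1}, h) + [\,p^{\ve_k}\dd h\,]\cdot \U(\fm_k)(f)\bigl(\fm_k,\ \rho_{\fm_k}^1(p^{-\ve_k}h)\bigr),
\]
where $\ve_k := \ve_1(\fm_k)$ and $[\,\cdot\,]$ is an indicator (so the last term is present precisely when $p^{\ve_k} \dd h$). Iterating this from $k = x-1$ down to $k = 0$ (using $\U(\fm_0) = \mathrm{id}$) yields a single chain $\fh = h^{(x)}, h^{(x-1)}, \dots, h^{(0)}$ defined by $h^{(i)} = \rho_{\fm_i}^1(p^{-\ve_i}h^{(i+1)})$, together with the identity
\[
\U(\fm)(f)(\fm, \fh) = f(\fm, \fh) + \sum_{j=0}^{x-1} \Bigl(\,{\textstyle\prod_{i=j}^{x-1}}\,[\,p^{\ve_i}\dd h^{(i+1)}\,]\,\Bigr)\, f(\fm_j, h^{(j)}).
\]
Thus it remains to show that every term $f(\fm_j, h^{(j)})$ whose coefficient is $1$ vanishes.

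The decisive step is to track the defect $a_i := v_p(\ell(\fm_i)) - v_p(h^{(i)})$. By hypothesis $a_x = v_p(\ell(\fm)) - v_p(\fh) \ge a \ge 1$. I would then prove, by downward induction on $i$, that as long as the indicators $[\,p^{\ve_i}\dd h^{(i+1)}\,]$ stay nonzero one has $a_i = a_{i+1} + 1$ (in particular $a_i \ge 1$, so the induction continues). This is essentially Remark~\ref{remark: (m, h) and (mp, h')} read backwards: since $\ve_i = 1 + v_p(\ell(\fm_{i+1})) - v_p(\ell(\fm_i))$ one has $v_p(p^{-\ve_i}h^{(i+1)}) = v_p(\ell(\fm_i)) - 1 - a_{i+1} \le v_p(\ell(\fm_i)) - 2 < v_p(\fl_1(\fm_i))$, and because $\fl_1(\fm_i)$ is a power of $p$, reducing modulo $\fl_1(\fm_i)$ (which is what $\rho_{\fm_i}^1$ does) preserves the $p$-valuation; hence $v_p(h^{(i)}) = v_p(\ell(\fm_i)) - 1 - a_{i+1}$, i.e. $a_i = a_{i+1}+1$. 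It follows that $a_j = a_x + (x-j) \ge a+1 > a$ for every $j \le x-1$ along a surviving branch, so $(\fm_j, h^{(j)}) \in \cA_2^+(a)$ — one checks routinely that $\fm_j \in \cD(d)_2$ and $h^{(j)} \in I_{\fm_j}$ — and therefore $f(\fm_j, h^{(j)}) = 0$ by assumption. Summing the displayed identity gives $\U(\fm)(f)(\fm, \fh) = f(\fm, \fh)$.

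The step I expect to be the main obstacle is the second one: verifying that iterating the recursion really produces only the single telescoping chain $h^{(x)} \mapsto h^{(x-1)} \mapsto \cdots$ with no spurious cross terms, and correctly keeping track of which factor $\Psi_{\fm_k}$ contributes to which argument. This rests entirely on the prime-power shape of each $\ell(\fm_k)$ (so that $\Psi_{\fm_k} = \Phi_{\fm_k}^1$ and $\chi_{\fm_k}^1 \equiv 1$) and on the locality of $\Phi_{\fm_k}^1$; once the recursion is in hand, the valuation bookkeeping in the last step is forced.
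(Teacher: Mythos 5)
Your proposal is correct and follows essentially the same route as the paper: the paper proves the lemma by induction on $x=v_p(\fm)$, and its induction step is precisely your recursion $\U(\fm p)(f)(\fm p,\fh)=f(\fm p,\fh)+[\,p^{\ve}\dd\fh\,]\cdot\U(\fm)(f)(\fm,h)$ combined with the same valuation bookkeeping via Remark~\ref{remark: (m, h) and (mp, h')} to place $(\fm,h)$ in $\cA_2^+(a)$. Your explicit unwinding into a telescoping chain $h^{(x)},\dots,h^{(0)}$ is just the iterated form of that induction, and your care with $\rho_{\fm}^1$ preserving the $p$-valuation is a correct (and slightly more precise) rendering of what the paper does.
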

\begin{proof}
We prove the assertion by induction on $x=v_p(\fm)$. If $x=0$, then $\U(\fm)(f)=f$ and so it is obvious. Next, suppose that the assertion holds for $x\geq 0$ and we are going to prove that the assertion holds for $\fm p$. For simplicity, let $g=\U(\fm)(f)$ and $\fh \in I_{\fm p}$. By definition, we have $\U(\fm p)(f) = \Phi_\fm^1 (g)$.  Also, by applying Lemma \ref{lemma: simple formula for Psi}(1) successively we have $g(\fm p, h)=f(\fm p, h)$. Thus, if $p^{\ve}$ does not divide $\fh$, then we have
\begin{equation*}
\U(\fm p)(f)(\fm p, \fh)=\Phi^1_\fm (g)(\fm p, \fh)=g(\fm p, \fh)=f(\fm p, \fh).
\end{equation*}
So we may assume that $h=p^{-\ve}\fh$ is an integer. Then we have
\begin{equation*}
\U(\fm p)(f)(\fm p, \fh)=\Phi^1_\fm (g)(\fm p, \fh)=g(\fm p, \fh)+g(\fm, h)=f(\fm p, \fh)+\U(\fm)(f)(\fm, h).
\end{equation*}
For simplicity, let $c=v_p(\ell(\fm p))-v_p(\fh)$, and suppose that $v_p(\fh) \leq v_p(\ell(\fm p))-a$, i.e., $c\geq a$. 
Then we have $v_p(h)=v_p(\ell(\fm))-c-1<v_p(\ell(\fm))-a$ (cf. Remark \ref{remark: (m, h) and (mp, h')}). Hence by induction hypothesis, we have 
$\U(\fm)(f)(\fm, h)=f(\fm, h)$. Since $(\fm, h) \in \cA_2^+(a)$, $f(\fm, h)=0$ by our assumption. This completes the proof by induction.
\end{proof}

\begin{lemma}\label{lemma: v0_4}
Suppose that all the following hold.
\begin{enumerate}
\item
$\Psi(f)(m, h)=0$ for all $(m, h) \in \cS(d)_2$.
\item
$f(m, h)=0$ for all $(m, h) \in \cS(d)_2$ with $m \neq \fm$.
\end{enumerate}
Then we have $f(m, h)=0$ for all $(m, h) \in \cS(d)_2$.
\end{lemma}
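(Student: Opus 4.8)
The plan is to prove Lemma~\ref{lemma: v0_4} (for $\iota=2$) by descending induction on $x$, i.e.\ by induction on the length $r_1-1-x$ of the chain of divisors of $N$ lying above $\fm$. Write $m_k:=dp^kq^{r_2}$, so that $\cD(d)_2=\{m_0\prec m_1\prec\cdots\prec m_{r_1-1}\}$ and $\fm=m_x$; since $\gcd(M,pq)=1$ we have $\ell(m_k)=p^{\lfloor(r_1-k)/2\rfloor}$, a power of $p$, hence $\Psi_{m_k}=\Phi^1_{m_k}$ and $\Psi=\Phi^1_{m_{r_1-1}}\circ\cdots\circ\Phi^1_{m_0}$ on $\scF(d)_2$. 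The first step is to note that, as $f$ vanishes on every $m_k$ with $k\ne x$, Remark~\ref{remark: map Phi 1} gives $\Phi^1_{m_k}(f)=f$ for every $k<x$; thus $\U(\fm)(f)=f$, $\Psi(\fm)(f)=\Phi^1_\fm(f)=:g$, and $\Psi(f)=\Phi^1_{m_{r_1-1}}\circ\cdots\circ\Phi^1_{m_{x+1}}(g)$, where $g$ is supported only on $\fm$ and $\fm p$.

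The base case is $x=r_1-1$: then $\ell(\fm)=1$, so $\Phi^1_\fm=\mathrm{id}$ by Remark~\ref{remark: map Phi 2}, and $\Phi^1_{m_{r_1-2}}(f)=f$ because the correction term of $\Phi^1_{m_{r_1-2}}$ vanishes on $f$; hence $\Psi(f)=f$ and $f=\Psi(f)=0$. For the inductive step I would assume the lemma for $m_{x+1}=dp^{x+1}q^{r_2}$ and proceed in two moves. First, since none of the maps $\Phi^1_{m_j}$ with $j>x$ reads from or writes to the component $m_x=\fm$, we get $\Psi(f)(\fm,h)=g(\fm,h)=f(\fm,h)-f(\fm,\rho^1_\fm(h))$ (using $\chi^1_\fm\equiv1$, as $\ell(\fm)$ is a prime power), so the hypothesis $\Psi(f)=0$ forces $f(\fm,h)=f(\fm,\rho^1_\fm(h))$ for all $h\in I_\fm$; in particular $f(\fm,\cdot)$ is constant on residue classes modulo $\fl_1(\fm)$, so it suffices to prove $f(\fm,h)=0$ for $h\in I_\fm^1$.

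Second, I would reduce this to the inductive hypothesis. Let $\tilde g\in\scF(d)_2$ be supported on $m_{x+1}$ with $\tilde g(m_{x+1},\cdot)=g(m_{x+1},\cdot)=\Phi^1_\fm(f)(\fm p,\cdot)$. Because each $\Phi^1_{m_j}$ with $j\ge x+1$ only involves the components $m_j,m_{j+1}$, on which $g$ and $\tilde g$ agree, one checks inductively that $\Psi(\tilde g)=\Psi(f)$ on $m_{x+1},\dots,m_{r_1-1}$, while on $m_0,\dots,m_x$ the map $\Psi$ leaves the zero values of $\tilde g$ untouched; so $\Psi(\tilde g)=0$ on all of $\cS(d)_2$. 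By the inductive hypothesis applied to $\tilde g$, which is supported on $dp^{x+1}q^{r_2}$, we get $\tilde g=0$, i.e.\ $\Phi^1_\fm(f)(\fm p,h)=0$ for every $h\in I_{\fm p}$. As $f(\fm p,\cdot)=0$, the definition of $\Phi^1_\fm$ then reads $f\bigl(\fm,\rho^1_\fm(p^{-\ve}h)\bigr)=0$ whenever $p^{\ve}\mid h$; since $\ell(\fm p)/p^{\ve}=\ell(\fm)/p=\fl_1(\fm)$ by the definition of $\ve$ (cf.\ Remark~\ref{remark: (m, h) and (mp, h')}), the arguments $\rho^1_\fm(p^{-\ve}h)$ run over all of $I_\fm^1$ as $h$ varies, so $f$ vanishes on $I_\fm^1$. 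Combined with the periodicity established above, this gives $f(\fm,h)=0$ for all $h\in I_\fm$, completing the induction.

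The only genuinely delicate point is the bookkeeping in the last paragraph: one must track carefully which components of $\scF(d)_2$ each $\Phi^1_{m_j}$ reads from and writes to, in order to justify that $\Psi(\tilde g)$ agrees with $\Psi(f)$ on the upper part of the chain and vanishes on the lower part. Everything else is a direct unwinding of the definitions of Section~\ref{section: the map Psi}, the explicit formula in Definition~\ref{definition: the map Phi}, and the fact that $\rho^1_\fm$ restricts to the identity on $I_\fm^1$.
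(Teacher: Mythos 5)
Your proof is correct and follows essentially the same route as the paper's: reverse induction on $x=v_p(\fm)$, showing that $\Psi_\fm(f)$ (your $\tilde g$, which in fact coincides with $g=\Psi_\fm(f)$ since $g(\fm,\cdot)=\Psi(f)(\fm,\cdot)=0$) satisfies the hypotheses with $\fm$ replaced by $\fm p$, then unwinding $\Phi^1_\fm$ to get vanishing on $I_\fm^1$ and extending by the periodicity mod $\fl_1(\fm)$ forced by $\Psi(f)(\fm,\cdot)=0$. The paper shortcuts your component-tracking via Remarks \ref{remark: Psi(f) independent of ordering} and \ref{remark: Psi(Psi)=Psi}, but the argument is the same.
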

\begin{proof}
Note that by definition $\Psi_m(f)=f$ for any $m \in \cD(d)_2$ with $m\neq \fm$ as we assume that $f(m, h)=0$ for all $h \in I_m^\rd$.  (cf. Remark \ref{remark: identity if vanish on Im1}).
Thus, we have $\U(\fm)(f)=f$ and $\Psi(\fm)(f)=\Psi_\fm(f)$.
We prove the assertion by (reverse) induction on $x=v_p(\fm)$. If $x=r_1-1$, then $\Psi_\fm$ is the identity map and so
$\Psi(f)=\Psi(\fm)(f)=\Psi_\fm(f)=f$. Thus, the assertion obviously holds by assumption (1). 
Suppose that the assertion holds for $x+1\leq r_1-1$ and let $g=\Psi_\fm(f)$. If $v_p(m)\leq x$, then for any $h \in I_m$ we have
\begin{equation*}
g(m, h)=\Psi(\fm)(f)(m, h)=\Psi(m)(f)(m, h)=\Psi(f)(m, h)=0
\end{equation*}
(cf. Remark \ref{remark: Psi(f) independent of ordering}). Also, if $v_p(m)>x+1$, then $g(m, h)=f(m, h)=0$. 
Since $\Psi(g)=\Psi(\Psi(\fm)(f))=\Psi(f)$ (cf. Remark \ref{remark: Psi(Psi)=Psi}), $g$ also satisfies the two assumptions with $\fm$ replaced by $\fm p$ in assumption (2). Hence by induction hypothesis we have $g(m, h)=\Psi_\fm(f)(m, h)=0$ for all $(m, h) \in \cS(d)_2$. By definition, we have
\begin{equation*}
0=\Psi_\fm(f)(\fm p, p^{\ve}h)=f(\fm p, p^{\ve}h)+f(\fm, h)=f(\fm, h).
\end{equation*}
This proves that $f(\fm, h)=0$ for all $h \in I_\fm^1=I_\fm^{\rd}$. Also, we have
\begin{equation*}
0=\Psi_\fm(f)(\fm, h)=f(\fm, h)-f(\fm, \rho_\fm^1(h)).
\end{equation*}
Thus, for any $h \in I_\fm$ we have $f(\fm, h)=f(\fm, \rho_\fm^1(h))=0$. By induction, this completes the proof.
\end{proof}

Now, we are ready to prove Theorem \ref{theorem: vanishing 1}.
\begin{proof}[Proof of Theorem \ref{theorem: vanishing 1}]
By definition, it suffices to show that we have
\begin{equation*}
f(\fm, \fh)=0 \quad \text{ for all } ~~ \fh\in I_\fm ~~\text{ with } ~~v_p(\fh)=v_p(\ell(\fm))-a.
\end{equation*}
We prove the assertion under the assumption that 
\begin{equation}\tag{$\star$}
f(m, h)=0 \quad \text{ for all } ~(m, h) \in \cS(d)_2 ~~\text{ with } ~~v_p(m)>x=v_p(\fm).
\end{equation} 
We first claim that $\U(\fm)(f)$ satisfies two assumptions in Lemma \ref{lemma: v0_4}. Indeed, if $v_p(m)>x$, then $\U(\fm)(f)(m, h)=f(m, h)=0$. If $v_p(m)<x$, then 
by Remark \ref{remark: Psi(f) independent of ordering}
we have $\U(\fm)(f)(m, h)=\Psi(f)(m, h)$, which is zero by assumption (1). Thus by Lemma \ref{lemma: v0_4} we have $\U(\fm)(f)(m, h)=0$ for all $(m, h) \in \cS(d)_2$.
By assumption (2), $f$ satisfies the assumption in Lemma \ref{lemma: v0_1}.
Hence for any $\fh \in I_\fm$ with $v_p(\fh)=v_p(\ell(\fm))-a$, we have
$f(\fm, \fh)=\U(\fm)(f)(\fm, \fh)=0$.

Note that if $v_p(m)> r_1-2a$, then for any $h\in I_m$ we have $v_p(\ell(m))-v_p(h) \leq v_p(\ell(m))<a$, and so $(m, h) \not\in \cA_2(a)$. Thus, by assumption (2) we have $f(m, h)=0$ and hence we can take $x=r_1-2a$. 
By the argument above, we obtain that $f(m, h)=0$ for all $(m, h)\in \cS(d)_2$ whenever $v_p(m)\geq r_1-2a$. Now, we can also take $x=r_1-2a-1$ above and run the same argument again to deduce $f(m, h)=0$ for all $(m, h) \in \cS(d)_2$ whenever $v_p(m)\geq r_1-2a-1$. Doing this successively, the assertion follows.
\end{proof}

\svv  
\section{Vanishing result II}\label{section6}
As in the previous section, we try to prove a vanishing result for $f \in \scF(d)_0$. 
However, since the definition of $\Psi$ is not so simple any more, the corresponding statement may not be true. Instead, we prove a weaker one. 

\svv  
As above, let
\begin{equation*}
\scA_\iota(a):=\{(m, h) \in \cS(d)_0 : v_{p_\iota}(\ell(m))-v_{p_\iota}(h)=a\} \qa \scA^+_\iota(a):=\cup_{n> a} \scA_\iota(n).
\end{equation*}
Also, let $\cD_\iota(x):=\{ m \in \cD(d)_0 : v_{p_\iota}(m)=x\}$. Furthermore, let
\begin{equation*}
\scB_\iota(x):=\{(m, h) \in \cS(d)_0 : x \in \cD_\iota(x)\} \qa \scB^-_\iota(x):= \cup_{n<x} \scB_\iota(n).
\end{equation*}
Throughout the section, $f$ always denotes an element of $\scF(d)_0$.

\svv   
The main results in this section are the following.
\begin{theorem}\label{theorem: vanishing i=1}
Let $x \in \Z_{\geq 0}$ and $b \in \Z_{\geq 1}$. Suppose that all the following hold.
\begin{enumerate}
\item
$\Psi(f)(m, h)=0$ for all $(m, h) \in \cS(d)_0$.
\item
$f(m, h)=0$ for all $(m, h) \in \scB^-_1(x)$.

\item
$f(m, h)=0$ for all $(m, h) \in \scB_1(x) \sm \scA_2(b)$.
\end{enumerate}
Then for any $\fm \in \cD_1(x)$, we have
\begin{equation*}
\Phi_\fm^1 \circ \U(\fm)(f)(\fm, \fh)=\Phi_\fm^1(f)(\fm, \fh)=0 \quad \text{ for all }~ \fh \in I_\fm.
\end{equation*}
\end{theorem}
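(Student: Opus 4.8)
The plan is to decouple the two displayed equalities. Write $g:=\U(\fm)(f)$. Since $\Phi_\fm^1$ produces the value of $\Phi_\fm^1(h)$ over $\fm$ using only the values of $h$ over $\fm$ (see Definition \ref{definition: the map Phi}), and since $\chi_\fm^1\equiv 1$ on $I_\fm$ whenever $pq\mid\ell(\fm)$ (Remark \ref{remark: Hm1 Hm2 Im1}), the equality $\Phi_\fm^1(g)(\fm,\fh)=\Phi_\fm^1(f)(\fm,\fh)$ will follow once I show that $g-f$ over $\fm$ is $\fl_1(\fm)$-periodic, i.e.\ $g(\fm,\fh)-f(\fm,\fh)$ depends on $\fh$ only modulo $\fl_1(\fm)$. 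The ``$=0$'' part then amounts to $\Phi_\fm^1(f)(\fm,\cdot)=0$. So I would prove \emph{(A)} $g-f$ is $\fl_1(\fm)$-periodic over $\fm$, and \emph{(B)} $\Phi_\fm^1(f)(\fm,\cdot)=0$. A preliminary lemma I would record first: since $f$ vanishes on $\scB_1^-(x)$ by hypothesis (2), the partial composition $\U(\fm)(f)$ still vanishes on every slot $(m,h)$ with $v_p(m)<x$; this is an induction along the ordering $\prec$, because a factor $\Psi_{m'}$ alters only the slots over $m'$, $m'p$, $m'q$, and when $v_p(m)<x$ every such $m'$ again has $v_p(m')<x$, so the correction terms of Lemma \ref{lemma: simple formula for Psi}(2),(3) are sums of current values at slots of $p$-valuation $<x$, which vanish inductively.

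\textbf{Proof of (A).} The difference $g(\fm,\cdot)-f(\fm,\cdot)$ receives contributions only from the factors $\Psi_{\fm/p}$ and $\Psi_{\fm/q}$ of $\U(\fm)$ (Lemma \ref{lemma: simple formula for Psi}(1)). The $\Psi_{\fm/p}$-contribution is, by Lemma \ref{lemma: simple formula for Psi}(2), a sum of values of the then-current function at slots over $\fm/p$; since $v_p(\fm/p)=x-1<x$, these vanish by the preliminary lemma. The $\Psi_{\fm/q}$-contribution to the $\fh$-slot is, by Lemma \ref{lemma: simple formula for Psi}(3), a function of $q^{-\ve_2(\fm/q)}\fh$ modulo $\fd(\fm/q)$; a valuation count prime by prime, using \eqref{eqn: f003}--\eqref{eqn: f004}, yields the key identity $q^{\ve_2(\fm/q)}\fd(\fm/q)=\fl_1(\fm)$, so this contribution is $\fl_1(\fm)$-periodic in $\fh$. (When $q\nmid\fm$ the factor $\Psi_{\fm/q}$ does not occur, and when $x=0$ no factor of $\U(\fm)$ touches the slots over $\fm$ at all.) This proves (A), hence the first equality.

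\textbf{Proof of (B).} By Remark \ref{remark: Psi(f) independent of ordering} and hypothesis (1), $\Psi_\fm(g)(\fm,h)=\Psi(f)(\fm,h)=0$ for all $h\in I_\fm$. If $x=r_1-1$ then $p\nmid\ell(\fm)$, so $\Phi_\fm^1=\mathrm{id}$ and, since $\scB_1(r_1-1)$ exhausts $\cD(d)_0$ modulo $\scB_1^-(r_1-1)$ (on which $f$ vanishes), the situation involves only the prime $q$, $\Psi_\fm=\Phi_\fm^2$, and I would rerun the argument of Theorem \ref{theorem: vanishing 1} with $q$ in place of $p$ and hypothesis (3) playing the role of the concentration assumption there, getting $f(\fm,\cdot)=0=g(\fm,\cdot)$. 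If $x\le r_1-2$ and $\ell(\fm)$ is a power of $p$, then $\Psi_\fm=\Phi_\fm^1$ (Remark \ref{remark: map Phi 2}), so $\Phi_\fm^1(g)(\fm,\cdot)=0$; and hypothesis (3) is vacuous over $\fm$ because $v_q(\ell(\fm))=0<b$, so $f(\fm,\cdot)=0$ and (B) is immediate. In the remaining case $pq\mid\ell(\fm)$, I would expand $\Psi_\fm(g)(\fm,\cdot)=0$ through Lemma \ref{lemma: f01}: this shows $\Phi_\fm^1(g)(\fm,\cdot)$ vanishes on $I_\fm^1$ and that the block restrictions $V_\nu(\Phi_\fm^1(g),\fm)$ form a triangular system whose only undetermined part is the family $V_{\b_k}(\Phi_\fm^1(g),\fm)$ for $\b_k\in T_2$. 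To pin those down I would invoke hypothesis (3): by (A), $\Phi_\fm^1(g)(\fm,\cdot)=\Phi_\fm^1(f)(\fm,\cdot)$, which (as $\rho_\fm^1$ preserves $q$-valuation) is supported on the slots $h$ with $v_q(h)=v_q(\ell(\fm))-b$; intersecting this support with the blocks $I_\fm(\b_k)$ leaves only a thin sublattice of surviving components, and feeding this back into the triangular relations (and, if necessary, the $\Phi_\fm^2$-form of Lemma \ref{lemma: f01} together with Lemmas \ref{lemma: f02}--\ref{lemma: f03} applied over $\fm p$ and $\fm q$) forces $V_{\b_k}(\Phi_\fm^1(f),\fm)=0$ for every $k$, hence $\Phi_\fm^1(f)(\fm,\cdot)=0$.

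\textbf{Main obstacle.} The delicate point is (B) when $pq\mid\ell(\fm)$: Lemma \ref{lemma: f01} by itself leaves the blocks indexed by $T_2$ free, so one genuinely has to import the concentration hypothesis (3) and track how the $q$-adic valuations restrict $\Phi_\fm^1(f)$ — this bookkeeping is exactly what forces the weaker formulation of this section, and I expect it to be the part where the induction on $x$ (and possibly an auxiliary induction on $b$) is actually needed. By contrast, the identity $q^{\ve_2(\fm/q)}\fd(\fm/q)=\fl_1(\fm)$ underpinning (A) and the vanishing of $\U(\fm)(f)$ on $\scB_1^-(x)$ are routine once one unwinds the definitions.
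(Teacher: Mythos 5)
Your reduction to the two claims (A) and (B) is reasonable in outline, and several of your observations (the preliminary lemma that $\U$ acts trivially below level $x$, the identity $q^{\ve_2(\fm/q)}\fd(\fm/q)=\fl_1(\fm)$, the support analysis of $\Phi_\fm^1(f)(\fm,\cdot)$ via hypothesis (3)) do appear in the paper's argument. But claim (A) as you justify it has a genuine gap. By Lemma \ref{lemma: f03}, the $\Psi_{\fm/q}$-contribution to the slot $\fh=q^{\ve_2(\fm/q)}(\fd(\fm/q)(\a_j-1)+z)$ is $\sum_{n=1}^{j-1}V_{\b_n}(\Phi_{\fm/q}^1(\cdot),\fm/q)_z$: the elements of the set $S_2$ in Lemma \ref{lemma: simple formula for Psi}(3) are all congruent to $\fh_2$ modulo $\fd(\fm/q)$, but the set itself (i.e., which $\b_n$ occur, namely $n=1,\dots,j-1$) depends on the block index $j$, which changes as $\fh$ moves by multiples of $\fl_1(\fm)$. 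So the contribution is \emph{not} $\fl_1(\fm)$-periodic unless the vectors $V_{\b_n}(\Phi_{\fm/q}^1(\U(\fm/q)(f)),\fm/q)$ all vanish --- and that vanishing is precisely the conclusion of the theorem one level down in the $q$-direction. Your argument for (A) is therefore circular as written; it can only be repaired by an explicit (reverse) induction on $y=v_q(\fm)$, which you do not set up (you speak of ``induction on $x$'', but $x$ is fixed throughout the statement).

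This missing induction is also what your sketch of (B) silently needs. In the paper, the engine is: reverse induction on $y$ produces $\Phi_{\fm q}^1(\Psi_\fm(f_y))(\fm q,\cdot)=0$ for $f_y=\U(\fm)(f)$ (base case $y\ge r_2-2b$ via Lemma \ref{lemma: v1_4}, where hypothesis (3) forces total vanishing); Lemma \ref{lemma: f03} then turns this into $V_{\b_k}(\Phi_\fm^1(f_y),\fm)=X^2_{\a_k}(f_y,\fm)-X^2_{\a_{k+1}}(f_y,\fm)$, and the valuation/support hypotheses (including Lemma \ref{lemma: v1_1}, which gives $f_y=f$ on the slots with $v_q(h)\le v_q(\ell(\fm))-b$) kill each component (Lemma \ref{lemma: v1_5}); finally one transfers from $f_y$ to $f$ on the remaining slots using that same equality $f_y=f$ there. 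Your phrase ``intersecting this support with the blocks \dots forces $V_{\b_k}(\Phi_\fm^1(f),\fm)=0$'' is exactly the step that cannot be carried out from hypothesis (3) alone: one genuinely needs the input $\Phi_{\fm q}^1(\Psi_\fm(f_y))(\fm q,\cdot)=0$ coming from the level $\fm q$ above, i.e., the reverse induction. Until that induction is made explicit, neither (A) nor (B) is established.
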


\svv    
In the case of $\iota=2$, to make our argument parallel to the above one, let
\begin{equation*}
\Phi_\fm^*(f)(m, h):=\begin{cases}
f(m, h)-f(\fm, \rho_\fm^2(h)) & \text{ if } ~~m=\fm,\\
f(m, h)+f(\fm, \rho_\fm^2(q^{-\ve_2(\fm)}h)) & \text{ if } ~~m=\fm q \qqa q^{\ve_2(\fm)} \dd h,\\
f(m, h) & \text{ otherwise}.
\end{cases}
\end{equation*}

\begin{theorem}\label{theorem: vanishing i=2}
Let $y \in \Z_{\geq 0}$ and $a\in \Z_{\geq 1}$. Suppose that all the following hold.
\begin{enumerate}
\item
$\Psi(f)(m, h)=0$ for all $(m, h) \in \cS(d)_0$.
\item
$f(m, h)=0$ for all $(m, h) \in \scB_2^-(y)$.
\item
$f(m, h)=0$ for all $(m, h) \in \scB_2(y) \sm \scA_1(a)$.
\end{enumerate}
Then for any $\fm \in \cD_2(y)$, we have
\begin{equation*}
\Phi_\fm^* \circ \U(\fm)(f)(\fm, \fh)=\Phi_\fm^*(f)(\fm, \fh)=0 \quad \text{ for all }~ \fh \in I_\fm.
\end{equation*}
\end{theorem}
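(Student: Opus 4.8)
The plan is to imitate the proof of Theorem~\ref{theorem: vanishing i=1} with the primes $p_1$ and $p_2$ --- equivalently the operators $\Phi^1_\fm$ and $\Phi^2_\fm$ --- interchanged throughout; the one structural obstruction is that $\Psi_\fm=(\Phi^1_\fm\circ\Phi^2_\fm)^{p-1}\circ\Phi^1_\fm$ is not symmetric in the two primes (it applies $\Phi^1$ once more than $\Phi^2$, and it iterates $p-1$ times). This is exactly why the conclusion is stated for the operator $\Phi^*_\fm$, which replaces $F_{\fm,h}$ for \emph{every} $h\in I^2_\fm$ --- including the top block $I_\fm(p+q)=I^2_\fm\setminus I^{\rd}_\fm$, on which the factor $\chi^2_\fm$ makes $\Phi^2_\fm$ act trivially --- so that $\Phi^*_\fm(g)(\fm,h)=g(\fm,h)-g(\fm,\rho^2_\fm(h))$ holds for \emph{all} $h$; I will use this identity constantly.

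I would organise the proof around the two auxiliary facts playing the roles of Lemmas~\ref{lemma: v0_1} and~\ref{lemma: v0_4}. The first is a ``$\U(\fm)$ is invisible to $\Phi^*_\fm$'' lemma: under hypotheses (2) and (3), $\Phi^*_\fm\circ\U(\fm)(f)(\fm,\fh)=\Phi^*_\fm(f)(\fm,\fh)$ for all $\fh\in I_\fm$. By Lemma~\ref{lemma: simple formula for Psi}(1) the only factors $\Psi_{m'}$ of $\U(\fm)$ that disturb the $\fm$-slice are $m'\in\{\fm/p,\fm/q\}$, and Lemma~\ref{lemma: simple formula for Psi}(2)--(3) express the defect $\U(\fm)(f)(\fm,\cdot)-f(\fm,\cdot)$ as an explicit sum of values of $\Phi^1_{\fm/q}(\,\cdot\,)$ on the $\fm/q$-slice and of $\Phi^2_{\fm/p}(\,\cdot\,)$ on the $\fm/p$-slice. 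Hypothesis (2) kills the $\fm/q$-contribution, since $v_q(\fm/q)=y-1<y$; the $\fm/p$-contribution need not vanish (as $v_q(\fm/p)=y$ and hypothesis (3) only controls $\scB_2(y)\setminus\scA_1(a)$), but by the congruence statement of Lemma~\ref{lemma: simple formula for Psi}(2) the indices occurring there lie in fixed residue classes modulo $\fd(\fm)$, so that contribution is constant along each fiber of $\rho^2_\fm$ and is annihilated by $\Phi^*_\fm$. The second is a ``$\Phi^*_\fm$ vanishes on the $\fm$-slice'' lemma, proved by reverse induction on $x=v_p(\fm)$ over $\fm\in\cD_2(y)$ just as in Lemma~\ref{lemma: v0_4}: writing $g=\U(\fm)(f)$, hypothesis (1) and Remark~\ref{remark: Psi(f) independent of ordering} give $\Psi_\fm(g)(\fm,h)=0$ for all $h$; inserting this into the second equality of Lemma~\ref{lemma: f01} (which records $\Psi_\fm$ through $\Phi^2_\fm$, to be then compared with $\Phi^*_\fm$) and using the inductive hypothesis $\Phi^*_{\fm p}\circ\U(\fm p)(f)(\fm p,\cdot)=0$ forces $\Phi^*_\fm(g)(\fm,\fh)=0$. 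The base case --- $x$ so large that $v_p(\ell(\fm))<a$, e.g.\ $x=r_1-1$ --- is immediate: then no $h$ gives $(\fm,h)\in\scA_1(a)$, so $f(\fm,\cdot)=0$ by hypothesis (3). Combining the two lemmas gives $\Phi^*_\fm(f)(\fm,\fh)=\Phi^*_\fm\circ\U(\fm)(f)(\fm,\fh)=0$, as desired. The case $q\nmid\ell(\fm)$ --- equivalently $y=r_2-1$, where $\ell(\fm)$ is a power of $p$ and $\Phi^*_\fm$ degenerates --- is treated separately: the slices $v_q(m)=r_2-1$ restrict to a one-prime configuration and the conclusion follows from the argument of Theorem~\ref{theorem: vanishing 1} (with $\iota=2$ there).

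I expect the combinatorial core of the second lemma to be the main obstacle. The relations extracted from $\Psi_\fm(g)(\fm,\cdot)=0$ via Lemma~\ref{lemma: f01} are mostly automatic --- they merely re-express the ``new'' blocks of $\Phi^1_\fm(g)$ in terms of the ``old'' ones and reprove Corollary~\ref{corollary: vanishing on Im1} --- and on the $\fm$-slice alone they determine the blocks of $\Phi^1_\fm(g)$ in $I^2_\fm$ only up to the single diagonal relation $\sum_{\nu=q+1}^{p+q}V_\nu(g,\fm)=\sum_{\nu=1}^{p}V_\nu(g,\fm)$. Deducing the full statement $\Phi^*_\fm(g)(\fm,\cdot)=0$ therefore forces one to combine this with the vanishing of $\Psi(f)$ on the $\fm p$- and $\fm q$-slices, i.e.\ to track the interaction of $\Psi_\fm,\Psi_{\fm p},\Psi_{\fm q}$ and $\Psi_{(\fm p)/q}$ there (which is what the reverse induction on $v_p$ manages), choosing the ordering on $\cD(d)_0$ co-lexicographically so that $\Psi_{(\fm p)/q}$ is applied after $\Psi_\fm$. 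Carrying this out uniformly in the block size $\fd(\fm)$ --- so that each $I_\fm(\nu)$ behaves like the single integer $\nu$, which is the purpose of the vector notation of Section~\ref{section: formula} --- is where the real work lies, and it is there that replacing $\Phi^2_\fm$ by $\Phi^*_\fm$ is precisely what makes the two-prime argument close up symmetrically.
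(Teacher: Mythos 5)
Your overall architecture (reverse induction on $v_p(\fm)$, the base case where $\scA_1(a)$ misses the slice entirely, and the use of $\Phi_\fm^*$ to symmetrize the asymmetric $\Psi_\fm$) matches the paper's, but your first auxiliary lemma is justified incorrectly and, as stated, is not available from hypotheses (2) and (3) alone. You claim that the defect $\U(\fm)(f)(\fm,\cdot)-f(\fm,\cdot)$ coming from $\Psi_{\fm/p}$ is constant along each fiber of $\rho_\fm^2$ because the indices in $S_1$ lie in a fixed residue class modulo $\fd(\fm/p)$. But the defect is not determined by that residue class: by Lemma \ref{lemma: f02} the correction on the block $\a^i_j$ of $I_{\fm/p}^1$ is $V_{\a^i_j}(\Phi^2_{\fm/p}(g),\fm/p)+\sum_{n=1}^{j-1}V_{\a_n}(\Phi^2_{\fm/p}(g),\fm/p)$, a partial sum whose length depends on $j$, and a single $\rho_\fm^2$-fiber meets blocks with different values of $j$ (indeed all of $T_1$ when $\ve_1(\fm/p)=0$). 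So the defect is not killed by $\Phi_\fm^*$ in general. The paper avoids this by proving the weaker Lemma \ref{lemma: v2_1}: $\U(\fm)(f)=f$ only on the indices $\fh$ with $v_p(\fh)\leq v_p(\ell(\fm))-a$, where each summand $\Phi^2_{\fm/p}(g)(\fm/p,h)$ genuinely vanishes because both $h$ and $\rho^2_{\fm/p}(h)$ fall in $\scB_2(y)\cap\scA_1^+(a)$; the remaining indices of $I_\fm$ are then handled at the very end by the direct case split $c\neq a$ (where hypothesis (3) gives $f(\fm,\fh)=f(\fm,\rho_\fm^2(\fh))=0$ outright) versus $c=a$ (where $f=f_x$ on the fiber). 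In other words, the displayed identity $\Phi_\fm^*\circ\U(\fm)(f)=\Phi_\fm^*(f)$ is obtained in the paper by showing both sides vanish, not by showing the $\U(\fm)$-defect is $\rho_\fm^2$-constant.

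The second, larger gap is the inductive step of your "second lemma". Knowing $\Psi_\fm(g)(\fm,\cdot)=0$ and $\Phi^*_{\fm p}(\Psi_\fm(g))(\fm p,\cdot)=0$ does \emph{not} force $\Phi_\fm^*(g)(\fm,\cdot)=0$: as you yourself observe, Lemma \ref{lemma: f01} on the $\fm$-slice only pins things down modulo one diagonal relation, and the $X^1$-equalities coming from the $\fm p$-slice give $V_{\a^i_j}(\Phi^2_\fm(g),\fm)$ only up to the unknown vectors $X^1_{\a_n}(g,\fm)$, i.e.\ up to the values of $g$ itself on the $\fm p$-slice (see \eqref{eqn: v2_4} and \eqref{eqn: v2_5}). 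Closing the argument requires the additional quantitative inputs of Lemma \ref{lemma: v2_5}: $f$ vanishes on the $\fm p$-slice except on $\scA_1(a)$, and on the $\fm$-slice for $v_p(h)<v_p(\ell(\fm))-a$, and these must be transported to $f_x=\U(\fm)(f)$ via Lemma \ref{lemma: v2_1} before each application of the inductive step. Your sketch defers exactly this ("where the real work lies"), but it is the content of the proof rather than a routine verification; without it the reverse induction does not close for $x<r_1-2a$. Finally, a small point: in the extreme case the relevant degenerate slices are $x=r_1-1$ (where $\ell(\fm)$ is a power of $q$ and $\Phi_\fm^*=\Psi_\fm$, which is the base case of Lemma \ref{lemma: v2_4}), not only $y=r_2-1$; make sure your base case is the one where $\scA_1(a)$ is empty on the slice, namely $x\geq r_1-2a$, rather than $x=r_1-1$.
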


\svv   
\subsection{Proof of Theorem \ref{theorem: vanishing i=1}}\label{section: vanishing i=1}
In this subsection, we use the lexicographic ordering on $\cD(d)_0$ and write
\begin{equation*}
\cD(d)_0=\{ m_1, m_2, \dots, m_{r_1r_2}\}.
\end{equation*}
Namely, $m_1=d$, $m_2=dq$, $m_3=dq^2$ and so on. (In particular, $m_i=dp^{\gauss{(i-1)/{r_2}}}q^{i-1-\gauss{(i-1)/{r_2}}r_2}$.) For simplicity, let $\fm=dp^x q^y$ for some $0\leq y \leq r_2-1$ and let $\bbO$ denote a zero vector of appropriate size. During the subsection, we assume the following.
\begin{enumerate}
\item
$f(m, h)=0$ for all $(m, h) \in \scB_1^-(x)$.
\item
$0\leq x\leq r_1-2$ unless otherwise mentioned. (When $x=r_1-1$, the arguments in Section \ref{section5} work, and indeed we get a stronger result that $f(\fm, h)=0$ for all $h \in I_\fm$.)
\end{enumerate}
Note that $\Psi_m(f)=f$ for any $m \in \cD(d)_0$ with $v_p(m)<x$ as $f(m, h)=0$ for all $h \in I_m^{\rd}$ for such an $m$ (cf.  Remark \ref{remark: identity if vanish on Im1}). Thus, we have
\begin{equation}\label{eqn: v1_1}
\U(dp^x)(f)=f,
\end{equation}
which will be frequently used below. To begin with, we prove some lemmas.

\begin{lemma}\label{lemma: v1_1}
Let $b\in \Z_{\geq 1}$ and suppose that all the following hold.
\begin{enumerate}
\item
$\Psi(f)(m, h)=0$ for all $(m, h) \in \cS(d)_0$.
\item
$f(m, h)=0$ for all $(m, h) \in \scB_1(x) \cap \scA_2^+(b)$.
\end{enumerate}
Then we have
\begin{equation*}
\U(\fm)(f)(\fm, \fh)=f(\fm, \fh) \quad \text{ for all } ~ \fh \in I_\fm ~~\text{ with } ~~v_q(\fh) \leq v_q(\ell(\fm))-b.
\end{equation*}
\end{lemma}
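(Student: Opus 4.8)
The plan is to prove the identity by induction on $y=v_q(\fm)$, where $\fm=dp^xq^y$. The base case $y=0$ is immediate: here $\U(\fm)=\U(dp^x)$, so $\U(\fm)(f)=f$ by \eqref{eqn: v1_1} and both sides of the asserted identity agree. For the inductive step I would assume the statement for $\fm=dp^xq^y$ with $0\le y\le r_2-2$ (so that $\ell(\fm)$ is divisible by $pq$) and deduce it for $\fm q=dp^xq^{y+1}$. Writing $g:=\U(\fm)(f)$, the lexicographic ordering gives $\U(\fm q)=\Psi_\fm\circ\U(\fm)$, hence $\U(\fm q)(f)=\Psi_\fm(g)$; moreover $\U(\fm)=\Psi_{dp^xq^{y-1}}\circ\cdots\circ\Psi_{dp^x}\circ\U(dp^x)$ with $\U(dp^x)(f)=f$, and none of the maps $\Psi_{dp^xq^c}$ with $c\le y-1$ alters the value at $\fm q=dp^xq^{y+1}$ by Lemma \ref{lemma: simple formula for Psi}(1). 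Thus $g(\fm q,\fh)=f(\fm q,\fh)$ for all $\fh\in I_{\fm q}$.

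Fix $\fh\in I_{\fm q}$ with $v_q(\fh)\le v_q(\ell(\fm q))-b$ and set $\fh_2:=q^{-\ve_2(\fm)}\fh$. If $\fh_2\notin\Z$, then $\Psi_\fm(g)(\fm q,\fh)=g(\fm q,\fh)=f(\fm q,\fh)$ by Lemma \ref{lemma: simple formula for Psi}(1), and there is nothing more to do. If $\fh_2\in\Z$, Lemma \ref{lemma: simple formula for Psi}(3) expresses $\Psi_\fm(g)(\fm q,\fh)-g(\fm q,\fh)$ as a sum of terms $\Phi_\fm^1(g)(\fm,h)$ over $h$ in a set $S_2\subseteq I_\fm^{\rd}\cap I_\fm^2$, each element of which is congruent to $\fh_2$ modulo $\fd(\fm)$; since $\ell(\fm)$ is divisible by $pq$ we have $\chi_\fm^1\equiv 1$ (Remark \ref{remark: Hm1 Hm2 Im1}), so $\Phi_\fm^1(g)(\fm,h)=g(\fm,h)-g(\fm,\rho_\fm^1(h))$, and it suffices to show $g(\fm,h)=g(\fm,\rho_\fm^1(h))=0$ for every $h\in S_2$.

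The crux is the $q$-adic valuation count. By \eqref{eqn: f003} one has $v_q(\fh_2)=v_q(\ell(\fm))-1-(v_q(\ell(\fm q))-v_q(\fh))\le v_q(\ell(\fm))-1-b$, which is strictly less than $v_q(\fd(\fm))=v_q(\ell(\fm))-1$ because $b\ge 1$. As $h$ and $\rho_\fm^1(h)$ are both congruent to $\fh_2$ modulo $\fd(\fm)$ (for $\rho_\fm^1(h)$ this uses $\fd(\fm)\mid\fl_1(\fm)$), their $q$-adic valuations both equal $v_q(\fh_2)\le v_q(\ell(\fm))-1-b\le v_q(\ell(\fm))-b$. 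Two conclusions follow. First, the induction hypothesis applied to $\fm$ yields $g(\fm,h)=f(\fm,h)$ and $g(\fm,\rho_\fm^1(h))=f(\fm,\rho_\fm^1(h))$. Second, since $v_p(\fm)=x$ and $v_q(\ell(\fm))-v_q(h)\ge b+1>b$, the pairs $(\fm,h)$ and $(\fm,\rho_\fm^1(h))$ lie in $\scB_1(x)\cap\scA_2^+(b)$, so assumption (2) forces $f(\fm,h)=f(\fm,\rho_\fm^1(h))=0$. Hence every summand vanishes, $\Psi_\fm(g)(\fm q,\fh)=g(\fm q,\fh)=f(\fm q,\fh)$, and the induction is complete.

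I expect the valuation bookkeeping above to be the only genuinely delicate point: one must extract from Lemma \ref{lemma: simple formula for Psi}(3) that the indices $h$ occurring in the correction sum, together with their images $\rho_\fm^1(h)$, all share the residue of $\fh_2$ modulo $\fd(\fm)$, and then use the coarse bound $v_q(\fh_2)<v_q(\fd(\fm))$ to determine $v_q(h)$ exactly, so that $h$ simultaneously lies in the range covered by the induction hypothesis and inside $\scA_2^+(b)$, where $f$ vanishes by hypothesis. Everything else is formal manipulation of the maps $\U$, $\Psi_\fm$ and $\Phi_\fm^1$ using the formulas already recorded in Section \ref{subsection: 4.1} and Lemma \ref{lemma: simple formula for Psi}.
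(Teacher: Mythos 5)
Your proposal is correct and follows essentially the same route as the paper's proof: induction on $y=v_q(\fm)$ with base case \eqref{eqn: v1_1}, reduction of the correction term via Lemma \ref{lemma: simple formula for Psi}(3) to a sum of $\Phi_\fm^1(g)(\fm,h)$ over $h\in S_2$, and the identical valuation computation from \eqref{eqn: f003} showing $v_q(h)=v_q(\rho_\fm^1(h))=v_q(\fh_2)<v_q(\ell(\fm))-b$, so that both the induction hypothesis and assumption (2) kill each summand.
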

\begin{proof}
The proof is very similar to that of Lemma \ref{lemma: v0_1}. For convenience of the readers, we provide a complete proof.
We prove the assertion by induction on $y$. If $y=0$, then the assertion follows by \eqref{eqn: v1_1}. Suppose that the assertion holds for $y\geq 0$ and let $g=\U(\fm)(f)$. Then by induction hypothesis, we have
\begin{equation}\label{eqn: v1_2}
g(\fm, h)=f(\fm, h) \quad \text{ for all } ~ h \in I_\fm ~\text{ with }~ v_q(h) \leq v_q(\ell(\fm))-b.
\end{equation}
Also, by definition we have $g(\fm q, \fh)=f(\fm q, \fh)$ for all $\fh \in I_{\fm q}$. 
So by Lemma \ref{lemma: simple formula for Psi}(3), we have
\begin{equation*}
\U(\fm q)(f)(\fm q, \fh)=\Psi_\fm(g)(\fm q, \fh)=f(\fm q, \fh)+\sum_{h \in S_2} \Phi_\fm^1(g)(\fm, h)
\end{equation*}
for some $S_2 \subset I_\fm^{\rd} \cap I_\fm^2$. For simplicity, let $\fh \in I_{\fm q}$ and $c=v_q(\ell(\fm q))-v_q(\fh)$. 
Suppose that $v_q(\fh)\leq v_q(\ell(\fm q))-b$, i.e., $c\geq b$. To prove the assertion, it suffices to show that 
$\Phi_\fm^1(g)(\fm, h)=0$ for all $h \in S_2$. If $\fh_2=q^{-\ve_2(\fm)}\fh \not\in \Z$, then $S_2=\emptyset$ (cf. Lemma \ref{lemma: simple formula for Psi}(1)).
So we may assume that $\fh_2 \in \Z$. Then by \eqref{eqn: f003}, we have
\begin{equation*}
v_q(\ell(\fm))-v_q(\fh_2)=v_q(\ell(\fm q))-v_q(\fh)+1=c+1 \geq 2,
\end{equation*}
i.e., $v_q(\fh_2) \leq v_q(\ell(\fm))-2 < v_q(\fd(\fm)) < v_q(\fl_1(\fm))$. Since $h \equiv \fh_2 \pmod {\fd(\fm)}$, we have
\begin{equation*}
v_q(\fh_2)=v_q(h)=v_q(\rho_\fm^1(h))=v_q(\ell(\fm))-(c+1) <v_q(\ell(\fm))-b.
\end{equation*}
In other words, $(\fm, h), (\fm, \rho_\fm^1(h)) \in \scB_1(x) \cap \scA_2^+(b)$. Thus, by \eqref{eqn: v1_2} and assumption (2) we have 
\begin{equation*}
\Phi_\fm^1(g)(\fm, h)=g(\fm, h)-g(\fm, \rho_\fm^1(h))=f(\fm, h)-f(\fm, \rho_\fm^1(h))=0-0=0.
\end{equation*}
By induction, this completes the proof.
\end{proof}

\begin{lemma}\label{lemma: v1_0}
We have $\Phi_\fm^1(f)(\fm, h)=0$ for all $h \in I_\fm$ if and only if 
\begin{equation*}
U^1_1(f, \fm)=U^1_2(f, \fm)=\cdots=U^1_p(f, \fm).
\end{equation*}
\end{lemma}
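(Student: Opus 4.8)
The plan is to unwind the definitions of $\Phi_\fm^1$, $\rho_\fm^1$ and the vectors $U_a^1(f,\fm)$, using that $I_\fm=\{1,\dots,\ell(\fm)\}$ breaks up into $p$ consecutive blocks of length $\fl_1(\fm)=\ell(\fm)/p$, these blocks being exactly what the vectors $U_1^1(f,\fm),\dots,U_p^1(f,\fm)$ record, and that $\rho_\fm^1$ collapses each block onto the first one.

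First I would record that $\chi_\fm^1(h)=1$ for every $h\in I_\fm$. By Remark \ref{remark: Hm1 Hm2 Im1}, either $\ell(\fm)$ is a power of $p$ and $I_\fm^1=I_\fm^{\rd}$, or $\ell(\fm)$ is divisible by $pq$ and $I_\fm^1\subset I_\fm^{\rd}$; in both cases $\rho_\fm^1(h)\in I_\fm^1\subseteq I_\fm^{\rd}$, so $\chi_\fm^1(h)=1$. Hence, by Definition \ref{definition: the map Phi}, for all $h\in I_\fm$,
\[
\Phi_\fm^1(f)(\fm,h)=f(\fm,h)-f(\fm,\rho_\fm^1(h)).
\]
Next, write an arbitrary $h\in I_\fm$ uniquely as $h=\fl_1(\fm)(a-1)+z$ with $1\leq a\leq p$ and $1\leq z\leq \fl_1(\fm)$ (this is possible and unique since $\ell(\fm)=p\cdot\fl_1(\fm)$). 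Since $I_\fm^1=\{1,\dots,\fl_1(\fm)\}$ and $\rho_\fm^1(h)$ is by Notation \ref{notation: notion associated with m} the unique element of $I_\fm^1$ congruent to $h$ modulo $\fl_1(\fm)$, we get $\rho_\fm^1(h)=z$. By the definition of the vectors, $f(\fm,h)=U_a^1(f,\fm)_z$ and $f(\fm,\rho_\fm^1(h))=f(\fm,z)=U_1^1(f,\fm)_z$, so $\Phi_\fm^1(f)(\fm,h)=U_a^1(f,\fm)_z-U_1^1(f,\fm)_z$.

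Consequently $\Phi_\fm^1(f)(\fm,h)=0$ for all $h\in I_\fm$ if and only if $U_a^1(f,\fm)_z=U_1^1(f,\fm)_z$ for every $1\leq a\leq p$ and $1\leq z\leq \fl_1(\fm)$, i.e. if and only if $U_a^1(f,\fm)=U_1^1(f,\fm)$ for all $a$, which is exactly the asserted equality $U_1^1(f,\fm)=U_2^1(f,\fm)=\cdots=U_p^1(f,\fm)$. I do not anticipate any real obstacle here: the only point needing an argument is the verification $\chi_\fm^1\equiv 1$ from Remark \ref{remark: Hm1 Hm2 Im1}, and the rest is the block decomposition of $I_\fm$ rewritten in the $U_a^1$-notation.
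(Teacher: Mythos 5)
Your proposal is correct and is exactly the argument the paper has in mind: the paper's proof of this lemma is the one-line ``This is obvious from its definition,'' and your unwinding of $\chi_\fm^1\equiv 1$ (via Remark \ref{remark: Hm1 Hm2 Im1}), the block decomposition $h=\fl_1(\fm)(a-1)+z$, and the identification $\rho_\fm^1(h)=z$ is the intended verification.
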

\begin{proof}
This is obvious from its definition.
\end{proof}

\begin{lemma}\label{lemma: v1_2}
Suppose that $\ell(\fm)$ is divisible by $pq$. If $\Psi_\fm(f)(\fm, h)=0$ for all $h \in I_\fm$ and $V_{\b_n}(\Phi_\fm^1(f), \fm)=\bbO$ for all $1\leq n \leq p-1$, then 
we have $\Phi_\fm^1(f)(\fm, h)=0$.
\end{lemma}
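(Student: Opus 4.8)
The plan is to deduce the conclusion directly from the explicit formula of Lemma~\ref{lemma: f01}. Recall that for every $\nu$ with $1\le\nu\le pq$, writing $[\nu]_1=\a^i_j$ and $[\nu]_2=\b_k$, that lemma yields
\[
V_\nu(\Psi_\fm(f),\fm)=V_\nu(\Phi_\fm^1(f),\fm)+\sum_{n=1}^{j-1}V_{\b_n}(\Phi_\fm^1(f),\fm)-\sum_{n=1}^{k}V_{\b_n}(\Phi_\fm^1(f),\fm).
\]
First I would note that, by the conventions fixed in Notation~\ref{notation: ell=pq}, the indices obey $1\le j\le p$ and $0\le k\le p-1$, so both sums involve only vectors $V_{\b_n}(\Phi_\fm^1(f),\fm)$ with $1\le n\le p-1$. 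Invoking the hypothesis that every such vector is $\bbO$, the formula collapses to the identity $V_\nu(\Psi_\fm(f),\fm)=V_\nu(\Phi_\fm^1(f),\fm)$, valid for all $\nu$.

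Next I would use the second hypothesis: $\Psi_\fm(f)(\fm,h)=0$ for all $h\in I_\fm$ means exactly that $V_\nu(\Psi_\fm(f),\fm)=\bbO$ for each $\nu$, whence $V_\nu(\Phi_\fm^1(f),\fm)=\bbO$ for all $1\le\nu\le pq$ by the previous step. Since the blocks $I_\fm(\nu)$, $1\le\nu\le pq$, partition $I_\fm$ and the entries of $V_\nu(\Phi_\fm^1(f),\fm)$ are precisely the values $\Phi_\fm^1(f)(\fm,h)$ with $h$ ranging over $I_\fm(\nu)$, this is equivalent to $\Phi_\fm^1(f)(\fm,h)=0$ for all $h\in I_\fm$, as desired.

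This lemma is a bookkeeping step and presents no real obstacle; the one point demanding attention is checking that the two summation ranges $n=1,\dots,j-1$ and $n=1,\dots,k$ never reach beyond $p-1$, so that the vanishing of $V_{\b_1},\dots,V_{\b_{p-1}}$ genuinely annihilates every term. This is where one must invoke the normalizations $1\le j\le p$ and $0\le k\le p-1$ from Notation~\ref{notation: ell=pq} --- in particular the choice $\b_0$ rather than $\b_p$ for the exceptional block $I_\fm(p+q)$, which keeps that block out of the sums.
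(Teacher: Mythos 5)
Your proof is correct and follows the paper's own argument exactly: both apply the first identity of Lemma \ref{lemma: f01}, use the hypothesis $V_{\b_n}(\Phi_\fm^1(f),\fm)=\bbO$ for $1\leq n\leq p-1$ to collapse the two sums, and then conclude from $V_\nu(\Psi_\fm(f),\fm)=\bbO$. Your extra remark verifying that the summation indices stay within $\{1,\dots,p-1\}$ is a worthwhile detail the paper leaves implicit.
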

\begin{proof}
Assume that $V_{\b_n}(\Phi_\fm^1(f), \fm)=\bbO$ for all $1\leq n \leq p-1$. Then by Lemma \ref{lemma: f01}, for any $1\leq \nu\leq pq$ we have $V_{\nu}(\Phi^1_\fm(f), \fm)=V_{\nu}(\Psi_\fm(f), \fm)$, which is zero by our assumption. This completes the proof.
\end{proof}

\begin{lemma}\label{lemma: v1_3}
Suppose that $\ell(\fm)$ is divisible by $pq$. Also, suppose that all the following hold.
\begin{enumerate}
\item
$f(\fm q, h)=0$ for all $h \in I_{\fm q}$.
\item
$\Phi_{\fm q}^1(\Psi_\fm(f))(\fm q, h)=0$ for all $h \in I_{\fm q}$.
\item
$\Psi_\fm(f)(\fm, h)=0$ for all $h \in I_\fm$.
\end{enumerate}
Then we have $\Phi_\fm^1(f)(\fm, h)=0$ for all $h \in I_\fm$.
\end{lemma}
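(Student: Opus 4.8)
The plan is to reduce to a statement about a single family of vectors and then invoke the structure lemmas already proved. Write $g:=\Psi_\fm(f)$; assumption (3) says $g(\fm,h)=0$ for all $h\in I_\fm$. Since $\ell(\fm)$ is divisible by $pq$, Lemma \ref{lemma: v1_2} tells us it is enough to prove
\[
V_{\b_n}(\Phi_\fm^1(f),\fm)=\bbO\qquad\text{for all }1\le n\le p-1 .
\]

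First I would extract these vectors from the behaviour of $\Psi_\fm$ on the slot $\fm q$. By assumption (1) we have $X^2_{\a_j}(f,\fm)=\bbO$ for every $j$, so Lemma \ref{lemma: f03} gives $X^2_{\a_j}(g,\fm)=\sum_{n=1}^{j-1}V_{\b_n}(\Phi_\fm^1(f),\fm)$ for $1\le j\le p$. Taking $j=1$ yields $X^2_{\a_1}(g,\fm)=\bbO$, and differencing consecutive identities yields $V_{\b_j}(\Phi_\fm^1(f),\fm)=X^2_{\a_{j+1}}(g,\fm)-X^2_{\a_j}(g,\fm)$ for $1\le j\le p-1$. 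Hence it suffices to show that $X^2_a(g,\fm)$ is independent of $a\in\{1,\dots,p\}$: since $\{\a_1,\dots,\a_p\}=\{1,\dots,p\}$, independence together with $X^2_{\a_1}(g,\fm)=\bbO$ forces every $X^2_a(g,\fm)=\bbO$, whence every $V_{\b_j}(\Phi_\fm^1(f),\fm)=\bbO$.

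The key step, and the place where I expect to have to be careful, is to identify $X^2_a(g,\fm)$ with a sub-vector of $U^1_a(g,\fm q)$, so that assumption (2) can be used. Because $N/(\fm q)=(N/\fm)/q$ alters only the $q$-adic valuation of $\ell$, we have $v_q(\ell(\fm q))=v_q(\ell(\fm))+\ve_2(\fm)-1$ (cf. Remark \ref{remark: (m, h) and (mp, h')}), hence $\ell(\fm q)=q^{\ve_2(\fm)-1}\ell(\fm)$ and $\fl_1(\fm q)=\ell(\fm q)/p=q^{\ve_2(\fm)}\fd(\fm)$. Plugging this into the definitions, for $1\le a\le p$ and $1\le z\le\fd(\fm)$,
\[
X^2_a(g,\fm)_z=g\!\left(\fm q,\ q^{\ve_2(\fm)}\!\left(\fd(\fm)(a-1)+z\right)\right)=g\!\left(\fm q,\ \fl_1(\fm q)(a-1)+q^{\ve_2(\fm)}z\right)=U^1_a(g,\fm q)_{q^{\ve_2(\fm)}z},
\]
and one checks that the index $q^{\ve_2(\fm)}z$ lies in $\{1,\dots,\fl_1(\fm q)\}$ (with equality at $z=\fd(\fm)$). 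By assumption (2) and Lemma \ref{lemma: v1_0} applied with $\fm q$ in place of $\fm$ (note $p\mid\ell(\fm q)$ since $pq\mid\ell(\fm)$), the vectors $U^1_1(g,\fm q),\dots,U^1_p(g,\fm q)$ all coincide; therefore so do the vectors $X^2_a(g,\fm)$, and the argument concludes as above. The only genuinely delicate point is the valuation bookkeeping behind $\fl_1(\fm q)=q^{\ve_2(\fm)}\fd(\fm)$ and the matching of subscripts; everything else is a routine unwinding of the definitions of $X^2_a$, $U^1_a$, and the maps $\Phi_\fm^1,\Phi^1_{\fm q}$.
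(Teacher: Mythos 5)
Your proposal is correct and follows essentially the same route as the paper's proof: assumption (1) kills the $X^2_j(f,\fm)$ terms, assumption (2) together with Lemma \ref{lemma: v1_0} (applied at $\fm q$) gives the equality of the $U^1_a(\Psi_\fm(f),\fm q)$, the identification $X^2_a(\Psi_\fm(f),\fm)_z=U^1_a(\Psi_\fm(f),\fm q)_{q^{\ve_2(\fm)}z}$ transfers this to the $X^2_a$, and Lemma \ref{lemma: f03} plus assumption (3) and Lemma \ref{lemma: v1_2} conclude. The only cosmetic difference is that you handle the two cases $\ve_2(\fm)\in\{0,1\}$ by a single uniform index formula where the paper splits them, and your valuation bookkeeping for $\fl_1(\fm q)=q^{\ve_2(\fm)}\fd(\fm)$ is accurate.
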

\begin{proof}
By assumption (1), we have $X^2_j(f, \fm)=\bbO$ for all $1\leq j \leq p$. Also, by assumption (2) and Lemma \ref{lemma: v1_0}, we have
\begin{equation*}
U^1_1(\Psi_\fm(f), \fm q)=U^1_2(\Psi_\fm(f), \fm q)=\cdots=U^1_p(\Psi_\fm(f), \fm q).
\end{equation*}

Let $1\leq j \leq p$. If $\ve_2(\fm)=0$, i.e., $\ell(\fm)=\ell(\fm q)q$ and $\fd(\fm)=\fl_1(\fm q)$, then by definition we have 
\begin{equation*}
X_j^2(\Psi_\fm(f), \fm)=U^1_j(\Psi_\fm(f), \fm q).
\end{equation*}
If $\ve_2(\fm)=1$, i.e., $\fl_1(\fm q)=\fl_1(\fm)=\fd(\fm)q$, then we have
\begin{equation*}
X_j^2(\Psi_\fm(f), \fm)_z=U^1_j(\Psi_\fm(f), \fm q)_{qz} \quad \text{ for any } ~ 1\leq z\leq \fd(\fm).
\end{equation*}
Thus, in both cases we have
\begin{equation}\label{eqn: v1_3}
X^2_1(\Psi_\fm(f), \fm)=X^2_2(\Psi_\fm(f), \fm)= \cdots = X^2_p(\Psi_\fm(f), \fm).
\end{equation}
So by Lemma \ref{lemma: f03}, we easily have $V_{\b_k}(\Phi_\fm^1(f), \fm)=\bbO$ for all $1\leq k \leq p-1$. By assumption (3) and Lemma \ref{lemma: v1_2}, the result follows.
\end{proof}

\begin{lemma}\label{lemma: v1_4}
Suppose that all the following hold.
\begin{enumerate}
\item
$\Psi(f)(m, h)=0$ for all $(m, h) \in \cS(d)_0$.
\item
$f(m, h)=0$ for all $(m, h) \in \scB_1(x)$ with $m\neq \fm$.
\end{enumerate}
Then we have $\Phi_\fm^1(f)(\fm, h)=0$ for all $h \in I_\fm$.
\end{lemma}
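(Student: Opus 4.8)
The plan is to prove Lemma~\ref{lemma: v1_4} by reverse induction on $y=v_q(\fm)$, where $\fm=dp^xq^y$ with $0\le y\le r_2-1$: the inductive step reduces the assertion for $\fm$ to the same assertion for $\fm q$, and the base case $y=r_2-1$ (where $\ell(\fm)$ is a pure power of $p$) is immediate. Throughout I keep the standing assumptions of the subsection, in particular that $f$ vanishes on $\scB_1^-(x)$ and that $0\le x\le r_1-2$.

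First I would extract the key consequence of the hypotheses. Together with the standing assumption, hypothesis~(2) says that $f(m,h)=0$ for every $m\in\cD(d)_0$ that strictly precedes $\fm$ in the lexicographic order (these are exactly the $m$ with $v_p(m)<x$, or with $v_p(m)=x$ and $m\ne\fm$). By Remark~\ref{remark: identity if vanish on Im1} each such $\Psi_m$ acts as the identity, so $\U(\fm)(f)=f$. Combining this with Remark~\ref{remark: Psi(f) independent of ordering} and hypothesis~(1) yields $\Psi_\fm(f)(\fm,h)=\Psi(f)(\fm,h)=0$ for all $h\in I_\fm$. For the base case $y=r_2-1$, since $r_1-x\ge2$ and $r_2-y=1$ we have $\ell(\fm)=p^{\gauss{(r_1-x)/2}}$, a nontrivial power of $p_1$, so $\Psi_\fm=\Phi_\fm^1$ by Remark~\ref{remark: map Phi 2}; hence the vanishing just obtained reads $\Phi_\fm^1(f)(\fm,h)=0$, as wanted.

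For the inductive step $y\le r_2-2$ the integer $\ell(\fm)$ is divisible by $pq$, and the idea is to apply Lemma~\ref{lemma: v1_3} to $\fm$. Hypothesis~(3) of that lemma is the vanishing $\Psi_\fm(f)(\fm,\cdot)=0$ already in hand. Hypothesis~(1), that $f(\fm q,\cdot)\equiv0$, holds because $\fm q=dp^xq^{y+1}\in\cD(d)_0$ with $v_p(\fm q)=x$ and $\fm q\ne\fm$, so hypothesis~(2) of Lemma~\ref{lemma: v1_4} applies. Hypothesis~(2) of Lemma~\ref{lemma: v1_3}, namely $\Phi_{\fm q}^1(\Psi_\fm(f))(\fm q,\cdot)=0$, I would get by feeding $g:=\Psi_\fm(f)$ into the inductive hypothesis (Lemma~\ref{lemma: v1_4} for $\fm q$): since $\Psi_\fm$ changes values only at $\fm$, $\fm p$, $\fm q$ (Lemma~\ref{lemma: simple formula for Psi}(1)), the function $g$ still vanishes on $\scB_1^-(x)$ and, using $g(\fm,\cdot)=\Psi_\fm(f)(\fm,\cdot)=0$, on $\scB_1(x)\setminus\{\fm q\}$; and $\Psi(g)=\Psi(\Psi(\fm)(f))=\Psi(f)=0$ by Remark~\ref{remark: Psi(Psi)=Psi} (here $g=\Psi(\fm)(f)$ because $\U(\fm)(f)=f$). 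Thus $g$ satisfies the three hypotheses of Lemma~\ref{lemma: v1_4} for $\fm q$, which has $v_q=y+1$, so the induction supplies the required vanishing, and Lemma~\ref{lemma: v1_3} then gives $\Phi_\fm^1(f)(\fm,h)=0$ for all $h\in I_\fm$.

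I expect the main obstacle to be organizational rather than conceptual: carefully verifying that $g=\Psi_\fm(f)$ inherits all the hypotheses of Lemma~\ref{lemma: v1_4} with $\fm$ replaced by $\fm q$ (the delicate point being that $\Psi_\fm$ leaves the relevant lower-level vanishing undisturbed and that $\Psi(g)=\Psi(f)$), and confirming that the reverse induction terminates — which it does, since $\fm q^{j}=dp^xq^{y+j}$ remains in $\cD(d)_0$ exactly until $y+j=r_2-1$, the base case. All the genuine computational content has already been isolated in the preceding lemmas of this section (and Corollary~\ref{corollary: vanishing on Im1} together with the structural remarks of Section~\ref{section: the map Psi}), so what remains is a bootstrapping assembly of those results.
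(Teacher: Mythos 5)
Your proposal is correct and follows essentially the same route as the paper: reverse induction on $y=v_q(\fm)$, with the base case $y=r_2-1$ handled via $\Psi_\fm=\Phi_\fm^1$ and the inductive step obtained by applying the inductive hypothesis to $g=\Psi_\fm(f)$ (checking $\Psi(g)=\Psi(f)$ and the vanishing of $g$ on $\scB_1(x)$ away from $\fm q$) and then invoking Lemma \ref{lemma: v1_3}. All the verifications you flag as potential obstacles are carried out in the paper exactly as you describe.
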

\begin{proof}
The proof is very similar to that of Lemma \ref{lemma: v0_4}. For convenience of the readers, we provide a complete proof.
Note first that $\U(\fm)(f)=f$ and $\Psi(\fm)(f)=\Psi_\fm(f)$ by Remark \ref{remark: identity if vanish on Im1} (recall assumption (1) above \eqref{eqn: v1_1}).
We prove the assertion by (reverse) induction on $y$. First, let $y=r_2-1$. Then $\ell(\fm)$ is a power of $p$. Thus, $\Phi_\fm^1=\Psi_\fm$ and so
\begin{equation*}
\Phi_\fm^1(f)(\fm, h)=\Psi_\fm(f)(\fm, h)=\Psi(\fm)(f)(\fm, h)=\Psi(f)(\fm, h)=0
\end{equation*}
(cf. Remark \ref{remark: Psi(f) independent of ordering}). 

Next, let $y \leq r_2-2$ and assume the assertion holds for $y+1$. Let $g=\Psi_\fm(f)$. 
We then have $g(\fm, h)=\Psi(\fm)(f)(\fm, h)=\Psi(f)(\fm, h)=0$ for all $h \in I_\fm$. Furthermore, since $g(m, h)=\Psi_\fm(f)(m, h)=f(m, h)$ for all $(m, h) \in \scB_1(x)$ with $m \not\in \{\fm, \fm q\}$, we have $g(m, h)=0$ for all $(m, h) \in \scB_1(x)$ with $m\neq \fm q$. Thus, by induction hypothesis we have $\Phi_{\fm q}^1(g)(\fm q, h)=0$ for all $h \in I_{\fm q}$. In other words, $\Phi_{\fm q}^1(\Psi_\fm(f))(\fm q, h)=0$ for all $h \in I_{\fm q}$. 
Since $f(\fm q, h)=0$ for all $h \in I_{\fm q}$ by our assumption, the assertion for $y$ follows by Lemma \ref{lemma: v1_3}. This completes the proof.
\end{proof}

Finally, we prove a generalization of Lemma \ref{lemma: v1_3}.
\begin{lemma}\label{lemma: v1_5}
Let $b \in \Z_{\geq 1}$ and suppose that $\ell(\fm)$ is divisible by $pq$. Suppose that all the following hold.
\begin{enumerate}
\item
$f(\fm q, h)=0$ for all $h \in I_{\fm q}$ unless $v_q(h)= v_q(\ell(\fm q))-b$.
\item
$\Phi_{\fm q}^1(\Psi_\fm(f))(\fm q, h)=0$ for all $h \in I_{\fm q}$.
\item
$f(\fm, h)=0$ for all $h \in I_\fm$ with $v_q(h) < v_q(\ell(\fm))-b$.
\item
$\Psi_\fm(f)(\fm, h)=0$ for all $h \in I_\fm$.
\end{enumerate}
Then we have $\Phi_\fm^1(f)(\fm, h)=0$ for all $h \in I_\fm$.
\end{lemma}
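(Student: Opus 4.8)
The plan is to follow the proof of Lemma~\ref{lemma: v1_3} verbatim for as long as possible, and then to use the two new ``level~$b$'' hypotheses (1) and (3) to kill the error terms that vanished for free there. Throughout, write $\ell=\ell(\fm)$ and $\fd=\fd(\fm)=\ell/(pq)$, so that $v_q(\fd)=v_q(\ell)-1$ and $v_q(\fl_1(\fm))=v_q(\ell)$; from $\ve_2(\fm)=1+v_q(\ell(\fm q))-v_q(\ell)$ we get $v_q(\ell(\fm q))=v_q(\ell)+\ve_2(\fm)-1$. The hypothesis $pq\mid\ell$ forces $v_q(N/\fm)\ge 2$, so $\fm q\in\cD(d)_0$ and all objects below are defined.

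First I would reproduce, exactly as in the proof of Lemma~\ref{lemma: v1_3}, the following chain: from hypothesis (2) and Lemma~\ref{lemma: v1_0}, $U^1_1(\Psi_\fm(f),\fm q)=\cdots=U^1_p(\Psi_\fm(f),\fm q)$; the purely definitional comparison of $X^2_a(\Psi_\fm(f),\fm)$ with $U^1_a(\Psi_\fm(f),\fm q)$ in the two regimes $\ve_2(\fm)=0$ and $\ve_2(\fm)=1$ then gives $X^2_{\a_1}(\Psi_\fm(f),\fm)=\cdots=X^2_{\a_p}(\Psi_\fm(f),\fm)$; feeding this into Lemma~\ref{lemma: f03} and telescoping yields
\begin{equation*}
V_{\b_n}(\Phi_\fm^1(f),\fm)=X^2_{\a_n}(f,\fm)-X^2_{\a_{n+1}}(f,\fm)\qquad(1\le n\le p-1).
\end{equation*}
Combining hypothesis (4), i.e.\ $V_\nu(\Psi_\fm(f),\fm)=\bbO$ for all $\nu$, with Lemma~\ref{lemma: f01} and the displayed identity, I then obtain, for every $\nu$ with $[\nu]_1=\a^i_j$ and $[\nu]_2=\b_k$,
\begin{equation*}
V_\nu(\Phi_\fm^1(f),\fm)=X^2_{\a_j}(f,\fm)-X^2_{\a_{k+1}}(f,\fm).
\end{equation*}
Since every $h\in I_\fm$ has the form $\fd(\nu-1)+z$ with $1\le z\le\fd$, it remains only to show the right-hand side vanishes in each coordinate $z$.

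Fix $z$. Writing out $X^2_a(f,\fm)_z=f(\fm q,\,q^{\ve_2(\fm)}(\fd(a-1)+z))$, hypothesis (1) makes this $0$ unless $v_q\!\big(q^{\ve_2(\fm)}(\fd(a-1)+z)\big)=v_q(\ell(\fm q))-b$, which by the relations above simplifies to $v_q(\fd(a-1)+z)=v_q(\fd)-b$. A short $q$-adic case check --- $a=1$ (where $\fd(a-1)+z=z$) versus $2\le a\le p$ (where $v_q(a-1)=0$ since $a-1<p<q$), combined with $v_q(z)<v_q(\fd)$ versus $v_q(z)\ge v_q(\fd)$ and $b\ge 1$ --- shows this can hold only when $v_q(z)=v_q(\fd)-b$, \emph{independently of $a$}. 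Hence, if $v_q(z)\ne v_q(\fd)-b$ then $X^2_a(f,\fm)_z=0$ for all $a$ and the coordinate is $0$. If instead $v_q(z)=v_q(\fd)-b$, then for every $\nu$ the integer $h=\fd(\nu-1)+z$ satisfies $v_q(h)=v_q(z)=v_q(\ell)-1-b<v_q(\ell)-b$ (when $\nu\ne 1$ the term $\fd(\nu-1)$ has $q$-valuation $\ge v_q(\fd)>v_q(z)$), and since $v_q(h)<v_q(\fl_1(\fm))$ we also have $v_q(\rho^1_\fm(h))=v_q(h)<v_q(\ell)-b$; so hypothesis (3) forces $f(\fm,h)=f(\fm,\rho^1_\fm(h))=0$, whence $\Phi^1_\fm(f)(\fm,h)=0$ directly. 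In both cases $V_\nu(\Phi^1_\fm(f),\fm)_z=0$, giving $\Phi_\fm^1(f)(\fm,h)=0$ for all $h\in I_\fm$, which is the assertion.

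The hard part will be the bookkeeping: one must verify that the ``level~$b$'' support condition coming from (1) always collapses to the single condition $v_q(z)=v_q(\fd)-b$ uniformly in $a$ (this is exactly where $p<q$ and $b\ge 1$ enter), and that the $z$'s escaping that condition are precisely those for which $h=\fd(\nu-1)+z$ lands in the range $v_q(h)<v_q(\ell)-b$ where (3) applies simultaneously to $f(\fm,h)$ and $f(\fm,\rho^1_\fm(h))$. Once the $\ve_2(\fm)\in\{0,1\}$ comparison of $X^2$ with $U^1$ is imported from Lemma~\ref{lemma: v1_3}, everything else is routine transcription.
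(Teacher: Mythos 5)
Your proposal is correct and follows essentially the same route as the paper: hypothesis (2) plus the $\ve_2(\fm)$-comparison gives the equality of the $X^2_a(\Psi_\fm(f),\fm)$, Lemma \ref{lemma: f03} then yields $V_{\b_n}(\Phi_\fm^1(f),\fm)=X^2_{\a_n}(f,\fm)-X^2_{\a_{n+1}}(f,\fm)$, and hypotheses (1) and (3) kill the coordinates exactly as in the paper's two-case split on $v_q(z)$. The only cosmetic difference is that you inline Lemma \ref{lemma: v1_2} by computing every $V_\nu(\Phi_\fm^1(f),\fm)$ directly from Lemma \ref{lemma: f01} and hypothesis (4), and you organize the case analysis around $v_q(z)=v_q(\fd(\fm))-b$ rather than $v_q(z)<v_q(\ell(\fm))-b$; both checks go through.
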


\begin{proof}
As above, assumption (2) implies that
\begin{equation*}
X^2_1(\Psi_\fm(f), \fm)=X^2_2(\Psi_\fm(f), \fm)= \cdots = X^2_p(\Psi_\fm(f), \fm)
\end{equation*}
(cf. \eqref{eqn: v1_3}).
So by Lemma \ref{lemma: f03}, for any $1\leq k \leq p-1$ we have
\begin{equation}\label{eqn: v1_4}
V_{\b_k}(\Phi_\fm^1(f), \fm)=X^2_{\a_k}(f, \fm)-X^2_{\a_{k+1}}(f, \fm).
\end{equation}

Let $1\leq z \leq \fd(\fm)$ be an integer.
\begin{enumerate}
\item
Suppose that $v_q(z)<v_q(\ell(\fm))-b$. Let $h=\fd(\fm)(\b_k-1)+z$. Since $b\geq 1$, we have 
\begin{equation*}
v_q(z)=v_q(h)=v_q(\rho_\fm^1(h)) < v_q(\ell(\fm))-b.
\end{equation*}
By assumption (3), we have
$f(\fm, h)=f(\fm, \rho_\fm^1(h))=0$ and so 
\[
V_{\b_k}(\Phi_\fm^1(f), \fm)_z=\Phi_\fm^1(f)(\fm, h)=0.
\]

\item
Suppose that $v_q(z)\geq v_q(\ell(\fm))-b$. Let $h_j=\fd(\fm)(\a_j-1)+z$ for any $1\leq j \leq p$. Again, we have
$v_q(h_j)\geq v_q(\ell(\fm))-b$. As \eqref{eqn: f003}, we have
$v_q(\ell(\fm q))-v_q(q^{\ve_2(\fm)}h_j) \leq b-1$ and so $X^2_{\a_j}(f, \fm)_z=f(\fm q, q^{\ve_2(\fm)}h_j)$, which is zero by assumption (1). By \eqref{eqn: v1_4}, we have $V_{\b_k}(\Phi_\fm^1(f), \fm)_z=0$.
\end{enumerate}
In summary, we have $V_{\b_k}(\Phi_\fm^1(f), \fm)=\bbO$ for any $1\leq k \leq p-1$. Since we have assumption (4), the result follows by Lemma \ref{lemma: v1_2}.
\end{proof}

Now, we are ready to prove Theorem \ref{theorem: vanishing i=1}.
\begin{proof}[Proof of Theorem \ref{theorem: vanishing i=1}]
Let $b\in \Z_{\geq 1}$ and suppose that $f(m, h)=0$ for all $(m, h) \in \scB_1(x) \sm \scA_2(b)$.
For simplicity, let
\begin{equation*}
f_y:=\U(\fm)(f) \qa f_{y+1}=\Psi(\fm)(f)=\Psi_\fm(f_y).
\end{equation*}

We first prove that $\Phi_\fm^1(f_y)(\fm, h)=0$ for all $h \in I_\fm$. We prove this by (reverse) induction on $y$. If $y\geq r_2-2b$, then $f_y$ satisfies all the assumptions in Lemma \ref{lemma: v1_4}, and so the assertion follows. Next, suppose that the assertion is true for $y+1 \leq r_2-2b$, i.e.,
\begin{equation*}
\Phi_{\fm q}^1(f_{y+1})(\fm q, h)=\Phi_{\fm q}^1(\Psi_\fm(f_y))(\fm q, h)=0 \quad \text{ for any } ~ h \in I_{\fm q}.
\end{equation*}
Since $f(m, h)=0$ for all $(m, h) \in \scB_1(x) \cap \scA^+_2(b)$ and $\Psi(f)(m, h)=0$ for all $(m, h) \in \cS(d)_0$, by Lemma \ref{lemma: v1_1} we have
\begin{equation}\label{eqn: v1_6} 
f_y(\fm, h)=f(\fm, h) \quad \text{ for all $h \in I_{\fm}$ with }~ v_q(h) \leq v_q(\ell(\fm))-b.
\end{equation}
In particular, we have $f_y(\fm, h)=f(\fm, h)=0$ for all $h \in I_\fm$ with $v_q(h)<v_q(\ell(\fm))-b$.  Also, since $f_y(\fm q, h)=f(\fm q, h)$ for all $h \in I_{\fm q}$, we have $f_y(\fm q, h)=0$ unless $v_q(h)= v_q(\ell(\fm q))-b$. Since $\Psi(f_y)=\Psi(f)$,
the assertion for $y$ follows by Lemma \ref{lemma: v1_5}.

Next, we prove that $\Phi_\fm^1(f)(\fm, h)=0$ for all $h \in I_\fm$. Let $h \in I_\fm$ with $v_q(h)=v_q(\ell(\fm))-c$ for some $c$. Since $h \equiv \rho_\fm^1(h) \pmod {\fl_1(\fm)}$ and $v_q(\fl_1(\fm))=v_q(\ell(\fm))$, we have  $v_q(h)=v_q(\rho_\fm^1(h))$ if $c\geq 1$. Otherwise we have $v_q(\rho_\fm^1(h)) \geq v_q(\ell(\fm))$ and so $(\fm, \rho_\fm^1(h)) \not\in \scA_2(b)$. Therefore if $c\neq b$, then we have
\begin{equation*}
\Phi_\fm^1(f)(\fm, h)=f(\fm, h)-f(\fm, \rho_\fm^1(h))=0-0=0.
\end{equation*}
If $c=b$, then we have $(\fm, \rho_\fm^1(h)) \in \scA_2(b)$ and so by \eqref{eqn: v1_6} we have
\begin{equation*}
\Phi_\fm^1(f)(\fm, h)=f(\fm, h)-f(\fm, \rho_\fm^1(h))=f_y(\fm, h)-f_y(\fm, \rho_\fm^1(h))=\Phi_\fm^1(f_y)(\fm, h)=0.
\end{equation*}
Thus, the assertion follows.
\end{proof}

\svv   
\subsection{Proof of Theorem \ref{theorem: vanishing i=2}}\label{section: vanishing i=2}
In this subsection, we use the colexicographic ordering on $\cD(d)_0$ and write
\begin{equation*}
\cD(d)_0=\{ m_1, m_2, \dots, m_{r_1r_2}\}.
\end{equation*}
Namely, $m_1=d$, $m_2=dp$, $m_3=dp^2$ and so on. (In particular, $m_i=dp^{i-1-\gauss{(i-1)/{r_1}}r_1}q^{\gauss{(i-1)/{r_1}}}$.) As in the previous subsection, let $\fm=dp^x q^y$ for some $0\leq x \leq r_1-1$. 
Also, we assume the following.
\begin{enumerate}
\item
$f(m, h)=0$ for all $(m, h) \in \scB_2^-(y)$.
\item
$0\leq y\leq r_2-2$ unless otherwise mentioned. (As remarked in the previous subsection, the case for $y=r_1-1$ follows by the reasoning in Section \ref{section5}.)
\end{enumerate}

As above (cf. \eqref{eqn: v1_1}), we have
\begin{equation}\label{eqn: v2_1}
\U(dq^y)(f)=f.
\end{equation}
To begin with, we prove some lemmas. Although the statements and their proofs are parallel to the ones in the previous subsection, we provide detailed proofs for convenience of the readers.

\begin{lemma}\label{lemma: v2_1}
Let $a\in \Z_{\geq 1}$ and suppose all the following hold.
\begin{enumerate}
\item
$\Psi(f)(m, h)=0$ for all $(m, h) \in \cS(d)_0$.
\item
$f(m, h)=0$ for all $(m, h) \in \scB_2(y) \cap \scA^+_1(a)$.
\end{enumerate}
Then we have
\begin{equation*}
\U(\fm)(f)(\fm, \fh)=f(\fm, \fh) \quad \text{ for all } ~ \fh \in I_\fm ~~\text{ with } ~~v_p(\fh) \leq v_p(\ell(\fm))-a.
\end{equation*}
\end{lemma}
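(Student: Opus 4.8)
The plan is to prove Lemma \ref{lemma: v2_1} by induction on $x=v_p(\fm)$, transposing the proof of Lemma \ref{lemma: v1_1} (itself modelled on Lemma \ref{lemma: v0_1}) by swapping the roles of $p$ and $q$: $\Phi_\fm^2$, Lemma \ref{lemma: simple formula for Psi}(2), and \eqref{eqn: f001}--\eqref{eqn: f002} will play the parts that $\Phi_\fm^1$, Lemma \ref{lemma: simple formula for Psi}(3), and \eqref{eqn: f003}--\eqref{eqn: f004} play there. For the base case $x=0$ one has $\fm=dq^y$, so $\U(\fm)(f)=\U(dq^y)(f)=f$ by \eqref{eqn: v2_1} and there is nothing to prove; recall \eqref{eqn: v2_1} holds because every factor $\Psi_m$ of $\U(dq^y)$ has $v_q(m)<y$ and hence fixes $f$ by Remark \ref{remark: identity if vanish on Im1}, the standing hypothesis giving $f(m,h)=0$ for all such $(m,h)$, in particular on $I_m^{\rd}$.

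For the inductive step I would assume the statement for $\fm=dp^xq^y$ and set $g:=\U(\fm)(f)$; with the colexicographic ordering $\U(\fm p)=\Psi_\fm\circ\U(\fm)$, so $\U(\fm p)(f)=\Psi_\fm(g)$. The first point to record is that $g(\fm p,\fh)=f(\fm p,\fh)$ for every $\fh\in I_{\fm p}$: after collapsing the initial block $\U(dq^y)$ to the identity on $f$, the composition computing $g$ involves only the maps $\Psi_{dp^kq^y}$ with $k<x$, and a glance at the three branches $\{dp^kq^y,\,dp^{k+1}q^y,\,dp^kq^{y+1}\}$ of each of them shows that none touches the component $\fm p=dp^{x+1}q^y$. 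The second point is the induction hypothesis itself: $g(\fm,h)=f(\fm,h)$ whenever $v_p(h)\le v_p(\ell(\fm))-a$. Then, fixing $\fh\in I_{\fm p}$ with $c:=v_p(\ell(\fm p))-v_p(\fh)\ge a$: if $\fh_1:=p^{-\ve_1(\fm)}\fh\notin\Z$ then Lemma \ref{lemma: simple formula for Psi}(1) gives $\Psi_\fm(g)(\fm p,\fh)=g(\fm p,\fh)=f(\fm p,\fh)$, so I may assume $\fh_1\in\Z$. By Lemma \ref{lemma: simple formula for Psi}(2) there is $S_1\subset I_\fm^1$ with $h\equiv\fh_1\pmod{\fd(\fm)}$ for all $h\in S_1$ and $\Psi_\fm(g)(\fm p,\fh)-g(\fm p,\fh)=\sum_{h\in S_1}\Phi_\fm^2(g)(\fm,h)$, so it remains to kill each summand. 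From \eqref{eqn: f001}, $v_p(\ell(\fm))-v_p(\fh_1)=c+1\ge 2$, hence $v_p(\fh_1)\le v_p(\ell(\fm))-2<v_p(\fd(\fm))<v_p(\fl_2(\fm))$; together with $h\equiv\fh_1\pmod{\fd(\fm)}$ and $h\equiv\rho_\fm^2(h)\pmod{\fl_2(\fm)}$ this forces $v_p(h)=v_p(\rho_\fm^2(h))=v_p(\ell(\fm))-(c+1)<v_p(\ell(\fm))-a$. Consequently both $(\fm,h)$ and $(\fm,\rho_\fm^2(h))$ lie in $\scB_2(y)\cap\scA_1^+(a)$ (in $\scB_2(y)$ since $v_q(\fm)=y$, and in $\scA_1^+(a)$ since $v_p(\ell(\fm))-v_p(h)=c+1>a$), so $f(\fm,h)=f(\fm,\rho_\fm^2(h))=0$ by hypothesis (2); by the induction hypothesis then $g(\fm,h)=g(\fm,\rho_\fm^2(h))=0$, and therefore $\Phi_\fm^2(g)(\fm,h)=g(\fm,h)-\chi_\fm^2(h)\,g(\fm,\rho_\fm^2(h))=0$. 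This closes the induction, and running it up to $x=r_1-1$ completes the proof.

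The step I expect to be the main obstacle is the first claim in the inductive step, that $g(\fm p,\cdot)=f(\fm p,\cdot)$. A priori the composition $\U(\fm)$ does contain a factor, namely $\Psi_{dp^{x+1}q^{y-1}}$, whose ``$m\mapsto mq$'' branch lands precisely on the component $\fm p$, so one cannot merely assert that no factor of $\U(\fm)$ touches $\fm p$. The point to be made carefully is that this factor --- and every factor $\Psi_m$ of $\U(\fm)$ with $v_q(m)<y$ --- sits inside the initial block $\U(dq^y)$, which acts as the identity on $f$ by the standing hypothesis $f|_{\scB_2^-(y)}\equiv 0$; once that block collapses, $\U(\fm)(f)$ is genuinely computed by the factors $\Psi_{dp^kq^y}$, $k<x$, alone, and for those the three-branch check is immediate. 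Everything else is the mirror image of Lemma \ref{lemma: v1_1}: the $p$-adic valuation bookkeeping through \eqref{eqn: f001}--\eqref{eqn: f002}, the reduction to $\Phi_\fm^2$-terms via Lemma \ref{lemma: simple formula for Psi}, and the membership $(\fm,h),(\fm,\rho_\fm^2(h))\in\scB_2(y)\cap\scA_1^+(a)$ all transpose without change.
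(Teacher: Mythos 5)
Your proof is correct and follows essentially the same route as the paper: induction on $x=v_p(\fm)$, base case via \eqref{eqn: v2_1}, and the inductive step reducing to $\Phi_\fm^2(g)(\fm,h)=0$ through Lemma \ref{lemma: simple formula for Psi}(2) and the valuation bound from \eqref{eqn: f001}. The only difference is that you spell out why $g(\fm p,\cdot)=f(\fm p,\cdot)$ (collapsing the initial block $\U(dq^y)$ before checking the remaining factors), a point the paper dispatches with ``by definition''; your elaboration is accurate and harmless.
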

\begin{proof}
We prove the assertion by induction on $x$. If $x=0$, then it follows by \eqref{eqn: v2_1}. Suppose that the assertion holds for $x\geq 0$
and let $g=\U(\fm)(f)$. Then by induction hypothesis, we have
\begin{equation}\label{eqn: v2_2}
g(\fm, h)=f(\fm, h) \quad \text{ for all } ~ h \in I_\fm ~\text{ with }~ v_p(h) \leq v_p(\ell(\fm))-a.
\end{equation}
Also, by definition we have $g(\fm p, \fh)=f(\fm p, \fh)$ for all $\fh \in I_{\fm p}$. 
So by Lemma \ref{lemma: simple formula for Psi}(2), we have
\begin{equation*}
\U(\fm p)(f)(\fm p, \fh)=\Psi_\fm(g)(\fm p, \fh)=f(\fm p, \fh)+\sum\nolimits_{h \in S_1} \Phi_\fm^2(g)(\fm, h)
\end{equation*}
for some $S_1 \subset I_\fm^{\rd}$. For simplicity, let $\fh \in I_{\fm p}$ and $c=v_p(\ell(\fm p))-v_p(\fh)$. 
Suppose that $v_p(\fh)\leq v_p(\ell(\fm p))-a$, i.e., $c\geq a$. To prove the assertion, it suffices to show that 
$\Phi_\fm^2(g)(\fm, h)=0$ for all $h \in S_1$. As in the proof of Lemma \ref{lemma: v1_1}, we may assume that $\fh_1=p^{-\ve_1(\fm)}\fh \in \Z$. Then by \eqref{eqn: f001}, we have
\begin{equation*}
v_p(\ell(\fm))-v_p(\fh_1)=v_p(\ell(\fm p))-v_p(\fh)+1=c+1 \geq 2,
\end{equation*}
i.e., $v_p(\fh_1) \leq v_p(\ell(\fm))-2 < v_p(\fd(\fm)) < v_p(\fl_2(\fm))$. Since $h \equiv \fh_1 \pmod {\fd(\fm)}$, we have
\begin{equation*}
v_p(\fh_1)=v_p(h)=v_p(\rho_\fm^2(h)) = v_p(\ell(\fm))-(c+1) <v_p(\ell(\fm))-a.
\end{equation*}
In other words, $(\fm, h), (\fm, \rho_\fm^2(h)) \in \scB_2(y) \cap \scA_1^+(a)$. Thus, by \eqref{eqn: v2_2} and assumption (2) we have
\begin{equation*}
\Phi_\fm^2(g)(\fm, h)=g(\fm, h)-\chi_\fm^2(h)\cdot g(\fm, \rho_\fm^2(h))=f(\fm, h)-\chi_\fm^2(h)\cdot f(\fm, \rho_\fm^2(h))=0-0=0.
\end{equation*}
By induction, this completes the proof.
\end{proof}

\begin{lemma}\label{lemma: v2_0}
We have $\Phi_\fm^*(f)(\fm, h)=0$ for all $h \in I_\fm$ if and only if 
\begin{equation*}
U^2_1(f, \fm)=U^2_2(f, \fm)=\cdots=U^2_q(f, \fm).
\end{equation*}
\end{lemma}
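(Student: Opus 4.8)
The plan is to unwind the definition of $\Phi_\fm^*$ at $m=\fm$ and to recognize the resulting condition as a restatement, in the vector language of the $U^2_a$, of the assertion that $f(\fm,\cdot)$ is constant on each residue class modulo $\fl_2(\fm)$; this is precisely the reasoning behind Lemma \ref{lemma: v1_0}, with $\Phi_\fm^1$ replaced by $\Phi_\fm^*$. Recall that throughout this subsection $\fm=dp^xq^y$ with $0\leq y\leq r_2-2$, so $q$ divides $\ell(\fm)$ and hence $\fl_2(\fm)=\ell(\fm)/q$, the interval $I_\fm^2$, and the reduction map $\rho_\fm^2$ are all defined.

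First I would note that the definition gives $\Phi_\fm^*(f)(\fm,h)=f(\fm,h)-f(\fm,\rho_\fm^2(h))$ for every $h\in I_\fm$, and that $\rho_\fm^2$ is the identity on $I_\fm^2$: any $g\in I_\fm^2$ satisfies $g\equiv g\pmod{\fl_2(\fm)}$ and lies in $I_\fm^2$, so by the defining uniqueness $\rho_\fm^2(g)=g$. Consequently $\Phi_\fm^*(f)(\fm,g)=0$ automatically for $g\in I_\fm^2$, while for an arbitrary $h\in I_\fm$ the value $\Phi_\fm^*(f)(\fm,h)$ depends only on the class of $h$ modulo $\fl_2(\fm)$, since $\rho_\fm^2(h)$ does. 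Hence $\Phi_\fm^*(f)(\fm,h)=0$ for all $h\in I_\fm$ if and only if $f(\fm,\cdot)$ takes a single value on the set of all elements of $I_\fm$ lying in any fixed residue class modulo $\fl_2(\fm)$.

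Next I would translate this into the $U^2_a$. The $q$ blocks $\{\fl_2(\fm)(a-1)+1,\dots,\fl_2(\fm)a\}$ for $1\leq a\leq q$ partition $I_\fm=\{1,\dots,\ell(\fm)\}$ into consecutive intervals of length $\fl_2(\fm)$, each a complete set of residues modulo $\fl_2(\fm)$, with $\fl_2(\fm)(a-1)+z$ representing the class of $z$; since $U^2_a(f,\fm)_z=f(\fm,\fl_2(\fm)(a-1)+z)$, the condition ``$f(\fm,\cdot)$ is constant on residue classes modulo $\fl_2(\fm)$'' is literally ``$U^2_a(f,\fm)_z$ is independent of $a$'', that is, $U^2_1(f,\fm)=U^2_2(f,\fm)=\cdots=U^2_q(f,\fm)$. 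Combining the last two paragraphs gives the equivalence.

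I do not expect a genuine obstacle; the only point deserving a word of care --- and the sole place where the situation differs from Lemma \ref{lemma: v1_0} --- is that when $\ell(\fm)$ is divisible by $pq$ the interval $I_\fm^2$ is the shifted block $\{\fl_1(\fm)+1,\dots,\fl_1(\fm)+\fl_2(\fm)\}$ rather than $\{1,\dots,\fl_2(\fm)\}$, so $\rho_\fm^2$ is not simply ``reduce into the first block''. This is harmless, since the argument above uses only that $I_\fm^2$ is a complete residue system modulo $\fl_2(\fm)$ and that $\rho_\fm^2$ fixes it pointwise, and both hold regardless of where $I_\fm^2$ sits inside $I_\fm$.
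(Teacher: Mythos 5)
Your proof is correct and matches the paper's, which simply declares the equivalence "obvious from its definition": unwinding $\Phi_\fm^*(f)(\fm,h)=f(\fm,h)-f(\fm,\rho_\fm^2(h))$ shows the vanishing is exactly constancy of $f(\fm,\cdot)$ on residue classes modulo $\fl_2(\fm)$, which is the stated equality of the vectors $U^2_a(f,\fm)$. Your remark that the argument only needs $I_\fm^2$ to be a complete residue system fixed pointwise by $\rho_\fm^2$ (regardless of where it sits in $I_\fm$) is exactly the right point of care.
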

\begin{proof}
This is obvious from its definition.
\end{proof}

\hide{\begin{lemma}\label{lemma: v2_22}
Suppose that $\ell(\fm)$ is divisible by $pq$. 
If $\Psi_\fm(f)(\fm, h)=0$ for all $h \in I_\fm$ and $V_{n}(\Phi_\fm^*(f), \fm)=\bbO$ for all $1\leq n \leq q$, then 
 $\Phi_\fm^*(f)(\fm, h)=0$ for all $h \in I_\fm$ .
\end{lemma}
\begin{proof}
Suppose that $\Psi_\fm(f)(\fm, h)=0$ for all $h \in I_\fm$ and $V_{n}(\Phi_\fm^*(f), \fm)=\bbO$ for all $1\leq n \leq q$.
For simplicity, let $V=V_{\b_0}(f, \fm)$. Also, let $[\nu]_1=\a^i_j$ and $[\nu]_2=\b_k$.
Furthermore, let $\e(k)=\max(0, 1-k)$. Then by definition, we have  $V_{\nu}(\Phi_\fm^*(f), \fm)=V_\nu(\Phi_\fm^2(f), \fm)-\e(k)\cdot V$, and so
\begin{equation*}
V_{\a^i_n}(\Phi_\fm^2(f), \fm)=V_{\a^i_n}(\Phi_\fm^*(f), \fm)+\e(n-1)\cdot V=\e(n-1)\cdot V.
\end{equation*}
Thus, for any $j$ we have 
\begin{equation*}
V_{\a^i_j}(\Phi_\fm^2(f), \fm)+\sum_{n=1}^{j-1} V_{\a_n}(\Phi_\fm^2(f), \fm)=V.
\end{equation*}

Suppose first that $k=0$. Then by Lemma \ref{lemma: f01}, we have
\begin{equation*}
V_\nu(\Phi_\fm^*(f), \fm)=V_{\nu}(\Phi_\fm^2(f), \fm)-V=V_{\nu}(\Psi_\fm(f), \fm)=\bbO.
\end{equation*}
Similarly, if $k\geq 1$, then we have
\begin{equation*}
V_\nu(\Phi_\fm^*(f), \fm)=V_\nu(\Phi_\fm^2(f), \fm)=V-\sum_{n=1}^k V_{\a_n}(\Phi_\fm^2(f), \fm)=\bbO.
\end{equation*}
This completes the proof.
\end{proof}}

\begin{lemma}\label{lemma: v2_2}
Suppose that $\ell(\fm)$ is divisible by $pq$. Suppose further that all the following hold.
\begin{enumerate}
\item
$\Psi_\fm(f)(\fm, h)=0$ for all $h \in I_\fm$.
\item
$V_{\a^i_j}(\Phi_\fm^*(f), \fm)=\bbO$ for all $\a^i_j$ unless $j=1$.
\item
$V_{\a^i_1}(f, \fm)=V_{\a_1}(f, \fm)$ for all $\a^i_1$.
\end{enumerate}
Then we have $\Phi_\fm^*(f)(\fm, h)=0$ for all $h \in I_\fm$ .
\end{lemma}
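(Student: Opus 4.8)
The plan is to convert the three hypotheses into linear identities among the block vectors $V_\nu(\,\cdot\,,\fm)$, $1\le \nu\le pq$, and then to bootstrap from the single ``boundary'' block $\b_0=p+q$. To begin with, since the assertion $\Phi_\fm^*(f)(\fm,h)=0$ for all $h\in I_\fm$ is literally the statement that $V_\nu(\Phi_\fm^*(f),\fm)=\bbO$ for every $\nu$, and since unwinding the definitions of $\Phi_\fm^*$ and $\rho_\fm^2$ gives
\begin{equation*}
V_\nu(\Phi_\fm^*(f),\fm)=V_\nu(f,\fm)-V_{[\nu]_2}(f,\fm),
\end{equation*}
the goal reduces to proving $V_\nu(f,\fm)=V_{[\nu]_2}(f,\fm)$ for all $1\le\nu\le pq$ (equivalently, by Lemma~\ref{lemma: v2_0}, that $U^2_1(f,\fm)=\dots=U^2_q(f,\fm)$). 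The same bookkeeping, now using that $\chi_\fm^1\equiv 1$ (Remark~\ref{remark: Hm1 Hm2 Im1}), yields $V_\mu(\Phi_\fm^1(f),\fm)=V_\mu(f,\fm)-V_{[\mu]_1}(f,\fm)$ for all $\mu$; note here that $[\b_n]_1=\a_n$ for $1\le n\le p-1$ and $[\b_0]_1=\a_p$.

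Next I would feed these two translation formulae into the first equality of Lemma~\ref{lemma: f01} and invoke hypothesis (1), which says $V_\nu(\Psi_\fm(f),\fm)=\bbO$. Writing $[\nu]_1=\a^i_j$ and $[\nu]_2=\b_k$, this collapses to the master identity
\begin{equation*}
V_\nu(f,\fm)=V_{\a^i_j}(f,\fm)-\sum_{n=1}^{j-1}\bigl(V_{\b_n}(f,\fm)-V_{\a_n}(f,\fm)\bigr)+\sum_{n=1}^{k}\bigl(V_{\b_n}(f,\fm)-V_{\a_n}(f,\fm)\bigr).
\end{equation*}
In the same language hypothesis (2) reads $V_{\a^i_j}(f,\fm)=V_{\b_{j-1}}(f,\fm)$ for every $\a^i_j$ with $j\ge 2$ (since $[\a^i_j]_2=\b_{j-1}$), and hypothesis (3) reads $V_{\a^i_1}(f,\fm)=V_{\a_1}(f,\fm)$ for all admissible $i$.

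Then comes the bootstrap. Applying the master identity to $\nu=\b_0=p+q$ (so that $[\nu]_1=\a_p$, i.e.\ $j=p$, $i=0$, and $k=0$) and substituting $V_{\a_p}(f,\fm)=V_{\b_{p-1}}(f,\fm)$ together with $V_{\a_n}(f,\fm)=V_{\b_{n-1}}(f,\fm)$ for $2\le n\le p-1$ from hypothesis (2), the two sums telescope and everything collapses to $V_{\b_0}(f,\fm)=V_{\a_1}(f,\fm)$. Combined with hypothesis (3) this upgrades (2) to the uniform relations $V_{\a^i_j}(f,\fm)=V_{\b_{j-1}}(f,\fm)$ and $V_{\a_n}(f,\fm)=V_{\b_{n-1}}(f,\fm)$ valid for \emph{all} $j,n\in\{1,\dots,p\}$ and all admissible $i$. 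Substituting these back into the master identity for an arbitrary $\nu$, both sums become telescoping sums of differences $V_{\b_n}(f,\fm)-V_{\b_{n-1}}(f,\fm)$, and one reads off $V_\nu(f,\fm)=V_{\b_k}(f,\fm)=V_{[\nu]_2}(f,\fm)$, which by the first step is exactly the desired conclusion.

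The delicate point is the combinatorial bookkeeping in the two middle steps: one must track the indices $\a_j$, $\a^i_j$, $\b_k$ and the change of coordinates between ``$\Phi_\fm^1$-blocks'' and ``$\Phi_\fm^2$-blocks'' without error, and in particular make sure that the boundary block $\b_0=p+q$ — the one place where $\Phi_\fm^*$ differs from $\Phi_\fm^2$ — is handled correctly. It is precisely the definition of $\Phi_\fm^*$ (with no $\chi$-truncation) that makes that discrepancy cancel in the translation formula of the first step, so that the argument above goes through cleanly; getting this cancellation and the two telescopings to match up is where the real work lies.
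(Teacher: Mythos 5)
Your proof is correct and follows essentially the same route as the paper's: both rest on Lemma \ref{lemma: f01} together with hypotheses (2)--(3), a telescoping of the sums over $\b_n$, and a separate treatment of the boundary block $\b_0=p+q$ where $\Phi_\fm^*$ and $\Phi_\fm^2$ differ. The only difference is bookkeeping — you use the first equality of Lemma \ref{lemma: f01} and express everything in terms of $V_\nu(f,\fm)$, whereas the paper uses the second equality and works with $V_\nu(\Phi_\fm^2(f),\fm)$ — and your index computations ($[\b_0]_1=\a_p$, $[\a^i_j]_2=\b_{j-1}$, and the collapse $V_{\b_0}=V_{\a_1}$) all check out.
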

\begin{proof}
As before, let  $1\leq \nu \leq pq$ be an integer so that $[\nu]_1=\a^i_j$ and $[\nu]_2=\b_k$. For simplicity, let $V=V_{\b_0}(f, \fm)$ and $\e(k)=\max(0, 1-k)$. 
Then by definition, we have  $V_{\nu}(\Phi_\fm^*(f), \fm)=V_\nu(\Phi_\fm^2(f), \fm)-\e(k)\cdot V$. Therefore we have
$V_{\a^i_n}(\Phi_\fm^2(f), \fm)=V_{\a^i_n}(\Phi_\fm^*(f), \fm)=\bbO$ for any $2\leq n\leq p$ as $[\a^i_n]_2=\b_{n-1}$.
Also, by assumption (3) we have 
\begin{equation*}
V_{\a^i_1}(\Phi_\fm^2(f), \fm)=V_{\a^i_1}(f, \fm)\overset{(3)}{=}V_{\a_1}(f, \fm)=V_{\a_1}(\Phi_\fm^2(f), \fm)=V_{\a_1}(\Phi_\fm^*(f), \fm)+V.
\end{equation*}
Thus, for any $1\leq j \leq p$ we have 
\begin{equation*}
V_{\a^i_j}(\Phi_\fm^2(f), \fm)+\sum_{n=1}^{j-1} V_{\a_n}(\Phi_\fm^2(f), \fm)=V_{\a_1}(\Phi_\fm^2(f), \fm)=V_{\a_1}(\Phi_\fm^*(f), \fm)+V.
\end{equation*}

If $k\geq 1$, then by Lemma \ref{lemma: f01}, we have
\begin{equation*}
V_\nu(\Phi_\fm^*(f), \fm)=V_\nu(\Phi_\fm^2(f), \fm)=V_{\a_1}(\Phi_\fm^2(f), \fm)-\sum_{n=1}^k V_{\a_n}(\Phi_\fm^2(f), \fm)=\bbO.
\end{equation*}

Next, suppose that $k=0$. Similarly as above, we have
\begin{equation*}
V_\nu(\Phi_\fm^*(f), \fm)=V_{\nu}(\Phi_\fm^2(f), \fm)-V=V_{\a_1}(\Phi_\fm^2(f), \fm)-V=V_{\a_1}(\Phi_\fm^*(f), \fm).
\end{equation*}
So if we take $\nu=p+q$, which satisfies $[p+q]_2=\b_0$, then we have $V_{\nu}(\Phi_\fm^*(f), \fm)=\bbO$ by definition. This completes the proof.
\end{proof}

\begin{lemma}\label{lemma: v2_3}
Suppose that $\ell(\fm)$ is divisible by $pq$. Also, suppose that all the following hold.
\begin{enumerate}
\item
$f(\fm p, h)=0$ for all $h \in I_{\fm p}$.
\item
$\Phi_{\fm p}^*(\Psi_\fm(f))(\fm p, h)=0$ for all $h \in I_{\fm p}$.
\item
$\Psi_\fm(f)(\fm, h)=0$ for all $h \in I_\fm$.
\end{enumerate}
Then we have $\Phi_\fm^*(f)(\fm, h)=0$ for all $h \in I_\fm$.
\end{lemma}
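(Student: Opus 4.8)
The proof will run parallel to that of Lemma~\ref{lemma: v1_3}, with the roles of $p$ and $q$ interchanged: Lemma~\ref{lemma: f02} plays the role of Lemma~\ref{lemma: f03} and Lemma~\ref{lemma: v2_2} that of Lemma~\ref{lemma: v1_2}. First, assumption (1) gives $X^1_{\a^i_j}(f, \fm)=\bbO$ for every $\a^i_j \in T_1$, since each component of $X^1_{\a^i_j}(f, \fm)$ is a value of $f(\fm p, \cdot)$. Next, assumption (2) together with Lemma~\ref{lemma: v2_0} applied with $\fm p$ in place of $\fm$ gives $U^2_1(\Psi_\fm(f), \fm p)=\cdots=U^2_q(\Psi_\fm(f), \fm p)$. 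As in the proof of Lemma~\ref{lemma: v1_3} I would transfer this equality back to $\fm$ by a case split on $\ve_1(\fm)\in\{0,1\}$: when $\ve_1(\fm)=0$ one has $\fl_2(\fm p)=\fd(\fm)$ and $X^1_a(\Psi_\fm(f), \fm)=U^2_a(\Psi_\fm(f), \fm p)$, while when $\ve_1(\fm)=1$ one has $\fl_2(\fm p)=p\,\fd(\fm)$ and $X^1_a(\Psi_\fm(f), \fm)_z=U^2_a(\Psi_\fm(f), \fm p)_{pz}$. In either case the vectors $X^1_{\a^i_j}(\Psi_\fm(f), \fm)$ are all equal; call the common value $W$.

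Now I would substitute into the identity of Lemma~\ref{lemma: f02}. Since $X^1_{\a^i_j}(f, \fm)=\bbO$, it reads $V_{\a^i_j}(\Phi_\fm^2(f), \fm)+\sum_{n=1}^{j-1} V_{\a_n}(\Phi_\fm^2(f), \fm)=W$ for all $\a^i_j$; specializing to $i=0$ gives $\sum_{n=1}^{j} V_{\a_n}(\Phi_\fm^2(f), \fm)=W$ for every $j$, hence $V_{\a_1}(\Phi_\fm^2(f), \fm)=W$ and $V_{\a_j}(\Phi_\fm^2(f), \fm)=\bbO$ for $j\ge 2$, and substituting back, $V_{\a^i_1}(\Phi_\fm^2(f), \fm)=W$ for all $i$ while $V_{\a^i_j}(\Phi_\fm^2(f), \fm)=\bbO$ for all $j\ge 2$. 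It remains to convert these into the hypotheses of Lemma~\ref{lemma: v2_2}: for $j\ge 2$ one has $[\a^i_j]_2=\b_{j-1}$ with $1\le j-1\le p-1$, so $\rho_\fm^2$ maps the corresponding block into $I_\fm^{\rd}$, $\chi_\fm^2$ equals $1$ there, and $\Phi_\fm^*$ coincides with $\Phi_\fm^2$, giving $V_{\a^i_j}(\Phi_\fm^*(f), \fm)=\bbO$; for $j=1$ one has $[\a^i_1]_2=\b_0$, so $\rho_\fm^2$ maps the block into $I_\fm(p+q)=I_\fm^2\sm I_\fm^{\rd}$, $\chi_\fm^2$ vanishes, and $\Phi_\fm^2(f)(\fm,h)=f(\fm,h)$, whence $V_{\a^i_1}(f, \fm)=V_{\a^i_1}(\Phi_\fm^2(f), \fm)=W=V_{\a_1}(f, \fm)$ for all $i$. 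Together with assumption (3) — which is exactly hypothesis (1) of Lemma~\ref{lemma: v2_2} — Lemma~\ref{lemma: v2_2} yields $\Phi_\fm^*(f)(\fm, h)=0$ for all $h\in I_\fm$.

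The step I expect to be the main obstacle is the passage through Lemma~\ref{lemma: f02}. Unlike the formula in Lemma~\ref{lemma: f03}, which involves only $\Phi_\fm^1$-terms and therefore immediately produces the vanishing of the relevant $V_{\b_k}(\Phi_\fm^1(f), \fm)$, the formula in Lemma~\ref{lemma: f02} carries genuine $\Phi_\fm^2$-terms; one cannot conclude directly and must track the discrepancy between $\Phi_\fm^2$ and $\Phi_\fm^*$ — the $\chi_\fm^2$ and $\e$ corrections — on the exceptional block $I_\fm(p+q)$. This is precisely the bookkeeping that Lemma~\ref{lemma: v2_2} is designed to absorb, which is why it is invoked here in place of a direct analogue of Lemma~\ref{lemma: v1_2}.
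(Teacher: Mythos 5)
Your proof is correct and follows essentially the same route as the paper's: assumption (1) kills the $X^1(f,\fm)$ terms, assumption (2) plus Lemma \ref{lemma: v2_0} (transferred via the $\ve_1(\fm)$ case split) makes the $X^1(\Psi_\fm(f),\fm)$ vectors all equal, Lemma \ref{lemma: f02} then forces $V_{\a^i_j}(\Phi_\fm^2(f),\fm)=\bbO$ for $j\geq 2$ and $V_{\a^i_1}(f,\fm)=V_{\a_1}(f,\fm)$, and Lemma \ref{lemma: v2_2} finishes. Your closing remark about why Lemma \ref{lemma: v2_2} (rather than a direct analogue of Lemma \ref{lemma: v1_2}) is needed to absorb the $\chi_\fm^2$ discrepancy on the block $I_\fm(p+q)$ is exactly the point.
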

\begin{proof}
As above, let $V=V_{\b_0}(f, \fm)$.
By assumption (1), we have $X^1_n(f, \fm)=\bbO$ for all $1\leq n \leq q$. 
Also, as before (cf. \eqref{eqn: v1_3}), by assumption (2) and Lemma \ref{lemma: v2_0} we have
\begin{equation}\label{eqn: v2_3}
X^1_1(\Psi_\fm(f), \fm)=X^1_2(\Psi_\fm(f), \fm)= \cdots = X^1_q(\Psi_\fm(f), \fm).
\end{equation}
So by Lemma \ref{lemma: f02}, we have 
\begin{equation*}
V_{\a^i_j}(\Phi_\fm^2(f), \fm)=\begin{cases}
X^1_{\a_1}(\Psi_\fm(f), \fm) & \text{ if }~~ j=1,\\
~~\bbO & \text{ otherwise}.
\end{cases}
\end{equation*}
Thus, if $j\neq 1$, then $V_{\a^i_j}(\Phi_\fm^*(f), \fm)=V_{\a^i_j}(\Phi_\fm^2(f), \fm)=\bbO$.
Otherwise, we have 
\begin{equation*}
V_{\a^i_1}(\Phi_\fm^*(f), \fm)=V_{\a^i_1}(\Phi_\fm^2(f), \fm)-V=X^1_{\a_1}(\Psi_\fm(f), \fm)-V.
\end{equation*}
Thus, for any $i$ we have 
\begin{equation*}
V_{\a^i_1}(f, \fm)=V_{\a^i_1}(\Phi_\fm^2(f), \fm)=X^1_{\a_1}(\Psi_\fm(f), \fm).
\end{equation*}
By Lemma \ref{lemma: v2_2}, the result follows.
\end{proof}

\begin{lemma}\label{lemma: v2_4}
Suppose that all the following hold.
\begin{enumerate}
\item
$\Psi(f)(m, h)=0$ for all $(m, h) \in \cS(d)_0$.
\item
$f(m, h)=0$ for all $(m, h) \in \scB_2(y)$ with $m\neq \fm$.
\end{enumerate}
Then we have $\Phi_\fm^*(f)(\fm, h)=0$ for all $h \in I_\fm$.
\end{lemma}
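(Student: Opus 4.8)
The plan is to mirror the proof of Lemma~\ref{lemma: v1_4}, interchanging the roles of $p$ and $q$, of $\Phi_\fm^1$ and $\Phi_\fm^*$, and of the lexicographic and colexicographic orderings, and running a reverse induction on $x=v_p(\fm)$ for $\fm=dp^xq^y$. The preliminary reduction is: $f$ vanishes on $I_m^{\rd}$ for every $m\in\cD(d)_0$ that precedes $\fm$ in the colexicographic order, because such $m$ are precisely those with $v_q(m)<y$ (handled by the standing hypothesis $f\equiv 0$ on $\scB_2^-(y)$) together with the $m=dp^aq^y$, $a<x$ (handled by hypothesis (2)). Hence Remark~\ref{remark: identity if vanish on Im1} gives $\U(\fm)(f)=f$ and $\Psi(\fm)(f)=\Psi_\fm(f)$, and Remark~\ref{remark: Psi(f) independent of ordering} gives $\Psi(f)(\fm,h)=\Psi_\fm(f)(\fm,h)$ for every $h\in I_\fm$. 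In particular hypothesis (1) already forces $\Psi_\fm(f)(\fm,h)=0$ for all $h\in I_\fm$, which is assumption (3) of Lemma~\ref{lemma: v2_3}.

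For the base case $x=r_1-1$: since $y\leq r_2-2$ throughout this subsection, $\ell(\fm)=p^{\lfloor(r_1-x)/2\rfloor}q^{\lfloor(r_2-y)/2\rfloor}$ is a nontrivial power of $q$. Hence $\Psi_\fm(f)=\Phi_\fm^2(f)$ by Remark~\ref{remark: map Phi 2}, and $\Phi_\fm^2=\Phi_\fm^*$ because $\chi_\fm^2\equiv 1$ when $\ell(\fm)$ is a prime power (Remark~\ref{remark: Hm1 Hm2 Im1}); so $\Phi_\fm^*(f)(\fm,h)=\Psi_\fm(f)(\fm,h)=0$ by the previous paragraph, and the base case is done.

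For the inductive step $0\leq x\leq r_1-2$, where $\ell(\fm)$ is divisible by $pq$ (again as $y\leq r_2-2$), put $g:=\Psi_\fm(f)=\Psi(\fm)(f)$ and verify the three hypotheses of Lemma~\ref{lemma: v2_3}. Hypothesis (1), $f(\fm p,h)=0$ for all $h\in I_{\fm p}$, is immediate from hypothesis (2) since $\fm p=dp^{x+1}q^y\in\scB_2(y)$ and $\fm p\neq\fm$. Hypothesis (3) was obtained in the first paragraph. For hypothesis (2), $\Phi_{\fm p}^*(g)(\fm p,h)=0$ for all $h\in I_{\fm p}$, apply the induction hypothesis (Lemma~\ref{lemma: v2_4} for $\fm p$) to $g$: one checks $\Psi(g)=\Psi(\Psi(\fm)(f))=\Psi(f)=0$ by Remark~\ref{remark: Psi(Psi)=Psi}; that $g$ vanishes on $\scB_2^-(y)$, because $\Psi_\fm$ alters $f$ only at $m\in\{\fm,\fm p,\fm q\}$, all with $q$-adic valuation $\geq y$; and that $g$ vanishes on $\scB_2(y)$ except possibly at $\fm p$, since at $m=\fm$ we have $g(\fm,\cdot)=\Psi(f)(\fm,\cdot)=0$, while at every other such $m$ Lemma~\ref{lemma: simple formula for Psi}(1) gives $g(m,h)=f(m,h)=0$ by hypothesis (2). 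Lemma~\ref{lemma: v2_3} then yields $\Phi_\fm^*(f)(\fm,h)=0$ for all $h\in I_\fm$, completing the induction.

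The difficulty is not a hard computation but the bookkeeping of the inductive step: one must confirm that $g=\Psi_\fm(f)$ still satisfies every hypothesis of Lemma~\ref{lemma: v2_4} with $\fm$ replaced by $\fm p$, which rests on the locality of $\Psi_\fm$ (Lemma~\ref{lemma: simple formula for Psi}(1)) and on the projection-type identity $\Psi\circ\Psi(\fm)=\Psi$ of Remark~\ref{remark: Psi(Psi)=Psi}, together with the degenerate behaviour of $\Phi_\fm^*$ at the endpoint $x=r_1-1$. All of the vector-valued identities underlying $\Phi_\fm^*$ are already packaged in Lemmas~\ref{lemma: v2_0}, \ref{lemma: v2_2} and \ref{lemma: v2_3}, so past those invocations the argument is a line-by-line transcription of the one in Section~\ref{section: vanishing i=1}.
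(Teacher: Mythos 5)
Your proof is correct and follows the same route as the paper's: reverse induction on $x=v_p(\fm)$, with the base case $x=r_1-1$ handled by the identification $\Phi_\fm^*=\Phi_\fm^2=\Psi_\fm$ when $\ell(\fm)$ is a power of $q$, and the inductive step by applying the induction hypothesis to $g=\Psi_\fm(f)$ and invoking Lemma \ref{lemma: v2_3}. The only difference is that you spell out the bookkeeping (locality of $\Psi_\fm$, $\Psi\circ\Psi(\fm)=\Psi$, and the verification that $g$ inherits the hypotheses at $\fm p$) which the paper compresses into ``as in the proof of Lemma \ref{lemma: v1_4}.''
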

\begin{proof}
We prove the assertion by (reverse) induction on $x$. First, let $x=r_1-1$. Then $\ell(\fm)$ is a power of $q$. 
In this case, $I_\fm^2=I_\fm^{\rd}$ (cf. Remark \ref{remark: Hm1 Hm2 Im1}) and so $\Phi_\fm^*=\Phi_\fm^2=\Psi_\fm$ (cf. Remark \ref{remark: map Phi 2}). Therefore we have
\begin{equation*}
\Phi_\fm^*(f)(\fm, h)=\Psi_\fm(f)(\fm, h)=\Psi(\fm)(f)(\fm, h)=\Psi(f)(\fm, h)=0.
\end{equation*}
Next, let $x \leq r_1-2$ and assume the assertion holds for $x+1$. 
Let $g=\Psi_\fm(f)$. As in the proof of Lemma \ref{lemma: v1_4}, $g(m, h)=0$ for all $(m, h) \in \scB_2(y)$ with $m \neq \fm p$. Thus, by induction hypothesis we have $\Phi_{\fm p}^*(g)(\fm p, h)=0$ for all $h \in I_{\fm p}$. In other words, $\Phi_{\fm p}^*(\Psi_\fm(f))(\fm p, h)=0$ for all $h \in I_{\fm p}$. Since $f(\fm p, h)=0$ for all $h \in I_{\fm p}$ by our assumption, the assertion for $x$ follows by Lemma \ref{lemma: v2_3}. This completes the proof.
\end{proof}

Finally, we prove a generalization of Lemma \ref{lemma: v2_3}.
\begin{lemma}\label{lemma: v2_5}
Let $a\in \Z_{\geq 1}$ and suppose that $\ell(\fm)$ is divisible by $pq$. Also, suppose that all the following hold.
\begin{enumerate}
\item
$f(\fm p, h)=0$ for all $h \in I_{\fm p}$ unless $v_p(h)= v_p(\ell(\fm p))-a$.
\item
$\Phi_{\fm p}^*(\Psi_\fm(f))(\fm p, h)=0$ for all $h \in I_{\fm p}$.
\item
$f(\fm, h)=0$ for all $h \in I_\fm$ with $v_p(h) < v_p(\ell(\fm))-a$.
\item
$\Psi_\fm(f)(\fm, h)=0$ for all $h \in I_\fm$.
\end{enumerate}
Then we have $\Phi_\fm^*(f)(\fm, h)=0$ for all $h \in I_\fm$.
\end{lemma}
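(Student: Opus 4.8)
The plan is to adapt the argument used for Lemma~\ref{lemma: v1_5} to the $\iota=2$ situation, feeding the output into Lemma~\ref{lemma: v2_2} exactly as the proof of Lemma~\ref{lemma: v2_3} does. Throughout write $V:=V_{\b_0}(f,\fm)$ and recall $\b_0=p+q$, so that $\Phi_\fm^*$ differs from $\Phi_\fm^2$ only in the blocks with $[\nu]_2=\b_0$, where $V_\nu(\Phi_\fm^*(f),\fm)=V_\nu(\Phi_\fm^2(f),\fm)-V$. As a first step, exactly as in the derivation of \eqref{eqn: v2_3} in the proof of Lemma~\ref{lemma: v2_3}, assumption~(2) together with Lemma~\ref{lemma: v2_0} and the relation between $U^2_a(\Psi_\fm(f),\fm p)$ and $X^1_a(\Psi_\fm(f),\fm)$ dictated by the value of $\ve_1(\fm)$ (compare the $\ve_2(\fm)$ computation in the proof of Lemma~\ref{lemma: v1_3}) gives $X^1_1(\Psi_\fm(f),\fm)=\cdots=X^1_q(\Psi_\fm(f),\fm)$. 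Substituting this into Lemma~\ref{lemma: f02} yields, for every $\a^i_j$,
\[
X^1_{\a_1}(\Psi_\fm(f),\fm)=X^1_{\a^i_j}(f,\fm)+V_{\a^i_j}(\Phi_\fm^2(f),\fm)+\sum_{n=1}^{j-1}V_{\a_n}(\Phi_\fm^2(f),\fm).
\]

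The core of the argument is a stratified componentwise analysis. Fix $1\le z\le\fd(\fm)$. Since $a\ge 1$ and $v_p(\fd(\fm))=v_p(\ell(\fm))-1$, the residue $h\equiv z\pmod{\fd(\fm)}$ controls the $p$-adic valuation in the $z$-th coordinate of every block, and also after applying $\rho_\fm^2$ (because $v_p(\fl_2(\fm))=v_p(\ell(\fm))$). If $v_p(z)<v_p(\ell(\fm))-a$, then assumption~(3) forces $f(\fm,\cdot)=0$ at all the relevant points, so the $z$-th component of $\Phi_\fm^2(f)(\fm,\cdot)$ and of $\Phi_\fm^*(f)(\fm,\cdot)$ vanishes on every block and $V_{\a^i_1}(f,\fm)_z=V_{\a_1}(f,\fm)_z=0$. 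If instead $v_p(z)\ge v_p(\ell(\fm))-a$, then $v_p\bigl(\fd(\fm)(\a^i_j-1)+z\bigr)\ge v_p(\ell(\fm))-a$, so by \eqref{eqn: f001} one has $v_p\bigl(p^{\ve_1(\fm)}(\fd(\fm)(\a^i_j-1)+z)\bigr)\ne v_p(\ell(\fm p))-a$ and assumption~(1) gives $X^1_{\a^i_j}(f,\fm)_z=0$; plugging this into the displayed identity and arguing exactly as in the proof of Lemma~\ref{lemma: v2_3} (induction on $j$, using $[\a^i_1]_2=\b_0$ so that $V_{\a^i_1}(\Phi_\fm^2(f),\fm)=V_{\a^i_1}(f,\fm)$) shows that $V_{\a^i_j}(\Phi_\fm^2(f),\fm)_z$ equals $X^1_{\a_1}(\Psi_\fm(f),\fm)_z$ when $j=1$ and is $0$ when $j\ge 2$, while $V_{\a^i_1}(f,\fm)_z=V_{\a_1}(f,\fm)_z$.

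Combining the two strata (and noting $\Phi_\fm^*=\Phi_\fm^2$ on blocks with $j\ge 2$), one obtains $V_{\a^i_j}(\Phi_\fm^*(f),\fm)=\bbO$ for all $\a^i_j$ with $j\ge 2$ and $V_{\a^i_1}(f,\fm)=V_{\a_1}(f,\fm)$ for all $i$; together with assumption~(4) these are precisely hypotheses~(2), (3) and (1) of Lemma~\ref{lemma: v2_2}, so that lemma gives $\Phi_\fm^*(f)(\fm,h)=0$ for all $h\in I_\fm$, as required.

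The main obstacle is the stratified componentwise analysis: unlike Lemma~\ref{lemma: v2_3}, where $X^1_n(f,\fm)\equiv\bbO$, the hypotheses here provide vanishing only outside a single $p$-adic valuation stratum, so the argument must be carried out coordinate by coordinate, carefully propagating $v_p(z)$ through the block decomposition $h=\fd(\fm)(\nu-1)+z$, through $\rho_\fm^2$, through the $\b_0$-shift built into $\Phi_\fm^*$, and through the passage $\fm\to\fm p$ governed by \eqref{eqn: f001}--\eqref{eqn: f002}. Reproving the conclusions of Lemma~\ref{lemma: v2_2} in this stratified form is where essentially all the work concentrates; everything else is bookkeeping already recorded in Lemmas~\ref{lemma: f01}--\ref{lemma: f03}, \ref{lemma: v2_0}, \ref{lemma: v2_2} and \ref{lemma: v2_3}.
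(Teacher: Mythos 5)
Your proposal is correct and follows essentially the same route as the paper's proof: both derive $X^1_1(\Psi_\fm(f),\fm)=\cdots=X^1_q(\Psi_\fm(f),\fm)$ from assumption (2), stratify componentwise on $v_p(z)$ (using assumption (3) when $v_p(z)<v_p(\ell(\fm))-a$ and assumption (1) via \eqref{eqn: f001} when $v_p(z)\geq v_p(\ell(\fm))-a$), and conclude by verifying the hypotheses of Lemma \ref{lemma: v2_2}. The only cosmetic difference is that the paper first records the difference formulas \eqref{eqn: v2_4}--\eqref{eqn: v2_5} before stratifying, whereas you work directly with the identity from Lemma \ref{lemma: f02}.
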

\begin{proof}
As \eqref{eqn: v2_3}, assumption (2) implies that
\begin{equation*}
X^1_1(\Psi_\fm(f), \fm)=X^1_2(\Psi_\fm(f), \fm)= \cdots = X^1_q(\Psi_\fm(f), \fm).
\end{equation*}
So by Lemma \ref{lemma: f02}, for any $2\leq n \leq p$ we have
\begin{equation}\label{eqn: v2_4}
V_{\a_n}(\Phi_\fm^*(f), \fm)=V_{\a_n}(\Phi_\fm^2(f), \fm)=X^1_{\a_{n-1}}(f, \fm)-X^1_{\a_{n}}(f, \fm)
\end{equation}
and for any $1\leq j \leq p$, we have
\begin{equation}\label{eqn: v2_5}
\begin{split}
V_{\a^i_j}(\Phi_\fm^*(f), \fm)-V_{\a_j}(\Phi_\fm^*(f), \fm)&=V_{\a^i_j}(\Phi_\fm^2(f), \fm)-V_{\a_j}(\Phi_\fm^2(f), \fm)\\
&=X^1_{\a_j}(f, \fm)-X^1_{\a^i_j}(f, \fm).
\end{split}
\end{equation}

Let $1\leq z \leq \fd(\fm)$ be an integer.
\begin{enumerate}
\item
Suppose that $v_p(z)<v_p(\ell(\fm))-a$. Let $h=\fd(\fm)(\a_j^i-1)+z$ for any $1\leq j \leq p$. Since $a\geq 1$, we have 
\begin{equation*}
v_p(z)=v_p(h)=v_p(\rho_\fm^2(h)) < v_p(\ell(\fm))-a.
\end{equation*}
By assumption (3), we have
$f(\fm, h)=f(\fm, \rho_\fm^2(h))=0$. Therefore $V_{\a^i_j}(f, \fm)_z=f(\fm, h)=0$ and $V_{\a_j^i}(\Phi_\fm^*(f), \fm)_z=\Phi_\fm^*(f)(\fm, h)=0$.

\item
Suppose that $v_p(z)\geq v_p(\ell(\fm))-a$. Let $h=\fd(\fm)(\a_j^i-1)+z$ for any $1\leq j \leq p$. Again, we have
$v_p(h)\geq v_p(\ell(\fm))-a$. As \eqref{eqn: f001}, we have
$v_p(\ell(\fm p))-v_p(p^{\ve_1(\fm)}h) \leq a-1$ and so $X^1_{\a_j^i}(f, \fm)_z=f(\fm p, p^{\ve_1(\fm)}h)$ = 0 by assumption (1). By \eqref{eqn: v2_4}, we have $V_{\a_n}(\Phi_\fm^*(f), \fm)_z=0$ for any $2 \leq n \leq p$. Also, by \eqref{eqn: v2_5}, we have $V_{\a_j^i}(f, \fm)_z - V_{\a_j}(f, \fm)_z = V_{\a_j^i}(\Phi_\fm^*(f), \fm)_z - V_{\a_j}(\Phi_\fm^*(f), \fm)_z = 0$ for any $1 \leq j \leq p$. 
\end{enumerate}
In summary, we have $V_{\a^i_n}(\Phi_\fm^*(f), \fm)=\bbO$ for any $2\leq n \leq p$ and $V_{\a^i_1}(f, \fm) = V_{\a_1}(f, \fm)$. Since we have assumption (4), the result follows by Lemma \ref{lemma: v2_2}.   

\hide{
\begin{enumerate}
\item
Suppose that $v_p(z)<v_p(\ell(\fm))-a$. Let $h=\fd(\fm)(\a_n-1)+z$. Since $a\geq 1$, we have 
\begin{equation*}
v_p(z)=v_p(h)=v_p(\rho_\fm^2(h)) < v_p(\ell(\fm))-a.
\end{equation*}
By assumption (3), we have
$f(\fm, h)=f(\fm, \rho_\fm^2(h))=0$ and so $V_{\a_n}(\Phi_\fm^*(f), \fm)_z=\Phi_\fm^*(f)(\fm, h)=0$.

\item
Suppose that $v_p(z)\geq v_p(\ell(\fm))-a$. Let $h_j=\fd(\fm)(\a_j-1)+z$ for any $1\leq j \leq p$. Again, we have
$v_p(h_j)\geq v_p(\ell(\fm))-a$. As \eqref{eqn: f001}, we have
$v_p(\ell(\fm p))-v_p(p^{\ve_1(\fm)}h_j) \leq a-1$ and so $X^1_{\a_j}(f, \fm)_z=f(\fm p, p^{\ve_1(\fm)}h_j)$, which is zero by assumption (1). By \eqref{eqn: v2_4}, we have $V_{\a_n}(\Phi_\fm^*(f), \fm)_z=0$.
\end{enumerate}}
\end{proof}

Now, we are ready to prove Theorem \ref{theorem: vanishing i=2}.
\begin{proof}[Proof of Theorem \ref{theorem: vanishing i=2}]
Let $a\in \Z_{\geq 1}$ and suppose that $f(m, h)=0$ for all $(m, h) \in \scB_2(y) \sm \scA_1(a)$.
For simplicity, let
\begin{equation*}
f_x:=\U(\fm)(f) \qa f_{x+1}=\Psi(\fm)(f)=\Psi_\fm(f_x).
\end{equation*}

We first prove that $\Phi_\fm^*(f_x)(\fm, h)=0$ for all $h \in I_\fm$. We prove this by (reverse) induction on $x$. If $x\geq r_1-2a$, then $f_x$ satisfies all the assumptions in Lemma \ref{lemma: v2_4}, and so the assertion follows. Next, suppose that the assertion holds for $x+1 \leq r_1-2a$, i.e.,
\begin{equation*}
\Phi_{\fm p}^*(f_{x+1})(\fm p, h)=\Phi_{\fm p}^*(\Psi_\fm(f_x))(\fm p, h)=0 \quad \text{ for any } ~ h \in I_{\fm p}.
\end{equation*}
Since $f(m, h)=0$ for all $(m, h) \in \scB_2(y) \cap \scA^+_1(a)$ and $\Psi(f)(m, h)=0$ for all $(m, h) \in \cS(d)_0$, by Lemma \ref{lemma: v2_1} we have
\begin{equation}\label{eqn: v2_6} 
f_x(\fm, h)=f(\fm, h) \quad \text{ for all $h \in I_{\fm}$ with }~ v_p(h) \leq v_p(\ell(\fm))-a.
\end{equation}
In particular, we have $f_x(\fm, h)=f(\fm, h)=0$ for all $h \in I_\fm$ with $v_p(h)<v_p(\ell(\fm))-a$.  Also, since $f_x(\fm p, h)=f(\fm p, h)$ for all $h \in I_{\fm p}$, we have $f_x(\fm p, h)=0$ unless $v_p(h)= v_p(\ell(\fm p))-a$. Since $\Psi(f_x)=\Psi(f)$,
the assertion for $x$ follows by Lemma \ref{lemma: v2_5}.

Next, we prove that $\Phi_\fm^*(f)(\fm, h)=0$ for all $h \in I_\fm$. Let $h \in I_\fm$ with $v_p(h)=v_p(\ell(\fm))-c$ for some $c$. Since $h \equiv \rho_\fm^2(h) \pmod {\fl_2(\fm)}$ and $v_p(\fl_2(\fm))=v_p(\ell(\fm))$, we have  $v_p(h)=v_p(\rho_\fm^2(h))$ if $c\geq 1$. Otherwise we have $v_p(\rho_\fm^2(h)) \geq v_p(\ell(\fm))$ and so $(\fm, \rho_\fm^2(h)) \not\in \scA_1(a)$. Therefore if $c\neq a$, then we have
\begin{equation*}
\Phi_\fm^*(f)(\fm, h)=f(\fm, h)-f(\fm, \rho_\fm^2(h))=0-0=0.
\end{equation*}
If $c=a$, then we have $(\fm, \rho_\fm^2(h)) \in \scA_1(a)$ and so by \eqref{eqn: v2_6} we have
\begin{equation*}
\Phi_\fm^*(f)(\fm, h)=f(\fm, h)-f(\fm, \rho_\fm^2(h))=f_x(\fm, h)-f_x(\fm, \rho_\fm^2(h))=\Phi_\fm^*(f_x)(\fm, h)=0.
\end{equation*}
This completes the proof.
\end{proof}

\svv   
\section{Proof of Theorem \ref{theorem A}}\label{section7}
Finally, we prove Theorem \ref{theorem A}. In other words, we show that $E(m, h) \in \Z$ for all $(m, h) \in \cS(d)$ for a fixed divisor $d$ of $M$. 
Since all the arguments below are independent with our choice of $d$, the result will follow. As already mentioned in Remark \ref{remark: Q to Q/Z}, we 
will consider $E|_{\cS(d)_i}$ and $E_s|_{\cS(d)_i}$ for each $i \in \{0, 1, 2\}$ as functions from $\cS(d)_i$ to $\Q/\Z$, and show that $E(m, h)=0$ for all $(m, h) \in \cS(d)_i$ using the fact that $\Psi(E_s)(m, h)=0$ for all $(m, h) \in \cS(d)_i$. For simplicity, for any $\fm \in \cS(d)_i$ let
\begin{equation*}
I_\fm^0:=\{ h \in I_\fm : \p(\ell(\fm)) \leq h \leq \ell(\fm)\}.
\end{equation*}
Then by definition, for any $m \in \cD(d)_i$ we have $E(m, h)=0$ for all $h \in I_m^0$. 

\svv   
\subsection{Case 1: $i=2$}\label{section: case1}
In this subsection, we prove that $E(m, h)=0$ for all $(m, h) \in \cS(d)_2$.
Before proceeding, we introduce a simple lemma from elementary number theory.
\begin{lemma}\label{lemma: elementary 1}
Let $\fm \in \cD(d)_2$ with $\ell(\fm)=p^u$. Also, let $\fh \in I_\fm \sm I_\fm^0$ with $v_p(\fh)=u-a$.
If $a\geq 2$, then there exists an integer $1\leq k \leq p-1$ such that
\begin{equation*}
(1+p^{a-1}k)\fh \equiv \rho_\fm^1(\fh)+p^{u-1}(p-1) \pmod {\ell(\fm)}.
\end{equation*}
\end{lemma}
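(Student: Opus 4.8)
The plan is to reduce the displayed congruence modulo $p^u$ (write $\ell(\fm)=p^u$, so that $\fl_1(\fm)=p^{u-1}$) to a single linear congruence modulo $p$ in the unknown $k$, which is then immediate.

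First I would write $\fh=p^{u-a}\fh_0$ with $\gcd(\fh_0,p)=1$; this is possible since $v_p(\fh)=u-a$ (in particular $u\geq a\geq 2$). Then $p^{a-1}k\fh=p^{u-1}k\fh_0$, so
\[
(1+p^{a-1}k)\fh\equiv \fh+p^{u-1}k\fh_0\pmod{p^u},
\]
and the claim becomes equivalent to
\[
p^{u-1}k\fh_0\equiv\bigl(\rho_\fm^1(\fh)-\fh\bigr)+p^{u-1}(p-1)\pmod{p^u}.
\]

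Next I would unwind $\rho_\fm^1(\fh)$. By definition it is the representative of $\fh$ modulo $p^{u-1}$ lying in $\{1,\dots,p^{u-1}\}$; since $a\geq 2$ forces $p^{u-1}\nmid\fh$, we may write $\rho_\fm^1(\fh)=\fh-t\,p^{u-1}$ with $t=\gauss{\fh/p^{u-1}}$. The hypothesis $\fh\notin I_\fm^0$ says precisely that $\fh<\p(p^u)=p^{u-1}(p-1)$, whence $0\leq t\leq p-2$. Substituting and cancelling $p^{u-1}$, the congruence above collapses to
\[
k\fh_0\equiv (p-1)-t\pmod{p}.
\]
Since $\gcd(\fh_0,p)=1$ and $0\leq t\leq p-2$, the right-hand side is a nonzero residue mod $p$, so $k\equiv\fh_0^{-1}\bigl((p-1)-t\bigr)\pmod p$ has a solution with $1\leq k\leq p-1$, and this $k$ does the job.

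The computation is routine; the only points needing care — and the two places where the hypotheses are genuinely used — are that $\rho_\fm^1$ subtracts an honest multiple of $p^{u-1}$ from $\fh$ (this uses $a\geq 2$, equivalently $p^{u-1}\nmid\fh$) and that $t\neq p-1$, so that $k$ can be chosen away from $0$ modulo $p$ (this is exactly the content of $\fh\notin I_\fm^0$). Neither is a real obstacle, so I expect the lemma to follow in a few lines once this bookkeeping is arranged.
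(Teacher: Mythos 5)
Your proof is correct and follows essentially the same route as the paper's: both reduce the congruence to $k\fh_0\equiv (p-1)-t\pmod p$ (the paper's $j$ is your $t$ and its $n$ is your $\fh_0$) and use $\fh\notin I_\fm^0$ to see that the right-hand side is nonzero, so that $k$ can be taken in $\{1,\dots,p-1\}$. You merely spell out the "direct computation" the paper leaves implicit.
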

In particular, $E(\fm, (1+p^ak)\fh)=0$. 
\begin{proof}
For simplicity, for any $n \in \Z$ prime to $p$ let $n^*$ be its multiplicative inverse modulo $p$. In other words, $1\leq n^*\leq p-1$ and $nn^* \equiv 1 \pmod p$. Let $j=p^{1-u}(\fh - \rho_\fm^1(\fh))$ and let $\fh=p^{u-a}n$ for some integer $n$ prime to $p$. 
By direct computation, we can take $k$ as the remainder of $n^*(p-1-j)$ modulo $p$.
Since $\fh \in I_\fm \sm I_\fm^0$, we have $0\leq j \leq p-2$ and so $k\neq 0$. This completes the proof.
\end{proof}
 
For simplicity, let $dq^{r_2}=D$, $r_1=r$ and $t=\gauss{r/2}$. Then by definition, $\ell(D)=p^t$ and 
\begin{equation*}
\cS(d)_2=\mcoprod\nolimits_{0\leq a\leq t} \cA_2(a).
\end{equation*}
Since $\cA_2(0)=\{(m, \ell(m)) : m \in \cD(d)_2\}$, by definition we have $E(m, h)=0$ for all $(m, h)\in \cA_2(0)$. Thus, it suffices to show that
$E(m, h)=0$ for all $(m, h) \in \cA_2(a)$ for any $1\leq a \leq t$.
We proceed as follows.
\begin{enumerate}
\item
First, we prove that $E(m, h)=0$ for all $(m, h) \in \cA_2(t)$ if $t\geq 2$.
\item
Next, let $a\in \Z_{\geq 2}$ and assume that $E(m, h)=0$ for all $(m, h) \in \cA_2^+(a)$.
Then we prove that $E(m, h)=0$ for all $(m, h) \in \cA_2(a)$.
\item
Finally, we prove that $E(m, h)=0$ for all $(m, h) \in \cA_2(1)$.
\end{enumerate}

First, we take $s=1+p^{t-1}k$ for some $1\leq k\leq p-1$. Then $sh \equiv h \pmod {\ell(m)}$ unless $v_p(\ell(m))=t$ and $p \nmid h$, in which case $(m, h) \in \cA_2(t)$. Thus, $E_s(m, h)=0$ for all $(m, h) \in \cS(d)_2 \sm \cA_2(t)$.
By Theorem \ref{theorem: vanishing 1}, we have $E_s(m, h)=0$ for all $(m, h) \in \cS(d)_2$. 
Let $(\fm, \fh) \in \cA_2(t)$. 
If $\fh\in I_\fm^0$, then $E(\fm, \fh)=0$ by definition. So we may assume that $\fh \in I_\fm \sm I_\fm^0$.
Then by Lemma \ref{lemma: elementary 1}, we can find $k$ such that $E(\fm, s\fh)=0$. Thus, we get $E(\fm, \fh)=E(\fm, s\fh)-E_s(\fm, \fh)=0$. This proves the first claim.

Next, let $a\in \Z_{\geq 2}$ and assume that $E(m, h)=0$ for all $(m, h) \in \cA_2^+(a)$. Similarly as above, we take $s=1+p^{a-1}k$ for some $1\leq k \leq p-1$. Then $sh\equiv h \pmod {\ell(m)}$ unless $v_p(h) \leq v_p(\ell(m))-a$.
In other words, $E_s(m, h)=0$ for all $(m, h) \in \cS(d)_2 \sm \cA_2^+(a-1)$. 
Also, for any $(m, h) \in \cA_2^+(a)$ we have $(m, sh) \in \cA_2^+(a)$ as $(s, p)=1$, and hence $E_s(m, h)=0$ for all $(m, h) \in \cA_2^+(a)$.
Thus, by Theorem \ref{theorem: vanishing 1} we have
$E_s(m, h)=0$ for all $(m, h) \in \cS(d)_2$. As above, we can deduce $E(\fm, \fh)=0$ for all $(\fm, \fh) \in \cA_2(a)$ by Lemma \ref{lemma: elementary 1}.

Finally, by induction we conclude that $E(m, h)=0$ for all $(m, h)\in \cA_2^+(1)$. If $(s, p)=1$, we have $E_s(m, h)=0$ for all $(m, h) \in \cS(d)_2 \sm \cA_2(1)$. Again by Theorem \ref{theorem: vanishing 1}, for any $s \in \mm p$ we have $E_s(m, h)=0$ for all $(m, h) \in \cS(d)_2$. Let $(\fm, \fh) \in \cA_2(1)$ and let $v_p(\ell(\fm))=u\geq 1$. By definition, $\fh=p^{u-1}j$ for some $1\leq j \leq p-1$. By taking $s=(-j)^*$, we have
\begin{equation*}
E(\fm, \fh)=E(\fm, s\fh)-E_s(\fm, \fh)=E(\fm, p^{u-1}(p-1))-E_s(\fm, \fh)=0-0=0.
\end{equation*}
This completes the proof.
\qed

\svv   
\subsection{Case 2: $i=1$}\label{section: case2}
Exactly as in the previous subsection, we have $E(m, h)=0$ for all $(m, h) \in \cS(d)_1$. We leave the details to the readers.

\svv  
\subsection{Case 3: $i=0$}\label{section: case3}
In this subsection, we prove that $E(m, h)=0$ for all $(m, h) \in \cS(d)_0$. 
For simplicity, let
\begin{equation*}
\begin{split}
D_\iota&:=\{ m \in \cD(d)_0: v_{p_\iota}(m)=r_\iota-1\} \qa \\
D_0&:=\{ m \in \cD(d)_0 : p_1p_2 \dd \ell(\fm)\}=\cD(d)_0 \sm (D_1 \cup D_2).
\end{split}
\end{equation*}
Also, let 
\[
A_i:=\{(m, h) \in \cS(d)_0 : m \in D_i\}.
\]
In the rest of the paper, we prove that $E(m, h)=0$ for all $(m, h) \in A_0$. Then by the same argument as in Section \ref{section: case1} (resp. \ref{section: case2}), we obtain that $E(m, h)=0$ for all $(m, h) \in A_2$ (resp. $A_1$). Thus, this will finish the proof of Theorem \ref{theorem A}.

\svv   
By definition, if $m \in D_0$, then $\ell(m)$ is divisible by $pq$. So we henceforth assume that $\ell(m)$ is divisible by $pq$ unless otherwise mentioned. To prove our assertion, we proceed as follows.

\begin{theorem}\label{thm: final 1}
Let $a \in \Z_{\geq 2}$. Suppose that $E(m, h)=0$ for all $(m, h) \in A_0\cap \scA_1^+(a)$. Then we have
\begin{equation*}
E(m, h)=0 \quad \text{ for all }~~(m, h) \in A_0\cap \scA_1(a).
\end{equation*}
\end{theorem}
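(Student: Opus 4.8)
The plan is to mimic the strategy of Section \ref{section: case1}, but to play the two primes $p=p_1$ and $q=p_2$ against each other. Fix $a\ge 2$. The key observation is that if we take $s=1+p^{a-1}k$ with $1\le k\le p-1$ and $(s,pq)=1$ (which we may arrange since $p<q$), then $E_s(m,h)=E(m,sh)-E(m,h)$ vanishes unless $v_p((s-1)h)<v_p(\ell(m))$, i.e.\ unless $v_p(h)\le v_p(\ell(m))-a$; equivalently $E_s$ is supported on $\scA_1^+(a-1)$. Moreover, since $(s,pq)=1$, multiplication by $s$ preserves each set $\scA_1(n)$, so our hypothesis $E|_{A_0\cap\scA_1^+(a)}=0$ gives $E_s|_{A_0\cap\scA_1^+(a)}=0$ as well; hence $E_s$ is supported on $A_0\cap\scA_1(a)$ together with the parts of $\scA_1^+(a-1)$ lying in $A_1\cup A_2$. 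On those latter pieces we may invoke the already-proved vanishing for $i=1,2$ (Section \ref{section: case1}, Section \ref{section: case2}), so effectively $f:=E_s$ is supported on $A_0\cap\scA_1(a)$.

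Next I would feed $f=E_s$ into Theorem \ref{theorem: vanishing i=1}. By Corollary \ref{corollary: Psi(gs)=0} we have $\Psi(E_s)(m,h)\in\Z$, i.e.\ $\Psi(E_s)\equiv 0$ in $\Q/\Z$, so hypothesis (1) holds. Running the induction on $x=v_p(m)$ (with the roles arranged so that $\scB_1(x)$ and $\scB_1^-(x)$ play the part that $v_p(m)>x$ played in Section \ref{section5}), hypotheses (2) and (3) of Theorem \ref{theorem: vanishing i=1} reduce, using the support statement just established for $E_s$, to the inductive claim that $E_s$ already vanishes on the higher strata; combining this with the vanishing result for $i=1,2$ at lower $p_1$-valuation, one concludes $\Phi_\fm^1\circ\U(\fm)(E_s)(\fm,\fh)=\Phi_\fm^1(E_s)(\fm,\fh)=0$ for all $\fm\in\cD_1(x)$ and all $\fh\in I_\fm$, for every $x$. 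Since $\Phi_\fm^1(g)(\fm,h)=g(\fm,h)-g(\fm,\rho_\fm^1(h))$, this says $E_s(\fm,h)$ depends only on $h\bmod\fl_1(\fm)$; together with the support condition $E_s$ is supported on $\scA_1(a)$, and the fact that $\rho_\fm^1$ can move $h$ out of $\scA_1(a)$ precisely when $a\ge 1$ (because $v_q$ is unchanged by $\rho_\fm^1$ but one can shift $h$ so that $v_p(\rho_\fm^1(h))\ge v_p(\ell(\fm))$ forcing it into a lower stratum — careful here, we must instead argue within the $p$-part), we deduce $E_s(\fm,h)=0$ for all $(\fm,h)$.

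Finally, having $E_s\equiv 0$ for all such $s=1+p^{a-1}k$, I would recover $E$ on $A_0\cap\scA_1(a)$ exactly as in Section \ref{section: case1}: fix $(\fm,\fh)\in A_0\cap\scA_1(a)$; if $\fh\in I_\fm^0$ then $E(\fm,\fh)=0$ by definition, and otherwise Lemma \ref{lemma: elementary 1} (applied with the prime $p$ and $u=v_p(\ell(\fm))$, using $a\ge 2$) produces $k$ with $(1+p^{a-1}k)\fh\equiv\rho_\fm^1(\fh)+p^{u-1}(p-1)\pmod{\ell(\fm)}$; for that $s$ the right-hand residue lies in $I_\fm^0$, so $E(\fm,s\fh)=0$, whence $E(\fm,\fh)=E(\fm,s\fh)-E_s(\fm,\fh)=0-0=0$. (One must check that the $k$ produced by Lemma \ref{lemma: elementary 1} can simultaneously be chosen coprime to $q$; since $1\le k\le p-1<q$ this is automatic.)

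The main obstacle, and the step I expect to require genuine care, is the middle paragraph: verifying that $f=E_s$ genuinely satisfies hypotheses (2) and (3) of Theorem \ref{theorem: vanishing i=1} at every stage of the reverse induction on $x=v_p(m)$. The subtlety is that Theorem \ref{theorem: vanishing i=1} only yields $\Phi_\fm^1\circ\U(\fm)(f)(\fm,\cdot)=0$, not $f(\fm,\cdot)=0$ directly; passing from the former to the vanishing of $E_s$ itself uses both the support restriction of $E_s$ to $\scA_1(a)$ and the behaviour of $\rho_\fm^1$ on $q$-valuations, and one has to run this in tandem with the already-known vanishing on $A_1\cup A_2$ and with an inner induction handling the $q$-strata $\scA_2(b)$ (this is precisely why Theorem \ref{theorem: vanishing i=1} is stated with the auxiliary parameter $b$). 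Assembling these nested inductions so that the hypotheses feed each other consistently — rather than proving the individual lemmas — is where the real work lies.
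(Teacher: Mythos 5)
Your overall template---choose $s\equiv 1$ modulo a high power of $p$, feed $E_s$ into a vanishing theorem from Section \ref{section6}, then telescope back to $E$---is the right one, but several of the concrete choices are wrong and the argument does not close. First, with $s=1+p^{a-1}k$ the congruence $sh\equiv h\pmod{\ell(m)}$ requires $q^{v_q(\ell(m))}\mid(s-1)h$ in addition to the $p$-condition, so $E_s$ is \emph{not} supported on $\scA_1^+(a-1)$: it can be nonzero wherever $v_q(h)<v_q(\ell(m))$. The paper takes $s=1+p^{a-1}q^{t_2}j$; the extra factor $q^{t_2}$ removes the $q$-obstruction and simultaneously forces $E_s=0$ on $A_1$, where $\ell(m)$ is a power of $q$ dividing $q^{t_2}$. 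Your substitute for this---invoking the ``already-proved vanishing for $i=1,2$'' to dispose of $A_1\cup A_2$---conflates $A_1,A_2\subset\cS(d)_0$ with $\cS(d)_1,\cS(d)_2$: Sections \ref{section: case1}--\ref{section: case2} prove vanishing on the latter sets, and vanishing on $A_1,A_2$ is only deduced \emph{after} $A_0$ has been handled, so it is not available here.

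Second, you invoke the wrong vanishing theorem. A function supported on $\scA_1(a)$ (a constraint on $p$-valuations) matches hypothesis (3) of Theorem \ref{theorem: vanishing i=2} (induction over the strata $\scB_2(y)$), not Theorem \ref{theorem: vanishing i=1}, whose hypothesis (3) would require support in $\scA_2(b)$. Moreover the output one needs is $\Phi_\fm^*(E_s)(\fm,\cdot)=0$, i.e.\ invariance of $E_s(\fm,\cdot)$ modulo $\fl_2(\fm)$, because the concluding step telescopes over the residue class $K_\fm^1(\fh)=\{\fa_1,\dots,\fa_p\}$ and uses that invariance to replace each $\fa_k$ by its representative $\br{\fa_k}_2$, which lies in $I_\fm^0$ together with $\br{\fa_k}_2+\fl_1(\fm)$. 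Third, Lemma \ref{lemma: elementary 1} applies only to $\fm\in\cD(d)_2$ with $\ell(\fm)$ a $p$-power, so it cannot be used for $\fm\in D_0$; the correct tool is Lemma \ref{lemma: elementary 2}(1), and the conclusion is not a single jump of $s\fh$ into $I_\fm^0$ but the chain $E(\fm,\fa_1)=\cdots=E(\fm,\fa_p)$ with one term already known to vanish. Finally, you never establish $E_s\equiv 0$ (nor does the paper attempt to); the middle paragraph, which you yourself flag as ``where the real work lies,'' is precisely the part of the proof that is missing.
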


\begin{theorem}\label{thm: final 2}
Let $b \in \Z_{\geq 2}$. Suppose that $E(m, h)=0$ for all $(m, h) \in A_0 \cap \scA_2^+(b)$. Then we have
\begin{equation*}
E(m, h)=0 \quad \text{ for all }~~(m, h) \in A_0 \cap \scA_2(b).
\end{equation*}
\end{theorem}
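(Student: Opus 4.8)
The plan is to transpose the argument of Case~1 of Section~\ref{section: case1} and of Theorem~\ref{thm: final 1} to the $q$-direction, the one essential change being that the pointwise vanishing input (Theorem~\ref{theorem: vanishing 1} there) is replaced by the weaker Theorem~\ref{theorem: vanishing i=2}. Fix $b\ge 2$ and assume $E$ vanishes on $A_0\cap\scA_2^+(b)$. Exactly as in Case~1, it suffices to show that for each $(\fm,\fh)\in A_0\cap\scA_2(b)$ there is a Galois twist $s=1+q^{b-1}k$, with $(k,pq)=1$ and $(s,L)=1$, for which $E(\fm,s\fh)=0$ and $E_s(\fm,\fh)=0$; since $E_s(\fm,\fh)=E(\fm,s\fh)-E(\fm,\fh)$ (with $s\fh$ read modulo $\ell(\fm)$), this forces $E(\fm,\fh)=0$.

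For the first equality we would establish an elementary counting lemma parallel to Lemma~\ref{lemma: elementary 1}. Because $E$ vanishes on $I_\fm^0=\{h:\p(\ell(\fm))\le h\le\ell(\fm)\}$ by definition, it is enough to pick $k$ so that $s\fh$ reduces into $I_\fm^0$ modulo $\ell(\fm)$. Writing $\ell(\fm)=p^{a}q^{c}$ (with $a,c\ge 1$ since $\fm\in D_0$) and $v_q(\fh)=c-b$, the hypothesis $b\ge 2$ forces $v_q(q^{b-1}\fh)=c-1$, so as $k$ runs over residues prime to $pq$ the translates $\fh+q^{b-1}k\fh$, read modulo $\ell(\fm)$, sweep a coset whose points are spaced by a bounded multiple of $p^{a-1}q^{c-1}$; since $I_\fm^0$ is an interval of length $\ell(\fm)-\p(\ell(\fm))=p^{a-1}q^{c-1}(p+q-1)$, it meets this coset, and among the $k$ achieving this one can keep $(s,L)=1$ using that $p$ and $q$ are odd.

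The substance lies in the second equality, $E_s(\fm,\fh)=0$. Put $f:=E_s$; then $\Psi(f)\equiv 0$ on $\cS(d)_0$ by Corollary~\ref{corollary: Psi(gs)=0}, and analyzing when $sh\equiv h$ modulo $\ell(m)$ fails, together with the assumption on $\scA_2^+(b)$, shows that on $A_0$ the function $f$ is supported on $(A_0\cap\scA_2(b))\cup(A_0\cap\scA_1^+(0))$. To invoke Theorem~\ref{theorem: vanishing i=2} we must arrange, for each $\fm$ being treated, that $f$ vanishes on $\scB_2^-(y)$ and on $\scB_2(y)\setminus\scA_1(a)$ for a suitable $a\ge 1$ (with $y=v_q(\fm)$); the plan is to do this by a staged induction --- decreasing in the $\scA_1$-index, increasing in $y$ --- each stage yielding $\Phi_\fm^*\circ\U(\fm)(f)(\fm,\cdot)=\Phi_\fm^*(f)(\fm,\cdot)=0$ for all $\fm\in\cD_2(y)$, i.e.\ that $f(\fm,\cdot)$ is constant on residue classes modulo $\fl_2(\fm)=\ell(\fm)/q$. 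Since $b\ge 2$ keeps $v_q$ constant along each such class, this upgrades to $f(\fm,\cdot)\equiv 0$ once one knows a single vanishing representative per class (from $I_\fm^0$, or from an already-treated stratum). This is the bootstrap realized in Lemmas~\ref{lemma: v2_1}--\ref{lemma: v2_5}, where one keeps track of which of the vectors $V_\nu(f,\fm)$ and $X^\iota_a(f,\fm)$ of Lemmas~\ref{lemma: f01}--\ref{lemma: f03} are forced to vanish.

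The main obstacle will be precisely this staging. Unlike Case~1, where Theorem~\ref{theorem: vanishing 1} delivered outright pointwise vanishing of $E_s$, Theorem~\ref{theorem: vanishing i=2} controls only $\Phi_\fm^*(E_s)$; worse, the $\scA_2(b)$-component of $E_s$ contaminates the $\scA_1(0)$-stratum inside $\scB_2(y)$, and that stratum cannot be excepted in Theorem~\ref{theorem: vanishing i=2} because there $a\ge 1$. The inductions must therefore be set up so that, by the moment Theorem~\ref{theorem: vanishing i=2} is applied at a given $\fm$, the surviving contamination has been confined to one $\scA_1$-level --- this is where the fine structure of $\Psi$ and the relations of Theorem~\ref{theorem: relation among Fmh} are genuinely used. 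A lesser nuisance will be checking the counting lemma of the second paragraph in the edge cases where $v_q(\ell(\fm))$ equals $b$, and that the congruence imposed on $k$ still leaves room for $(s,L)=1$.
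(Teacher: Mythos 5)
Your overall template (twist by an $s$ congruent to $1$ modulo a high power of $q$, feed $E_s$ into a vanishing theorem for $\cS(d)_0$, then propagate zeroes out of $I_\fm^0$ by an elementary congruence lemma) is the right one, but the two concrete choices you made break the argument, and the ``staging'' you flag as the main obstacle is the symptom. First, the twist must be $s=1+p^{t_1}q^{b-1}j$, not $s=1+q^{b-1}k$: the factor $p^{t_1}$ guarantees that $\ell(m)$ divides $(s-1)h$ whenever $v_q(h)>v_q(\ell(m))-b$, so $E_s$ vanishes on $A_0\sm\scA_2^+(b-1)$ and on all of $A_2$ outright, and together with the hypothesis on $\scA_2^+(b)$ its support in $A_0$ is confined to the single stratum $A_0\cap\scA_2(b)$. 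With your $s$ the support spills over all of $\scA_1^+(0)$, and neither vanishing theorem of Section~\ref{section6} tolerates that; the ``staged induction decreasing in the $\scA_1$-index'' you propose to repair this is never carried out and is in fact unnecessary. Second, the vanishing input is Theorem~\ref{theorem: vanishing i=1}, not Theorem~\ref{theorem: vanishing i=2}: the exceptional stratum in its hypothesis (3) is $\scA_2(b)$, which is exactly the support of $E_s$, and its conclusion $\Phi_\fm^1(E_s)(\fm,\cdot)=0$ says $E_s(\fm,\cdot)$ is constant on residue classes modulo $\fl_1(\fm)$; one applies it successively for $x=0,1,\dots$, mirroring the induction over $y$ in the proof of Theorem~\ref{thm: final 1}.

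Moreover, your reduction ``find one $s$ with $E(\fm,s\fh)=0$ and $E_s(\fm,\fh)=0$'' cannot be executed as stated, because for $(m,h)\in\cS(d)_0$ the vanishing theorems never give pointwise vanishing of $E_s$; they give only the mod-$\fl_1(\fm)$ constancy. The missing ingredient is the analogue of Lemma~\ref{lemma: elementary 2}(1), namely part (2) of that lemma: one lists the class $K_\fm^2(\fh)=\{\fb_1,\dots,\fb_q\}$ and, for each $k$, chooses $j=n$ so that $s\fb_k\equiv\fb_{k+1}$ and $s\br{\fb_k}_1\equiv\br{\fb_k}_1+\fl_2(\fm)\pmod{\ell(\fm)}$ with both $\br{\fb_k}_1$ and $\br{\fb_k}_1+\fl_2(\fm)$ lying in $I_\fm^0$; then $E_s(\fm,\fb_k)=E_s(\fm,\br{\fb_k}_1)=0$ by the constancy, whence $E(\fm,\fb_{k+1})=E(\fm,\fb_k)$, and one concludes because the chain meets $I_\fm^0$. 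Your single-twist counting argument produces neither the companion point in the class of $\fh$ modulo $\fl_1(\fm)$ nor any control of $E_s(\fm,\fh)$ itself.
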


\begin{theorem}\label{thm: final 3}
Suppose that $E(m, h)=0$ for all $(m, h) \in A_0 \cap (\scA_1^+(1) \cup \scA_2^+(1))$. Then we have
\begin{equation*}
E(m, h)=0 \quad \text{ for all }~~(m, h) \in A_0.
\end{equation*}
\end{theorem}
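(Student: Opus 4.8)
The plan is to follow the mechanism of Section~\ref{section: case1}, peeling off the strata $\scA_\iota(1)$ of $A_0$ one at a time, with the new feature that the strong single‑prime vanishing input there (Theorem~\ref{theorem: vanishing 1}) must be replaced by an interleaved use of the two weaker statements, Theorems~\ref{theorem: vanishing i=1} and~\ref{theorem: vanishing i=2}. For the reduction, write $a_\iota(m,h):=v_{p_\iota}(\ell(m))-v_{p_\iota}(h)$ for $(m,h)\in\cS(d)_0$; since $m\in\cD(d)_0$ forces $\ell(m)$ to be a product of powers of $p_1$ and $p_2$, one has $a_\iota\ge 0$, and $a_1(m,h)=a_2(m,h)=0$ forces $h=\ell(m)\in I_m^0$, whence $E(m,h)=0$ by the definition of $I_m^0$. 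The hypothesis says exactly that $E$ vanishes on every $(m,h)\in A_0$ with $\max(a_1,a_2)\ge 2$, so it remains to prove that $E$ vanishes on $A_0\cap\scA_1(1)$ and on $A_0\cap\scA_2(1)$.

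I treat $A_0\cap\scA_2(1)$ first. Since $p_2$ is odd, choose $s$ prime to $L$ with $v_{p_2}(s-1)=0$ and $v_{p_1}(s-1)\ge\gauss{r_1/2}$; then $E_s$ is supported on $\scA_2^+(0)$, and, since $h\mapsto sh$ preserves each stratum $\scA_\iota(a)$ and the hypothesis already kills $\scA_2(a)$ for $a\ge2$ inside $A_0$, the support of $E_s$ is contained in $\scA_2(1)$ (off the divisors $m$ with $\ell(m)$ a power of $p_2$, where one is in the single‑prime situation of Section~\ref{section5}). Granting that $E_s\equiv 0$ for every such $s$, the function $E(\fm,\cdot)$ is invariant under the group these $s$ generate, which acts on $\Z/\ell(\fm)\Z$ by fixing the $p_1$‑part and multiplying the $p_2$‑part by an arbitrary unit; an elementary number‑theoretic lemma in the spirit of Lemma~\ref{lemma: elementary 1}, together with the vanishing of $E(\fm,\cdot)$ on $I_\fm^0$, then yields $E(\fm,\fh)=0$ for $(\fm,\fh)\in\scA_2(1)$.

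The stratum $A_0\cap\scA_1(1)$ is symmetric to the above when $p_1$ is odd (interchange $p_1,p_2$ and replace $\Phi_\fm^*$ by $\Phi_\fm^1$). When $p_1=2$ it cannot be isolated, because every $s$ prime to $L$ has $v_2(s-1)\ge 1$; but after the $\scA_2(1)$ step and the hypothesis we already know $E$ vanishes on $\{a_1\ge2\}\cup\{a_2\ge1\}$, so the remaining pairs of $A_0\cap\scA_1(1)$ have $a_1=1,\ a_2=0$, forcing $h=2^{v_2(\ell(m))-1}p_2^{v_{p_2}(\ell(m))}\ge\p(\ell(m))$, hence $h\in I_m^0$ and $E(m,h)=0$.

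The crux, and the step I expect to be the hardest, is the vanishing $E_s\equiv 0$. No single theorem suffices; instead one runs a joint induction on the pair $\big(v_{p_2}(\fm),\,a_1\big)$, applying Theorem~\ref{theorem: vanishing i=2} on the current $v_{p_2}$‑layer and Theorem~\ref{theorem: vanishing i=1} to sweep through the $a_1$‑values inside it. The hypotheses of these theorems — $\Psi(E_s)=0$ everywhere (Corollary~\ref{corollary: Psi(gs)=0}), $E_s=0$ on $\scB_2^-(y)$, and $E_s=0$ on $\scB_2(y)$ outside a single $\scA_1$‑stratum — are precisely what the induction has accumulated; their conclusion gives $\Phi_\fm^*(E_s)(\fm,\cdot)=0$ (resp.\ $\Phi_\fm^1(E_s)(\fm,\cdot)=0$), so by Lemma~\ref{lemma: v2_0} (resp.\ Lemma~\ref{lemma: v1_0}) the function $E_s(\fm,\cdot)$ depends only on $h$ modulo $\fl_2(\fm)$ (resp.\ $\fl_1(\fm)$). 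Since $E_s(\fm,\cdot)$ is supported where $v_{p_2}(h)=v_{p_2}(\ell(\fm))-1$, each residue class of $h$ modulo $\fl_2(\fm)=\ell(\fm)/p_2$ contains a representative outside that support, whence $E_s(\fm,\cdot)=0$ and the induction advances. Arranging the order of this double induction so that every invocation of Theorem~\ref{theorem: vanishing i=1} or~\ref{theorem: vanishing i=2} finds all of its hypotheses in place, and bootstrapping one divisor at a time from this $\Phi$‑level vanishing back to the vanishing of $E_s$ itself, is the delicate bookkeeping that fills out the remainder of the section.
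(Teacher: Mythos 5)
Your overall skeleton matches the paper's: reduce to the pairs with $a_1,a_2\le 1$, kill $A_0\cap\scA_2(1)$ first using $s\equiv 1\pmod{p^{t_1}}$ together with Theorem~\ref{theorem: vanishing i=1}, and only then kill $A_0\cap\scA_1(1)\cap\scA_2(0)$ using $s\equiv 1\pmod{q^{t_2}}$ together with Theorem~\ref{theorem: vanishing i=2} (whose hypothesis (3) \emph{is} satisfied at that second stage, precisely because the first stage has confined the support of $E_s$ to the single stratum $\scA_1(1)$). The problem is your central step, the claim $E_s\equiv 0$ at the first stage. You propose to obtain it from $\Phi_\fm^*(E_s)(\fm,\cdot)=0$ via Theorem~\ref{theorem: vanishing i=2}, but that theorem is not applicable there: its hypothesis (3) requires $E_s$ to vanish on $\scB_2(y)$ outside a \emph{single} $\scA_1$-stratum, whereas at this stage the support of $E_s$ is $A_0\cap\scA_2(1)$ intersected with $\scA_1(0)\cup\scA_1(1)$ --- two strata --- and killing one of them is exactly what you are trying to prove. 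Your fallback, Theorem~\ref{theorem: vanishing i=1}, only yields that $E_s(\fm,\cdot)$ is periodic modulo $\fl_1(\fm)$, and this periodicity cannot force $E_s(\fm,\cdot)=0$: since $v_q(\fl_1(\fm))=v_q(\ell(\fm))$, the value $v_q(h)$ is constant on every residue class modulo $\fl_1(\fm)$ that meets $\{h: v_q(h)=v_q(\ell(\fm))-1\}$, so no such class contains a representative outside the support. Your "joint induction on $(v_{p_2}(\fm),a_1)$" does not repair this, because the conclusion of Theorem~\ref{theorem: vanishing i=1} is only the $\Phi_\fm^1$-level identity and gives no stratum-by-stratum vanishing of $E_s$ to feed back into Theorem~\ref{theorem: vanishing i=2}.

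The paper never proves $E_s\equiv 0$ at this stage. Instead it combines the mod-$\fl_1(\fm)$ periodicity of $E_s$ with Lemma~\ref{lemma: elementary 2}(4): for each $k$ one chooses a \emph{different} $s=1+p^{t_1}n$ (with $n$ depending on $k$) such that $s\fb_k\equiv\fb_{k+1}$ and $s\br{\fb_k}_1\equiv\br{\fb_k}_1+\fl_2(\fm)\pmod{\ell(\fm)}$; then $E_s(\fm,\fb_k)=E_s(\fm,\br{\fb_k}_1)=E(\fm,\br{\fb_k}_1+\fl_2(\fm))-E(\fm,\br{\fb_k}_1)=0$ because both arguments lie in $I_\fm^0$, giving the chain $E(\fm,\fb_1)=\cdots=E(\fm,\fb_{q-1})$, which terminates at a point of $I_\fm^0$. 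This device --- evaluating $E_s$ at a point where both $h$ and $sh$ land in $I_\fm^0$, with $s$ varying along the chain --- is the missing idea in your proposal; without it (or a substitute), the deduction of $E=0$ on $\scA_2(1)$ from the $\Phi$-level vanishing does not go through.
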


\svv  
Before proceeding, we introduce some further notation: For any $\fm \in D_0$, let 
\begin{equation*}
J_\fm^\iota:=\{ h \in I_\fm :  z(\fm) < h \leq z(\fm)+\fl_{\iota}(\fm)\},
\end{equation*}
where
\begin{equation*}
z(\fm):=\ell(\fm)-\fl_1(\fm)-\fl_2(\fm)=\fd(\fm)(pq-p-q).
\end{equation*}

Also, for any $h \in I_\fm$ let $\br{h}_\iota$ be an element of $J_\fm^\iota$ such that $h \equiv \br{h}_\iota \pmod {\fl_\iota(\fm)}$, and let
\begin{equation*}
K_\fm^\iota(h):=\{ h' \in I_\fm : h' \equiv h \pmod {\fl_\iota(\fm)} \}=\{ R_\fm(h+k \cdot \fl_\iota(\fm)) : 0\leq k < p_\iota\},
\end{equation*}
where $R_\fm(a)$ denotes the smallest positive integer congruent to $a$ modulo $\ell(\fm)$, i.e., for any $a\in \Z$ we have $R_\fm(a) \in I_\fm$
and $R_\fm(a) \equiv a \pmod {\ell(\fm)}$. For $\iota \in \{1, 2\}$, let $t_\iota = \gauss {r_\iota/2}$.

As before, we introduce a simple lemma. 
\begin{lemma}\label{lemma: elementary 2}
Let $\fm \in D_0$ with $v_{p_\iota}(\ell(\fm))=u_\iota$ and let $\fh \in I_\fm$ with $v_{p_\iota}(\fh)=u_\iota-a_\iota$. 
\begin{enumerate}
\item
If $a_1\geq 2$, then there is a unique integer $h_1 \in K_\fm^1(\fh)$ such that $\br{h_1}_2 \not\in I_\fm^0$ and $\br{h}_2 \in I_\fm^0$ for all $h \in K_\fm^1(\fh)$ different from $h_1$. Let
$\fa_k=R_\fm(h_1+k \cdot \fl_1(\fm)) \in K_\fm^1(\fh)$. Then for any $1\leq k \leq p-1$, there is an integer $1\leq n\leq p-1$ such that
\begin{equation*}
(1+p^{a_1-1}q^{t_2}n)\fa_k \equiv \fa_{k+1} \pmod {\ell(\fm)}.
\end{equation*}
Moreover, for such an $n$ we have
\begin{equation*}
(1+p^{a_1-1}q^{t_2}n)\br{\fa_k}_2 \equiv \br{\fa_k}_2+\fl_1(\fm) \pmod {\ell(\fm)}.
\end{equation*}

\item
If $a_2\geq 2$, then there is a unique integer $h_2 \in K_\fm^2(\fh)$ such that $\br{h_2}_1 \not\in I_\fm^0$ and $\br{h}_1 \in I_\fm^0$ for all $h \in K_\fm^2(\fh)$ different from $h_2$. Let
$\fb_k=R_\fm(h_2+k \cdot \fl_2(\fm)) \in K_\fm^2(\fh)$. Then for any $1\leq k \leq q-1$, there is an integer $1\leq n\leq q-1$ such that
\begin{equation*}
(1+p^{t_1}q^{a_2-1}n)\fb_k \equiv \fb_{k+1} \pmod {\ell(\fm)}.
\end{equation*}
Moreover, for such an $n$ we have
\begin{equation*}
(1+p^{t_1}q^{a_2-1}n)\br{\fb_k}_1 \equiv \br{\fb_k}_1+\fl_2(\fm)\pmod {\ell(\fm)}.
\end{equation*}

\item
Suppose that $a_1=1$ and $a_2\leq 1$. Then there is a unique integer $h_1 \in K_\fm^1(\fh)$ such that $v_p(h_1)\geq u_1$ and $v_p(h)=u_1-1$ for all $h \in K_\fm^1(\fh)$ different from $h_1$. Let
$\fa_k=R_\fm(h_1+k \cdot \fl_1(\fm)) \in K_\fm^1(\fh)$. Then for any $1\leq k \leq p-2$, the following hold.
\begin{itemize}[--]
\item
We have $\br{\fa_k}_2, \br{\fa_k}_2+\fl_1(\fm) \in I_\fm^0$.
\item
There is an integer $1\leq n\leq p-1$ such that 
\begin{equation*}
(1+q^{t_2}n)\fa_k \equiv \fa_{k+1} \pmod {\ell(\fm)}
\end{equation*}
and $(1+q^{t_2}n)$ is divisible by neither $p$ nor $q$. Moreover, for such an $n$ we have
\begin{equation*}
(1+q^{t_2}n)\br{\fa_k}_2 \equiv \br{\fa_{k}}_2+\fl_1(\fm) \pmod {\ell(\fm)}.
\end{equation*}
\end{itemize}
\item
Suppose that $a_1\leq 1$ and $a_2=1$. Then there is a unique integer $h_2 \in K_\fm^2(\fh)$ such that $v_q(h_2)\geq u_2$ and $v_q(h)=u_2-1$ for all $h \in K_\fm^2(\fh)$ different from $h_2$. Let
$\fb_k=R_\fm(h_2+k \cdot \fl_2(\fm)) \in K_\fm^2(\fh)$. Then for any $1\leq k \leq q-2$, the following hold.
\begin{itemize}[--]
\item
We have $\br{\fb_k}_1, \br{\fb_k}_1+\fl_2(\fm) \in I_\fm^0$.
\item
There is an integer $1\leq n\leq q-1$ such that 
\begin{equation*}
(1+p^{t_1}n)\fb_k \equiv \fb_{k+1} \pmod {\ell(\fm)}
\end{equation*}
and $(1+p^{t_1}n)$ is divisible by neither $p$ nor $q$.  Moreover, for such an $n$ we have
\begin{equation*}
(1+p^{t_1}n)\br{\fb_k}_1 \equiv \br{\fb_{k}}_1+\fl_2(\fm) \pmod {\ell(\fm)}.
\end{equation*}
\end{itemize}
\end{enumerate}
\end{lemma}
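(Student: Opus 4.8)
The plan is to prove all four parts by essentially one computation, carried out inside $\Z/\ell(\fm)\Z$ and bookkept through the $p$-adic and $q$-adic valuations. Write $\ell(\fm)=p^{u_1}q^{u_2}$ (with $u_1,u_2\ge 1$ since $\fm\in D_0$), so that $\fl_1(\fm)=p^{u_1-1}q^{u_2}$, $\fl_2(\fm)=p^{u_1}q^{u_2-1}$ and $\fd(\fm)=p^{u_1-1}q^{u_2-1}$; note that $z(\fm)=\fd(\fm)(pq-p-q)$ is a multiple of $\fd(\fm)$ and that $\p(\ell(\fm))=z(\fm)+\fd(\fm)$, hence $I_\fm^0=\{h\in I_\fm:h\ge z(\fm)+\fd(\fm)\}$. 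The single elementary input that makes the multipliers work is the inequality $u_\iota=v_{p_\iota}(\ell(\fm))=\gauss{(r_\iota-v_{p_\iota}(\fm))/2}\le\gauss{r_\iota/2}=t_\iota$, which gives $q^{t_2}\equiv 0\pmod{q^{u_2}}$ and $p^{t_1}\equiv 0\pmod{p^{u_1}}$.

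I would first dispose of the uniqueness assertions. In part (1), since $a_1\ge 2$ every $h'\in K_\fm^1(\fh)$ has $v_p(h')=u_1-a_1\le u_1-2$ (the $p$ elements of $K_\fm^1(\fh)$ agree modulo $p^{u_1-1}$ and differ by multiples of $p^{u_1-1}$), so in particular $\fd(\fm)\nmid\fh$. Reducing modulo $\fl_2(\fm)$ and subtracting $z(\fm)$, the numbers $\br{h'}_2-z(\fm)$ are all congruent to $\fh\bmod\fd(\fm)=:r\ne 0$ modulo $\fd(\fm)$, and because $\gcd(p,q)=1$ adding $\fl_1(\fm)$ cyclically permutes their ``$\fd(\fm)$-digit'' through $\{0,\dots,p-1\}$; hence exactly one of them lies in $\{1,\dots,\fd(\fm)-1\}$, i.e.\ exactly one $h'$ has $\br{h'}_2\notin I_\fm^0$. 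Part (2) is the mirror image with $p\leftrightarrow q$. In parts (3) and (4) the uniqueness is immediate: adding multiples of $p^{u_1-1}$ to $\fh$ (where $v_p(\fh)=u_1-1$) raises $v_p$ for exactly one residue and leaves it equal to $u_1-1$ for the others.

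Next come the multipliers. With $\fa_k=R_\fm(h_1+k\fl_1(\fm))$ one has $\fa_{k+1}-\fa_k\equiv\fl_1(\fm)\pmod{\ell(\fm)}$ (the wrap-around case $k=p-1$ being automatic since $p\fl_1(\fm)=\ell(\fm)$), so the asserted congruence $(1+p^{a_1-1}q^{t_2}n)\fa_k\equiv\fa_{k+1}$ is equivalent to $p^{a_1-1}q^{t_2}n\,\fa_k\equiv p^{u_1-1}q^{u_2}\pmod{p^{u_1}q^{u_2}}$. Because $v_p(\fa_k)=u_1-a_1$ and $q^{t_2}\equiv 0\pmod{q^{u_2}}$, this is automatic modulo $q^{u_2}$, and modulo $p^{u_1}$ it collapses to the single linear congruence $q^{t_2}n\cdot(\fa_k/p^{u_1-a_1})\equiv q^{u_2}\pmod p$, which has a unique unit solution $n$; pick $1\le n\le p-1$. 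The ``moreover'' clause needs no new work: $\br{\fa_k}_2\equiv\fa_k\pmod{\fl_2(\fm)}$ with $\fl_2(\fm)=p^{u_1}q^{u_2-1}$ forces $\br{\fa_k}_2\equiv\fa_k\pmod{p^{u_1}}$, so $v_p(\br{\fa_k}_2)=u_1-a_1$ and $\br{\fa_k}_2/p^{u_1-a_1}\equiv\fa_k/p^{u_1-a_1}\pmod p$, whence the same $n$ yields $(1+p^{a_1-1}q^{t_2}n)\br{\fa_k}_2\equiv\br{\fa_k}_2+\fl_1(\fm)\pmod{\ell(\fm)}$. Parts (2), (3), (4) go through after the obvious relabelling (in (3)--(4) one has $a_\iota=1$, so the $p^{a_\iota-1}$ factor disappears).

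The step I expect to require the most attention is the two extra claims in parts (3) and (4). For the assertion that $1+q^{t_2}n$ is divisible by neither $p$ nor $q$: it is $\equiv 1\pmod q$ because $t_2\ge u_2\ge 1$, while from $v_p(h_1)\ge u_1$ one computes $\fa_k/p^{u_1-1}\equiv kq^{u_2}\pmod p$, hence $q^{t_2}n\equiv k^{-1}\pmod p$, so $1+q^{t_2}n\equiv 1+k^{-1}\pmod p$, which is nonzero precisely when $k\not\equiv -1\pmod p$ --- this is exactly why the range there is $1\le k\le p-2$. For the claim $\br{\fa_k}_2,\ \br{\fa_k}_2+\fl_1(\fm)\in I_\fm^0$: in parts (3)--(4) one has $\fd(\fm)\mid\fh$ (since $v_p(\fh)\ge u_1-1$ and $v_q(\fh)\ge u_2-1$), so $\br{\fa_k}_2-z(\fm)$ is a positive multiple of $\fd(\fm)$, giving $\br{\fa_k}_2\ge z(\fm)+\fd(\fm)=\p(\ell(\fm))$, and the bound $\br{\fa_k}_2\le z(\fm)+\fl_2(\fm)=\ell(\fm)-\fl_1(\fm)$ then places $\br{\fa_k}_2+\fl_1(\fm)$ in $[\p(\ell(\fm)),\ell(\fm)]$ as well. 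Assembling these pieces proves the lemma, the parts with $\iota=2$ (and part (4)) being obtained by interchanging the roles of $p$ and $q$ throughout.
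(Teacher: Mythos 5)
Your proposal is correct and follows essentially the same route as the paper's proof: the same digit/interval counting modulo $\fd(\fm)$ for the uniqueness statements, the same use of $t_\iota\ge u_\iota$ to reduce the multiplier condition to a single invertible linear congruence modulo $p_\iota$ (your CRT phrasing versus the paper's explicit $n=(q^{t_2-u_2}c)^*$ is only cosmetic), and the same transfer from $\fa_k$ to $\br{\fa_k}_2$ via $\br{\fa_k}_2\equiv\fa_k\pmod{\fl_2(\fm)}$. Your observation that $1+q^{t_2}n\equiv 1+k^{-1}\pmod p$ is a slightly more explicit explanation of the restriction $k\le p-2$ than the paper's, but the argument is the same.
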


\begin{proof}
As before, for any $n \in \Z$ prime to $p$ let $n^*$ be its multiplicative inverse modulo $p$. Note that 
\begin{equation*}
J_\fm^1 \sm I_\fm^0=J_\fm^2 \sm I_\fm^0=\{ h \in I_\fm : z(\fm) < h < z(\fm)+\fd(\fm)\}.
\end{equation*}
\begin{enumerate}
\item
Suppose that $a_1\geq 2$. Then $\fh$ is indivisible by $\fd(\fm)$ and so is $\br{h}_2$ for any $h \in K_\fm^1(\fh)$. Let
\begin{equation*}
J_k=\{ h \in I_\fm : z(\fm)+(k-1)\fd(\fm) < h < z(\fm)+k\cdot \fd(\fm)\}.
\end{equation*}\
Then any element of $J_\fm^2 \sm (\cup_{k\geq 0} J_k)$ is divisible by $\fd(\fm)$. So for any $n \in J_\fm^2$ not divisible by $\fd(\fm)$, we can find $1\leq k \leq p$ such that $n \in J_k$.
In such a case, let $\wp(n):=k$. Since $p$ and $q$ are relatively prime, for two distinct elements $h$ and $h'$ of $K_\fm^1(\fh)$
we easily have $\wp(\br{h}_2) \neq \wp(\br{h'}_2)$. Thus, there is a unique element $h_1 \in K_\fm^1(\fh)$ such that $\wp(\br{h_1}_2) \in J_1$,
which is equal to $J_\fm^2 \sm I_\fm^0$. This completes the first assertion.

Next, since $v_p(\fh)=u_1-a_1$ and $a_1\geq 2$, we have $v_p(h_1)=u_1-a_1$ as well, and so we can write $h_1=p^{u_1-a_1}c$ for some $c \in \Z$ prime to $p$.
Let $n=(q^{t_2-u_2}c)^*$. Then we have
\begin{equation*}
(1+p^{a_1-1}q^{t_2}n)(h_1+k\cdot \fl_1(\fm)) \equiv \fa_k+\fl_1(\fm) \cdot q^{t_2-u_2}cn \equiv \fa_k+\fl_1(\fm) \pmod {\ell(\fm)}
\end{equation*}
as $p^{a-1}\cdot \fl_1(\fm)$ is divisible by $\ell(\fm)$. 

Lastly, let $s=1+p^{a_1-1}q^{t_2}n$. Since $\fa_k \equiv \br{\fa_k}_2 \pmod {\fl_2(\fm)}$ and $s-1$ is divisible by $q$, we have
\begin{equation}\label{eqn: final}
(s-1)(\fa_k-\br{\fa_k}_2) \equiv \fl_1(\fm)+\br{\fa_k}_2-s\br{\fa_k}_2 \equiv 0 \pmod {\ell(\fm)}.
\end{equation}
This completes the proof.

\item
By the same argument as (1), the assertion follows.

\item
Suppose that $a_1= 1$ and $a_2 \leq 1$. 
By our assumption, $\fh$ is divisible by $\fd(\fm)$ and so we write $\fh=\fd(\fm)\nu$ for some integer $1\leq \nu \leq pq$, which is
prime to $p$. Also, let $c$ be the remainder of $-q^*\nu$ modulo $p$.
Since $\nu$ is prime to $p$, we have $1\leq c \leq p-1$. By direct computation we have
\begin{equation*}
\fh+c\cdot \fl_1(\fm)=\fd(\fm)(\nu+qc) = \fd(\fm)(pe) \quad \text{ for some } e \in \Z.
\end{equation*}
Thus, we can take $h_1=\fh+c \cdot \fl_1(\fm)$. It is straightforward that $v_p(h)=u_1-1$ for all other $h \in K_\fm^1(\fh)$. 

Next, note that 
\begin{equation*}
\fh\equiv \fa_k \equiv \br{\fa_k}_2 \equiv 0 \pmod {\fd(\fm)}.
\end{equation*}
Since there is no element of $J_\fm^2 \sm I_\fm^0$ divisible by $\fd(\fm)$, we have $\br{\fa_k}_2 \in I_\fm^0$. 
Moreover, since $\br{\a_k}_2 \leq \ell(\fm)-\fl_1(\fm)$, we have $\br{\a_k}_2+\fl_1(\fm) \in I_\fm^0$.

Lastly, let $n=(q^{t_2}k)^*$. As above, we have $1\leq n \leq p-1$ and 
\begin{equation*}
(1+q^{t_2}n)\fa_k \equiv (1+q^{t_2}n)(h_1+k\cdot \fl_1(\fm)) \equiv \fa_k+q^{t_2} kn \cdot \fl_1(\fm) \equiv \fa_k+\fl_1(\fm) \pmod {\ell(\fm)}
\end{equation*}
as $q^{t_2}h_1$ is divisible by $\ell(\fm)$. It is obvious that $1+q^{t_2}n$ is prime to $q$. Also, since $v_p(\fa_k)=v_p(\fa_{k+1})=u_1-1$, $1+q^{t_2}n$ must be indivisible by $p$. The last assertion easily follows by the same argument as \eqref{eqn: final}.
This completes the proof.
\item
By the same argument as (3), the assertion follows. \qedhere
\end{enumerate}
\end{proof}

\svv   
Now, we prove the theorems above.
\begin{proof}[Proof of Theorem \ref{thm: final 1}]
Let $a\in \Z_{\geq 2}$ and suppose that $E(m, h)=0$ for all $(m, h) \in A_0 \cap \scA_1^+(a)$. 
Before proceeding, we note the following. As usual, let $s$ be an integer prime to $L$, where $L=p^{t_1}q^{t_2}$.
\begin{enumerate}
\item
Since $\gcd(s, pq)=1$, we have $v_{p_\iota}(h)=v_{p_\iota}(sh)$ for any $h \in I_m$. Thus by our assumption, for any $(m, h) \in A_0 \cap \scA_1^+(a)$ we have
\begin{equation*}
E_s(m, h)=E(m, sh)-E(m, h)=0-0=0.
\end{equation*}
\item
If $q^{t_2} \dd (s-1)$, then $sh \equiv h \pmod {\ell(m)}$ for all $(m, h) \in A_1$.
\end{enumerate}

Let $s=1+p^{a-1}q^{t_2}j$ for some $1\leq j \leq p-1$. Then by our choice of $s$, we have $sh\equiv h \pmod {\ell(m)}$ for all $(m, h) \in A_0 \sm \scA_1^+(a-1)$. 
Thus, by (1) and (2) above we have $E_s(m, h)=0$ for all $(m, h) \in (A_0 \sm \scA_1(a)) \cup A_1$. Hence $E_s$ satisfies all the assumptions in Theorem \ref{theorem: vanishing i=2} with $y=0$. Thus, for any $\fm \in \cD_2(0)$ we have 
$\Phi_\fm^*(E_s)(\fm, h)=0$ for all $h\in I_\fm$. Take $\fh \in I_\fm$ such that $v_p(\fh)=v_p(\ell(\fm))-a$. Then by Lemma \ref{lemma: elementary 2}(1) we can write $K_\fm^1(\fh)=\{\fa_1, \fa_2, \dots, \fa_p\}$. Also, by taking $j=n$ as in Lemma \ref{lemma: elementary 2}(1), we have 
\begin{equation*}
E_s(\fm, \fa_k)=E_s(\fm, \br{\fa_k}_2)=E(\fm, \br{\fa_k}_2+\fl_1(\fm))-E(\fm, \br{\fa_k}_2),
\end{equation*}
where the first equality follows from $\Phi_\fm^*(E_s)(m, h)=0$ (cf. Lemma \ref{lemma: v2_0}). Since $\br{\fa_k}_2, \br{\fa_k}_2+\fl_1(\fm) \in I_\fm^0$ if $1\leq k \leq p-1$, we have $E_s(\fm, \fa_k)=E(\fm, \fa_{k+1})-E(\fm, \fa_k)=0$. Thus, we have
\begin{equation*}
E(\fm, \fh)=E(\fm, \fa_1)=E(\fm, \fa_2)=\cdots = E(\fm, \fa_p).
\end{equation*}
Since $K_\fm^1(h) \cap I_\fm^0 \neq \emptyset$, we have $E(\fm, \fh)=0$. This shows that $E(m, h)=0$ for all $(m, h) \in \scB_2(0) \cap \scA_1(a)$. Thus, similarly as (1) above, for any $s$ prime to $L$ we have $E_s(m, h)=0$ for all $(m, h) \in \scB_2(0) \cap \scA_1(a)$. 

Next, we take $s$ as above, i.e., $s=1+p^{a-1}q^{t_2}j$ for some $1\leq j \leq p-1$.
Since $E_s(m, h)=0$ for all $(m, h) \in \scB_2(0) \cap \scA_1(a)$, $E_s$ satisfies all 
the assumptions in Theorem \ref{theorem: vanishing i=2} with $y=1$. By the same argument as above, we can easily prove that $E(m, h)=0$ for all $(m, h) \in \scB_2(1) \cap \scA_1(a)$.

Doing this successively, the result follows.
\end{proof}

By the same argument as above, we can easily prove Theorem \ref{thm: final 2}. We leave the details to the readers. Finally, we prove Theorem \ref{thm: final 3}.
\begin{proof}[Proof of Theorem \ref{thm: final 3}]
We use the same argument as in the proof of Theorem \ref{thm: final 1}.

By our assumption, for any $m \in D_0$ it suffices to show that $E(m, h)=0$ for all $h \in I_m$ divisible by $\fd(m)$.
First, we take $s=1+p^{t_1}j$ for some $1\leq j\leq q-1$ so that $(s, q)=1$. As above, by Theorem \ref{theorem: vanishing i=1} with $x=0$ and $b=1$, we have
$\Phi_m^1(E_s)(m, h)=0$ for any $(m, h) \in \scB_1(0)$. 
Let $\fm \in \cD_1(0)$ and let $\fh \in I_\fm$ with $v_p(\fh)\geq v_p(\ell(\fm))-1$ and $v_q(\fh)=v_q(\ell(\fm))-1$.
Then by Lemma \ref{lemma: elementary 2}(4), we have (as in the proof of Theorem \ref{thm: final 1})
\begin{equation*}
E(\fm, \fh)=E(\fm, \fb_1)=E(\fm, \fb_2)=\cdots=E(\fm, \fb_{q-1}). 
\end{equation*}
Since the length of $I_\fm^0$ is larger than $2\cdot \fl_2(\fm)$, there is an integer $1\leq k \leq q-1$ such that $\fb_k \in I_\fm^0$. 
Thus, we have $E(\fm, \fh)=E(\fm, \fb_k)=0$. This proves that $E(m, h)=0$ for all $(m, h) \in \scB_1(0) \cap \scA_2(1)$. 
Next, as above we have $E_s(m, h)=0$ for all $(m, h) \in \scB_1(0)$. So it satisfies all the assumptions in Theorem \ref{theorem: vanishing i=1} with $x=1$ and $b=1$. 
As above, we can easily prove that $E(m, h)=0$ for all $(m, h) \in \scB_1(1) \cap \scA_2(1)$.
Doing this successively, we prove that $E(m, h)=0$ for all $(m, h) \in A_0 \cap \scA_2(1)$. 

Finally, let $\scA:=A_0 \cap \scA_1(1) \cap \scA_2(0)$. Then by definition, we have
\begin{equation*}
\scA=\{(m, k \cdot \fl_1(m)) \in A_0 : 1\leq k \leq p-1\}
\end{equation*}
and
\begin{equation*}
A_0 \sm (\scA_2^+(0) \cup \scA_1^+(1))= \scA \cup \{(m, \ell(m)) : m \in D_0\}.
\end{equation*}
Since $E(m, \ell(m))=0$ for any $m \in D_0$ by definition, it suffices to show that $E(m, h)=0$ for all $(m, h) \in \scA$.

As above, we take $s=1+q^{t_2}j$ for some $1\leq j \leq p-1$ so that $(s, p)=1$. By Theorem \ref{theorem: vanishing i=2} with $y=0$ and $a=1$, we then have $\Phi_m^*(E_s)(m, h)=0$ for any $(m, h) \in \scB_2(0)$. 
Let $\fm \in \cD_2(0)$ and let $\fh \in I_\fm$ with $v_p(\fh)=v_p(\ell(\fm))-1$ and $v_q(\fh)=v_q(\ell(\fm))$.
Then by direct computation, we have $K_\fm^1(\fh)=\{ k\cdot \fl_1(\fm) : 1\leq k \leq p\}$ and so $h_1=\ell(\fm)$ and $\fa_{p-1}=\ell(\fm)-\fl_1(\fm) \in I_\fm^0$ as in the notation of Lemma \ref{lemma: elementary 2}(3).
Thus, by Lemma \ref{lemma: elementary 2}(3) we have (as in the proof of Theorem \ref{thm: final 1})
\begin{equation*}
E(\fm, \fh)=E(\fm, \fa_1)=E(\fm, \fa_2)=\cdots=E(\fm, \fa_{p-1})=0.
\end{equation*}
This shows that $E(m, h)=0$ for all $(m, h) \in \scB_2(0) \cap \scA$.
As above, we easily prove that $E(m, h)=0$ for all $(m, h) \in \scA$. 
This completes the proof.
\end{proof}

\vv


\begin{thebibliography}{99}
\bibitem{BM25} Jennifer S. Balakrishnan and Barry Mazur, \emph{Ogg's torsion conjecture: Fifty years later}, Bull. Amer. Math. Soc., \textbf{62}(2) (2025), 235--268. 


\bibitem{Dr73} Vladimir Drinfeld, \emph{Two theorems on modular curves}, Functional Anal. Appl., \textbf{7} (1973), 155--156. 


\bibitem{GYYY} Jia-Wei Guo, Yifan Yang, Hwajong Yoo and Myungjun Yu, \emph{The rational cuspidal subgroup of $J_0(p^2M)$ with squarefree $M$}, Math. Nachr. \textbf{296} (2023), 4634--4655.


\bibitem{Li75} Gerald Ligozat, \emph{Courbes modulaires de genre 1}, Bull. Soc. Math. France, M\'emoire, tome \textbf{43} (1975), 5--80.

\bibitem{Ma72} Yuri Manin, \emph{Parabolic points and zeta functions of modular curves (in Russian)}, Izv. Akad. Nauk SSSR Ser. Mat., \textbf{36} (1972), 19--66. Translation in Math USSR-Izv, \textbf{6} (1972), 19--64.

\bibitem{M77} Barry Mazur, \emph{Modular curves and the Eisenstein ideal}, Publ. Math. Inst. Hautes \'Etudes Sci., tome \textbf{47} (1977), 33--186.



\bibitem{Og73} Andrew P. Ogg, \emph{Rational points on certain elliptic modular curves}, Proc. Sympos. Pure Math., \textbf{24}, AMS, Providence, R. I. (1973), 221--231.


\bibitem{Og75} Andrew Ogg, \emph{Diophantine equations and modular forms}, Bull. Amer. Math. Soc., Vol. \textbf{81} (1975), 14--27.

\bibitem{Oh14} Masami Ohta, \emph{Eisenstein ideals and the rational torsion subgroups of modular Jacobian varieties II}, Tokyo J. Math., Vol. \textbf{37}, No. 2 (2014), 273--318. 

\bibitem{Ren18} Yuan Ren, \emph{Rational torsion subgroups of modular Jacobian varieties}, J. Number Theory \textbf{190} (2018), 169--186.

\bibitem{RW22} Ken Ribet and Preston Wake, \emph{Another look at rational torsion of modular Jacobians}, P. N. A. S. \textbf{119}(41) (2022), e2210032119.







\bibitem{WY20} Liuquan Wang and Yifan Yang, \emph{Modular units and cuspidal divisor classes on $X_0(n^2M)$ with $n|24$ and $M$ squarefree}, J. Algebra \textbf{562} (2020), 410--432.

\bibitem{Ya04} Yifan Yang, \emph{Transformation formulas for generalized Dedekind eta functions}, Bull. London Math. Soc., \textbf{36}(5) (2004), 671--682.


\bibitem{Y1} Hwajong Yoo, \emph{The rational cuspidal divisor class group of $X_0(N)$}, J. Number Theory \textbf{242} (2023), 278--401. 

\bibitem{Y2} Hwajong Yoo, \emph{The rational torsion subgroup of $J_0(N)$}, Adv. Math. \textbf{426} (2023), 109111.


\end{thebibliography}
\end{document}